\documentclass[a4paper,dvipsnames,oneside]{amsart}

\makeatletter
\@namedef{subjclassname@2020}{%
  \textup{2020} Mathematics Subject Classification}
\makeatother

\usepackage[left=3cm,right=3cm,top=25mm,bottom=25mm]{geometry}

\usepackage{amsmath,amssymb,amsfonts,amsthm,mathtools,comment,enumitem,thmtools,setspace,graphicx}

\usepackage[textwidth=2.5cm,colorinlistoftodos,textsize=footnotesize]{todonotes}

\usepackage{tikz}
\usetikzlibrary{patterns}
\usepackage[color,matrix,arrow]{xy}
\definecolor{bluUniud}{RGB}{119,154,171}

\definecolor{mycolor}{HTML}{F7F8E0}
\usepackage[colorlinks,citecolor=mycolor!70!black,urlcolor=black]{hyperref} 


\usepackage[capitalise,noabbrev]{cleveref}


\newtheorem{theorem}{Theorem}[section]

\newtheorem{corollary}[theorem]{Corollary}
\newtheorem{lemma}[theorem]{Lemma}

\newtheorem{proposition}[theorem]{Proposition}

\theoremstyle{definition}
\newtheorem{definition}[theorem]{Definition}

\theoremstyle{remark}  

\newtheorem{remark}[theorem]{Remark}

\newtheorem{question}[theorem]{Question}

 \newtheoremstyle{fatto}
  {}
  {}
  {\itshape}
  {5mm}
  {\bfseries} 
  {.}
  {2mm}
  {}
\theoremstyle{fatto}
\newtheorem{claim}{Claim}[theorem]

\declaretheoremstyle[
  spaceabove=-3pt,%
  spacebelow=6pt,%
  headfont=\normalfont\itshape,%
  postheadspace=2mm,%
  qed=\footnotesize{\(\square\)}]{mystyle}

\setlist{itemsep=.02em,labelsep=.4em,topsep=.5em}   

\newcommand{\rca}{\mathsf{RCA}_0}
\newcommand{\wkl}{\mathsf{WKL}_0}
\newcommand{\aca}{\mathsf{ACA}_0}
\newcommand{\pra}{\mathsf{PRA}}

\newcommand{\Pra}{{\mathsf{PRA}^2}}
\newcommand{\qfac}{\mathsf{QF}\text{-}\mathsf{AC}}
\newcommand{\bqfac}{\mathsf{BQF}\text{-}\mathsf{AC}}

\newcommand{\pa}{\mathsf{PA}}
\newcommand{\pam}{\mathsf{PA}^-}

\newcommand{\iso}{\mathrm{I}\Sigma^0_1}
\newcommand{\ido}{\mathrm{I}\Delta^0_1}

\newcommand{\is}[2]{\mathrm{I}\Sigma^{#1}_{#2}}
\newcommand{\bs}[2]{\mathrm{B}\Sigma^{#1}_{#2}}
\newcommand{\id}[2]{\mathrm{I}\Delta^{#1}_{#2}}
\newcommand{\lnp}[2]{\mathrm{L}\Pi^{#1}_{#2}}
\newcommand{\dcao}{\Delta^0_0\text{-}\mathrm{CA}}
\newcommand{\dca}{\Delta^0_1\text{-}\mathrm{CA}}
\newcommand{\dzca}{\Delta^0_0\text{-}\mathrm{CA}}
\newcommand{\qfi}{\mathsf{QF}\text{-}\mathsf{I}}

\newcommand{\E}{\,E\,}

\newcommand{\R}{\,R\,}

\newcommand{\la}{\langle}
\newcommand{\ra}{\rangle}

\newcommand{\imp}{\rightarrow}
\newcommand{\Imp}{\Rightarrow}
\newcommand{\biimp}{\leftrightarrow}
\newcommand{\Biimp}{\Leftrightarrow}
\newcommand{\Nb}{\mathbb{N}}
\newcommand{\nat}{\mathbb{N}}

\newcommand{\smf}{{}^\smallfrown}

\newcommand{\mc}{\mathcal}

\newcommand{\kl}{\mathsf{KL}}
\newcommand{\rt}[2]{\mathsf{RT}^{#1}_{#2}}
\newcommand{\coh}{\mathsf{COH}}

\usepackage{amssymb}
\usepackage{xcolor}
\usepackage{framed}

\def\msf{\mathsf}

\def\mcal{\mathcal}
\def\h{\hat}

\newcommand{\etalchar}[1]{$^{#1}$}

\title{Primitive recursive reverse mathematics}
\author{Nikolay Bazhenov}
\address{Sobolev Institute of Mathematics, pr. Akad. Koptyuga 4, Novosibirsk, 630090 Russia}
\email{bazhenov@mail.math.nsc.ru}

\author{Marta Fiori-Carones}
\address{Sobolev Institute of Mathematics, pr. Akad. Koptyuga 4, Novosibirsk, 630090 Russia}
\email{marta.fioricarones@outlook.it}

\author{Lu Liu}
\address{School of Mathematics and Statistics, HNP-LAMA, Central South University ChangSha, China, 410083}
\email{g.jiayi.liu@gmail.com}

\author{Alexander Melnikov}
\address{Victoria University of Wellington, Wellington, New Zealand} 
\email{alexander.g.melnikov@gmail.com}

\thanks{We thank Jeremy Avigard and Stephen Simpson for several useful suggestions at early stages of the project. Special thanks to Rodney Downey who actively participated in these early discussions.
}
\date{\today}

\definecolor{martablu}{HTML}{0C7988}

\def\pr{PR}
\begin{document}

\begin{abstract}
We use a second-order analogy $\Pra$ of $\pra$ to
investigate  the proof-theoretic strength of theorems in countable algebra, analysis,
and infinite combinatorics.
We compare our results with similar results in the fast-developing  field of primitive recursive (`punctual')
algebra and analysis, and with results from `online' combinatorics.
We argue that $\Pra$ is sufficiently robust to serve as an alternative base system below $\rca$
to study the proof-theoretic content of theorems in ordinary mathematics.
(The most popular alternative is perhaps $\rca^*$.)
We discover that many theorems that are known to be true in $\rca$ either hold in $\Pra$ or are equivalent to
$\rca$ or its weaker (but natural) analogy $2^{\mathbb{N}}$-$\rca$ over $\Pra$. However, we also discover that some standard mathematical and combinatorial facts 
are incomparable with these natural subsystems.


\end{abstract}

\keywords{Computability theory, primitive recursion, reverse mathematics, foundations of mathematics}

\subjclass[2020]{03B30, 03F35, 03D20, 03C57, 03D78}

\maketitle

\tableofcontents

\section{Introduction}

Reverse mathematics is a relatively new program in mathematical logic. Its basic goal is to assess the relative logical strengths of theorems from the  `ordinary' (non set theoretic) mathematics. In reverse mathematics, one tries to find the minimal natural axiom system $\Gamma$ that is capable of proving a given theorem $\Delta$.
This is usually done by proving that, over a certain rather weak base system, $\Gamma$ is \emph{equivalent} to $\Delta$. In other words, one of the crucial steps in such investigations is proving the axioms $\Gamma$ from  the given theorem $\Delta$---thus, the name `reverse mathematics'.

\

 Friedman, Simpson, and Smith 
 presented  these ideas in a systematic way in their seminal work
\cite{FriedmanSimpsonSmith}. Their paper contains a large number of examples of classical theorems from
countable algebra analysed in several subsystems of the second-order arithmetic. Following the earlier ideas of Friedman~\cite{Friedman1976a,Friedman1976b},  Friedman, Simpson, and Smith chose $\rca$ as their most basic axiomatic system. Here $\rca$ stands for the `recursive comprehension axiom (scheme)'; 
informally speaking, this axiomatic system postulates the existence of `recursive'  (computable) subsets of $\mathbb{N}$.

It is perhaps not a coincidence that around the same time, the subject of \emph{recursive (computable, effective) algebra} was getting increasingly popular
in both the US and Australia and,  independently, in the Soviet Union.
Effective algebra investigates computability-theoretic properties of countable algebraic structures. Such investigations began in the 1960s with the works of Mal'cev~\cite{Mal-61,Mal-62} and Rabin~\cite{Rabin-60}. By the mid-1980s the subject had accumulated a large number of non-trivial results, perhaps most notably in countable field theory, countable Boolean algebras, and commutative group theory; we cite \cite{Handbook-RecMath-01,Handbook-RecMath-02,Ash-Knight-book,Ershov-Goncharov-2000}.  Around the same time, the subject of \emph{computable analysis}
was becoming increasingly popular too; we cite \cite{Pour-El-Richards,Weihrauch-book}. The main objects of investigation in computable analysis are recursively (computably) presented separable spaces and recursive (computable) functions between such spaces.

A large number of results in reverse mathematics, especially in the early stages of its development, were based on similar results in
effective algebra and computable analysis.
 Many results in Friedman, Simpson, and Smith~\cite{FriedmanSimpsonSmith} are essentially `recycled'
effective algebraic theorems.
 For example, it is well-known that every computable field can be computably embedded into
its computable algebraic closure; this is an old result due to Rabin~\cite{Rabin-60}. It is therefore perhaps not surprising that the result also holds in $\rca$. However, this of course requires some extra work since $\rca$ additionally restricts the  axiom of induction; we cite~\cite{sosoa} for the details.
 For more results based on effective algebra, we cite \cite{reverse-book-2001,sosoa,SolomonThesis,ShoreBA}.
For various results inspired by computable analysis, see, e.g., \cite{reverseanal,reverseanal1, reverseanal2}.
More recently, it has become rather common to
\emph{combine} reverse mathematics with effective algebra. Each of the two subjects suggests a certain measure of complexity of an algebraic result, and while  these measures can be somewhat related technically, usually there is no immediate implication between the two. For a few  relatively recent examples, we cite \cite{Melnikov-Greenberg,Conidis-19}. Also, there are rather explicit connections between reverse mathematics and computable analysis; e.g.,~\cite{Weihrauch-book,Brattka-05,Gherardi-Markone-09,Brattka-Survey-21}. More generally, \emph{computable mathematics} and \emph{reverse mathematics} (especially in $\rca$ and not far beyond) have become so interconnected that no firm line can be drawn between them.

\vspace{2mm}

In the recent years and beginning with \cite{Kal17}, there has been much work in primitive recursive (`punctual') algebra.
Also, there have been several recent results in primitive recursive analysis~\cite{SelSel-21,spaces-preprint}.
The main  goal of such investigations is the elimination of unbounded search from results in computable mathematics.
Such investigations often lead to unexpected results. Indeed, the technical depth of some of these results
is almost equally unexpected. For example, it is easy to see that the back-and-forth
proofs of computable categoricity for the
 dense linear order $(\mathbb{Q},<)$  and for the random graph contain exactly one instance of unbounded search at every stage.
Using degree-theoretic techniques, Melnikov and Ng~\cite{MelNg-19} discovered that the `fully primitive recursive degrees' of these structures
are not isomorphic as partial orders, and this reflects that these delays have different nature.
This difference is rather subtle and its nature is not yet fully understood.
We cite surveys \cite{Mel17-survey,SurveyOnline2019,punc2} for many more results in primitive recursive mathematics and
for a detailed exposition of the theory.
The theory has accumulated many theorems about primitive recursive algebraic and separable structures.
 Perhaps more importantly, the theory has developed enough tools
that allow to systematically investigate primitive recursive mathematical structures and processes upon such structures.

Perhaps somewhat unexpectedly, such investigations are rather closely related to another seemingly distant branch of computable mathematics,
 namely `online' combinatorics.
Beginning in the 1980's
there has been quite a lot of work on online infinite combinatorics, particularly
by Kierstead, Trotter, Remmel and others (\cite{Kier81,Kier98a,KPT94,LST89,Remmel86}). Some results
were quite surprising. For example, Dilworth's theorem says that a
partial ordering of width $k$ can be decomposed into
$k$ chains. Szemeredi and others showed that there is a
computable partial ordering of width $k$ that cannot be decomposed into
$k$
computable chains. But in 1981, Kierstead proved that
there is an online algorithm that will decompose
any online presentation of a computable partial ordering
into $\frac{5^k-1}{4}$ many (computable) chains.
Investigations here are still ongoing; e.g.,~\cite{FCM,FCSS}.
As was noted in \cite{KaMeMo}, there is a technical connection between results of this sort
and the primitive recursive `punctual' framework by means of \emph{subrecursive relativisation} (to be clarified). For instance, it has been  demonstrated in   \cite{KaMeMo} that
 there is a tight connection between definability  and relativized primitive recursion
in the context of countable algebraic structures. Based on these results and observations,
it has been proposed in \cite{punc2} that punctual algebra and online combinatorics can be studied simultaneously, and indeed
that there should exist a unified approach to the reverse mathematics of these results. However, it was not clear what would be the `right'
base axiomatic system for such investigations. Even though $\rca$ is the standard base system for reverse mathematics, it fails to capture the subtle effects related to forbidding unbounded search.

\

There are several  weaker base systems below $\rca$ that could potentially capture the subrecursive content of mathematics, we briefly go over some of them. For example, \cite{SimSm,Hatzikiriakou1989} proposed $\rca^*$,  which is $\rca$ with a weakened induction scheme. While certainly rather interesting and useful in the study of the role of induction, it seems
its power and convenience (in, e.g., countable algebra) is extremely limited. Perhaps, one of the possible reasons is that  $\rca^*$ only proves bounded primitive recursion which poses a significant limitation on the `constructive' arguments that can be imitated in  $\rca^*$. In fact, $\rca^*$ is  $\Pi^0_2$-conservative over \emph{elementary recursion arithmetic}, see \cite[Theorem 4.4]{avigad_2005} and \cite[Corollary 4.9]{SimSm}). However, with some effort several results in ordinary mathematics can be carried over  $\rca^*$, which seems very surprising (thus, interesting) since bounded primitive recursion appears to be a very weak tool in algebra. Research into $\rca^*$ is ongoing; we cite~\cite{KolodziejczykYokoyama,kky:ramsey-rca0star,fkwy,yokoyama2013,HatSimp2017}.

The other well-known `subrecursive' system is $\pra$ with one axiom for each primitive recursive scheme.  However, it is a first-order system and can really  handle only finite sets that can be identified with their codes. A  truly remarkable theorem is the $\Pi^0_2$-conservativity of $\wkl$ over $\pra$ which in particular implies that these theories are equiconsistent; see \cite[Section IX.3]{sosoa} where one can also find more references. However, while the system undoubtedly plays a rather important role in proof theory, it
cannot serve as a base  for the reverse mathematics of, e.g., countable algebra or infinite combinatorics. The obvious obstacle is, of course, that the system is not second-order.

We also mention the various sub-recursive systems specifically designed to study complexity-theoretic results; see books \cite{buss1986,buss1998,Cook-Nguyen}.
Similarly to $\pra$, such subsystems appear to be too restricted to be used as a base theory to study infinite mathematics.

To keep the intro reasonably compact, we will no longer proceed with the  discussion of various possible systems below $\rca$ and refer the reader to \cite{FFF}. Instead, we will concentrate on the main subject of the paper, namely the second-order analogy $\Pra$ of $\pra$.

\medskip

The system of our choice is $\Pra$. It is a function-based system (as opposed to the set-based systems $\rca$, $\wkl$, etc.)
that postulates that functions are closed under primitive recursive schemata.
 Informally, this corresponds to primitive recursive relativisation.
The system $\Pra$ is, of course, not new. For instance,
Avigad~\cite{avigad_2005} presented a nonstandard higher-type extension of $\pra$, which is still $\Pi_2$-conservative over $\pra$, and some weaker systems, providing some examples of statements of elementary analysis which can be proved in such systems.  Various proof-theoretic properties of higher-type analogies of $\pra$, including $\Pra$,  are thoroughly studied in the books \cite{avigad_2005,kohlenbach2008}. We also remark that Harvey Friedman in \cite{Friedman1976a,Friedman1976b} originally introduced $\rca$ in a functional language, not in the set-based one adopted in \cite{sosoa}.  Friedman defined $\rca$ as $\dca$ plus essentially $\ido$ and closure under primitive recursive functions. This subsystem implies $\iso$, so, in the end, it is another presentation of the usual basic theory. Nonetheless, this may reveal that to Friedman's eyes, primitive recursion carries a foundational import, which is perhaps hidden in the later formulation of Simpson~\cite{sosoa} who uses $\dca$ and $\iso$ to derive totality of primitive recursive functions\footnote{See also \url{https://cs.nyu.edu/pipermail/fom/2002-April/005415.html} for a further discussion.}.

 The axiomatic system $\Pra$ seems to be the most natural second-order system to study primitive recursive proofs and processes in countable algebra, separable spaces, and infinite combinatorics.
Indeed, the second-order part of the minimal $\omega$-model of $\Pra$ is just the collection of all primitive recursive functions.
As we will discuss later, $\Pra$ proves comprehension and induction with bounded quantifiers.
This corresponds to our intuition that primitive recursive processes should correspond to definability with bounded quantifiers. We are not the first to realise that $\Pra$ has a potential in the reverse mathematics of ordinary theorems.
Some 20 years before us Kohlenbach~\cite{kohlenbach2000} tested the system from the perspective of reverse mathematics.
While Kohlenbach's examples are both interesting and instructive, at that time neither countable algebra nor
analysis could really offer enough \emph{primitive recursive} results and techniques that could be partially re-used to truly test
the system in ordinary mathematics.

The main purpose of this paper is to initiate (or revive) a systematic investigation of the primitive recursive content of ordinary mathematics using $\Pra$.
 In this paper, we do only a few initial steps that we believe are sufficient to lay the foundations of this theory.

\

We now discuss the results that are summarised in Fig.~\ref{fig:fig}.  (Not all results and examples are included into the diagram.)

\begin{figure}[h]\label{fig:fig}
\begin{center}
 \begin{tikzpicture}
 \node (pra) at (0,0) {$\Pra$};
 \node (ido) at (0,1) {$\ido$};
 \node (dca) at (1,2) {$\dca$};
 \node (iso) at (-1,2) {$\is 0 1$};
 \node (baire) at (-4,3) {$\mathsf{BaireCategory}$};
  \node (concom) at (5.5,4) {$\mathsf{Connected Components Exist}$};
 \node (isodca) at (0,4) {$2^{\mathbb{N}}$-$\rca= \is 0 1 \land \dca$};
 \node (qfac) at (0,6) {$ \rca$};
 \node (cat) at (-2,6) {$\mathsf{Categoricity}$};
 \node (rt1Inf) at (-5,5.5) {$\rt 1 {<\infty}$};
 \node (rt22) at (-5,6.5) {$\rt 2 2$};
  \node (rt32) at (-5,7.5) {$\rt 3 2$};
 \node (wkl) at (4,5.5) {$\wkl$};
  \node (hbt) at (6.5,5.5) {$\mathsf{HeineBorelTheorem}$};
 \node (unifCon) at (5,7) {$\mathsf{Uniform Continuity}$};
  \node (rcaWkl) at (2,7) {$\rca+\wkl$};
 \node (2aca) at (7,8) {$2^{\Nb}\text{-}\aca$};
 \node (aca) at (0,9) {$\aca$};

\draw[->] (ido) -- (pra);
 \draw[->] (iso) -- (ido);
 \draw[->] (isodca) -- (iso);
 \draw[->] (isodca) -- (dca);
 \draw[->] (dca) -- (ido);
 \draw[->] (qfac) -- (isodca); 
 \draw[<->] (hbt) -- (wkl);
  \draw[<->] (qfac) -- (cat);
    \draw[<->] (concom) -- (isodca);
    \draw[<->] (qfac) -- (cat);
\draw[->] (qfac) -- (baire);
 \draw[->] (baire) -- (pra);
 \draw[dashed,->] (baire) -- (iso);
 \draw[->] (wkl) -- (dca);
 \draw[->] (rcaWkl) -- (wkl);
 \draw[->] (rcaWkl) -- (qfac);
 \draw[<->] (unifCon) -- (rcaWkl);
 \draw[->] (2aca)  .. controls (7,7) and (6.5,6) .. (wkl);
 \draw[->] (aca) -- (qfac);
 \draw[->] (aca) -- (2aca);
 \draw[->] (aca) -- (rcaWkl);
  \draw[->] (aca) -- (rt32);
\draw[->] (rt32) -- (rt22);
\draw[->] (rt22) -- (rt1Inf);
 \draw[dashed,->] (rt22)  .. controls (-4.5,5.5) .. (baire);
 \draw[dashed,->] (rt1Inf) -- (baire);
  \draw[dashed,->] (rt22) -- (iso);
 \draw[dashed,->] (rt1Inf) -- (iso);
 \draw[->] (rt1Inf) .. controls (-7,2) and (-3,1) .. (pra);
  \end{tikzpicture}
  \end{center}
  \caption{\small Summary of the results proved in the paper.
  Lines represent strict implications. Dashed arrows represent either not known but possible implications, or implications that we believe are true but we have not formally verified in the paper.  
  $\mathsf{Categoricity}$ stands for the three principles studied in \cref{Subsec:cat}. $\rca$ is identified with it function-based analogy $\qfac$.   } 
\end{figure}

Section~\ref{sect:prelim} is a preliminaries section. It contains some basic facts about $\Pra$ that will be necessary
in the later sections. Among other things, we verify that the formalisation of finite sets is robust in $\Pra$. More specifically, we check that the two most natural approaches (the first-order and the second-order ones) are equivalent, and that the cardinality of a finite set makes sense. We also discuss the relationship between the function-based $\Pra$ and the set-based $\rca$, and also induction and comprehension axiom schemata. For instance, we explain why, over $\Pra$, the set-based recursive comprehension $\dca$
is weaker than the full function-based version  $\qfac$ of recursive comprehension. Since we will observe that $\qfac$ implies $\iso$ over $\Pra$, one can say that up to notation, $\qfac$ is $\rca$; it is indeed very similar to what Friedman initially suggested.

Section~\ref{sec:examples} contains two alternative approaches to countable algebraic structures in $\Pra$, and it also contains a fair amount of examples. Some of these examples follow (often rather non-elementary) proofs from the literature and are perhaps somewhat routine but instructive. However, we believe that some other results presented in the section should be viewed as foundational. For instance, we verify that $\Pra$ proves that every countable field can be embedded into its algebraic closure. It is well-known that the result holds in $\rca$ \cite{FriedmanSimpsonSmith,sosoa}, but the standard proof relies on too much induction. Some care must be taken to prove it over $\Pra$.
In this section we also look at countable categoricity of $(\mathbb{Q},<)$ and the random graph. In contrast with the aforementioned result from primitive recursive algebra, $\Pra$ fails to detect the subtle difference between these two results.
More formally,  each of these results is equivalent to $\rca$ over $\Pra$. This situation is only expected: the reverse mathematics over $\rca$ also typically does not distinguish between, e.g., $\mathbf{0}'$-effective and $\mathbf{0}''$-effective arguments in computable algebra~\cite{Melnikov-Greenberg}.

In Section~\ref{sect:Baire_and_Ramsey}, we study Ramsey-type theorems and Baire category theorem.  It is well-known (and easy to see) that $\rca$ proves Baire category theorem.
It also seems that the proof makes an essential use of a truly unbounded search (quantification). However, we will prove that
Baire category theorem
is actually \emph{not} equivalent to $\rca$ over $\Pra$, but lies strictly in-between $\Pra$ and $\rca$.
In fact, with just a bit more effort we show that Baire category theorem neither implies nor is implied by  $2^{\mathbb{N}}$-$\rca$ over $\Pra$
 (recall $2^{\mathbb{N}}$-$\rca$ is the weaker set-version of the full function-based version of $\rca$).
We also examine Ramsey theorem and show that, over $\Pra$,  $\mathsf{RT}_k^n$ is incomparable with
$2^{\mathbb{N}}$-$\rca$.
We believe that these results have no direct analogy in the literature.

Section~\ref{sect:transforming} studies the following, rather general, phenomenon: for many combinatorial problems, any
computable instance can be transformed into a primitive recursive instance having either the same or (in some sense) equivalent solution.
We give a long list of examples of such problems and discuss the consequences. We note that (almost evidently) $\wkl$ is also in this list.  The results should be compared to the rather long list of examples from primitive recursive algebra which assert that, in many broad classes of countable algebraic structures, every computable structure has a primitive recursive (or even `punctual') presentation; we cite \cite{Grigorieff-90,Cenzer-Remmel-91,Cenzer-Remmel-98,Kal17}.  Results of this sort give a rather strong evidence that $\Pra$
indeed could be used as an alternative base for reverse mathematics to study combinatorial and algebraic theorems.

In \cref{Sec:WKL} we look at $\wkl$ over $\Pra$.  
 It is well-known that, over $\rca$, uniform continuity  of a continuous function on $[0,1]$ is equivalent to $\wkl$; see, e.g., Theorem~IV.2.3 in~\cite{sosoa}.
We show that, over $\Pra$, the uniform continuity of a continuous function on $[0,1]$
is \emph{strictly stronger} than $\wkl$. 
Specifically, we prove that  over $\Pra$, the uniform continuity  of a continuous function on $[0,1]$ is equivalent to $\rca+ \wkl$.

The study of $\Pra$ has many potential open questions, some perhaps routine but some likely  challenging.
We state several concrete open questions throughout the paper. We finish the paper with a brief Section~\ref{sec:que} where we pose several further open problems.


\section{Preliminaries}\label{sect:prelim}


\subsection{The finitist's first-order system PRA}

\begin{definition}\label{Def_ind}
The induction axiom for $\Sigma_n$-formulae, $\is {} n$, is the following schema of axioms
\[\forall \bar{c}\,( (\varphi(0,\bar{c}) \land \forall n \,(\varphi(n,\bar{c}) \imp \varphi(n+1,\bar{c}))) \imp \forall n \,\varphi(n,\bar{c}) ),\]
where $\varphi$ is a $\Sigma_n$-formula.

The induction axiom for $\Delta_n$-formulae, $\id {} n$, is the following schema of axioms
\[\forall \bar{c}\,( \forall n\, (\varphi(n,\bar{c}) \biimp \psi(n,\bar{c})) \imp ((\varphi(0,\bar{c}) \land \forall n \,(\varphi(n,\bar{c}) \imp \varphi(n+1,\bar{c}))) \imp \forall n \,\varphi(n,\bar{c}))),\]
where $\varphi$ is a $\Sigma_n$-formula and $\psi$ is a $\Pi_n$-formula.

The least number principle for $\Pi_n$-formulae, $\lnp {} n$, is the following schema of axioms
\[\forall \bar{c} \, (\exists n \,\varphi(n,\bar{c}) \imp \exists n\, (\varphi(n,\bar{c}) \land \forall m<n \, \neg\varphi(m,\bar{c})) ), \]
where $\varphi$ is a $\Pi_n$-formula.
\end{definition}

Any of $\Sigma_0, \Pi_0, \Delta_0$ are generally defined to be formulae with only bounded quantifiers. Also, $\Sigma_n$- and $\Delta_n$-formulae may contain  bounded quantifiers; these do not contribute to the complexity.
Recall that over $\pam$, for each $n \in \nat$, $\is {} {n+1} \Imp \id {} {n+1} \Imp \is {}  n \Biimp \lnp {} n$, and $\id {} 0 \Biimp \is {} 0$; see \cite[Theorems I.2.4, I.2.5, IV.1.29]{hajekPudlak}, plus the fact that $\pam + \exp \vdash \bs {} n \Biimp \id {} n$, for all $n>0$ by \cite{slaman2004}.
The definition below is standard (e.g., \cite{sosoa}).

\begin{definition}
 Let $\mc{L}_{\pra}$ be the first-order language with non-logical symbols $\{0, s, < \}$ and a symbol for any primitive recursive function.
   The axioms of $\pra$ are the following:
 \begin{enumerate}
   \item $\forall x (0 \ne s(x))$; $\forall x,y (s(x) = s(y) \imp x=y)$;
   \item defining equations of any primitive recursive function;
   \item $\qfi$, i.e., induction for any quantifier-free formula $\theta$:
   \[
	   	(\theta(0) \land \forall n \, (\theta(n) \imp \theta(n+1))) \imp \forall n \,\theta(n).
	\]
 \end{enumerate}
\end{definition}

The next lemma is proved in \cite[Theorem 0.35]{hajekPudlak} and in \cite[Lemma IX.3.7]{sosoa} for $\Delta^0_0$-formulae. 

\begin{lemma}[\cite{sosoa}] \label{SimpsonIX37}
For each $\Delta_0$-formula $\theta$ there exists   a primitive recursive function $f$ such that $\pra \vdash (f(n)= 1 \biimp \theta(n)) \land (f(n)= 0 \biimp \neg\theta(n))$.
\end{lemma}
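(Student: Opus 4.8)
The plan is to argue by (meta)induction on the structure of the $\Delta_0$-formula $\theta$, maintaining throughout the stronger invariant that the primitive recursive function $f$ produced always takes values in $\{0,1\}$ provably in $\pra$, together with $\pra \vdash (f(\bar n) = 1 \biimp \theta(\bar n))$. From these two facts both displayed biconditionals follow, since then $f(\bar n) = 0 \biimp f(\bar n) \ne 1 \biimp \neg\theta(\bar n)$. Since the language $\mc{L}_{\pra}$ has a symbol for every primitive recursive function, every term denotes a primitive recursive function of its free variables; hence for an atomic $\theta$ of the form $t_1 = t_2$ or $t_1 < t_2$ the characteristic function is obtained by composing the term-functions with the primitive recursive characteristic functions of equality and of $<$, and its correctness is immediate from the defining equations, with no appeal to induction. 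For the propositional connectives one combines the characteristic functions supplied by the induction hypothesis using the obvious primitive recursive Boolean operations on $\{0,1\}$, and the required equivalences are pure propositional reasoning inside $\pra$.

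The only genuinely delicate case is bounded quantification, and by the connective case it suffices to treat $\theta(\bar n) \equiv \forall m < t(\bar n)\, \varphi(m,\bar n)$. Writing $c(m,\bar n)$ for the characteristic function given by the induction hypothesis for $\varphi$, so that $\pra \vdash c(m,\bar n) = 1 \biimp \varphi(m,\bar n)$ and $\pra \vdash c(m,\bar n) \in \{0,1\}$, I would introduce the primitive recursive ``least counterexample'' function $Q$ defined by the recursion $Q(0,\bar n) = 0$, and $Q(k+1,\bar n) = Q(k,\bar n)$ if $Q(k,\bar n) < k$, while otherwise $Q(k+1,\bar n) = k$ if $c(k,\bar n) = 0$ and $Q(k+1,\bar n) = k+1$ if $c(k,\bar n) = 1$. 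Intuitively $Q(k,\bar n)$ is the least $m < k$ with $c(m,\bar n) = 0$, or $k$ if there is none, so I would set $f(\bar n) = 1$ exactly when $Q(t(\bar n),\bar n) = t(\bar n)$, which is again a primitive recursive definition.

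The heart of the matter is to verify the correctness of $Q$ inside $\pra$ using only $\qfi$, even though the target equivalence $Q(k,\bar n) = k \biimp \forall m < k\, c(m,\bar n) = 1$ contains a bounded quantifier. The trick is that each direction can be extracted from a quantifier-free statement carrying a free numerical parameter, to which $\qfi$ applies. Concretely, I would first prove $\pra \vdash (m < k \land c(m,\bar n) = 0) \imp Q(k,\bar n) \le m$ by quantifier-free induction on $k$ with $m$ kept as a free parameter, the successor step splitting on $m < k$ versus $m = k$ and invoking the defining equations of $Q$; universally closing in $m$ then yields $\pra \vdash Q(k,\bar n) = k \imp \forall m < k\, c(m,\bar n) = 1$. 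Dually, I would prove the quantifier-free facts $Q(k,\bar n) \le k$ and $Q(k,\bar n) < k \imp c(Q(k,\bar n),\bar n) = 0$ by $\qfi$ on $k$, which together give $\forall m < k\, c(m,\bar n) = 1 \imp Q(k,\bar n) = k$. Instantiating $k := t(\bar n)$ and combining with the induction hypothesis $c(m,\bar n) = 1 \biimp \varphi(m,\bar n)$ delivers $\pra \vdash f(\bar n) = 1 \biimp \theta(\bar n)$, closing the induction.

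I anticipate that the main obstacle is precisely this last step: keeping every induction genuinely quantifier-free. A naive proof would try to induct directly on formulas such as $Q(k,\bar n) = k \biimp \forall m < k\, c(m,\bar n) = 1$, which are $\Delta_0$ rather than quantifier-free, whereas $\pra$ is not assumed outright to prove $\Delta_0$-induction; indeed, producing characteristic functions is the usual route to $\is{}{0}$, so one must take care to avoid circularity. Routing the argument through the auxiliary function $Q$, whose defining equations internalise the bounded search, is exactly what lets $\qfi$ alone carry the correctness proof, with the bounded quantifier appearing only after $Q$ has been shown to behave correctly.
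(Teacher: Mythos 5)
Your proof is correct. Note first that the paper does not actually prove this lemma: it is quoted from H\'ajek--Pudl\'ak (Theorem 0.35) and Simpson (Lemma IX.3.7), and the closest in-paper argument is the proof of the second-order analogue (Lemma~\ref{Lemma_Delta00EquivQf}), which follows the same skeleton you use --- a meta-induction on the structure of $\theta$, with characteristic functions for atoms read off from the term functions and Boolean combinations handled by products and complements on $\{0,1\}$. The one place you genuinely diverge is the bounded quantifier: the paper (and the standard references) simply set $\Phi(\bar n) = \prod_{i<m}\Phi_1(i,\bar n)$ and declare the result primitive recursive, leaving the equivalence $\prod_{i<k} c(i)=1 \biimp \forall i<k\,(c(i)=1)$ unverified inside the base theory, whereas you route the construction through a least-counterexample function $Q$ and then carry out both directions of the correctness claim by quantifier-free induction on $k$ with the witness $m$ kept as a free parameter. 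The product is slicker and yields a shorter definition; your version buys an explicit, self-contained check that only $\qfi$ is used, which is exactly the point you correctly flag as the delicate one (since $\Delta_0$-induction is what this lemma is ultimately used to derive, one cannot assume it without circularity). The same parameter trick would also discharge the verification for the product formulation, so the two gadgets are interchangeable; your write-up just makes the bookkeeping visible where the paper elides it.
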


\begin{proposition}\label{prop:indthing}
$\pra \vdash \id {} 0$.
\end{proposition}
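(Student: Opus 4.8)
The plan is to reduce $\Delta_0$-induction to quantifier-free induction by replacing a bounded formula with its primitive recursive characteristic function, which is exactly the content packaged in \cref{SimpsonIX37}. Observe first that since $\Sigma_0$, $\Pi_0$ and $\Delta_0$ all denote the same class of bounded formulae, the hypothesis $\forall n\,(\varphi(n,\bar c)\biimp\psi(n,\bar c))$ appearing in the statement of $\id{}{0}$ is superfluous: it suffices to prove, for an arbitrary $\Delta_0$-formula $\varphi$, the unconditional induction instance
\[
(\varphi(0,\bar c)\land\forall n\,(\varphi(n,\bar c)\imp\varphi(n+1,\bar c)))\imp\forall n\,\varphi(n,\bar c),
\]
and $\id{}{0}$ follows at once by discharging the (unused) antecedent.

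Next I would apply \cref{SimpsonIX37} to $\varphi$, treating the parameters $\bar c$ as additional free number variables on the same footing as $n$. This yields a primitive recursive function $f$ --- a function symbol of $\mc L_{\pra}$ --- together with a proof in $\pra$ that $f(n,\bar c)=1\biimp\varphi(n,\bar c)$ and $f(n,\bar c)=0\biimp\neg\varphi(n,\bar c)$. In particular $\pra\vdash\forall n\,\forall\bar c\,(\varphi(n,\bar c)\biimp f(n,\bar c)=1)$. The point is that the formula $f(n,\bar c)=1$ --- call it $\theta(n)$ --- is \emph{quantifier-free} in the language of $\pra$, so the induction instance for $\theta$ is an axiom of $\pra$ via $\qfi$.

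It then remains to transport the quantifier-free induction for $\theta$ back to $\varphi$. Reasoning inside $\pra$ and fixing $\bar c$, assume $\varphi(0,\bar c)$ and $\forall n\,(\varphi(n,\bar c)\imp\varphi(n+1,\bar c))$. Using the provable equivalence of $\varphi$ with $\theta$ at each argument, these become $\theta(0)$ and $\forall n\,(\theta(n)\imp\theta(n+1))$; the instance of $\qfi$ for $\theta$ now gives $\forall n\,\theta(n)$, and translating back through the equivalence yields $\forall n\,\varphi(n,\bar c)$. Since $\bar c$ was arbitrary, this establishes the displayed induction instance, hence $\id{}{0}$.

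I do not expect a genuine obstacle here: all the difficulty has already been absorbed into \cref{SimpsonIX37}, whose proof is the nontrivial step (it is essentially the $\pra$-formalisation of the fact that a bounded quantifier can be evaluated by a primitive recursive search over a bounded range). The only points requiring mild care are the uniformity of $f$ in the parameters $\bar c$ --- handled by regarding $\bar c$ as ordinary arguments of the primitive recursive function --- and the bookkeeping that lets one discharge the $\psi$-antecedent of $\id{}{0}$ for free.
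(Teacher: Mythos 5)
Your proposal is correct and follows essentially the same route as the paper's own proof: apply \cref{SimpsonIX37} to replace the bounded formula by its primitive recursive characteristic function and then invoke $\qfi$ on the quantifier-free equation $f(n)=1$. Your additional remarks on handling the parameters $\bar c$ and on the superfluity of the $\psi$-antecedent in the $\id{}{0}$ schema are sound refinements of the same argument, not a different approach.
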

\begin{proof}
Let $\theta$ be a $\Delta_0$-formula and assume that $\pra \vdash \theta(0) \land \forall n (\theta(n) \imp \theta(n+1))$. By \cref{SimpsonIX37}, let $f$ be such that  $\pra \vdash (f(n)= 1 \biimp \theta(n)) \land (f(n)= 0 \biimp \neg\theta(n))$. Then, $\pra \vdash f(0)=1 \land \forall n \,(f(n)=1 \imp f(n+1)=1)$, which by $\qfi$ implies $\pra \vdash \forall n(f(n)=1)$, so that $\pra \vdash \forall n (\theta(n))$.
\end{proof}

Note that $\pra$ is an extension by definition of the theory $\pam + \id {} 0$ plus totality of any primitive recursive function. That is any model of $\pra$ can be seen as a model of $\pam + \id {} 0$ in which any primitive recursive function is total. The following propositions are  immediate consequences of this.

\begin{proposition} \label{Prop_PraNonId01}
$\pra \nvdash \id {} 1$.
\end{proposition}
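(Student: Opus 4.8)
The plan is to read off the non-provability from the model-theoretic description of $\pra$ recorded just above: since $\pra$ is, up to definitional expansion, the theory $\pam + \id{}{0}$ together with the totality of every primitive recursive function, it suffices to exhibit one model $M \models \pam + \id{}{0}$ in which all primitive recursive functions are total but $\id{}{1}$ fails. To obtain a more tractable target I would first pass through $\Sigma_1$-collection: exponentiation is primitive recursive, so $\pra \vdash \exp$, whence Slaman's equivalence $\pam + \exp \vdash \bs{}{1} \biimp \id{}{1}$ (\cite{slaman2004}, cited above) is available inside $\pra$. Thus $\pra \vdash \id{}{1}$ would force $\pra \vdash \bs{}{1}$, and it is enough to produce a model of $\pra$ in which $\bs{}{1}$ fails; equivalently, a model of $\pam + \id{}{0}$, closed under all primitive recursive functions, in which some instance of $\Sigma_1$-collection is false.

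The delicate point is which model to take, and here the obvious candidate must be discarded. One's first instinct is to use a multiplicatively closed proper cut $I$ of a nonstandard $M \models \pa$, since such an $I$ is automatically closed under all primitive recursive functions and satisfies $\id{}{0}$. However, every such cut already validates $\bs{}{1}$: given a $\Delta_0$ matrix $\theta$ with parameters in $I$ and $I \models \forall x<a\,\exists y\,\theta$, the least-witness function is absolute between $I$ and $M$, and the bound supplied by collection in $M$ is attained at a single argument, hence lies in $I$. So cuts of models of $\pa$ cannot separate $\pra$ from $\bs{}{1}$, and a genuinely weaker ambient is required. Instead I would start from a model $N \models \id{}{0} + \neg\bs{}{1}$ -- these exist by the classical strictness $\id{}{0} \subsetneq \bs{}{1}$, obtained through the Kirby--Paris indicator and cut techniques (see \cite{hajekPudlak}) -- and fix once and for all a witness $(\theta, a)$ to the failure of collection in $N$.

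The main work, and the step I expect to be hardest, is then to secure the full model: one must have all primitive recursive functions total and yet $\bs{}{1}$ false. These requirements pull against each other, since totality of the primitive recursive functions is exactly the closure that cut constructions provide for free, while (as the computation above shows) such closure over an ambient model of $\pa$ reinstates $\Sigma_1$-collection. I would therefore build the model directly rather than as a cut of a model of $\pa$, using the indicator and arithmetized-completeness techniques for the $\id{}{0}$-hierarchy (\cite{hajekPudlak}), which produce models of $\id{}{0} + \neg\bs{}{1}$ with controlled closure properties; the crux to be verified is that the closure demands can be pushed all the way to totality of every primitive recursive function without restoring collection (the relevant collection-failure being a $\Pi_2$ condition on the fixed witness $(\theta,a)$, asserting that its least-witness function on $[0,a)$ has unbounded range). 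Granting such a model of $\pra + \neg\bs{}{1}$, it refutes $\id{}{1}$ by Slaman's equivalence, and therefore $\pra \nvdash \id{}{1}$.
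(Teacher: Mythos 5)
Your reduction is sound and matches the paper's: since $\exp$ is primitive recursive, $\pra \vdash \pam + \id{}{0} + \exp$, so by Slaman's equivalence $\pam + \exp \vdash \id{}{1} \biimp \bs{}{1}$ it suffices to exhibit a model of $\pra$ in which $\bs{}{1}$ fails. Your observation that proper cuts of models of $\pa$ cannot serve this purpose is also correct and worth making explicit (downward closure plus absoluteness of $\Delta_0$ formulae forces $\bs{}{1}$ in any such cut). However, there is a genuine gap: the entire content of the proposition is the existence of a model of $\pra + \neg\bs{}{1}$, and your proposal defers exactly this ("granting such a model \dots"). Gesturing at "indicator and arithmetized-completeness techniques" does not resolve the tension you yourself identify, namely that closure under all primitive recursive functions is precisely the kind of closure that tends to reinstate collection; you flag the verification as "the crux" but do not carry it out, so the argument as written does not establish the statement.

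The missing idea is to drop downward closure rather than to build a model from scratch. Take a nonstandard $M \models \pa$, a nonstandard $c \in M$, and let $K^1(M,c)$ be the substructure of elements of $M$ that are $\Sigma_1$-definable from $c$. This is not an initial segment, which is exactly what lets it escape your cut obstruction: by \cite[Theorems~10.3, 10.4]{kaye1991}, $K^1(M,c) \models \pam + \id{}{0}$ but $K^1(M,c) \nvDash \bs{}{1}$. The "crux" you leave open is then settled by a short absoluteness argument rather than a delicate construction: every primitive recursive $f$ is provably total in $\pa$ and $\Delta_1$-defined in $M$ (see \cite[Theorem~I.1.54, Lemma~I.1.52]{hajekPudlak}), so for $x \in K^1(M,c)$ the value $y = f(x)$ is itself $\Sigma_1$-definable from $c$ (by the conjunction of the defining formula for $x$ with the graph of $f$, together with a uniqueness clause), hence $y \in K^1(M,c)$. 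Thus $K^1(M,c)$, expanded to $\mc{L}_{\pra}$, models $\pra$ while refuting $\bs{}{1}$, and your Slaman step finishes the proof. In short: your frame is right, but the argument is incomplete without this (or some equivalent) construction, and the difficulty is resolved not by a harder model-building technique but by using a non-downward-closed $\Sigma_1$-hull.
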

\begin{proof}[Proof Sketch]
The following proof suggested to us by Ko{\l}odziejczyk 
is similar to the proof of the fact that $\is {} {n-1} \nvdash \bs {} n$, for each $n\ge 1$ (see \cite[Chapter 10]{kaye1991}), and so that  $\is {} {n-1} + \exp \nvdash \id {} n$, since $\pam + \exp \vdash \id {} n \biimp \bs {} n$, for each $n \in \Nb$, by \cite{slaman2004}.

Let $M \vDash \pa$ and $c \in M$ be non-standard.
Consider the structure $K^1(M,c)$ of the elements of $M$ defined by a $\Sigma_1$-formula, that is $a \in K^1(M,c)$ if and only if there exists some $\Sigma_1$-formula $\varphi(x,c)$ such that $M \vDash \varphi(a,c) \land \forall b \,(\varphi(b,c)\rightarrow b=a)$. Then $K^1(M,c) \vDash \pam + \id {} 0$ and $K^1(M,c) \nvDash \bs {} 1$ by \cite[Theorems 10.3, 10.4]{kaye1991}. We argue that $K^1(M,c)$ proves totality of any primitive recursive function, so that $K^1(M,c) \vDash \pra$ once expanded to $\mc{L}_{\pra}$. To this end, let $f$ be a primitive recursive function. Since $f$ is primitive recursive and $M \vDash \pa$, then $f$ is provably total in $M$ and defined in $M$ by some  $\Delta_1$-formula $\psi(z,y)$  (see \cite[Theorem I.1.54 and Lemma I.1.52]{hajekPudlak}). Thus, if $x \in K^1(M,c)$, there exists $y \in M$ such that $M \vDash \psi(x,y)$, that is $y=f(x)$ in $M$. The following $\Sigma_1$-formula defines $y$ in $M$
\[\exists x \, \big(\theta(x,c) \land \psi(x,y)\big) \land \forall b \,\forall d \,\big(\theta(b,c) \land \psi(b,d) \imp d=y\big)\]
where $\theta(x,c)$ is the $\Sigma^0_1$-formula defining $x$.
This shows that $y \in K^1(M,c)$.
\end{proof}

Analogous reasons as above actually prove that $\pra \land \is {} {n-1} \nvdash \bs {} n$, for each $n\ge 1$, and in particular that the structure $K^n(M,c)$ of the elements of $M$ defined by a $\Sigma_n$-formula on some parameter $c \in M$ satisfies $\pra$. Moreover, as expected, the following is true and can be proved similarly.

\begin{proposition} \label{Prop:notIs}
 $\pra \land \id {} n \nvdash \is {} n$, for each $n\ge 1$.
 \end{proposition}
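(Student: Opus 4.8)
The plan is to separate collection from induction and then check that the separating model still validates $\pra$, with primitive recursive totality being the point that needs care. First I would replace $\id {} n$ by $\bs {} n$: exponentiation is primitive recursive, so $\pra \vdash \exp$, and hence $\pra \vdash \id {} n \Biimp \bs {} n$ for every $n \ge 1$ by \cite{slaman2004}. Thus $\pra + \id {} n$ and $\pra + \bs {} n$ axiomatise the same theory, and it suffices to produce, in the language $\mc L_{\pra}$, a model $N \vDash \pra + \bs {} n + \neg \is {} n$: such an $N$ automatically satisfies $\id {} n$ and refutes $\is {} n$.

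For $n \ge 2$ this is nearly free. The classical Kirby--Paris separation (see \cite[Chapter 10]{kaye1991} and \cite{hajekPudlak}) furnishes a model of $\bs {} n + \neg \is {} n$. By the Kirby--Paris implication $\bs {} n \Imp \is {} {n-1}$ (cf.\ the chain recalled after \cref{Def_ind}) and, for $n \ge 2$, $\is {} {n-1} \Imp \is {} 1$, this model satisfies $\is {} 1$, which in turn proves $\id {} 0$ and the totality of every primitive recursive function. Hence it expands canonically to an $\mc L_{\pra}$-structure that models $\pra$, and we are done for $n \ge 2$.

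The case $n = 1$ is the heart of the matter, since $\bs {} 1$ proves $\id {} 0$ but not primitive recursive totality, and no totality statement can separate the two theories, $\is {} 1$ being $\Pi_2$-conservative over $\pra$. Here I would build the witness by hand as a proper cut $I$ of a countable $M \vDash \pa$, arranged to be closed under all primitive recursive functions while satisfying $\bs {} 1$ but not $\is {} 1$; then closure gives totality, $\bs {} 1 \Imp \id {} 0$ supplies the remaining axioms, and $I$ expanded to $\mc L_{\pra}$ is the required model. I expect this construction to be the main obstacle, because the two requirements pull against each other. Closing a single nonstandard element under primitive recursion yields a cut modelling $\pra$ that is typically too thin for $\bs {} 1$ (this is the mechanism behind the $K^n(M,c)$ examples), whereas the usual cuts witnessing $\bs {} 1 + \neg \is {} 1$ need not be closed under fast-growing primitive recursive functions. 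The task is to obtain a cut that is at once fat enough to validate $\Sigma_1$-collection, closed under primitive recursion, and not strong enough for $\Sigma_1$-induction; this is within reach of the Kirby--Paris theory of (semi)regular cuts, but verifying that combination is where the real work lies.
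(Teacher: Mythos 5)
Your reduction of $\id {} n$ to $\bs {} n$ via Slaman's theorem is correct (since $\pra$ proves $\exp$), and your treatment of $n \ge 2$ is complete and is in fact a shortcut the paper does not take: because $\bs {} n \Imp \is {} {n-1} \Imp \is {} 1$ for $n\ge 2$, any Kirby--Paris model of $\bs {} n + \neg \is {} n$ already satisfies $\is {} 1$ and hence expands to a model of $\pra$ with no further work.

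The case $n=1$, however, is left as a specification rather than a proof: you describe the cut you need and then concede that ``verifying that combination is where the real work lies.'' That verification is the entire content of the proposition in its hardest case, so this is a genuine gap. Moreover, the tension you anticipate between $\Sigma_1$-collection and primitive recursive closure does not actually arise; the standard Kirby--Paris witness already has all three properties. Take $M \vDash \pa$ with $c$ nonstandard, let $K^1(M,c)$ be the set of $\Sigma_1(c)$-definable elements, and let $I^1(M,c)$ be its downward closure. By Kaye's Theorem 10.10 together with Slaman's equivalence, $I^1(M,c) \vDash \pam + \id {} 1 + \neg \is {} 1$. Closure under primitive recursion is then automatic by a domination argument: $K^1(M,c)$ is closed under every primitive recursive $f$, since $f$ is provably total and $\Delta_1$-definable in $\pa$, so the value of $f$ at a definable element is again definable (this is exactly the computation in the proof of \cref{Prop_PraNonId01}); and every primitive recursive $f$ is majorized by the nondecreasing primitive recursive $g(z) = \sum_{i\le z} f(i)$. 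Hence for $x \in I^1(M,c)$, choosing $b \ge x$ with $b \in K^1(M,c)$ gives $f(x) \le g(x) \le g(b) \in K^1(M,c)$, so $f(x) \in I^1(M,c)$ by downward closure. This is precisely the argument the paper runs, uniformly for all $n \ge 1$; without it (or some equivalent construction) your proposal does not establish the $n=1$ case.
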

 \begin{proof}

 Let $M \vDash \pa$, $c \in M$ be non-standard and $n \ge 1$.
Consider $I^n(M,c)$, the downwards closure of the elements $\Sigma_n$ in $c$ definable; in other words,
$a \in I^n(M,c)$ if and only if there exists some $b \ge a$ and some $\Sigma_n$-formula $\varphi(x,c)$ such that $M \vDash \varphi(b,c) \land \forall d \,(\varphi(d,c)\rightarrow d=b)$. Then $I^n(M,c) \vDash \pam + \id {} n$ and $I^n(M,c) \nvDash \is {} n$ by \cite[Theorem 10.10]{kaye1991} and \cite{slaman2004}. We argue that $I^n(M,c)$ proves totality of any primitive recursive function, so that $I^n(M,c) \vDash \pra$ once expanded to $\mc{L}_{\pra}$. To this end, let $f$ be a primitive recursive function and $x \in I^n(M,c)$.

Define a  primitive recursive function $g \colon \Nb \to \Nb$ such that $g(z)=\sum_{i=0}^z f(i)$. Let $b\ge x$ be such that $b \in K^n(M,c)$. Since $K^n(M,c)\vDash \pra$, then $g(b) \in K^n(M,c)$. Moreover, $g$ is provably non-decreasing in $M$, and so $f(x) \in I^n(M,c)$, since $f(x)\le g(x) \le g(b)$.
  \end{proof}

The previous proposition implies that $\pra$ does not prove $\is {} n$, for any $n >0$.
Recall that $\pam + \id {} 0$ does not prove totality of $\exp$ (see \cite[Theorem 4.3]{parikh1971}).

\begin{proposition}
There exists a model of $\pam + \id {} 0$ 
which cannot be expanded to a model of $\pra$.
\end{proposition}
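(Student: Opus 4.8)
The plan is to read off the claim from the fact recalled immediately above, due to Parikh~\cite{parikh1971}, that $\pam + \id{}0$ does not prove the totality of $\exp$. First I would fix, via that nonprovability, a model $M \vDash \pam + \id{}0$ together with a (necessarily nonstandard) element $a \in M$ for which no $b \in M$ satisfies $M \vDash \varphi_{\exp}(a,b)$, where $\varphi_{\exp}(x,y)$ is the usual $\Delta_0$ graph of exponentiation (``$y = 2^x$''), whose basic recursive properties are already available in $\pam + \id{}0$. The claim is then that this particular $M$ admits no expansion to a model of $\pra$.

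To see this I would argue by contradiction, assuming that some expansion $M'$ of $M$ to the language $\mc{L}_{\pra}$ satisfies $\pra$. Being an expansion, $M'$ has the same domain as $M$ and interprets the symbols of $\pam$ in the same way; in particular $M$ and $M'$ agree on every $\Delta_0$-formula written in the language of $\pam$. Since $\exp$ is primitive recursive and $\pra$ is (by definition) an extension of $\pam + \id{}0$ together with totality of every primitive recursive function, the symbol $\exp$ denotes a \emph{total} function in $M'$, so I may set $b := \exp^{M'}(a) \in M$.

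The decisive step is to verify that $\pra \vdash \forall x \, \varphi_{\exp}(x, \exp(x))$, i.e.\ that the primitive recursive symbol $\exp$ really computes the arithmetically defined exponential. This I would prove by $\id{}0$-induction, which is available by \cref{prop:indthing}: the formula $\varphi_{\exp}(x, \exp(x))$ is $\Delta_0$ (the term $\exp(x)$ is substituted into the $\Delta_0$ graph $\varphi_{\exp}$), its base case follows from the defining equation $\exp(0)=1$, and its inductive step follows from the defining equation $\exp(x+1) = \exp(x) + \exp(x)$ together with the recurrence $\forall x\,\forall u\,(\varphi_{\exp}(x,u) \imp \varphi_{\exp}(x+1,u+u))$ provable in $\id{}0$. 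Instantiating this theorem in $M'$ at $a$ yields $M' \vDash \varphi_{\exp}(a, b)$, and hence, by $\Delta_0$-absoluteness between $M'$ and its reduct $M$, also $M \vDash \varphi_{\exp}(a, b)$, contradicting the choice of $a$. Thus no such expansion exists.

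I expect the only delicate point to be the boxed claim $\pra \vdash \forall x\,\varphi_{\exp}(x,\exp(x))$: one must check that the defining equations adopted for the symbol $\exp$ in $\mc{L}_{\pra}$ match the chosen $\Delta_0$ graph $\varphi_{\exp}$ closely enough for the $\id{}0$-induction to close, and that substituting the term $\exp(x)$ leaves the inductive formula genuinely $\Delta_0$. Everything else—passing from the reduct $M$ to the expansion $M'$ and back, and $\Delta_0$-absoluteness—is routine bookkeeping.
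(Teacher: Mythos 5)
Your argument is correct and is essentially the paper's own proof: take a model of $\pam + \id{}0$ in which some primitive recursive function (here $\exp$, via Parikh) fails to be total, and observe that no expansion can satisfy $\pra$ since $\pra$ proves totality of every primitive recursive function. You merely spell out the step the paper leaves implicit — that the defining equations of the symbol $\exp$, together with $\id{}0$-induction and $\Delta_0$-absoluteness between $M$ and its expansion, force the symbol to realize the arithmetical graph $\varphi_{\exp}$ — and this added detail is sound.
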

\begin{proof}
 Let $M \vDash \pam + \id {} 0$ be such that there exists a primitive recursive function which is not  total in $M$. Such a model exists since $\pam + \id {} 0$ does not prove totality of all primitive recursive functions.  Then, since $\pra$ proves totality of any primitive recursive function, $M \nvDash \pra$.
\end{proof}

The previous proposition contrasts with the fact that any model of $\is {} 1$ can be expanded to a model of $\pra$ (see \cite[Lemma IX.3.5]{sosoa}). It is also well-known that $\wkl$ is $\Pi^0_2$-conservative over $\pra$; see \cite{sosoa}.

The obvious issue with $\pra$ is that it is first-order, so it is not suited for reverse mathematics in algebra and analysis in the usual sense.


\subsection{The second-order system $\Pra$}
We follow Kohlenbach \cite{kohlenbach2000} and consider the second-order analogy of $\pra$ which, according to the notation in \cite{kohlenbach2000}, will be denoted by $\Pra$.  Recall that in the definition of $\pra$ we postulated the existence of primitive recursive functions, each function was given by a separate axiom. To get the second-order analogy $\Pra$ of $\pra$, we need to postulate the existence of primitive recursive \emph{functionals} (to be clarified). Equivalently, we need to postulate that functions are `closed under primitive recursion'.

\begin{definition}[$\Pra$]
Let $\mc{L}_{\Pra}$ be the two-sorted language with first and second order (function) variables,
 plus all the non-logical symbols of $\mc{L}_\pra$.

The axioms of $\Pra$ are the axioms of $\pra$ extended with defining equations for all primitive recursive functionals of Type 2 (i.e., functions of function argument). We also additionally allow that, in the quantifier-free induction, the formulae can have function-variables (parameters); equivalently, we can take the universal closure of each such axiom.
\end{definition}

We clarify what we mean by a primitive recursive functional. To define primitive recursive functionals on finitely many inputs $f_1, \ldots, f_k$ (which are themselves functions), adjoin
$f_1, \ldots, f_k$ to the list of basic primitive recursive functions and close them under primitive recursion, composition and  bounded minimisation (the latter is, of course, a mere convenience).
Each such individual definition --- that we call a scheme  primitive recursive  relative to $f_1, \ldots, f_k$ --- will correspond to a functional on arguments $f_1, \ldots, f_k$. On input $f_1, \ldots, f_k$ it will output a function defined by the scheme. If we additionally allow $k$ to be arbitrary, we get a recursive list of all possible primitive recursive schemata, each defining a functional. If $P(f_1, \ldots, f_k)$ is one such scheme, we axiomatically postulate that for every $f_1,\ldots, f_k$ there is a $g$ such that $g = P(f_1, \ldots, f_k)$.
Hence, a model $(M, \mc{X})$ is a model of $\Pra$ if $M\vDash \pra$ and $\mc{X}$  is closed under composition and primitive recursion.
This is similar to $\pra$ where we postulate the existence of all primitive recursive functions, but we have no direct access to them since the language is first-order.
Similarly, in $\Pra$, we cannot quantify over functionals, but we can quantify over functions. We also note that
a primitive recursive function can be viewed as a primitive recursive functional: formally, set $k=0$ in $g = P(f_1, \ldots, f_k)$.

\begin{definition}
We say that a class $\mathcal{K}$ of (total) functions is \emph{closed under primitive recursion}, or PR-\emph{closed},  if for every $n$-ary
primitive recursive functional $\Psi$ and any $f_1, \ldots, f_n\in\mathcal{K}$,
\[
	\Psi^{f_1, \ldots, f_n} \in \mathcal{K}.
\]
\end{definition}

\noindent We could instead have used iterated join:
\[
	(f \oplus g) (k) = \begin{cases} f(i) \, \mbox{ if } k = 2i, \\ g(i) \,  \mbox{ if } k = 2i+1, \end{cases}
\]
(which itself is a primitive recursive functional) to restrict ourselves to primitive recursive functionals of one argument, but then we also have to require that $K$ is closed under $\oplus$.

Given a collection $S$ of functions, we can define their primitive recursive closure PR$(S)$
to be the smallest PR-closed class that contains $S$. In particular, a class $\mathcal{K}$
is PR-closed if, and only if, PR$(\mathcal{K}) = \mathcal{K}$.
Note that the smallest PR-closed class is the class of all primitive recursive functions rather than the empty set.
If $\Psi$ is a primitive recursive functional, then in an $\omega$-model it can be thought of as a Turing machine as well, so, in particular, the use principle applies. Thus, occasionally we call these functionals  \emph{operators}.

\

Notice that formulae of $\Pra$ may contain numbers and functions, while $\pra$ is first-order.
Whenever $M\vDash \pra$, $(M, PRec(M)) \vDash \Pra$, where $PRec(M)$ is the collection of all primitive recursive functions over $M$.
In particular, $\Pra$ has a minimal $\omega$-model $(\omega, PRec(\omega))$. Following the convention, we let $\omega$ denote the standard natural numbers, and $\Nb$ the first order universe, which is possibly non standard.

\subsubsection{Primitive recursive induction and comprehension}\label{sub:delta00}

$\,$

\

\paragraph{\emph{Primitive recursive ($\Delta^0_0$) induction.}} Let $\Sigma^0_n$, $\Pi^0_n$, $\Delta^0_n$ denote $\Sigma_n$, $\Pi_n$, $\Delta_n$-formulae, respectively, where function-parameters are allowed. Hence, $\is 0 n$, $\id 0 n$, $\lnp 0 n$ are defined as in \cref{Def_ind} and denote respectively the induction for $\Sigma^0_n$ and $\Delta^0_n$-formulae with function-parameters and the least number principle for $\Pi^0_n$-formulae, also with parameters, respectively.

The proposition below says that $\Pra$ proves induction over formulae in which all quantifiers are bounded --- this is of course exactly as expected. Note that the formulae can have function-pa\-ra\-meters.

\begin{proposition}
$\Pra \vdash \id 0 0$.
\end{proposition}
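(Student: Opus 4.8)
The plan is to relativize the proof of \cref{prop:indthing} to function parameters, replacing the primitive recursive function supplied by \cref{SimpsonIX37} with a primitive recursive \emph{functional} whose existence (once the parameters are substituted) is guaranteed by the closure axioms of $\Pra$. The three moving parts are: a relativized version of \cref{SimpsonIX37}, an application of PR-closure, and an application of the quantifier-free induction scheme with function-parameters.

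First I would establish a functional version of \cref{SimpsonIX37}: for every $\Delta^0_0$-formula $\theta(n,\bar{c},\bar{f})$, with number parameters $\bar{c}$ and function parameters $\bar{f}$, there is a scheme primitive recursive relative to $\bar{f}$ defining a functional $\Psi$ such that
\[
	\Pra \vdash (\Psi^{\bar{f}}(n,\bar{c}) = 1 \biimp \theta(n,\bar{c},\bar{f})) \land (\Psi^{\bar{f}}(n,\bar{c}) = 0 \biimp \neg\theta(n,\bar{c},\bar{f})).
\]
This is proved exactly as \cref{SimpsonIX37}, by induction on the structure of $\theta$: atomic formulae are decided by comparing values of primitive recursive terms (which may now involve the symbols $\bar{f}$), the boolean connectives are handled by the corresponding primitive recursive operations on the characteristic values, and a bounded quantifier $\forall m < t$ (respectively $\exists m < t$) is eliminated by bounded product (respectively bounded sum or bounded minimisation) up to the value of the term $t$, which is itself primitive recursive relative to $\bar{f}$. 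The only new ingredient over the first-order case is that the decision procedure may refer to $\bar{f}$; but bounded search relative to $\bar{f}$ is a primitive recursive functional, so nothing beyond the schemata of $\Pra$ is invoked.

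Next, fix parameters $\bar{c}$ and $\bar{f}$ and assume $\theta(0,\bar{c},\bar{f}) \land \forall n\,(\theta(n,\bar{c},\bar{f}) \imp \theta(n+1,\bar{c},\bar{f}))$. By the closure axioms of $\Pra$, applying $\Psi$ to the given $\bar{f}$ yields a function $g$ with $g(n) = \Psi^{\bar{f}}(n,\bar{c})$, so that $g(n)=1 \biimp \theta(n,\bar{c},\bar{f})$ and $g(n)=0 \biimp \neg\theta(n,\bar{c},\bar{f})$. The inductive hypothesis then translates into $g(0)=1 \land \forall n\,(g(n)=1 \imp g(n+1)=1)$. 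Since $g(n)=1$ is quantifier-free in the function-parameter $g$, the quantifier-free induction scheme of $\Pra$---which by definition permits function-variables---applies and gives $\forall n\,(g(n)=1)$, hence $\forall n\,\theta(n,\bar{c},\bar{f})$. Taking the universal closure over $\bar{c}$ and $\bar{f}$ yields $\id 0 0$.

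I expect the only point requiring care to be the relativized \cref{SimpsonIX37}, namely verifying that each bounded quantifier of $\theta$ can be discharged by a primitive recursive functional of the parameters $\bar{f}$ (so that the resulting characteristic function lives in every PR-closed class). This is a routine adaptation of the unrelativized argument, and once it is in place the remainder is verbatim as in \cref{prop:indthing}, with the crucial reliance on $\Pra$ admitting function-parameters in its quantifier-free induction.
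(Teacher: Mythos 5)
Your proposal is correct and follows the paper's own argument essentially verbatim: the paper likewise proves a functional version of the decidability lemma (its Lemma~\ref{Lemma_Delta00EquivQf}, established by induction on the structure of $\theta$ with bounded quantifiers handled by bounded products) and then repeats the argument of \cref{prop:indthing} with the resulting functional in place of the primitive recursive function. No substantive differences.
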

\begin{proof}
The plan is to imitate the proof of Proposition~\ref{prop:indthing}, but this time we need to define a functional rather than a function.


\begin{lemma}  \label{Lemma_Delta00EquivQf}
For each $\Delta^0_0$-formula $\theta$, there exists   a primitive recursive functional $\Phi$ such that $\Pra$ proves the following
\begin{enumerate}
   \item $\Phi(n_0, \dots, n_m, f_0, \dots, f_\ell)= 1 \biimp \theta(n_0, \dots, n_m, f_0, \dots, f_\ell)$
   \item $  \Phi(n_0, \dots, n_m, f_0, \dots, f_\ell)= 0 \biimp \neg\theta(n_0, \dots, n_m, f_0, \dots, f_\ell)$.
 \end{enumerate}
\end{lemma}
\begin{proof}
Compare with \cite[Lemma IX.3.7]{sosoa}. It is important that the definition of $\Phi$ is derived from the syntax (i.e., the formula), and thus the existence of the functional is postulated in $\Pra$. In other words, this is a meta-argument. We give the details below.

The lemma is proved by induction on the complexity of the formula $\theta$. Assume for readability that $\theta$ has as unique parameters $n$ and $f$.

If $\theta(n,f)$ is atomic of the form $t_1 = t_2$, then $\Phi(n,f)=1 \biimp (|t_1-t_2| \times |t_1-t_2|)=0$ and $\Phi(n,f)=0 \biimp (|t_1-t_2| \times |t_1-t_2|)>0$.

If $\theta(n,f)$ is atomic of the form $t_1 < t_2$, then $\Phi(n,f)=1 \biimp t_2-t_1 > 0$ and $\Phi(n,f)=0 \biimp t_1-t_2 \ge 0$.

If $\theta(n,f)= \theta_1(n,f) \land \theta_2(n,f)$, let by induction hypothesis that $\Phi_1(n,f)$ and $\Phi_2(n,f)$ are equivalent to $\theta_1(n,f)$ and $\theta_2(n,f)$ respectively. Let $\Phi(n,f)= \Phi_1(n,f) \times \Phi_2(n,f)$.

If $\theta(n,f)= \neg \theta_1(n,f) $, let by induction hypothesis  $\Phi_1(n,f)$ be equivalent to $\theta_1(n,f)$. Let $\Phi(n,f)=1 \biimp \Phi_1(n,f)=0$.

Finally, if $\theta(n,f)= (\forall i < m)\big(\theta_1(i,n,f)\big)$, for some term $m$, let by induction hypothesis $\Phi_1(i,n,f)$ be  equivalent to $\theta_1(i,n,f)$. Let $\Phi(n,f)= \prod_{i<m}\Phi_1(i,n,f)$ which is a primitive recursive functional.
\end{proof}

The rest proceeds as in the proof of~\cref{prop:indthing}, but with $\Phi$ in place of $\theta$.
\end{proof}

\paragraph{\emph{Primitive recursive ($\Delta^0_0$) choice and comprehension.}}

The definition below allows to define functions using bounded quantifiers. It can be viewed as a `primitive recursive' variation of choice, and it serves as the function-analog of $\Delta^0_0$-comprehension.

\begin{definition}[Bounded $\Delta^0_0$ choice] Bounded $\Delta^0_0$ choice ($\bqfac$) states
for any $\Delta^0_0$-for\-mu\-la $\theta$ and any term $b$ not mentioning $m$
\[\forall n (\exists m < b) \theta (n,m) \imp \exists f\, \forall n\, \theta(n,f(n)), \]
where both $b$ and $\theta$ may contain parameters (including perhaps function variables).

\end{definition}

\begin{proposition}
$\Pra$ proves bounded $\Delta^0_0$ choice.
\end{proposition}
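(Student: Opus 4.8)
The plan is to imitate the finitist argument of \cref{prop:indthing} one level up, replacing the decision \emph{function} by the decision \emph{functional} supplied by \cref{Lemma_Delta00EquivQf}. First I would apply \cref{Lemma_Delta00EquivQf} to the $\Delta^0_0$-formula $\theta$ to obtain a primitive recursive functional $\Phi$ for which $\Pra$ proves $\Phi(n,m,\vec f)=1 \biimp \theta(n,m,\vec f)$ and $\Phi(n,m,\vec f)=0 \biimp \neg\theta(n,m,\vec f)$, where $\vec f$ collects the function-parameters of $\theta$. Since the bound $b$ is a term of $\mc{L}_{\Pra}$, it is itself computed by a primitive recursive functional of $n$ and the parameters (note that $b$ does not mention $m$).

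Next I would produce the witness by bounded minimisation. Setting
\[
f(n) = \mu m < b\,[\,\Phi(n,m,\vec f)=1\,],
\]
with the convention that the value is $b$ when no $m<b$ works, defines a scheme that is primitive recursive relative to $\vec f$ and to the parameters occurring in $b$; bounded minimisation is exactly one of the operations under which the second-order part of any model of $\Pra$ is closed (it is listed, as a convenience, in the definition of a primitive recursive functional). Hence the relevant closure axiom of $\Pra$ postulates the existence of the function $f$.

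It remains to verify $\forall n\,\theta(n,f(n))$. Fix $n$. By the antecedent $\forall n\,(\exists m<b)\,\theta(n,m)$ there is some $m<b$ with $\theta(n,m)$, hence with $\Phi(n,m,\vec f)=1$. The correctness of bounded minimisation --- that whenever some witness below $b$ exists, the value returned is the least such witness and satisfies $\Phi=1$ --- then gives $\Phi(n,f(n),\vec f)=1$, so $\theta(n,f(n))$ by \cref{Lemma_Delta00EquivQf}. As $n$ was arbitrary, this establishes the conclusion. The only point needing a little care is precisely this last step: the correctness of bounded minimisation is a $\Delta^0_0$ statement about $\Phi$, and its verification in $\Pra$ is where the proposition $\Pra\vdash\id 0 0$ proved above is used, so no induction beyond what $\Pra$ already provides is required.
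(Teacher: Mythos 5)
Your proposal is correct and follows essentially the same route as the paper: apply \cref{Lemma_Delta00EquivQf} to replace $\theta$ by a primitive recursive functional, define $f$ by bounded minimisation below the term $b$, invoke the closure axiom of $\Pra$ for the existence of $f$, and check the defining property. The paper's own proof is just a terser version of this argument (it folds the least-witness condition into the minimised formula rather than relying on the minimisation operator to return the least witness), so there is nothing to add.
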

\begin{proof} We use Lemma~\ref{Lemma_Delta00EquivQf} and
bounded minimisation to define a function $f$ such that
$f(n) = (\mu m < b ) (\theta(n,m)  \land (\forall z<m)\big(\neg \theta(n,z)\big)$.
This, in particular, involves defining a primitive recursive functional (based on the syntactical complexity of $\theta$) and then referring to the axiom stating that the result of applying this functional to the given function-parameters exists.  It should be clear that this function satisfies the desired property.
\end{proof}

\begin{definition}
$\Delta^0_0$-comprehension axiom, $\dcao$, is the following schema
\[\exists f \,\forall n \, (\varphi(n) \imp  f(n)=1\land  \neg\varphi(n) \imp f(n)=0)\]
 where $\varphi$ is a $\Delta^0_0$-formula (perhaps, with parameters). \end{definition}

\noindent Note that $f$ is a characteristic function, i.e.,
$\forall x [f(x) =0 \vee f(x) =1]$.
 As usual, if there are parameters then we could take the universal closure of the formulae above instead. Note that different choices of parameters will correspond to different functions $f$, so each such axiom essentially postulates the existence of a  \emph{functional}. Thus, the proposition below is highly expected.

\begin{proposition}\label{prop:d00ca} $\Pra$ proves $\Delta^0_0$-comprehension.
\end{proposition}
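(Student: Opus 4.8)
The plan is to reduce the statement directly to \cref{Lemma_Delta00EquivQf}, which already carries out all of the real work. Given a $\Delta^0_0$-formula $\varphi$ with number parameters and function parameters $\bar{f}$, I would first apply \cref{Lemma_Delta00EquivQf} to $\varphi$ to obtain a primitive recursive functional $\Phi$ for which $\Pra$ proves both $\Phi(n,\bar{f})=1 \biimp \varphi(n,\bar{f})$ and $\Phi(n,\bar{f})=0 \biimp \neg\varphi(n,\bar{f})$ (suppressing the remaining number parameters for readability).

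Next I would invoke the defining axiom of $\Pra$ postulating closure under primitive recursive functionals: since $\Phi$ is a primitive recursive functional and $\bar{f}$ are functions of the model, there is a function $f$ with $f(n)=\Phi(n,\bar{f})$ for all $n$. It is worth emphasising, exactly as in the proof of \cref{Lemma_Delta00EquivQf}, that this is a meta-level step: the functional $\Phi$ is read off from the syntactic structure of $\varphi$, and the existence of $f$ is then a single instance of the $\Pra$ scheme applied to the parameter functions $\bar{f}$.

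It then remains to verify that $f$ is the characteristic function required by $\dcao$. For each $n$, classical logic gives that exactly one of $\varphi(n)$ and $\neg\varphi(n)$ holds; by the two biconditionals of \cref{Lemma_Delta00EquivQf} this forces $f(n)=\Phi(n,\bar{f})$ to equal $1$ in the first case and $0$ in the second, so $\forall n\,(f(n)=0 \lor f(n)=1)$, i.e.\ $f$ is genuinely $\{0,1\}$-valued. The two implications $\varphi(n)\imp f(n)=1$ and $\neg\varphi(n)\imp f(n)=0$ are then immediate from clauses (1) and (2) respectively.

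I do not expect any real obstacle: the entire difficulty has been absorbed into \cref{Lemma_Delta00EquivQf}. The only points requiring minor care are the bookkeeping of parameters---ensuring that the function parameters $\bar{f}$ of $\varphi$ are exactly the function arguments supplied to $\Phi$---and the explicit remark that, because the value of $\Phi$ always lies in $\{0,1\}$, the resulting $f$ is a bona fide characteristic function rather than merely some function agreeing with $\varphi$ on the set where $\varphi$ holds.
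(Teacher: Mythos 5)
Your proposal is correct and follows exactly the route the paper takes: the paper's own proof of \cref{prop:d00ca} consists of the single remark that it ``is essentially Lemma~\ref{Lemma_Delta00EquivQf}'', and your argument simply spells out the details of that reduction (obtaining $\Phi$ from the lemma, invoking the $\Pra$ closure axiom to get $f(n)=\Phi(n,\bar f)$, and verifying the characteristic-function conditions). No gap; you have just made explicit what the paper leaves implicit.
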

\begin{proof} This is essentially
Lemma~\ref{Lemma_Delta00EquivQf}.
\end{proof}

\subsection{Recursive comprehension and choice}

\subsubsection{$\Delta^0_1$-comprehension axiom ($\dca$).}
\begin{definition}
The $\Delta^0_1$-comprehension axiom, $\dca$, is the following schema
\[\forall n \,(\varphi(n) \biimp \psi(n)) \imp \exists f \,\forall n \,( \varphi(n)\imp f(n)=1 \land  \neg\varphi(n)\imp f(n)=0), \]
 where $\varphi$ is a $\Sigma^0_1$-formula and $\psi$ is a $\Pi^0_1$-formula. Note that $\forall x [f(x) =0 \vee f(x) =1]$.
\end{definition}
We give an example of a familiar theorem equivalent to $\dca$.
Recall that Post's theorem asserts that a set is computable if and only if both the set and its complement are computably enumerable.

\begin{proposition}
Over $\Pra$, $\dca$ is equivalent to the following statement for each $g,h \colon \Nb \to \Nb$:
\[\forall n\, (\exists y \, g(y)=n \biimp \forall y \, h(y)\ne n) \imp \exists f \, \forall n \,((f(n)=1 \biimp \exists y \, g(y)=n) \land (f(n)=0  \biimp \exists y \, h(y)=n )).\]
\end{proposition}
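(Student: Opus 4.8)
The plan is to recognise the displayed statement as a formalisation of Post's theorem and to prove the two implications directly. Throughout, it is convenient to read $\ran(g)$ and $\ran(h)$ as the two sets being enumerated: the hypothesis of the displayed statement says exactly that $\ran(g)$ and $\ran(h)$ are complementary subsets of $\Nb$, and the conclusion produces the characteristic function of $\ran(g)$ (equivalently, of the complement of $\ran(h)$).

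For the forward direction (assuming $\dca$) I would simply unfold the definitions. Given $g,h$ satisfying the hypothesis, set $\varphi(n) \equiv \exists y\, g(y)=n$, a $\Sigma^0_1$-formula with parameter $g$, and $\psi(n) \equiv \forall y\, h(y) \ne n$, a $\Pi^0_1$-formula with parameter $h$. The hypothesis is precisely $\forall n\,(\varphi(n) \biimp \psi(n))$, so $\dca$ yields an $f$ with $f(n)=1 \biimp \varphi(n)$ and $f(n)=0 \biimp \neg\varphi(n)$. Since $\neg\varphi(n) \biimp \neg\psi(n) \biimp \exists y\, h(y)=n$, this $f$ witnesses the conclusion. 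This direction needs nothing beyond the definitions.

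The substance is the converse. Given a $\Sigma^0_1$-formula $\varphi$ and a $\Pi^0_1$-formula $\psi$ with $\forall n\,(\varphi(n)\biimp\psi(n))$, I want to manufacture functions $g,h$ to which the statement applies and from whose output I can read off the $\dca$-witness. Using Lemma~\ref{Lemma_Delta00EquivQf} I would first rewrite $\varphi$ and $\psi$ as $\exists m\,\alpha(n,m)$ and $\forall m\,\beta(n,m)$ with $\alpha,\beta$ quantifier-free, so that $\neg\psi(n) \biimp \exists m\,\neg\beta(n,m)$. The naive idea is to let $g$ enumerate $\{n:\varphi(n)\}$ and $h$ enumerate $\{n:\neg\psi(n)\}$, which are complementary by hypothesis.

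The main obstacle is the classical nuisance of enumerating a possibly empty $\Sigma^0_1$ set by a \emph{total} function: if $\{n:\varphi(n)\}$ or its complement is empty, no total $g$ can have it as range. I would resolve this by a padding trick. Encode the genuine partition on the even coordinates and reserve two fixed odd numbers as built-in defaults, setting $\tilde A = \{1\}\cup\{2n:\varphi(n)\}$ and $\tilde B = \{2k+3:k\in\Nb\}\cup\{2n:\neg\psi(n)\}$. Using $\varphi\biimp\psi$ one checks that $\tilde A\sqcup\tilde B=\Nb$ and that both sets are nonempty (they contain $1$ and $3$ respectively). Now each side has a guaranteed default element, so I can define total functions $g,h$ that are primitive recursive in the characteristic functionals of $\alpha$ and $\beta$ (hence exist in $\Pra$) and satisfy $\ran(g)=\tilde A$, $\ran(h)=\tilde B$; for instance $g(\langle n,m\rangle)=2n$ if $\alpha(n,m)$ and $g(\langle n,m\rangle)=1$ otherwise, with an analogous definition for $h$. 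Applying the statement to this $g,h$ yields $\tilde f$ with $\tilde f(k)=1\biimp k\in\tilde A$ and $\tilde f(k)=0\biimp k\in\tilde B$; then $f(n):=\tilde f(2n)$ is primitive recursive in $\tilde f$ and satisfies $f(n)=1\biimp\varphi(n)$ and $f(n)=0\biimp\neg\varphi(n)$, which is exactly the $\dca$-witness. The only remaining points are the routine verifications, all available in $\Pra$ from $\varphi\biimp\psi$, that $\tilde A$ and $\tilde B$ are complementary and that the defining schemes for $g,h,f$ are genuinely primitive recursive in the given parameters.
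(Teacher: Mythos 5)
Your proof is correct and follows essentially the same route as the paper's: the forward direction is the same unfolding of definitions, and your reverse direction uses the same default-element padding trick, only with even/odd coordinates where the paper instead shifts the coding by $2$ and reserves $0$ and $1$ as unconditional values of $g$ and $h$ (it sets $g(\langle n,0\rangle)=0$ and codes $\theta(n,y)$ at $\langle n,y+1\rangle$). One small patch: as literally written, $g(\langle n,m\rangle)=2n$ if $\alpha(n,m)$ and $=1$ otherwise puts $1$ into $\ran(g)$ only when $\neg\alpha(n,m)$ holds for some pair, so in the degenerate case where $\alpha$ is identically true the hypothesis of the displayed statement fails at $n=1$; hard-code the default value (e.g.\ set $g(0)=1$ and do the coding on inputs $\geq 1$), exactly as the paper's definition does.
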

\begin{proof}
For the forward direction, given $g,h \colon \Nb \to \Nb$ as in the statement, $\dca$ guarantees the existence of $f \colon \Nb \to \{ 0,1\}$ such that $f(n)=1 \biimp \exists y \, g(y)=n$. Hence, $f$ is the desired function.


For the reverse direction, let $\theta(n,y)$ and $\eta(n,y)$ be $\Delta^0_0$-formulae such that  $\forall n\, (\exists y\, \theta(n,y) \biimp \forall y\, \eta(n,y))$. Define the functions $h,g \colon \Nb \to \Nb$ such that

\begin{minipage}{0.45\textwidth}
 \begin{align*}
   g(\la n,0\ra) &= 0\\
   g(\la n,y+1 \ra) &=
\begin{cases}
 n+2 & \text{ if } \theta(n,y) \\
 0 & \text{ otherwise}
\end{cases}
 \end{align*}
\end{minipage}
\begin{minipage}{0.45\textwidth}
\begin{align*}
  h(\la n,0\ra) &= 1\\
h(\la n,y+1 \ra)&=
\begin{cases}
 n+2 & \text{ if } \neg\eta(n,y) \\
 1 & \text{ otherwise}.
\end{cases}
\end{align*}
\end{minipage}
\smallskip

Notice that $\forall m \,(\exists y \, g(y)=m \biimp \forall y \, h(y)\ne m)$. Let $f$ be as in the consequent of the statement. Then we have $\forall m \, (f(m)=1 \biimp (m=0 \lor (m\geq 2 \land \exists y \, \theta(m-2,y)))$, and we can choose $f'(n) = f(n+2)$ for the $\Delta^0_1$-comprehension.
\end{proof}

In order to give a simple example of how induction can play a role in the study of mathematical theorems over $\Pra$, we recall the following statement proved in \cite[Lemma 6.4]{avigad_2005}, which essentially states that $\iso$ is equivalent to the existence of a least upper bound for bounded functions.

\begin{lemma}
Over $\Pra$, $\iso$ is equivalent to the following: for each $f \colon \Nb\to \Nb$, if $\exists z\, \forall n \, (f(n)\le z)$, then $\exists z \forall n \, (f(n)\le f(z))$.
\end{lemma}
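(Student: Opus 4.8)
The plan is to prove the two implications separately, in both cases routing through the least number principle $\lnp 0 1$ for $\Pi^0_1$-formulae, which is equivalent to $\iso$ over $\Pra$; this is the function-parameter version of the equivalence $\is {} 1 \biimp \lnp {} 1$ recalled in the preliminaries, and it lets me freely trade $\iso$ for $\lnp 0 1$.

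To prove that $\iso$ implies the max-attainment statement, I would fix $f$ bounded by $z$ and use $\lnp 0 1$ to locate the maximum of the (bounded) range of $f$. Concretely, consider the $\Pi^0_1$-formula $\psi(k) \equiv \forall m\,(\exists n\, (f(n)=m) \imp m \le k)$, observe that $\psi(z)$ holds, and let $k_0$ be the least $k$ with $\psi(k_0)$. A short argument shows that $k_0$ itself lies in the range: either $k_0 = 0$ and the nonempty range forces $0$ into it, or $k_0 > 0$ and minimality gives $\neg\psi(k_0-1)$, which together with $\psi(k_0)$ yields $\exists n\, f(n) = k_0$. Any such $n$ is then a witness $z'$ with $\forall n\, f(n) \le k_0 = f(z')$.

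For the converse, assume the max-attainment statement and derive $\lnp 0 1$. Given $\pi(n) \equiv \forall y\, \rho(n,y)$ with $\rho \in \Delta^0_0$ and a witness $n^*$ with $\pi(n^*)$, I would define the function
\[ f(s) = \mu m \le n^*\,[\,\forall y \le s\, \rho(m,y)\,]. \]
This is legitimate in $\Pra$ because Lemma~\ref{Lemma_Delta00EquivQf} supplies a primitive recursive characteristic functional for $\rho$, so $f$ is obtained by bounded minimisation and exists by PR-closure; note $m = n^*$ always qualifies, so no default value is needed. The key observations are that $f$ is non-decreasing in $s$ (the qualifying condition only gets harder) and bounded by $n^*$, so the max-attainment statement furnishes a point $z$ with $\forall s\, f(s) \le f(z) =: n_0$. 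I would then check $n_0$ is the least $n$ with $\pi(n)$: for each $m < n_0$ the value $f(z)=n_0$ forces $\exists y \le z\, \neg\rho(m,y)$, hence $\neg\pi(m)$; and $\pi(n_0)$ holds because if some $y_1$ had $\neg\rho(n_0,y_1)$ (necessarily $y_1 > z$) then $f(y_1) > n_0$, contradicting maximality (in the edge case $n_0 = n^*$ one instead reads off $\pi(n_0)$ directly from the chosen witness).

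The derivations via $\lnp 0 1$ are routine; the step I expect to require the most care is the converse construction — confirming that the bounded-minimisation function is genuinely primitive recursive in the $\Delta^0_0$ matrix, that it is monotone, and above all upgrading the local agreement $\forall y \le z\, \rho(n_0,y)$ to the genuine $\Pi^0_1$ statement $\pi(n_0)$ via the maximality of $f(z)$, while correctly disposing of the degenerate case $n_0 = n^*$.
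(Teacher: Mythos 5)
Your argument is correct, but there is nothing in the paper to compare it against: the lemma is stated there only as a recollection of \cite[Lemma 6.4]{avigad_2005}, with no proof given. Your self-contained argument is sound in both directions. For the forward direction, applying $\lnp 0 1$ to the $\Pi^0_1$ formula $\forall n\,(f(n)\le k)$ and showing the least such $k_0$ is attained (via $\neg\psi(k_0-1)\land\psi(k_0)$, with the trivial case $k_0=0$) is exactly the standard route. For the converse, the function $f(s)=\mu m\le n^*\,[\forall y\le s\,\rho(m,y)]$ is indeed available in $\Pra$ by \cref{Lemma_Delta00EquivQf} plus bounded minimisation, its monotonicity needs no induction (a witness for $s+1$ is a witness for $s$, so minimality gives $f(s)\le f(s+1)$ directly), and the upgrade from $\forall y\le z\,\rho(n_0,y)$ to $\pi(n_0)$ via $f(y_1)>n_0$ for a putative counterexample $y_1>z$ is watertight, since every $m<n_0$ already failed at stage $z<y_1$ and $n^*$ always qualifies so $f(y_1)$ is defined. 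The one step you should make explicit is that the equivalence $\is 0 1 \biimp \lnp 0 1$ you route through is stated in the preliminaries only in its first-order form over $\pam$; the proof of that equivalence is schematic and relativises to function parameters over $\Pra$ without difficulty, but since the whole lemma turns on it, a sentence saying so (or a direct derivation of $\is 0 1$ from max-attainment, which is equally easy: take $f(s)$ to be the largest $m\le c$ all of whose predecessors have witnesses below $s$) would close the argument completely.
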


\subsubsection{Quantifier-free axiom of choice ($\qfac$)}

\begin{definition}
 The schema of the quantifier-free axiom of choice, $\qfac$, is the following schema
 \[\forall n \, \exists m \, \theta (n,m) \imp \exists f\, \forall n\, \theta(n,f(n)), \]
 where $\theta$ is a quantifier-free formula (perhaps, with function- or number-parameters).
\end{definition}

Notice that by \cref{Lemma_Delta00EquivQf}, $\theta$ in the previous definition may be taken $\Delta^0_0$. This implies that $\qfac$ is equivalent over $\Pra$ to $\forall n \, \exists ! m \, \theta (n,m) \imp \exists f\, \forall n\, \theta(n,f(n))$, because one can consider the $\Delta^0_0$-formula $\theta'(n,m) = \theta (n,m) \land (\forall z<m) \neg \theta(n,z)$.

\begin{proposition}\label{prop:p2}
Over $\Pra$, $\qfac$ implies $\dca$.
\end{proposition}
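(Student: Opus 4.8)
The plan is to express the two sides of the $\Delta^0_1$-definition as existential statements, merge them into a single instance of choice so that $\qfac$ supplies a uniform witness function, and then read off the characteristic function by $\Delta^0_0$-comprehension.

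First I would write an arbitrary instance of $\dca$ in the form $\varphi(n) = \exists y\, \alpha(n,y)$ and $\psi(n) = \forall y\, \beta(n,y)$, where $\alpha$ and $\beta$ are $\Delta^0_0$ (possibly with parameters), and assume the hypothesis $\forall n\,(\varphi(n) \biimp \psi(n))$, i.e.\ $\forall n\,(\exists y\, \alpha(n,y) \biimp \forall y\, \beta(n,y))$. The key observation is that for every $n$ exactly one of the two $\Sigma^0_1$-statements $\exists y\,\alpha(n,y)$ and $\exists y\,\neg\beta(n,y)$ holds: if $\varphi(n)$ holds then $\psi(n)$ holds, so $\forall y\,\beta(n,y)$ and the second statement fails; if $\neg\varphi(n)$ holds then $\neg\psi(n)$ holds, so $\exists y\,\neg\beta(n,y)$ while the first statement fails. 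In particular $\forall n\, \exists m\, \theta(n,m)$, where $\theta(n,m)$ is the $\Delta^0_0$-formula $\alpha(n,m) \lor \neg\beta(n,m)$.

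Next I would apply $\qfac$ (which, by the remark following its definition, may be used with a $\Delta^0_0$ matrix) to $\theta$ to obtain a function $g$ with $\forall n\,\theta(n,g(n))$, i.e.\ $\forall n\,(\alpha(n,g(n)) \lor \neg\beta(n,g(n)))$. Thus $g(n)$ is, for each $n$, a witness to one of the two alternatives. Crucially, the alternatives are mutually exclusive at this particular witness: if $\alpha(n,g(n))$ held together with $\neg\beta(n,g(n))$, then $\varphi(n)$ would force $\psi(n)$ and hence $\beta(n,g(n))$, a contradiction; so exactly one of $\alpha(n,g(n))$ and $\neg\beta(n,g(n))$ is true.

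Finally I would define $f$ by $\Delta^0_0$-comprehension (\cref{prop:d00ca}), applied to the $\Delta^0_0$-formula $\alpha(n, g(n))$ with $g$ as a function-parameter, so that $f(n)=1 \biimp \alpha(n,g(n))$ and $f(n)=0 \biimp \neg\alpha(n,g(n))$. It then remains to check $f(n)=1 \biimp \varphi(n)$: if $\alpha(n,g(n))$ then $\exists y\,\alpha(n,y)$, i.e.\ $\varphi(n)$; and if $\neg\alpha(n,g(n))$ then by exclusivity $\neg\beta(n,g(n))$, whence $\exists y\,\neg\beta(n,y)$, i.e.\ $\neg\psi(n)$, i.e.\ $\neg\varphi(n)$. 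This is exactly the conclusion of $\dca$. I expect no genuine obstacle here; the one point requiring care is the mutual exclusivity of the two alternatives at the chosen witness $g(n)$, which is precisely where the hypothesis $\varphi \biimp \psi$ is used and which is what makes the quantifier-free matrix $\alpha(n,g(n))$ a correct decision procedure.
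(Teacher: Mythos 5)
Your proposal is correct and follows essentially the same route as the paper: form the $\Delta^0_0$ disjunction $\alpha(n,m)\lor\neg\beta(n,m)$, apply $\qfac$ to get a witness function $g$, and then decide $\alpha(n,g(n))$ via \cref{Lemma_Delta00EquivQf} (equivalently, $\Delta^0_0$-comprehension with $g$ as a parameter) to produce the characteristic function. The paper leaves the final verification, including the mutual-exclusivity point you highlight, as "immediate to verify", so your write-up is simply a more detailed version of the same argument.
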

\begin{proof}
Let $\exists y \,\theta(n,y)$ and $\forall y \, \eta(n,y)$, with $\theta$ and $\eta$ $\Delta^0_0$-formulae, be equivalent over $\Pra$. Then it holds that $\forall n \exists y (\theta(n,y) \lor \neg\eta(n,y))$. Since the disjunction in brackets in $\Delta^0_0$, by $\qfac$, there exists a function $f$ such that $\forall n (\theta(n,f(n)) \lor \neg\eta(n,f(n)))$. By \cref{Lemma_Delta00EquivQf}, let $\Phi(n,f)$  be equivalent to $\theta(n,f(n))$.
Then let $g\colon \nat \to \nat$ be such that $g(n)= \Phi(n,f)$. It is immediate to verify that the function $g$ witnesses the satisfaction of $\dca$.\end{proof}

\begin{proposition}[\cite{kohlenbach2008}, Proposition 3.21]\label{Prop_QfacNotIS01}

Over $\Pra$, $\qfac$ implies $\is 0 1$.
\end{proposition}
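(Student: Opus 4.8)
The plan is to reduce $\Sigma^0_1$-induction to $\Delta^0_0$-induction, which $\Pra$ already proves (the earlier Proposition $\Pra \vdash \id 0 0$), by using $\qfac$ to extract an explicit witnessing function. Fix a $\Sigma^0_1$-formula $\varphi(n) = \exists y\, \theta(n,y)$ with $\theta$ a $\Delta^0_0$-formula (possibly with parameters), and assume the hypotheses of induction, namely $\varphi(0)$ and $\forall n\,(\varphi(n) \imp \varphi(n+1))$. The goal is to produce a single function $h$ such that $\theta(n, h(n))$ holds for all $n$; once we have this, $\forall n\, \varphi(n)$ follows at once.

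First I would convert the induction step into a form to which $\qfac$ applies. From $\forall n\,(\exists y\,\theta(n,y) \imp \exists z\,\theta(n+1,z))$ one deduces $\forall n\,\forall y\,\exists z\,\big(\theta(n,y) \imp \theta(n+1,z)\big)$: if $\theta(n,y)$ fails, any $z$ (say $z=0$) works vacuously, and if $\theta(n,y)$ holds, then $\exists z\,\theta(n+1,z)$ by the induction step. The matrix $\theta(n,y) \imp \theta(n+1,z)$ is $\Delta^0_0$, so after pairing $\la n,y\ra$ into a single argument, $\qfac$ (in the $\Delta^0_0$ form licensed by \cref{Lemma_Delta00EquivQf}) yields a function $g$ with $\forall n\,\forall y\,\big(\theta(n,y) \imp \theta(n+1, g(n,y))\big)$. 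Next, invoking $\varphi(0)$, fix a witness $y_0$ with $\theta(0,y_0)$, and define $h$ by primitive recursion relative to $g$: set $h(0) = y_0$ and $h(n+1) = g(n, h(n))$. Closure of $\Pra$ under primitive recursion (with the number-parameter $y_0$ and function-parameter $g$) guarantees $h$ exists.

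It then remains to prove $\forall n\, \theta(n, h(n))$ by $\Delta^0_0$-induction. The base case is $\theta(0, h(0)) = \theta(0, y_0)$, which holds by choice of $y_0$; the inductive step is immediate from the defining property of $g$, since $\theta(n, h(n))$ gives $\theta(n+1, g(n, h(n))) = \theta(n+1, h(n+1))$. Crucially, $\theta(n, h(n))$ is $\Delta^0_0$ with $h$ as a function-parameter, so $\id 0 0$ applies and delivers $\forall n\, \theta(n, h(n))$, whence $\forall n\, \varphi(n)$. The main obstacle is the first move: one must repackage the implication $\varphi(n) \imp \varphi(n+1)$, whose natural form $\exists y\,\theta(n,y) \imp \exists z\,\theta(n+1,z)$ is $\Pi^0_2$ and thus out of reach of $\qfac$, into the $\forall\exists$-over-a-$\Delta^0_0$-matrix shape required, taking care of the vacuous case where the hypothesis $\theta(n,y)$ is false. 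Once the witness function is in hand, the remaining work is purely bounded induction, which $\Pra$ handles.
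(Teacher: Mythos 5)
Your proof is correct and follows essentially the same route as the paper's: repackage the induction step as $\forall n\,\forall y\,\exists z\,(\theta(n,y)\imp\theta(n+1,z))$, apply $\qfac$ to obtain a witnessing function $g$, iterate it by primitive recursion starting from a witness for $\theta(0,\cdot)$, and verify $\forall n\,\theta(n,h(n))$ by $\id 0 0$. The only difference is that you spell out the prenexing of the $\Pi^0_2$ induction step (handling the vacuous case), which the paper takes as its starting hypothesis without comment.
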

\begin{proof}
 Assume that $\exists y \, \theta(0,y) \land \forall n \,\forall y\,\exists z\,( \theta(n,y) \imp \theta(n+1,z)))$ holds, for some $\Delta^0_0$-formula $\theta$. Then, by $\qfac$, it holds that
 $\exists f\, \forall \la n,y\ra \, (\theta(n,y) \imp \theta(n+1,f(n,y)))$
 Define a primitive recursive functional $\Phi$ such that
 \[\begin{aligned}
 & \Phi(0,y,f)= y\\
 & \Phi(n+1,y,f)= f(n,\Phi(n,y,f)).
 \end{aligned}\]
 Let $m$ be such that $\theta(0,m)$ holds. Then it is easy to check by $\id 0 0$ that $\forall n \, \theta(n,\Phi(n,m,f))$ and so that $\forall n \,\exists y\, \theta(n,y)$.\end{proof}

\begin{proposition}[$\Pra$]\label{prop:mini}
The following are equivalent:
\begin{enumerate}
  \item $\qfac$,
  \item totality of minimisation for functions\footnote{ That is, for each $g \colon \Nb^{i+2} \to \Nb$ such that for each $n_0, \dots, n_i \in \Nb$ there exists $m \in \Nb$ such that $g(n_0, \dots, n_i,m)=0$, there exists   $h \colon \Nb^{i+1} \to \Nb$ such that $h(n_0, \dots, n_i)=\mu m \, (g(n_0, \dots, n_i,m)=0)$.}.
\end{enumerate}

\end{proposition}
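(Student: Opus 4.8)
The plan is to prove the two implications separately, using throughout the fact (noted right after the definition of $\qfac$) that the matrix of $\qfac$ may be taken to be $\Delta^0_0$ rather than merely quantifier-free, together with the fact that $\Pra \vdash \lnp 0 0$, which follows from $\Pra \vdash \id 0 0$ and the equivalences $\id {} 0 \Biimp \is {} 0 \Biimp \lnp {} 0$ over $\pam$.

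For the direction $(2) \Imp (1)$, I would start from a $\Delta^0_0$-formula $\theta(n,m)$ with function-parameters $\vec f$ satisfying $\forall n \, \exists m \, \theta(n,m)$. Applying \cref{Lemma_Delta00EquivQf} to $\neg\theta$ produces a primitive recursive functional whose value on $\vec f$ is a genuine function $g \colon \Nb^2 \to \Nb$ with $g(n,m) = 0 \biimp \theta(n,m)$; crucially, $g$ lies in the second-order universe of the model because $\Pra$ postulates closure under the functional supplied by the lemma, rather than through any comprehension over unbounded formulae. The hypothesis then reads $\forall n \, \exists m \, (g(n,m) = 0)$, so the totality of minimisation (with $i = 0$) yields $h \colon \Nb \to \Nb$ with $h(n) = \mu m \, (g(n,m) = 0)$. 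In particular $g(n,h(n)) = 0$, hence $\theta(n,h(n))$ holds for all $n$, and $h$ witnesses $\qfac$.

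For the direction $(1) \Imp (2)$, I would begin with an arbitrary $g \colon \Nb^{i+2} \to \Nb$ satisfying $\forall \vec n \, \exists m \, (g(\vec n, m) = 0)$. Writing $\theta(\vec n, m)$ for the $\Delta^0_0$-formula $g(\vec n, m) = 0$, I pass to $\theta'(\vec n, m) \equiv \theta(\vec n, m) \land (\forall z < m) \neg\theta(\vec n, z)$, which is the transformation already used in the remark after the definition of $\qfac$. By $\lnp 0 0$, for each $\vec n$ the least $m$ with $g(\vec n, m) = 0$ exists, so $\forall \vec n \, \exists m \, \theta'(\vec n, m)$, and this witness is moreover unique. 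Applying $\qfac$ to $\theta'$ produces $h$ with $\theta'(\vec n, h(\vec n))$ for all $\vec n$, which says precisely that $h(\vec n) = \mu m \, (g(\vec n, m) = 0)$, as required.

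The argument is essentially routine once the two standard ingredients are in place, so I do not expect a genuine obstacle. The only points requiring care are, first, ensuring in $(2) \Imp (1)$ that the characteristic function $g$ built from the syntax of $\theta$ actually belongs to the model — guaranteed by the closure axioms of $\Pra$ together with \cref{Lemma_Delta00EquivQf} — and second, ensuring in $(1) \Imp (2)$ that $\qfac$ delivers the \emph{least} witness rather than an arbitrary one, which is arranged by the bounded-quantifier modification $\theta'$ and justified by the availability of $\lnp 0 0$ in $\Pra$.
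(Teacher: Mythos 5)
Your proof is correct and follows essentially the same route as the paper's: both directions hinge on \cref{Lemma_Delta00EquivQf} to replace the $\Delta^0_0$-matrix by a $\{0,1\}$-valued function in the model, and your $(2)\Imp(1)$ is the paper's argument almost verbatim (you are in fact slightly more careful than the paper in arranging the value to be $0$ rather than $1$ so that the minimisation operator applies literally as stated in the footnote). The only divergence is in $(1)\Imp(2)$: the paper applies $\qfac$ to the unmodified formula $g(\vec n,m)=0$ to obtain an \emph{arbitrary} witness function $f$, and then extracts the least witness by \emph{bounded} minimisation $h(\vec n)=\mu m\leq f(\vec n)\,(g(\vec n,m)=0)$, which is a primitive recursive operation and so needs no appeal to a least number principle; you instead build leastness into the formula $\theta'$ and justify $\forall\vec n\,\exists m\,\theta'(\vec n,m)$ via $\lnp 0 0$. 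Your route is also valid, since $\Pra\vdash\id 0 0$ and $\id{}0\Biimp\is{}0\Biimp\lnp{}0$ over $\pam$ (with function parameters handled by \cref{Lemma_Delta00EquivQf}), but the paper's bounded-minimisation trick is marginally more economical in that it never invokes the least number principle explicitly.
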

\begin{proof}
For the sake of convenience, assume $i=0$.

$(1 \Imp 2)$ The $\Delta_0^0$-formula $g(n,m)=0$ satisfies the antecedent of $\qfac$, so let $f$ be such that $\forall n \, (g(n,f(n))=0)$. Define $h(n)= \mu m \leq f(n) \, (g(n,m)=0)$ by bounded minimisation.

$(2 \Imp 1)$ Assume $\forall n \,\exists m\, \theta(n,m)$, for some $\theta \in \Delta^0_0$,  and let $\Phi$ be as in \cref{Lemma_Delta00EquivQf}. Let $h$ be the function that exists for the fixed collection of parameters that  occur in the formula, in accordance with the corresponding axiom of $\Pra$.  Since $\forall n\, \exists m \, h(n,m)=1$, there exists $g$ returning the least $m$ witnessing that $h(n,m)=1$. Thus,   $\forall n \, \theta(n,g(n))$.
\end{proof}



\bigskip

 Since we are working over $\Pra$, we shall write simply
$\qfac$ for $\Pra +\qfac$, and the same for other additional axioms that we will encounter.
Notice that the two systems  $\rca$ and $\Pra +\qfac$ share the same consequences, as one can be interpreted in the other and vice versa using characteristic and pairing functions.
We thus shall  stretch our notation even further: 

\begin{framed}
\begin{center}
We  identify $\rca$ with $\Pra +\qfac$.
\end{center}
\end{framed}

However, we must not forget that the second-order objects in our studies are (total) functions rather than sets. There is a significant difference between the function-based and the set-based approaches when we go below $\rca$;  we will encounter this difference already in the proof of Proposition~\ref{pr:rca}. In our function-based approach, the set-version $2^{\mathbb{N}}$-$\rca= \is 0 1 \land \dca$ of $\rca$ is strictly weaker than the `full' version $\mathbb{N}^{\mathbb{N}}$-$\rca = \Pra +\qfac$ that we identify with $\rca$. We will see that there are theorems that imply the natural set-version $2^{\mathbb{N}}$-$\rca$ of $\rca$, but not $\Pra +\qfac$ (see e.g.\ \cref{Cor:RT-RCA}). This distinction will be made very clear when necessary.

\subsubsection{The obvious implications are strict}

\begin{proposition}
 $\Pra \nvdash \id 0 1$.
\end{proposition}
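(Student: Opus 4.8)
The plan is to build a model of $\Pra$ in which $\id 0 1$ fails, mirroring the separation results already obtained at the first-order level. Recall from \cref{Prop_PraNonId01} that $\pra \nvdash \id {} 1$, witnessed by the structure $K^1(M,c)$ of elements $\Sigma_1$-definable in a nonstandard parameter $c$ inside some $M \vDash \pa$. The natural approach is to take such a first-order model $M_0 \vDash \pra + \neg\id {} 1$ and expand it to a full second-order model $(M_0, \mathcal{X}) \vDash \Pra$ in which the failure of $\id {} 1$ lifts to a failure of $\id 0 1$. The simplest such expansion is the \emph{minimal} one: as noted right after \cref{Prop:notIs}, whenever $M_0 \vDash \pra$ we have $(M_0, PRec(M_0)) \vDash \Pra$, where $PRec(M_0)$ is the collection of all primitive recursive functions over $M_0$.

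First I would make precise the relationship between the first-order principle $\id {} 1$ and the second-order principle $\id 0 1$ over this minimal expansion. The key observation is that in the model $(M_0, PRec(M_0))$, every function in the second-order part is primitive recursive, hence definable by a $\Delta_1$-formula over $M_0$ with no function parameters. Consequently, any $\Sigma^0_1$- or $\Pi^0_1$-formula with only primitive recursive function parameters is, after substituting the defining equations of those parameters, equivalent over $M_0$ to a genuine first-order $\Sigma_1$- or $\Pi_1$-formula. Therefore an instance of $\id 0 1$ in the expanded model collapses to an instance of $\id {} 1$ in $M_0$. Running this translation in reverse, the specific $\Sigma_1$/$\Pi_1$ pair witnessing $K^1(M,c) \nvDash \id {} 1$ (equivalently, via \cite{slaman2004}, $\nvDash \bs {} 1$) yields a $\Sigma^0_1$/$\Pi^0_1$ pair $\varphi,\psi$ with the property that $\varphi \biimp \psi$ holds throughout the model, $\varphi(0)$ holds, the inductive step holds, yet $\varphi$ fails at some element. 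This exhibits a failing instance of $\id 0 1$.

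The main step I would carry out is to verify that the characteristic-function obstruction genuinely prevents $\dcao$-style repair. In the second-order setting one might worry that $\id 0 1$ could be proved by using the function existence axioms of $\Pra$ to produce a characteristic function for the $\Delta^0_1$ predicate and then applying $\id 0 0$ (which holds by \cref{prop:d00ca}). The crucial point — and what I expect to be the real obstacle — is showing that no such characteristic function lies in $PRec(M_0)$: the $\Delta_1$-definable predicate witnessing $\neg\id {} 1$ in $K^1(M,c)$ is precisely one whose characteristic function is \emph{not} primitive recursive over $M_0$, since its existence would, via bounded search and $\id 0 0$, restore the induction that we know fails. This is exactly parallel to the first-order situation and to the distinction, emphasised in the framed remark above, between $\dca$ and $\dcao$: $\dca$ asserts the \emph{existence} of such a characteristic function and is strictly stronger than what $\Pra$ proves on its own.

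In summary, the proof reduces to: (i) fix $M_0 = K^1(M,c) \vDash \pra + \neg\id {} 1$ from the proof of \cref{Prop_PraNonId01}; (ii) form the minimal expansion $(M_0, PRec(M_0)) \vDash \Pra$; (iii) translate the first-order $\Sigma_1$/$\Pi_1$ witness of $\neg\id {} 1$ into a $\Sigma^0_1$/$\Pi^0_1$ witness of $\neg\id 0 1$, using that all second-order objects are primitive recursive and hence contribute no new parameters beyond first-order definable ones. The technical care lies in step (iii), ensuring the syntactic translation between the one- and two-sorted languages is faithful and that the failing instance survives; the existence of the expansion and the collapse of parameters are routine given the earlier preliminaries.
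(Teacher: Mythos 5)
Your proof is correct and is essentially the paper's argument: the paper packages your steps (i)--(ii) as the lemma that $\Pra$ is arithmetically conservative over $\pra$ (witnessed by exactly the same expansion $(M, PRec(M))$), and then observes that the failing instance of $\id {} 1$, having no function parameters, is already a failing instance of $\id 0 1$, so no translation back and forth between the sorts is really needed. Your third paragraph is superfluous rather than a ``main step'': once the first-order instance of induction fails in the expanded model, the nonexistence of the relevant characteristic function in $PRec(M_0)$ follows automatically (its presence together with $\id 0 0$ would contradict that failure), so there is nothing additional to verify.
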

\begin{proof}
The following lemma is claimed in \cite[last line of p.~225]{kohlenbach2000} and in \cite[Theorem 2.1]{avigad_2005} for finite-type extensions of $\pra$.
\begin{lemma}\label{lem:cons}
 $\Pra$ is arithmetically conservative over $\pra$
\end{lemma}
\begin{proof}
Let $\varphi =  \forall y \theta (x,y)$ be a formula in $L_\pra$, for $\theta$ a $\Sigma_n$ formula, for some $n \in \nat$. Clearly, if $\pra \vdash \varphi$, then $\Pra \vdash \varphi$. For the reverse direction, assume $\pra \nvdash \varphi$ and so let $M$ and $n$ be such that $M\vDash \pra \land \neg  \theta (x,n)$. Then $(M, PRec(M)) \vDash \Pra \land \neg  \theta (x,n)$, so that $\Pra \nvdash \varphi$.
\end{proof}
The proposition now follows from the fact that, by~\cref{Prop_PraNonId01}, $\pra \nvdash \id {} 1$ (thus,  the induction may fail even without function-parameters).
\end{proof}

Since $\Pra\nvdash \iso$ one needs to pay attention to the precise definition of  \lq infinity\rq\ (see, e.g., \cite[Lemma 3.2]{SimpsonYokoyama}). In this paper  \lq infinity\rq\  means `unbounded'. Unless stated otherwise, all instances of the principles mentioned in this paper have domain $\Nb$ and, if the solution is required to be infinite, then it is required to be unbounded (though it may be the case that those requirements may be relaxed for some statements).

\begin{proposition} \label{Prop_DcaNotIS01}
Over $\Pra$, $\dca$ does not imply $\is 0 1$.
\end{proposition}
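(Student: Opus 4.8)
The plan is to construct a model $(M,\mc{X})\vDash \Pra +\dca$ in which $\is 0 1$ fails. Since $\is 0 1$ holds in every $\omega$-model, $M$ must be non-standard, and I shall in fact arrange that already the parameter-free first-order scheme $\is {} 1$ fails in $M$; this immediately yields $(M,\mc{X})\nvDash \is 0 1$, because the witnessing $\Sigma_1$-formula is in particular a $\Sigma^0_1$-formula with no function parameters, so the induction instance that fails in the first-order reduct also fails in $(M,\mc{X})$. On the other hand $\dca$ will force $\id 0 1$, hence $\id {} 1$, on the first-order part, so the reduct $M$ must satisfy $\id {} 1$ but not $\is {} 1$. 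This is exactly the situation realised by the model $I^1(M_0,c)$ from the proof of \cref{Prop:notIs}: fixing $M_0\vDash \pa$ and a non-standard $c\in M_0$, I set $M := I^1(M_0,c)$, so that $M\vDash \pra+\id {} 1$ while $M\nvDash \is {} 1$.

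For the second-order part I would take $\mc{X}$ to be the class of all total functions on $M$ that are $\Delta_1$-definable over $M$ with parameters, allowing finitely many previously constructed members of $\mc{X}$ as oracles. The verification that $(M,\mc{X})\vDash \Pra$ splits into two points. First, quantifier-free induction with function parameters: given a quantifier-free $\theta(n,\bar f)$ with $\bar f\in\mc{X}$, substituting the $\Delta_1$-definitions of the $f_i$ (and introducing existential, resp. universal, witnesses for the values of $f_i$ at the relevant subterms) turns $\theta(n,\bar f)$ into a formula $\theta'(n)$ that is provably $\Delta_1$ over $M$; the instance then follows from $M\vDash\id {} 1$. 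Second, closure under $\dca$ is essentially dual: if $\varphi\in\Sigma^0_1$ and $\psi\in\Pi^0_1$ (with parameters from $M$ and $\mc{X}$) are equivalent, substituting the $\Delta_1$-definitions of the function parameters produces an equivalent pair consisting of a $\Sigma_1$- and a $\Pi_1$-formula over $M$; the set they define is therefore $\Delta_1$ over $M$, and its characteristic function is again $\Delta_1$-definable, hence lies in $\mc{X}$ (using \cref{Lemma_Delta00EquivQf} on the bounded matrices where convenient).

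The main obstacle is the remaining requirement that $\mc{X}$ be PR-closed, i.e.\ that for a primitive recursive functional $\Psi$ and $f_1,\dots,f_k\in\mc{X}$ the function $\Psi^{f_1,\dots,f_k}$ is again total and $\Delta_1$-definable over $M$. The difficulty is precisely that the naive argument --- that the coded computation sequence of $\Psi^{f_1,\dots,f_k}$ exists up to every argument $n$ --- is a $\Sigma_1$-induction, and this is unavailable under $\id {} 1$ alone. My preferred route is to carry out the recursion inside the ambient model $M_0\vDash\pa$, where full induction is available: each $f_i$ extends to a $\Delta_1$-definable function of $M_0$, so $\Psi^{f_1,\dots,f_k}$ is total and $\Delta_1$-definable over $M_0$, and it then remains to check that it maps the cut $M=I^1(M_0,c)$ into itself and that its restriction is $\Delta_1$-definable over $M$.

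Consequently, the technical heart of the proof is establishing that the cut is closed under the relevant oracle-relative primitive recursive functionals --- controlling growth so that the output of $\Psi^{f_1,\dots,f_k}$ on inputs below the cut stays below the cut. This is exactly the step I expect to require the most care, and it may force one to refine the choice of $M_0$ and $c$ (or of the cut itself) so as to guarantee enough closure under the functions in $\mc{X}$, in the spirit of the constructions underlying \cref{Prop_PraNonId01} and \cref{Prop:notIs}. Once this closure is in place, $(M,\mc{X})$ models $\Pra+\dca$ while refuting $\is 0 1$, which is what we want.
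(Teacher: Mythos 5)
Your construction is the one the paper itself uses: the first-order part is the model $M\vDash \pra\land\id {} 1\land\neg\is {} 1$ supplied by \cref{Prop:notIs}, the second-order part is the class of $\Delta^0_1$-definable total functions on $M$, $\dca$ is immediate, $\qfi$ with function parameters is obtained by substituting the $\Delta^0_1$-definitions of the parameters and invoking $\id {} 1$, and a parameter-free $\Sigma_1$ witness to $\neg\is {} 1$ refutes $\is 0 1$ in the expanded structure. So the overall route is identical to the paper's.

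The one place you diverge is precisely the step you leave open, the PR-closure of $\Delta^0_1\text{-def}(M)$, and there your proposed repair goes in the wrong direction. You want to extend each $f_i$ to a $\Delta_1$-definable function of the ambient model $M_0\vDash\pa$ and run the recursion there; but $M=I^1(M_0,c)$ is a proper cut of $M_0$, and a $\Delta_1$ definition over $M$ does not transfer: only its $\Sigma_1$ half is upward absolute along the end-extension, so in $M_0$ the same pair of formulas need not define a total, or even single-valued, function, and the asserted extension is unjustified. The paper never leaves $M$: after checking $\qfi$ it simply concludes $(M,\Delta^0_1\text{-def}(M))\vDash\Pra$, treating the closure of the $\Delta_1$-definable functions under relativized primitive recursion as routine inside $M$ (where $\id {} 1$, equivalently $\bso$ over $\pra$, is available for arguing with coded computation sequences whose step relation becomes $\Delta_1$ once the oracles' definitions are substituted in). In particular, your suggestion that one may need to refine the choice of $M_0$ and $c$ is not borne out; the model of \cref{Prop:notIs} is used exactly as it stands. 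To finish your write-up in the spirit of the paper, drop the detour through $M_0$ and verify directly over $M$ that for $f_1,\dots,f_k\in\Delta^0_1\text{-def}(M)$ and a primitive recursive scheme $P$ the function $P(f_1,\dots,f_k)$ is total with $\Delta_1$ graph.
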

\begin{proof}
Consider $M\vDash \pra \land \id {} 1 \land \neg \is {} 1$, which exists by \cref{Prop:notIs}. Let $\exists y \, \theta(x,y)$ be the formula which witness the failure of $\is {} 1$, namely such that $M \vDash \exists y \,\theta(0,y)$ and $M \vDash \forall n\,(\exists y \,\theta(n,y) \imp \exists y\, \theta(n+1,y))$, but $M \vDash \exists n \,\forall y \,\neg \theta(n,y)$. Note that $\theta$ does not have any second-order parameters. Consider now the model $(M, \Delta^0_1\text{-def($M$))}$. It is clear that it satisfies $\dca$.
Moreover,  $(M, \Delta^0_1\text{-def($M$))}\vDash  \qfi$, since $M \vDash \id {} 1$. In fact, any second-order parameter in a quantifier-free formula can be substituted with its $\Delta^0_1$-definition, so to obtain a $\Delta_1$-formula which is equivalent to the original quantifier-free formula. This allows to conclude that  $(M, \Delta^0_1\text{-def($M$))}\vDash \Pra$.
However, $\exists n \,\forall y \,\neg \theta(n,y)$ witnesses that $(M, \Delta^0_1\text{-def($M$))} \vDash \neg \is 0 1 $.
\end{proof}

\begin{proposition} \label{Prop_IS01NotDca}
Over $\Pra$,  $\is 0 1$ does not imply $\dca$.
\end{proposition}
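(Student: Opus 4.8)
The plan is to use the minimal $\omega$-model $(\omega, PRec(\omega))$ of $\Pra$ itself as the separating model, exploiting that $PRec(\omega)$---the class of all primitive recursive functions---is strictly contained in the class of computable functions. First I would recall that $(\omega, PRec(\omega)) \vDash \Pra$, since the first-order part is the standard model $\omega$ and $PRec(\omega)$ is by definition $PR$-closed. The whole point is that $\is 0 1$ holds trivially in this model while $\dca$ fails.

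For $\is 0 1$: since the first-order universe is the standard $\omega$, every induction scheme---with or without function parameters drawn from $PRec(\omega)$---holds, because for a fixed instantiation $\varphi(\cdot, f)$ with $f \in PRec(\omega)$ ordinary induction on $\omega$ applies. Thus $(\omega, PRec(\omega)) \vDash \is 0 1$ (indeed it satisfies arithmetic induction of every level).

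For the failure of $\dca$: I would exhibit a computable set $A$ whose characteristic function is not primitive recursive. Fix an effective enumeration $(\Phi_e)_{e \in \omega}$ of the primitive recursive functions and set $A = \{\, e : \Phi_e(e) = 0 \,\}$. Since evaluation of primitive recursive functions is computable and each $\Phi_e$ is total, $\chi_A$ is computable; but by the usual diagonal argument $\chi_A$ differs from every primitive recursive $\{0,1\}$-valued function, so $\chi_A \notin PRec(\omega)$. Because $A$ is computable, it is $\Delta^0_1$-definable using the primitive recursive Kleene $T$-predicate as matrix: there are a $\Sigma^0_1$-formula and a $\Pi^0_1$-formula (parameter-free, or using only primitive recursive parameters available in the model) that provably agree on $A$. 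An application of $\dca$ to these would yield $\chi_A \in PRec(\omega)$, a contradiction. Hence $(\omega, PRec(\omega)) \nvDash \dca$, completing the separation.

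The point requiring the most care---rather than a genuine obstacle---is verifying that $A$ admits honest $\Sigma^0_1$ and $\Pi^0_1$ definitions whose bounded matrices are available in $\Pra$ (this is where primitive recursiveness of the $T$-predicate and of the universal primitive-recursive evaluator enter), and that the diagonal function is $\{0,1\}$-valued so that it is literally a characteristic function and not merely a total computable function. Both are routine, so the argument is short; the conceptual content is simply that recursive comprehension outstrips primitive recursive closure already in the standard model.
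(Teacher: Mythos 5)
Your argument is correct and is essentially the paper's own proof: the paper also takes a model of $\pra + \is{}1$ (e.g.\ the standard model) expanded by $PRec(M)$, notes it satisfies $\Pra + \is 0 1$, and kills $\dca$ by the existence of a computable, hence $\Delta^0_1$-definable, function that is not primitive recursive. Your explicit diagonal set and the remarks about the Kleene $T$-predicate just fill in details the paper leaves implicit.
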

\begin{proof}
Consider $M\vDash \pra \land  \is {} 1$ --- e.g., the standard model. Then $(M, PRec(M))\vDash \Pra \land  \is 0 1 \land \neg\dca$, since there exists a computable, and hence $\Delta^0_1$, function which is not primitive recursive.
\end{proof}

\begin{corollary}
 Over $\Pra$, $\dca$ and $\is 0 1$ are incomparable. Moreover,  $\Pra$ does not imply $\dca$.
\end{corollary}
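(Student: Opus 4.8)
The plan is to observe that the corollary is immediate from the two preceding propositions together with the model already constructed there. First, the incomparability of $\dca$ and $\is 0 1$ over $\Pra$ is exactly the conjunction of \cref{Prop_DcaNotIS01}, which gives $\Pra + \dca \nvdash \is 0 1$, and \cref{Prop_IS01NotDca}, which gives $\Pra + \is 0 1 \nvdash \dca$. Since incomparability over a base theory means precisely that neither principle proves the other, nothing beyond invoking these two statements is required for this half.

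For the \emph{moreover} clause I would simply reuse the witness from the proof of \cref{Prop_IS01NotDca}. There one takes any $M \vDash \pra \land \is {} 1$---for instance the standard model---and forms $(M, PRec(M))$. As recorded in the preliminaries, whenever $M \vDash \pra$ we have $(M, PRec(M)) \vDash \Pra$, so this is a model of $\Pra$. It fails $\dca$ because $PRec(M)$ is not closed under arbitrary computable functions: there is a computable, and hence $\Delta^0_1$-definable, function that is not primitive recursive, whereas $\dca$ would force the corresponding characteristic function to lie in the second-order part. Thus $(M, PRec(M)) \vDash \Pra \land \neg\dca$, whence $\Pra \nvdash \dca$. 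Equivalently, one could cite the minimal $\omega$-model $(\omega, PRec(\omega))$ directly, since it satisfies $\Pra$ but omits, e.g., the Ackermann function.

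No step here poses a genuine obstacle, as both halves are corollaries of work already completed; the only thing to confirm is that the model of \cref{Prop_IS01NotDca} is in fact a model of $\Pra$ (and not merely of $\Pra + \is 0 1$), which is guaranteed by the general fact that $(M, PRec(M)) \vDash \Pra$ for every $M \vDash \pra$.
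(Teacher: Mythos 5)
Your proposal is correct and matches the paper's (implicit) argument exactly: incomparability is the conjunction of \cref{Prop_DcaNotIS01} and \cref{Prop_IS01NotDca}, and the failure of $\Pra \vdash \dca$ is witnessed by the same model $(M, PRec(M))$ used in \cref{Prop_IS01NotDca}. Nothing further is needed.
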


In contrast with the previous corollary, Proposition~\ref{prop:d00ca} says that $\Pra \vdash \dzca$, which is comprehension for $\Delta^0_0$-formulae. Also, if $(M, \mc{X})  \vDash \Pra \land \dca$, then $(M, \mc{X}) \vDash \id 0 1$, since any $\Delta^0_1$-formula becomes a $\Delta^0_0$-formula and so $\id 0 1$ reduces to $\id 0 0$.

\begin{corollary}\label{cor:weaker}
 Over $\Pra$, both $\dca$ and $\is 0 1$ are strictly weaker than $\rca$ (the latter is identified with $\qfac$).
\end{corollary}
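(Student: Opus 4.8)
The plan is to assemble this corollary entirely from the propositions already established in this subsection, since no new construction is required. The non-strict direction is immediate: \cref{prop:p2} shows that $\qfac$ implies $\dca$ over $\Pra$, and \cref{Prop_QfacNotIS01} shows that $\qfac$ implies $\is 0 1$ over $\Pra$. Thus both $\dca$ and $\is 0 1$ are consequences of $\rca$ (identified with $\Pra + \qfac$), and it remains only to verify that neither implication reverses.

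For the strictness of $\dca$, I would argue by contradiction using $\is 0 1$ as a separating principle. Suppose $\dca$ implied $\qfac$ over $\Pra$. Then, composing with \cref{Prop_QfacNotIS01}, $\dca$ would imply $\is 0 1$ over $\Pra$. But this directly contradicts \cref{Prop_DcaNotIS01}, which exhibits a model $(M, \Delta^0_1\text{-def}(M))$ of $\Pra + \dca$ in which $\is 0 1$ fails. Hence $\dca$ does not imply $\qfac$, and the implication $\qfac \Rightarrow \dca$ is strict.

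For the strictness of $\is 0 1$, I would run the symmetric argument with $\dca$ now playing the role of the separating principle. Suppose $\is 0 1$ implied $\qfac$ over $\Pra$. Then, composing with \cref{prop:p2}, $\is 0 1$ would imply $\dca$ over $\Pra$, contradicting \cref{Prop_IS01NotDca}, whose witnessing model $(M, PRec(M))$ satisfies $\Pra + \is 0 1$ but not $\dca$ (the obstruction being a computable, hence $\Delta^0_1$, function that is not primitive recursive). Therefore $\is 0 1$ does not imply $\qfac$, and the implication $\qfac \Rightarrow \is 0 1$ is strict as well.

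There is no genuine technical obstacle here: the entire content of the corollary has been front-loaded into the four cited propositions, so the only care needed is to route the contradictions through the correct separating principle in each case ($\is 0 1$ separates $\qfac$ from $\dca$, while $\dca$ separates $\qfac$ from $\is 0 1$). Combining the two strictness claims with the non-strict implications yields that both $\dca$ and $\is 0 1$ are strictly weaker than $\rca$ over $\Pra$, which is exactly the statement.
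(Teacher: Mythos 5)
Your argument is correct and coincides with the paper's own proof: the paper likewise derives strictness by supposing $\dca$ (resp.\ $\is 0 1$) implies $\qfac$, composing with \cref{Prop_QfacNotIS01} (resp.\ \cref{prop:p2}), and contradicting \cref{Prop_DcaNotIS01} (resp.\ \cref{Prop_IS01NotDca}). You have merely made explicit the forward implications that the paper leaves implicit.
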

\begin{proof}
If $\dca$ ($\is 0 1$) implies $\qfac$, then by \cref{Prop_QfacNotIS01} (resp., \cref{prop:p2}), it would imply $\is 0 1$ (resp., $\dca$) contrary to \cref{Prop_DcaNotIS01} (resp., \cref{Prop_IS01NotDca}).
\end{proof}

\begin{proposition}\label{pr:rca}
Over $\Pra$, $\dca \land \is 0 1$ does not imply  $\rca$.
\end{proposition}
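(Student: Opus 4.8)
The plan is to exhibit an $\omega$-model $(\omega,\mathcal{X})$ that satisfies $\Pra+\dca+\is 0 1$ but not $\qfac$. Two preliminary observations guide the construction. First, in any $\omega$-model the first-order part is the genuine $\omega$, so every instance of induction holds (with arbitrary function parameters); in particular $\is 0 1$ comes for free, and I only need to worry about $\dca$ and $\qfac$. Second, by \cref{prop:mini}, $\qfac$ is equivalent over $\Pra$ to totality of minimisation, so it suffices to place in $\mathcal{X}$ a function $g$ with $\forall n\,\exists m\,(g(n,m)=0)$ whose minimisation $n\mapsto\mu m\,(g(n,m)=0)$ is not in $\mathcal{X}$, while keeping $\mathcal{X}$ closed under the operations demanded by $\Pra$ and $\dca$.

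I would take
\[
	\mathcal{X}=\{\, f:\omega\to\omega \mid f \text{ is computable and } f(x)\le p(x) \text{ for all } x, \text{ for some primitive recursive } p \,\}.
\]
With this choice $\dca$ is immediate: a set that is $\Delta^0_1$ with parameters from $\mathcal{X}$ is computable, and its characteristic function is computable and bounded by the constant $1$, hence lies in $\mathcal{X}$. To defeat $\qfac$, fix a computable $A$ that dominates every primitive recursive function (e.g.\ the diagonal Ackermann function) and put $g(n,m)=0$ if $m\ge A(n)$ and $g(n,m)=1$ otherwise. Then $g$ is computable and $\{0,1\}$-valued, so $g\in\mathcal{X}$ and $\forall n\,\exists m\,(g(n,m)=0)$; but its minimisation is exactly $A$, which is not bounded by any primitive recursive function and therefore not in $\mathcal{X}$. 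By \cref{prop:mini} this falsifies $\qfac$, i.e.\ the model does not satisfy $\rca$.

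The one substantive step --- and the main obstacle --- is verifying that $\mathcal{X}$ is PR-closed, so that $(\omega,\mathcal{X})\vDash\Pra$. Closure under computability is routine; the content is that applying a primitive recursive functional to oracles that are bounded by primitive recursive functions again yields a function bounded by a primitive recursive function. I would prove this by a structural induction on the scheme defining the functional (or, equivalently, by invoking majorisability): replacing each oracle $f_i$ by a monotone primitive recursive majorant $\bar p_i(x)=\max_{y\le x}p_i(y)$, the basic functions, composition and bounded minimisation clearly preserve primitive recursive bounds, and a step of primitive recursion $h(n+1,\vec x)=b(n,\vec x,h(n,\vec x))$ is bounded by iterating a monotone primitive recursive majorant of $b$, which is again primitive recursive. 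Since $\mathcal{X}$ contains all primitive recursive functions and is closed under these schemes, $(\omega,\mathcal{X})\vDash\Pra$, completing the separation. I expect this argument to be exactly where the function-based versus set-based distinction bites: $\dca$ only ever manufactures the bounded characteristic function of the graph of the minimisation, which does not let one recover the unbounded function itself without the very search that $\qfac$ provides.
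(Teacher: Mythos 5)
Your proposal is correct and rests on exactly the same key observation as the paper's proof: $\dca$ only ever produces $\{0,1\}$-valued (hence primitively recursively bounded) functions, and a primitive recursive functional applied to primitively recursively bounded oracles is again primitively recursively bounded, so the resulting $\omega$-model omits every computable function that dominates all primitive recursive functions and $\qfac$ fails. The only difference is presentational — you name the model explicitly as the class of primitively recursively bounded computable functions and exhibit a concrete witness via \cref{prop:mini}, where the paper iterates a closure construction starting from the minimal model — which, if anything, makes the argument cleaner.
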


\begin{proof}[Proof Sketch]
We are working in a standard model, and thus we do not have to worry about induction.
Begin with the minimal model of $\Pra$ which contains only primitive recursive functions over $\omega$.
Note that $\dca$ establishes the existence of only $\{0,1\}$-valued functions, and every such
function is  bounded by a primitive recursive function.

Consider a primitive recursive functional $\Psi$ on input $f_1, \ldots, f_k$. Since each of $f_1, \ldots, f_k$ is bounded by a primitive recursive function, there is a primitive recursive bound on the use of $\Psi(f_1, \ldots, f_k)$ and, therefore, a primitive recursive bound on the value of the output function on a given input. (We can simply go over all computations and take the maximum over all potential outputs.)
We cite Lemma 3.5 of \cite{punc2} for a detailed proof of a similar result. (Notice that the mentioned lemma applies, since we can produce the primitively recursively bounded compact subspace of $\omega^\omega$ and identify each $f_i$ with a path through this space.)

Iterate the process of closing the model under instance of $\dca$ and by primitive recursive operators (as required by $\Pra$) to construct an $\omega$-model of $\Pra$ which satisfies $\dca$ but fails $\qfac$ since it does not contain computable functions that are not dominated by primitive recursive functions.
\end{proof}

\begin{remark}
Recall that $\rca^*$ is the weakening of $\rca$ in which $\Sigma^0_1$-induction is replaced by $\exp$, stating the totality of exponentiation, and $\Sigma^0_0$-in\-duc\-tion, a.k.a.\ induction over formulae with only bounded quantifiers.
Since $\rca^*$ includes $\dca$ into its axioms, and so its minimal model includes
general recursive functions that are not primitive recursive, but does not prove totality of primitive recursive functions, while $\Pra$ does vice versa, the two theories give two independent axiomatic foundations below $\rca$.  Moreover, $\rca^*$ is a set-based second-order system, while $\Pra$ is function-based.

A peculiar fact is that, over $\rca^*$, $\Sigma^0_1$-induction is equivalent to
the statement that the universe of (total) functions is closed under primitive recursion; see, e.g., Lemma 2.5 in \cite{SimSm}.  That is, over $\rca^*$,  $\pra$ is equivalent to $\is 0 1$ (and, thus, to $\rca$).
 \end{remark}

\begin{remark}
In this paper we study some statements which have already been analysed from the classical reverse mathematics point of view. Such  statements typically  are formalised in the  set-based language of reverse mathematics~\cite{sosoa}, and thus they have to  be translated into our function-based language to be studied using $\Pra$ (as was done, for example, for $\dca$). Nonetheless, quite often such a careful distinction is not necessary, since sets can be canonically identified with their characteristic functions that are elements of $2^{\mathbb{N}}$. For example, $\wkl$  formulated in $\Pra$ guarantees that for each $T\colon 2^{<\Nb} \to 2$, such that $T^{-1}(1)$ is an infinite tree, there exists a function $P\colon \Nb \to 2$ such that $T(\la P(0), \dots, P(n)\ra) =1$ for each $n\in \Nb$.
When more care is needed, or when we adopt a different representation, we will mention it explicitly.
\end{remark}

\subsection{Calculus of finite sets}\label{sec:fin}
The main purpose of this subsection is to establish the following informal principle:
\begin{framed} \begin{center}
The formalisation of finite sets is robust in $\Pra$.
\end{center}
\end{framed}

When working in $\pra$, a finite set is usually identified with its code which is a number (a string).
In $\Pra$, where we actually do have sets (identified with their characteristic functions), we can also define a finite set to be a bounded set. It also makes sense to specify the bound rather than just state that it exists---the latter requires an unbounded quantifier.
In $\pra$,  \cite{sosoa} defines the cardinality of a finite set using a primitive recursive function (via the sum of a string) completely avoiding second-order considerations.
In $\Pra$, it is perhaps more natural to define the cardinality of a finite set using bijections with initial segments of $\mathbb{N}$.
We will see that these two approaches (the first-order and the second-order ones) to finite sets are equivalent over $\Pra$.
As a consequence, we can use them interchangeably.
This will be convenient when dealing with finite subsets of infinite sets.
We also establish some basic properties of finite sets that will be used throughout the rest of the paper. We will later use the notion of a cardinality to bound our search by looking at `the first $m$ elements of a structure'; specifics in the end of the subsection (Remark~\ref{rem:cardsearch}).

\subsubsection{Two definitions of a finite set} As usual, for every $i\in\mathbb{N}$, let $p_i$  denote the $i$-th prime number.

\begin{definition}
	Let $n\in \mathbb{N}$, and let $\bar a = a_0,a_1,\dots,a_n$ be a tuple from $\mathbb{N}$. The \emph{code} of the tuple $\bar a$ is the number
	\[
		\mathrm{code}(\bar a) = p_0^{a_0+1} \cdot p_1^{a_1 + 1} \cdot \dots \cdot p_n^{a_n+1}.
	\]
\end{definition}

\begin{lemma}\label{lemma:fin-tuple1}
		\textnormal{(}$\Pra$\textnormal{)}  The set
			\[
				CT = \{ m\in\mathbb{N} \,\colon (\exists n)(\exists a_0,\dots,a_n) [m = \mathrm{code}(\bar a)]\}
			\]
			is $\Delta^0_0$-definable.
\end{lemma}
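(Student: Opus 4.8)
The plan is to observe that $m = \mathrm{code}(\bar a)$ holds exactly when the prime factorisation of $m$ uses an \emph{initial segment} of the primes, each with a positive exponent: indeed $\mathrm{code}(a_0,\dots,a_n) = \prod_{i\le n} p_i^{a_i+1}$ is divisible by $p_0,\dots,p_n$ and by no other prime. Accordingly I would exhibit the $\Delta^0_0$ formula
\[
\chi(m) \;\equiv\; m \ge 2 \;\land\; (\forall q \le m)\big[\,(\mathrm{Pr}(q) \land q \mid m) \imp (\forall q' < q)(\mathrm{Pr}(q') \imp q' \mid m)\,\big],
\]
where $q \mid m \equiv (\exists e \le m)(q\cdot e = m)$ and $\mathrm{Pr}(q) \equiv q \ge 2 \land (\forall d \le q)(\forall e \le q)(d\cdot e = q \imp d = 1 \lor e = 1)$ are the usual $\Delta^0_0$ predicates for divisibility and primality. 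Since every quantifier is bounded by $m$ (a prime dividing $m$, a candidate prime below it, and the cofactor all lie below $m$), $\chi$ is $\Delta^0_0$. It then remains to prove in $\Pra$ that $\chi(m)$ is equivalent to the defining (unbounded) condition $(\exists n)(\exists a_0,\dots,a_n)[m = \mathrm{code}(\bar a)]$.

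For the forward implication one reads off from a witnessing tuple that $m \ge 2$ and, by Euclid's lemma, that every prime dividing $m$ is one of $p_0,\dots,p_n$; since the primes below $p_i$ are precisely $p_0,\dots,p_{i-1}$, the initial-segment clause of $\chi$ follows. For the converse, assume $\chi(m)$. First, $2 \mid m$: $m$ has some prime factor $q$, and if $q > 2$ then the prime $2 < q$ would have to divide $m$ as well. I would then set $n = \mathrm{len}(m)$, the largest $i$ with $p_i \mid m$, and $a_i = \mathrm{ex}(m,i) - 1$, where $\mathrm{len}$ and the exponent function $\mathrm{ex}(m,i)$ (the largest $k$ with $p_i^k \mid m$) are primitive recursive, hence available as symbols of $\mc{L}_{\Pra}$. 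The initial-segment clause guarantees $\mathrm{ex}(m,i) \ge 1$ for every $i \le n$, so each $a_i \ge 0$, and unique factorisation yields $m = \prod_{i\le n} p_i^{a_i+1} = \mathrm{code}(\bar a)$.

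The main obstacle is not the definability but verifying that the supporting number theory — Euclid's lemma, the increasing enumeration of the primes, and unique factorisation in the form $m = \prod_{i \le \mathrm{len}(m)} p_i^{\mathrm{ex}(m,i)}$ — is genuinely available over $\Pra$. Since $\Pra$ extends $\pra$, it suffices that these hold in $\pra$; and they do, as each is an arithmetic statement provable from quantifier-free induction $\qfi$ together with $\id {} 0$ applied to primitive recursive quantities bounded by $m$, so that no $\Sigma^0_1$-induction is ever invoked. The only point requiring genuine care is to keep every witness and bound below $m$ — this is what permits the entire characterisation to be written with bounded quantifiers and is exactly why $\mathrm{len}(m)$, $\mathrm{ex}(m,i)$, and the search for prime divisors can all be confined to the interval $[0,m]$.
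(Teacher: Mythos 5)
Your proof is correct and follows essentially the same route as the paper: both characterise $CT$ by the bounded condition that $m\ge 2$ and the primes dividing $m$ form an initial segment of the primes, each occurring with positive exponent. The only difference is presentational — the paper writes the characterising formula directly as $(m\geq 2)\land(\forall i\leq \mathrm{long}(m))(\mathrm{ex}(i,m)>0)$ using the primitive recursive function symbols $\mathrm{ex}$ and $\mathrm{long}$ already present in $\mc{L}_{\Pra}$, whereas you spell the same condition out with bounded quantifiers over divisibility and primality and then verify the equivalence via unique factorisation (number theory that is indeed available over $\Pra$, cf.\ the paper's appeal to H\'ajek--Pudl\'ak, Chapter 1.b).
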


\begin{proof}
 It is known that the following functions (on natural numbers) are primitive recursive:
	\begin{align*}
		\mathrm{ex}(i,x) &= \begin{cases}
			\max \{ l\in\mathbb{N} \,\colon (p_i^l \mid  x) \}, & \text{if } x > 0,\\
			0, & \text{if } x = 0;
		\end{cases}\\
		\mathrm{long}(x) &= \begin{cases}
			\max \{ i\in\mathbb{N} \,\colon (p_i \mid  x) \}, & \text{if } x > 1,\\
			0, & \text{if } x \in \{ 0,1\}.
		\end{cases}
	\end{align*}	
	Therefore, we deduce that $m\in CT$ if and only if $(m\geq 2) \land (\forall i \leq \mathrm{long}(m)) (\mathrm{ex}(i,m) > 0).$
	\end{proof}

In a function-based language,
we could choose to
identify a finite set with a function $f$ having bounded support, so that the bound is also given.
The main point of the elementary lemma below is to verify that these two intuitions coincide over $\Pra$.
More formally, Lemma~\ref{lemma:fin-tuple}  implies that there are two equivalent approaches to finite sets. Consider a non-empty finite set $F = \{ b_0 < b_1 <\dots < b_k\} \subset \mathbb{N}$.
\begin{enumerate}
	\item The set $F$ can be encoded by a single number $m = \mathrm{code}(\bar a) \in CT$, where
	\begin{itemize}
		\item $\bar a = a_0,a_1,\dots,a_{b_k}$;
		
		\item if $i\leq b_k$ and $i\not\in F$, then $a_i = 0$;
		
		\item if $i\leq b_k$ and $i\in F$, then $a_i = 1$.
	\end{itemize}
	
	\item The set $F$ can be encoded by a function $f$ and a number $\ell$ such that:
	\begin{itemize}
		\item $\ell = b_k$;
		
		\item $(\forall x > \ell)(f(x) = 0)$;
		
		\item $(\forall x \leq \ell)[ (x\in F \rightarrow f(x) = 1) \land (x\not\in F \rightarrow f(x) = 0)]$.
	\end{itemize}
\end{enumerate}

\begin{lemma}\label{lemma:fin-tuple}
		\textnormal{(}$\Pra$\textnormal{)} Suppose that $n,a_0,a_1,\dots,a_n \in \mathbb{N}$.  Then the following are equivalent:
			\begin{itemize}
			 	\item[(a)] there is $m\in CT$ such that $m = \mathrm{code}(a_0,\dots,a_n)$;
		 	
			 	\item[(b)] there exists a unary function $f$ such that
					\begin{itemize}
						\item $(\forall i\leq n) (f(i) = a_i + 1)$;
			
						\item $(\forall i > n) (f(i) = 0)$.
					\end{itemize}
		\end{itemize}
		
\end{lemma}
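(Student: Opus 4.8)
The plan is to prove the two implications separately, in each case converting one encoding of the tuple into the other by means of an explicit primitive recursive functional whose existence is guaranteed by the axioms of $\Pra$. Throughout, I would rely on the fact that the exponent and length functions $\mathrm{ex}$ and $\mathrm{long}$ introduced in the proof of \cref{lemma:fin-tuple1} are primitive recursive, and that the standard number-theoretic facts about prime factorisation that I need --- most importantly that $\mathrm{ex}(i, \prod_{j\le n} p_j^{e_j}) = e_i$ for $i \le n$, and $\mathrm{ex}(i, \prod_{j\le n} p_j^{e_j}) = 0$ for $i > n$ --- are expressible by $\Delta^0_0$-formulae and provable in $\pra$, hence available together with $\id 0 0$ (which $\Pra$ proves).

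For $(a) \Rightarrow (b)$, suppose $m \in CT$ with $m = \mathrm{code}(a_0, \dots, a_n) = \prod_{i \le n} p_i^{a_i + 1}$. I would simply set $f(i) = \mathrm{ex}(i, m)$. As a function of $i$ with the number $m$ as a parameter, $f$ is primitive recursive, so the relevant axiom of $\Pra$ postulates its existence. It then remains to check the two clauses: for $i \le n$, unique factorisation gives $\mathrm{ex}(i,m) = a_i + 1$, and for $i > n$ the prime $p_i$ does not divide $m$, so $\mathrm{ex}(i,m) = 0$. Both are $\Delta^0_0$ verifications.

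For $(b) \Rightarrow (a)$, suppose $f$ satisfies the two clauses. I would define $m$ as the bounded product $m = \prod_{i \le n} p_i^{f(i)}$, obtained by the primitive recursion $g(0) = p_0^{f(0)}$ and $g(k+1) = g(k)\cdot p_{k+1}^{f(k+1)}$, setting $m = g(n)$; this is a primitive recursive functional in the arguments $f$ and $n$, and its value exists precisely because $\Pra$ proves totality of all primitive recursive functionals. Substituting $f(i) = a_i + 1$ for $i \le n$ yields $m = \prod_{i\le n} p_i^{a_i+1} = \mathrm{code}(a_0, \dots, a_n)$. Finally I would verify $m \in CT$ through the $\Delta^0_0$ characterisation established in \cref{lemma:fin-tuple1}: since every exponent $f(i) = a_i + 1 \ge 1$ we have $p_0 \mid m$, whence $m \ge 2$; moreover $\mathrm{long}(m) = n$ and $\mathrm{ex}(i, m) = a_i + 1 > 0$ for all $i \le \mathrm{long}(m)$.

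The argument is essentially routine; the only points requiring genuine care are foundational rather than combinatorial. First, the existence of the witness (the function $f$ in one direction, the number $m$ in the other) is not automatic in a weak base theory --- it is exactly the totality of primitive recursive functionals, which $\Pra$ supplies, that makes both conversions legitimate; note that $m = \prod_{i\le n} p_i^{a_i+1}$ is exponentially large, so its existence genuinely uses this. Second, one must ensure that every equality or membership assertion being verified is $\Delta^0_0$ and uses only $\id 0 0$, so that uniqueness of prime factorisation can be invoked without appealing to stronger induction. I would therefore expect the careful statement of these factorisation facts, rather than the structure of the proof, to be where the main attention is needed.
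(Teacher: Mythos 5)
Your proposal is correct and follows essentially the same route as the paper's proof: the paper likewise defines $f(x) = \mathrm{ex}(x,m)$ for (a$\Rightarrow$b) and recovers $m = \prod_{i=0}^n p_i^{f(i)}$ for (b$\Rightarrow$a). The additional foundational remarks you supply (primitive recursive existence of the witnesses, $\Delta^0_0$ verifiability of the factorisation facts) are sound elaborations of what the paper leaves implicit.
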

\begin{proof}
 (a$\Rightarrow$b)\ Assume that $m\in CT$. Then the desired function $f$ can be defined as follows:
	\[
		f(x) = \mathrm{ex}(x, m).
	\]
	
	(b$\Rightarrow$a)\ Given a function $f$  and a number $n$ (satisfying the conditions in $(b)$), the desired code $m$ is recovered as follows:
	\[
		m = \prod_{i=0}^n p_i^{f(i)}.
	\]
	Lemma~\ref{lemma:fin-tuple} is proved.
\end{proof}

In other words, a set is finite if, and only if, it is (explicitly) bounded.
We slightly abuse our notation and identify $S$ with the `pair' $(f, \ell)$ even though we actually do not use a pairing function of any kind to code $f$ and $\ell$ together into one parameter.

We also remark that, when we use the first-order approach to finite sets (and to finite maps alike) we can use $\Pi_2$-conservativity of $\wkl$ over $\pra$ and Lemma~\ref{lem:cons} to derive some of the basic, first-order, facts about (codes of) finite sets while arguing in  $\wkl$. Similar notions of finite sets and of cardinality are defined in the theory $\pam + \id {} 0 + \exp$ in \cite[Chapter 1.b]{hajekPudlak}. We highlight, in particular, Theorem 1.41 of \cite{hajekPudlak}, where a notion of cardinality similar to the one in \cref{Def_card} is introduced. Note that the results in \cite[Chapter 1.b]{hajekPudlak} are provable in $\Pra$, since $\Pra \vdash \pam + \id {} 0 + \exp$. However, for conveniency in the reverse-mathematical context, we chose a coding method for finite sets that differs from that  of H{\'a}jek and Pudl{\'a}k. Also, using concervativity  would not be much of a simplification though, as one can equally easily argue directly in $\Pra$. In the next few subsections we shall give these elementary proofs in $\Pra$. 


\subsubsection{Cardinality}
We use $\Delta^0_0$-induction and $\Delta^0_0$-comprehension throughout; recall that $\Pra$ proves these axiom schemata (see \S\,\ref{sub:delta00}).
Let $S = (f, d)$ be a finite set, where $f$ is a $\{0,1\}$-valued function and $d$ bounds its support.
Up to notation, the following definition is equivalent to the one found in Simpson~\cite{sosoa}:

\begin{definition}
Define the cardinality of  a finite set $S = (f,d)$ to be $$|S| = \sum_{i\leq d} f(i).$$
\end{definition}

Note that the above definition is witnessed by a primitive recursive functional
and therefore makes sense, and in particular for any $f$ and $d$ the cardinality $|S|$ is a number that can be obtained `uniformly' in the representation of $S$.

A different, perhaps occasionally more useful, notion of cardinality
is more similar to the usual set theoretic approach via bijections. However,
it will take some work to show that it is robust and is equivalent to the definition above.

\begin{definition} \label{Def_card}
Let $S$ be a finite set coded as $(f,d)$.
Define $card(S, m)$ to be the formula saying that there is a bijection between
$S$ and the initial segment $[0, \ldots, m-1]$.
\end{definition}

We note that the notion of a bijection between finite sets can be formalised in the language of $\Pra$; we omit this. Observe that one needs only bounded quantifiers to state that
a given function is a bijection between two given finite sets.
It is also easy to see that $\Pra$ proves that if $g: S \rightarrow L$ is a $1$-$1$ and onto map between two finite sets, then $f^{-1}$ exists and is also $1$-$1$ and onto.
We shall use these properties without explicit reference.

\begin{proposition}\label{prop:finite-cardinalities} \textnormal{(}$\Pra$\textnormal{)}
Let $S = (f,d)$ be a finite set. Then $card(S, m)$ holds if, and only if, $|S| = m$.
\end{proposition}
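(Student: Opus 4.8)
The plan is to prove the equivalence $card(S,m) \biimp |S| = m$ by induction on the cardinality, where the cardinality $|S| = \sum_{i \leq d} f(i)$ is the primitive recursive quantity from the first definition. Since $|S|$ is bounded by $d+1$ and is given by a $\Delta^0_0$-definable functional, all the statements involved are arithmetically low in complexity, so the relevant induction will be $\Delta^0_0$-induction, which $\Pra$ proves by \cref{prop:d00ca} and the proposition preceding \cref{Lemma_Delta00EquivQf}. The key idea is that both notions of cardinality behave the same way under removing the largest element of $S$, and this recursive behaviour is what drives the induction.

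First I would handle the base case: if $|S| = 0$, then $\sum_{i \leq d} f(i) = 0$, so (as $f$ is $\{0,1\}$-valued) $f(i) = 0$ for all $i \leq d$, hence $S$ is empty, and the empty bijection witnesses $card(S,0)$. Conversely, if $card(S,0)$ holds then $S$ is empty and $|S|=0$. For the inductive step, the natural move is to locate the largest element $b \in S$ — which exists and can be found by bounded minimisation since $S$ is bounded by $d$ — and pass to the set $S' = (f', d)$ obtained by setting $f'(b) = 0$ and $f'(i) = f(i)$ otherwise. Then $|S'| = |S| - 1$, and this is a $\Delta^0_0$-verifiable arithmetic fact about the sum. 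One shows that a bijection between $S$ and $[0,\dots,m]$ (i.e.\ witnessing $card(S, m+1)$) can be modified, by composing with a transposition, into a bijection sending $b$ to $m$, and restricting it to $S'$ yields a bijection between $S'$ and $[0, \dots, m-1]$, and vice versa. Thus $card(S, m+1) \biimp card(S', m)$, and combined with $|S'| = |S| - 1$ the induction closes.

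The statement to which I would apply induction is, roughly, the $\Delta^0_0$ formula asserting the equivalence for all finite sets of cardinality at most $k$, inducting on $k$; more carefully, since we cannot quantify over the code $m$ freely inside a genuinely $\Delta^0_0$ statement without bounding it, I would fix that $m$ and $|S|$ are both bounded by $d+1$ and phrase the target as the $\Delta^0_0$ property $\Phi(d) \equiv (\forall S = (f,d))(\forall m \leq d+1)(card(S,m) \biimp |S| = m)$, inducting on $d$. The delicate points, which constitute the main obstacle, are entirely in the manipulation of bijections between finite sets within $\Pra$: namely that composing a bijection with a transposition of two elements again yields a bijection, that restricting a bijection to a subset with one point removed gives a bijection onto the corresponding initial segment, and that these operations are all carried out by primitive recursive functionals whose defining schemata exist by the axioms of $\Pra$. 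Each such operation produces a function given explicitly and bounded, so all the verifications are $\Delta^0_0$ and legitimate, but one must be careful to invoke the closure properties of bijections between finite sets announced just before the statement (that inverses exist and are bijections) rather than appealing to any induction stronger than $\Delta^0_0$-induction. The remaining bookkeeping — that the transposition and restriction are definable and preserve the bijection property — is routine but is where all the care is needed.
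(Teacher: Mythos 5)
Your overall strategy (a single induction that peels off the largest element of $S$) is genuinely different from the paper's, which splits the statement into two lemmas: an explicit primitive recursive construction of an enumerating bijection witnessing $card(S,|S|)$, and a uniqueness lemma showing that $[0,\ldots,k]$ and $[0,\ldots,m]$ cannot be in bijection unless $k=m$. Your route could be made to work, but as written it has two genuine gaps. First, the formula $\Phi(d) \equiv (\forall S=(f,d))(\forall m\leq d+1)(card(S,m)\biimp |S|=m)$ is not $\Delta^0_0$: the quantifier over $S=(f,d)$ is a second-order quantifier over functions, and $\Delta^0_0$-induction in $\Pra$ admits function \emph{parameters} but not function quantifiers. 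Nor can you fix $f$ as a parameter and induct on $d$, because your inductive step replaces $f$ by the modified function $f'$, to which an induction hypothesis stated for the parameter $f$ does not apply. The repair is to internalize the quantification by passing to first-order codes of subsets of $[0,d]$ and of the candidate bijections (these codes are bounded by a primitive recursive function of $d$, using \cref{lemma:fin-tuple} and totality of exponentiation), but this needs to be said explicitly, since it is exactly the kind of care the setting demands.

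Second, and more substantively, your induction does not actually deliver the ``only if'' direction, i.e.\ $\neg card(S,m)$ when $m\neq|S|$. The equivalence $card(S,m+1)\biimp card(S',m)$ closes the biconditional only when the largest element of $S$ equals the declared bound $d$. When $f(d)=0$ you fall back on $\Phi(d-1)$, which says nothing about $m=d+1$; and the proposition quantifies over \emph{all} $m$, so you must also refute $card(S,m)$ for every $m>d+1$, which your bounded formula never addresses. In both situations the missing ingredient is precisely the finite pigeonhole principle --- that there is no bijection between $[0,\ldots,k]$ and $[0,\ldots,m]$ for $k\neq m$ --- which is not routine bookkeeping in $\Pra$ and is exactly what the paper isolates and proves by a separate $\Delta^0_0$-induction in \cref{lem2:card}. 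Once you add that lemma (from $card(S,|S|)$ and a hypothetical $card(S,m)$ with $m\neq|S|$, compose the two bijections to contradict it), your argument goes through; without it, the uniqueness half of the equivalence is unproved.
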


\begin{proof}
The proposition follows from the two lemmas:

\begin{lemma}\label{lem1:card} For any finite set $S$,
$card(S, |S|)$ holds.
\end{lemma}

\begin{lemma}\label{lem2:card}
For any finite set $S$ and any $m,k \in \mathbb{N}$,
$card(S, m) \land card(S, k)$ implies $m=k$.
\end{lemma}

\begin{proof}[Proof of Lemma~\ref{lem1:card}]  For simplicity, assume $S$ is not empty.
Using primitive recursion, define

\

$g(0) = \mu_{y \leq d} \, f(y) =1,$

\smallskip

$g(k+1) = \begin{cases}
\mu_{y \leq d} \, [y> g(k) \land f(y) =1], \\
d+1, \mbox{ if no such $y$ exists}.
\end{cases}$

\

Let $\psi(k)$ be a $\Delta^0_0$ formula (with parameter $S = (f, d)$)  saying that, if $g(k) \neq d+1$ then:
\begin{itemize}
\item $g \upharpoonright_{[0, \ldots, k]}$  is a bijection between $[0, \ldots, k]$ and $S \upharpoonright_{\leq g(k)}$;
\item $k+1= |S \upharpoonright_{\leq g(k)}|$.
\end{itemize}
It should be clear that we need only bounded quantifiers to write down $\psi(k)$.
We now can use $\Delta^0_0$-induction to demonstrate that $\forall k \psi(k)$ holds.
Recall $S \neq \emptyset$, so $g(0) \neq d+1$ is defined. We clearly have $g: [0] \rightarrow \{g(0)\}$ is a bijection, and $| \{g(0)\} | = 1$.

For the step, assume the statement holds for $k$. If $g(k+1) = d+1$, then we are done. Otherwise, $g(k+1)$ is defined and $g: [0, \ldots, k+1 ] \rightarrow S \upharpoonright_{\leq g(k+1)} = S \upharpoonright_{\leq g(k)} \cup  \{g(k+1)\}$ is a bijection. Also, $|S \upharpoonright_{\leq g(k)}| = \sum_{i \leq g(k+1)} f(i) = |S \upharpoonright_{\leq g(k)}| + 1$, and the lemma is proved\footnote{We implicitly used that $g$ is strictly increasing unless is equal to $d+1$; this also follows by $\Delta^0_0$-induction.}.
  \end{proof}

\begin{proof}[Proof of Lemma~\ref{lem2:card}]
It is easy to see that $|[0, \ldots, m]| = m+1$, by $\Delta^0_0$-induction.
It is sufficient to prove that, in $\Pra$, if there is a bijection $g : [0,\ldots, k] \rightarrow [0, \ldots, m]$ then $k=m$.
Assume $k<m$. Let $\psi$ be a bounded formula (with parameters $k$ and $g$) saying that, if $g: [0,\ldots, k] \rightarrow [0, \ldots, m]$ then $| g([0, \ldots, k]) | = k+1$.
If we can prove $\psi$, then the lemma will follow from
$m+1 = | [0, \ldots, m] | =| g([0, \ldots, k]) | = k+1,$
which is a contradiction.  But $\psi$  follows easily by ($\Delta^0_0$) induction, as follows.
$\psi(0)$ says that $|g( [0])| = |\{g(0)\}| =1$. For the step, observe that $g([0, \ldots,  k+1]) = g([0, \ldots,k]) \cup \{g(k+1)\}$ where the union is disjoint, so $|g([0, \ldots, k+1])| = |g([0, \ldots,k])|+1.$
\end{proof}

Proposition~\ref{prop:finite-cardinalities} is proved.
\end{proof}

\subsubsection{Set theoretic operations and bounded search} We can formalise the basic operations on finite sets (such as union, intersection, cartesian product, etc.) in the language of $\Pra$. In fact, all these elementary set theoretic operations with finite sets have a pleasant property of uniformity, meaning that each such operation is witnessed by a primitive recursive functional. In particular, we can uniformly calculate the upper bound of the output.
(This can also be formalised in $\pra$ using codes rather than second order names, and this would be equivalent in the right sense; we omit this.)

Using $\Delta^0_0$-induction, we can derive the following basic properties of finite sets and their cardinalities:

\begin{lemma}\label{lem:sets} Let $S = (f, d)$ and $K = (g, k)$ be finite sets.

\begin{enumerate}

\item When $S\cap K = \emptyset$ then $|S \cup K| =|S| +|K|.$
\item $|S \times K| = |S | \times |K|$.
\item $|S^n| = |S|^n$, for any $n \in \mathbb{N}$.
\item $S \subseteq K \implies |S| \leq |K|$.
\item $|S| < |K|$ implies that $\exists x \in S \setminus K$.
\end{enumerate}

\end{lemma}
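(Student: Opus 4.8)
The plan is to prove each of the five items by $\Delta^0_0$-induction, taking advantage of the fact that $\Pra$ proves $\id 0 0$ (the induction schema over bounded formulae) and that cardinality is given by a primitive recursive functional, so that all the formulae involved are genuinely $\Delta^0_0$ and the inductions are legitimate. Throughout I would work with the representation $S = (f,d)$ and use $|S| = \sum_{i\le d} f(i)$, so that cardinalities are computed uniformly and every assertion about them is bounded.

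For item (1), I would induct on the common bound. More precisely, given $S = (f,d)$ and $K=(g,k)$ with $S \cap K = \emptyset$, pass to a common bound $e = \max(d,k)$ and let $\psi(j)$ be the bounded formula asserting $|(S\cup K)\uhr_{\le j}| = |S\uhr_{\le j}| + |K\uhr_{\le j}|$ for $j \le e$. The base case is trivial, and the inductive step splits into whether the element $j+1$ lies in $S$, in $K$, or in neither; disjointness guarantees it cannot lie in both, so exactly one summand increments, matching the increment on the left. For item (4), an entirely analogous $\Delta^0_0$-induction on the bound shows that the partial sums defining $|S\uhr_{\le j}|$ are pointwise dominated by those of $|K\uhr_{\le j}|$ whenever $f(i) \le g(i)$ for all $i$ (which is what $S \subseteq K$ gives), yielding $|S| \le |K|$.

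For item (2) I would build the product set $S\times K$ explicitly (its characteristic function and a bound are obtained by a primitive recursive functional, via a pairing function on the coordinates), and then count it by slicing: for each of the $|S|$ elements $x \in S$, the fibre $\{x\}\times K$ has cardinality $|K|$ and these fibres are pairwise disjoint, so repeated application of item (1) over the $|S|$ slices gives $|S\times K| = |S|\cdot|K|$. To make this a clean induction rather than a vague "repeated application," I would enumerate the elements of $S$ using the strictly increasing function $g$ from the proof of Lemma~\ref{lem1:card} and induct on the number of slices already counted. Item (3) then follows from item (2) by a straightforward external induction on the fixed standard number $n$, since $S^{n+1} = S^n \times S$ and $|S^{n+1}| = |S^n|\cdot|S| = |S|^n\cdot|S| = |S|^{n+1}$; here $n$ ranges over $\omega$ (the metatheory), so no object-level induction on $n$ is needed. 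Item (5) is the contrapositive of a bounded statement: if $S \setminus K = \emptyset$ then $S \subseteq K$, whence $|S|\le|K|$ by item (4), contradicting $|S|<|K|$; the implication $S\setminus K = \emptyset \imp S \subseteq K$ is immediate from the definitions and requires only bounded reasoning.

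The main obstacle I anticipate is item (2), specifically setting up the product construction and the slicing argument so that every quantifier stays bounded and the only induction invoked is $\id 0 0$. One must verify that the characteristic function and explicit bound of $S\times K$ are produced by a primitive recursive functional (so that $S\times K$ is again a legitimately coded finite set in the sense of Lemma~\ref{lemma:fin-tuple}), and that the enumeration-by-slices is phrased as a single $\Delta^0_0$-induction on the slice count with the partial product counted along the way. The other items are routine bounded inductions, and the delicate point everywhere is simply to confirm, as we did for the cardinality lemmas in Proposition~\ref{prop:finite-cardinalities}, that the relevant formulae $\psi(j)$ are $\Delta^0_0$ so that $\Pra$-provable induction applies; none of the arguments needs $\iso$ or $\dca$.
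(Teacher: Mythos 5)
Your treatments of (1), (2) and (4) are sound and essentially the paper's argument (the paper inducts on $|S|$ with $|K|$ held fixed where you induct on the common bound or on slices, but both are legitimate $\Delta^0_0$-inductions). There are, however, two genuine gaps. The first is in (5): from $S\setminus K=\emptyset$ you derive $|S|\le|K|$ via (4) and claim this contradicts $|S|<|K|$ --- but it does not; $|S|\le|K|$ is perfectly consistent with $|S|<|K|$, so your contrapositive never closes. The underlying issue is that item (5) as printed is false as literally stated (take $S=\emptyset$, $K=\{0\}$): the intended conclusion, which is what the paper's own proof produces and what Remark~\ref{rem:cardsearch} actually uses, is $\exists x\in K\setminus S$. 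With that orientation the contrapositive does work, but with the roles reversed: $K\setminus S=\emptyset$ gives $K\subseteq S$, hence $|K|\le|S|$ by (4), which genuinely contradicts $|S|<|K|$. (The paper instead first reduces to the case $S\subseteq K$ using $|S\cap K|\le|S|<|K|$ and then compares the partial sums $\sum_{i\le j}f(i)$ and $\sum_{i\le j}g(i)$ to extract an $x$ with $f(x)<g(x)$; either route is fine once the set difference is pointed the right way.)

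The second gap is in (3), where you declare that $n$ ranges over the standard $\omega$ and run an external induction. In this paper $\mathbb{N}$ denotes the first-order universe of the model, which may be nonstandard, so item (3) is an assertion about all $n$ in the model and an external induction on standard $n$ does not reach them. You need an internal induction, and it is available at the right level: the code and explicit bound of $S^n$, as well as the number $|S|^n$, are produced by primitive recursive functionals in $n$ and $(f,d)$, so the equation $|S^n|=|S|^n$ is a $\Delta^0_0$ formula in $n$ and $\id 0 0$ applies, with the inductive step supplied by item (2) via $S^{n+1}=S^n\times S$. This is exactly what the paper means when it says (3) follows from (2) by $\Delta^0_0$-induction.
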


\begin{proof} (1) and (2) follow by, e.g., $\Delta^0_0$-induction in the cardinality of $S$  while the cardinality of $K$ is held fixed (as a parameter). Item
(3) follows from (2) by $\Delta^0_0$-induction, and  so does $(4)$.
To see why (5) holds,  use $(4)$ to conclude that $|S \cap K| < |K|$.
So we can assume $S \subseteq K$.
 If for all $x \leq k = \max \{d,k\}$, $x \in K \iff x \in S$ then, by $\Delta^0_0$-induction, we would have $ \sum_{i\leq d} f(i) =  \sum_{i\leq k} f(i)  = \sum_{i\leq k} g(i)$,
 and since $f \leq g$,
  it must be that, for some $x \leq d$, $f(x)< g(x)$.
\end{proof}

We note that in $(5)$, we can uniformly search for such an $x \in S \setminus K$ in the sense that there is a primitive recursive operator which, on input names of $S$ and $K$ (recall names include their upper bounds), outputs the least such $x$. If we prefer functions rather than functionals, we can of course use codes instead of (explicitly) bounded functions. It is rather convenient that, at least in this case, the first-order and the second-order approaches agree.

\begin{remark}\label{rem:cardsearch}
As promised at the beginning of the subsection, we explain
how to  use the notion of cardinality to bound a search through $\mathbb{N}$. Suppose we know that the cardinality of the finite set $\{x: \varphi(x)\}$ is $m$. Recall we already observed that $| [0,\ldots, m]| = m+1$.
Using Lemma~\ref{lem:sets}  conclude that there is a $y \in [0, \ldots, m]$ such that $\neg \varphi(y) $.
\end{remark}

	

\section{Examples from countable algebra and infinite combinatorics}\label{sec:examples}

In this section we present several relatively basic  results  carried over $\Pra$.
We also present two rather different approaches to countable structures, one seems to be more suited for model theory, and the other one for countable algebra.
This section is essentially a semi-preliminaries section with lots of examples, however, it appears that all results
discussed here are actually new.



%
%

\subsection{Algebraic structures and vector spaces}\label{sub:str}
In $\Pra$ all second-order objects are functions. For instance, if we want to represent a countable
 algebraic structure in a finite signature 
 we do it as follows.

 \begin{enumerate}
 \item The domain (each domain, if a structure is $n$-sorted) is either $\mathbb{N}$ or an initial segment of $\mathbb{N}$
 (identified with its characteristic function).

  \item The operations are functions on the domain.

 \item Relations are represented by their characteristic functions.

 \end{enumerate}

 \begin{remark}We restrict the domain to make the search for the $k$th element of the structure a bounded search. As argued in, e.g., \cite{Kal17}, without this assumption structures are not `fully' primitive recursive in the standard minimal model.
 \end{remark}

\begin{remark}
Note that if a structure is not infinite, it does not necessarily mean we can always `uniformly' access the finite code of its domain; recall such a code must also include the upper bound, see \S\,\ref{sec:fin}. (Formally, there is no primitive recursive functional that, on input a structure, outputs the upper bound for its domain.)
\end{remark}



For instance, a countable vector space $V$ over $\mathbb{F}$
is a two-sorted structure in which $(\mathrm{dom}(V), +_{V},$ $-_{V}, 0_{V})$
is an abelian group together with scalar multiplication by elements of $\mathbb{F}$.




Let $V$ be a countable vector space over $\mathbb{F}$. Then a \emph{basis} of $V$ is given as a function $b\colon \mathbb{N} \to \mathbb{N}$ with the following property: every $v\in V$ can be expressed uniquely in the form
\[
  v = \sum_{k\in E_0} \alpha_k \cdot b(k),
\]
where:
\begin{itemize}
  \item there exists $n_0\in\mathbb{N}$ such that $k\leq n_0$ for every $k\in E_0$;

  \item for every $k\in E_0$, we have $\alpha_k\in \mathbb{F}\setminus\{ 0\}$.
\end{itemize}

It is not difficult to show that the fact below fails in $\rca$ if $\mathbb{F} = \mathbb{Q}$, even in the standard minimal model. (An observation that can be traced back to Mal'cev~\cite{Mal-62}.)
\begin{proposition}\label{prop:Basis-for-Vector-Spaces}
  \textnormal{(}$\Pra$\textnormal{)} Let $V$ be a countable $\mathbb{F}$-vector space
  over a finite field $\mathbb{F}$. 
  Then $V$ has a basis.
\end{proposition}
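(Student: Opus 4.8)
The plan is to run the obvious greedy construction of a basis and to observe that, because $\mathbb{F}$ is finite, every step reduces to a \emph{bounded} search, so that no appeal to $\qfac$ (which is unavailable over $\Pra$ by \cref{prop:mini}) is needed. Write $q=|\mathbb{F}|$. If $V$ is finite then its domain is a coded initial segment, everything in sight is an explicitly bounded finite object, and a basis is extracted by a single bounded search; so assume $V$ is infinite, whence $\mathrm{dom}(V)=\mathbb{N}$ and (a finite-dimensional space over a finite field is finite) $V$ is infinite-dimensional. First I would define the \emph{selection} function $s\colon\mathbb{N}\to\{0,1\}$ by course-of-values primitive recursion, feeding back the code of $\langle s(0),\dots,s(n-1)\rangle$ and setting $s(n)=1$ exactly when $n$ does \emph{not} lie in the $\mathbb{F}$-span of $\{m<n : s(m)=1\}$. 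Since $\mathbb{F}$ is finite and the selected indices below $n$ are bounded by $n$, membership of $n$ in this span is the bounded ($\Delta^0_0$) question whether some assignment of coefficients in $\mathbb{F}$ to the (boundedly many) selected $m<n$ produces $n$ under the operations of $V$. Hence $s$ is given by a primitive recursive functional in those operations and exists by closure of $\Pra$ under primitive recursion. Put $B=\{n : s(n)=1\}$, a set with primitive recursive characteristic function $s$.

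Next I would record three facts. \emph{Spanning} is immediate from the definition of $s$: if $s(n)=1$ then $n\in B$, and if $s(n)=0$ then $n$ is an $\mathbb{F}$-combination of selected elements below it; either way $n\in\mathrm{span}(B\cap[0,n])$, so $V=\mathrm{span}(B)$. \emph{Unboundedness of $B$}: if $B$ were bounded with $|B|=d$, then $V=\mathrm{span}(B)$ would be a bounded (finite) set of at most $q^{d}$ elements, contradicting $\mathrm{dom}(V)=\mathbb{N}$; so, letting $n_0<n_1<\cdots$ enumerate $B$, each $n_k$ is defined. The key point, where finiteness of $\mathbb{F}$ does the real work a second time, is the \emph{bound} $n_k\le q^{k}$: by construction every natural number $m<n_k$ is unselected and hence lies in the span of the selected elements below it, all of which are among $n_0,\dots,n_{k-1}$; as the domain is all of $\mathbb{N}$ (no gaps), this gives $\{0,1,\dots,n_k-1\}\subseteq \mathrm{span}\{n_0,\dots,n_{k-1}\}$. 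The right-hand side is spanned by $k$ linearly independent vectors over a field of size $q$, so it is a finite set of exactly $q^{k}$ elements, and \cref{lem:sets}(4) (monotonicity of cardinality) yields $n_k\le q^{k}$. The span-count and the independence used here are verified inside the finite-set calculus of \S\ref{sec:fin} using $\id 0 0$.

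Because $q^{k}$ is a primitive recursive bound, the increasing enumeration of $B$ becomes available by \emph{bounded} minimisation: I would define $b(k)$ to be the unique $m\le q^{k}$ with $s(m)=1$ and $\sum_{j\le m}s(j)=k+1$, using the cardinality functional of \S\ref{sec:fin}. Then $b$ is a total, strictly increasing primitive recursive functional with range $B$, and so exists over $\Pra$. To see $b$ is a basis, note that $\mathrm{range}(b)=B$ is linearly independent by $\id 0 0$ (no selected element lies in the span of the smaller selected ones), while spanning was shown above. Together these give, for each $v=n$, a representation $v=\sum_{k\in E_0}\alpha_k b(k)$ with $\alpha_k\in\mathbb{F}\setminus\{0\}$; since $n\in\mathrm{span}(B\cap[0,n])$ and $b$ is strictly increasing (so $b(k)\ge k$), only basis elements $b(0),\dots,b(K)$ with $b(K)\le n$ can occur, and the index set $E_0\subseteq\{0,\dots,n\}$ is explicitly bounded. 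Uniqueness follows from independence. This proves that $V$ has a basis.

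The only genuine obstacle is the enumeration step: over $\Pra$ one cannot in general form the increasing enumeration of an unbounded $\Delta^0_0$-set, since that is exactly an instance of unbounded minimisation, equivalent to $\qfac$ by \cref{prop:mini}. The whole argument turns on the inequality $n_k\le q^{k}$, which converts the search into a bounded one; establishing that bound (and thereby using the finiteness of $\mathbb{F}$ essentially, in contrast with the $\mathbb{F}=\mathbb{Q}$ case, which fails already in $\rca$) is the heart of the matter. Two minor points deserve care: the finite-dimensional case must be dispatched separately, and one must keep track of the fact that the domain really is all of $\mathbb{N}$, which is precisely what makes the cardinality count go through.
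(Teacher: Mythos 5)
Your proof is correct and follows essentially the same route as the paper's: a greedy construction of the basis in which the finiteness of $\mathbb{F}$ bounds the size of each span (your $n_k\le q^k$ versus the paper's explicit bound $k^{n+1}+1$ in the bounded minimisation defining $b(n+1)$), turning every search into a bounded one, with independence, spanning, and uniqueness then verified by $\id 0 0$ via least-counterexample arguments. The only difference is presentational — you first build the $\{0,1\}$-valued selection function $s$ and then recover the increasing enumeration from the span bound, whereas the paper defines $b$ directly by primitive recursion — but the two constructions yield the same basis.
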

\begin{proof}
 We use the upper bound $\ell$ of $\mathbb{F}$  as well as its cardinality $k$, throughout.
  The procedure that we describe below can be witnessed by a primitive recursive functional that takes $V$ and $k$ and outputs the basis identified with its characteristic function. For simplicity, we restrict ourselves to infinite spaces and we assume that the domain of $V$ is $\mathbb{N}$.
  We also assume that $0$ denotes the zero of the space.

  We (usually, implicitly) use the materials of \S\,\ref{sec:fin} to operate with finite sets. In particular,  we use Lemma~\ref{lem:sets} to calculate cardinalities of sets and Remark~\ref{rem:cardsearch} to bound our search.

 \

\noindent  \emph{The idea} is to follow  the usual effective algebraic proof and search for the
smallest index element which is not already in the span of the finite part of the basis enumerated so far. It is not hard to see that, since the span of $n$ elements has the size of at most $k^n$, we can  uniformly bound our search.
The short version of the formal proof below is: ``this works in $\Pra$''. The construction would definitely work in the standard minimal model.
 But it takes some work to formally verify ---  using $\Delta^0_0$-induction, $\Delta^0_0$-comprehension, and properties of finite sets --- that this procedure works in $\Pra$. We give the details, but in later proofs similar details will often be omitted.

 \

\noindent  \emph{Formal proof.}  Recall sets are identified with their characteristic functions.
  First, we define the auxiliary set coding the relation of linear dependence:
  \[
    S = \{  \langle a_0,a_1,\dots,a_{n}\rangle \,\colon (\forall i\leq n) (a_i\in \mathrm{dom}(V)) \text{ and } [n = 0, \text{ or } n\geq 1 \text{ and } a_{n} \in \mathrm{span}(a_0,a_1,\dots,a_{n-1})]\}.
  \]
  The set $S$ is definable by a $\Delta^0_0$ formula with parameter $k = |\mathbb{F}|$. This is because, using primitive recursion, we can express $a_{n} \in \mathrm{span}(a_0,a_1,\dots,a_{n-1})$  as a $\Delta^0_0$-fact.

  We define a function $b$ (which provides a basis of $V$) by primitive recursion. The value $b(0)$ is chosen 
as some non-zero element from $V$, say, having index $1$.
  Suppose that the values $b(0),b(1),\dots,b(n)$ are already defined. Then we set
  \[
    b(n+1) := (\mu z \leq (k^{n+1} + 1))[z \in \mathrm{dom}(V) \land \langle b(0),b(1),\dots,b(n),z\rangle \not\in S].
  \]
  Since there exist at most $k^{n+1}$ linear combinations
   of the vectors $b(0),b(1),\dots,b(n)$, we deduce that the value $b(n+1)$ is well-defined.


  This concludes the construction of the function $b$. Now we need to prove that $b$ gives a basis of $V$. For convenience, for $n\in\mathbb{N}$, by $b_n$ we denote the vector $b(n)$.

  \smallskip

  \textbf{(1)}\ First, we show that there is no $n\in \mathbb{N}$ such that there exists a non-trivial linear combination $u$ of $b_0,b_1,\dots,b_n$ such that $u = 0$.

  Consider the set
  \[
    T_0 = \{ n\in\mathbb{N} \,\colon \text{some non-trivial linear combination of } b_0,\dots, b_n \text{ equals } 0\}.
  \]
 Note that $T_0$ is $\Delta^0_0$-definable. Towards a contradiction, assume that $T_0$ is not empty.  Since $\Pra \vdash \id 0 0$, we deduce  that there exists the least number $n$ belonging to $T_0$.  Consider the non-trivial combination
  \[
    0 =  \alpha_0 b_0 + \alpha_1 b_1 + \dots + \alpha_{n-1} b_{n-1} + \alpha_n b_n.
  \]
	Without loss of generality, one may assume that $n\geq 1$.

  \emph{Case~1.} Assume that $\alpha_n \neq 0$. Then we have
  \[
    b_n = (-\alpha_n^{-1}\alpha_0) b_0 + \dots + (-\alpha_n^{-1}\alpha_{n-1})b_{n-1},
  \]
  and hence, $\langle b_0,\dots,b_{n-1},b_n\rangle \in S$, which contradicts with how the vector $b_n$ is chosen in the construction.

  \emph{Case~2.} Otherwise, we have $\alpha_n = 0$. Then $u = 0$ is a non-trivial combination of $b_0,\dots,b_{n-1}$, and this contradicts the minimality of the number $n$.

  We conclude that the set $T_0$ is empty, and the vectors $b_n$, $n\in\mathbb{N}$, are linearly independent.

  \smallskip

  \textbf{(2)} Second, we show that every non-zero vector $w$ is a linear combination of $b_0,b_1,\dots,b_n$, for some $n\in\mathbb{N}$. We consider an auxiliary set
  \[
    T_2 := \{ i\in\mathbb{N} \,\colon i\not\in \mathrm{span}(b_0,b_1,\dots,b_i) \text{ and } i\neq b_{i+1}\}.
  \]
  It is sufficient to prove that the set $T_2$ \emph{is empty}: indeed, if this is true, then every $i$ can be written as a linear combination of vectors $b_0,b_1,\dots,b_{i},b_{i+1}$.
  Assume that $T_2$ is non-empty. Since $T_2$ is $\Delta^0_0$-definable, there exists the least $i$ belonging to $T_2$. Without loss of generality, we may assume that $i\neq 0$.
Consider the vector $b_{i+1}$~--- the construction ensures that $b_{i+1}\not\in \mathrm{span}(b_0,b_1,\dots,b_i)$. There are two cases:

\emph{Case~1.} Assume that $b_{i+1} =j < i$. We have
\[
  b_{i+1} \not\in \mathrm{span}(b_0,b_1,\dots,b_i) \supset  \mathrm{span}(b_0,b_1,\dots,b_{j}).
\]
In addition, $b_{i+1}\neq b_{k+1}$ for all $k < i$. In particular, $j = b_{i+1} \neq b_{j + 1}$. Hence, $j\in T_2$, which contradicts the fact that $i$ is the minimal element of $T_2$.

\emph{Case~2.} Otherwise, $b_{i+1} > i$. But then the choice of $b_{i+1}$ implies that $i\in \mathrm{span}(b_0,b_1,\dots,b_i)$; a contradiction.

The remaining case is when $b_{i+1}  =i$.
We deduce that for every $i\in\mathbb{N}$,
\begin{equation*} 
  i\in \mathrm{span}(b_0,b_1,\dots,b_i) \text{ or } i = b_{i+1}.
\end{equation*}

\smallskip

\textbf{(3)} Now it is sufficient to prove that every non-zero vector $u\in V$ admits a \emph{unique} decomposition in our basis. Consider a $\Delta^0_0$-definable set
\[
  T_3 = \{ n\in\mathbb{N}\,\colon \text{two different linear combinations over } b_0,b_1,\dots,b_n \text{ are equal}\}.
\]
If the set $T_3$ is non-empty, then it contains the least element $n_0$. But then, a standard argument shows that the vectors $b_0,b_1,\dots,b_{n_0}$ are linearly dependent, i.e., $n_0$ also belongs to the set $T_1$, which gives a contradiction.
\end{proof}

In the `classical' reverse mathematics over $\rca$, vector spaces have attracted a considerable attention.
For example, in \cite{subspaces-Downey-et-al}, it was shown that  the existence of a nontrivial proper subspace of a vector space of dimension greater than one (over an infinite field) is equivalent to $\wkl$
 over $\rca$, and that the existence of a finite-dimensional nontrivial proper subspace of such a vector space is equivalent to $\aca$
 over
$\rca$. Further related results can be found in \cite{infinite-dimensional-Conidis}.
We suspect that many of these results might still hold over $\Pra$, in one way or another.

\begin{question}\label{q:1}
Investigate  proper subspaces of vector spaces over $\Pra$.
\end{question}


\subsection{Countable categoricity}\label{Subsec:cat} Many standard results in infinite combinatorics and model theory are somewhat evidently relying on unbounded search (unbounded existential quantification) with no further restriction on the search.  If $\Pra$ is the `right' system to study unbounded search, then these basic results \emph{should} be equivalent to $\rca$ over $\Pra$.  In this subsection we clarify this intuition with a number of examples that are summarised in the theorem below.

Recall that $\rca$ is identified with its function-based version $\qfac$. All structures in the theorem are countable.

\begin{theorem}\label{thm:easyhomo}  Over $\Pra$, $\rca$ is equivalent to each of the following:
\begin{enumerate}
\item Categoricity of  dense linear orders without end points.
\item Categoricity of random graphs.
\item Categoricity of atomless Boolean algebras.

\end{enumerate}

\end{theorem}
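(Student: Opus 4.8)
The plan is to establish, for each of the three principles, the two implications separately over $\Pra$: that $\qfac$ proves categoricity, and that categoricity implies $\qfac$. All three structures are homogeneous in the back-and-forth sense (density and absence of endpoints for dense linear orders, the extension property for the random graph, atomlessness together with the splitting property for atomless Boolean algebras), so I would isolate a single back-and-forth template and instantiate it three times, recording in each case only the combinatorial fact that guarantees the existence of a one-point extension of a finite partial isomorphism.

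For the forward direction, fix two presentations $A$ and $B$ of the structure (both functions in the sense of \S\ref{sub:str}). I would build an isomorphism $F$ by the usual alternating back-and-forth, maintaining a sequence $\langle p_s\rangle_{s}$ of finite partial isomorphisms coded as finite sets of pairs via the calculus of \S\ref{sec:fin}. At a forth step one takes the least element $a$ of $A$ not yet in $\mathrm{dom}(p_s)$, locates its position among the finitely many elements of $\mathrm{dom}(p_s)$ (a bounded computation), and must choose an element $b$ of $B$ occupying the matching position over $\mathrm{ran}(p_s)$; back steps are symmetric with $p_s^{-1}$. The predicate ``$b$ is a legal witness for $a$ given $p_s$'' is $\Delta^0_0$, since it only evaluates the order or adjacency relations on finitely many bounded arguments, whereas the existence of a witness is the relevant homogeneity property and is $\Sigma^0_1$. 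Since the search is total (for every finite configuration a witness exists), \cref{prop:mini}, which gives $\qfac$ equivalent to totality of minimisation, yields a function selecting the least witness at each step. The passage $p_s \mapsto p_{s+1}$ is then primitive recursive in this selection function, so $\langle p_s\rangle$ is obtained by primitive recursion inside $\Pra$; each $a$ enters the domain by a stage bounded by a fixed function of $a$, so $F(a) := p_s(a)$ for the first witnessing $s$ is a well-defined function, and $\Delta^0_0$-induction (available by \S\ref{sub:delta00}) verifies the invariants that the $p_s$ are increasing partial isomorphisms, whence $F$ is an isomorphism.

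For the reversal I would reduce, via \cref{prop:mini}, to deriving totality of minimisation: given $g$ with $\forall n\,\exists m\,(g(n,m)=0)$, produce $h(n)=\mu m\,(g(n,m)=0)$. From $g$ I would build two primitive recursive presentations $A$ and $B$ of the structure, which exist in every model of $\Pra$ because the second-order part is PR-closed, together with a designated element $x_n$ for each $n$, arranged so that any isomorphism $F\colon A\to B$ is forced to send $x_n$ to a $B$-element with $F(x_n) > h(n)$ (as a natural number). Granting this, $h(n)$ is recovered from a planted ``ruler'' around $x_n$, whose $k$-th rung records by a bounded test on $g$ whether $k < h(n)$, by a \emph{bounded} minimisation with bound $F(x_n)$; bounded minimisation is available in $\Pra$, so $h$ is primitive recursive in $F$ and $g$. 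Since categoricity supplies such an $F$ as a function in the model, $h$ exists, and minimisation, hence $\qfac$, follows. Equivalently, in any model of $\Pra + \neg\qfac$ the pair $(A,B)$ admits no isomorphism, so categoricity fails.

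The main obstacle is exactly this reversal construction: one must guarantee that \emph{every} isomorphism, not merely the canonical back-and-forth one, supplies the bound $F(x_n) > h(n)$, and this must be engineered in the presence of the rich automorphism groups of these homogeneous structures, so that no single element can be pinned down by its finite type. I would carry out a stagewise construction of $A$ and $B$ in which, while keeping both copies isomorphic to the intended structure by meeting the homogeneity requirements, one ensures by a priority-style bookkeeping that the elements of $B$ below $h(n)$ are prevented from realising the finite configuration that $F(x_n)$ must realise, thereby pushing $F(x_n)$ past $h(n)$. The dense linear order case is the most delicate, since all finite suborders are isomorphic and gadgets cannot be distinguished by finite type; here I would code through the relative positions of a planted increasing sequence rather than through finite substructure types, adapting the classical proof of the computable categoricity of $(\mathbb{Q},<)$ and tracking its back-and-forth modulus. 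The random graph and atomless Boolean algebra cases are more forgiving, because finitely many vertices, respectively a finite subalgebra, can be given pairwise distinct isomorphism types into which $n$ and the current approximation to $h(n)$ are encoded.
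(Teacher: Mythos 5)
Your forward direction is essentially the paper's: obtain a witness-selection function for the $\Pi^0_2$ homogeneity statement from $\qfac$ (equivalently from totality of minimisation, \cref{prop:mini}), run the alternating back-and-forth by primitive recursion in that function, and verify the invariants by $\Delta^0_0$-induction. No complaint there.

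The reversal, however, has a genuine gap, and it sits exactly where you locate "the main obstacle". You propose to designate a single element $x_n$ and to arrange that \emph{every} isomorphism $F\colon A\to B$ satisfies $F(x_n)>h(n)$, so that $F(x_n)$ can serve as the bound for a bounded minimisation. This is impossible for all three structures: their automorphism groups act transitively on elements (on $B\setminus\{0,1\}$ in the Boolean algebra case), so if one isomorphism sends $x_n$ to $b$, then composing with an automorphism of $B$ sends $x_n$ to any prescribed element of the orbit of $b$, which is (essentially) all of $B$. The "finite configuration that $F(x_n)$ must realise" is a $1$-type over the empty set, and there is only one such type; no stagewise bookkeeping in the construction of $B$ can prevent small-indexed elements from realising it, and your fallback for the linear order ("relative positions of a planted increasing sequence") is a gesture rather than an argument.

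What actually works --- and what the paper does in \cref{Lem_CatDense} and its two companions --- is to shift the coding from single elements to tuples or intervals of the coded copy $\mathcal{B}$ whose relevant \emph{realizations} all involve a late-enumerated, hence numerically large, element. For the dense order, the interval $(4k+2,4k+6)_{\mathcal{B}}$ is kept empty until a witness for $\psi(k,\cdot)$ appears and is only then filled with fresh odd numbers, so every point strictly between $4k+2$ and $4k+6$ bounds a witness; one maps the designated pair forward, applies the Skolem function of the reference copy $\mathcal{A}$ to get a point between the images, and pulls it back with the inverse isomorphism. For the random graph and the atomless Boolean algebra one instead pushes forward a large finite configuration (a non-edge among $t_k+1$ nodes, or $m$ elements below a designated $d$) and uses the counting machinery of \S\ref{sec:fin} to conclude that at least one image has large index. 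In every case the bound is extracted from a tuple-level pigeonhole or betweenness argument, never from the image of a single designated point; your write-up is missing this idea, and without it the reversal does not go through.
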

The rest of the section is devoted to the proof of the theorem.
We will define all terms used in the theorem very shortly.

\

We begin with the folklore result about dense linear orders.
Categoricity of countable dense linear orders without end points says that, whenever  $(\mathbb{N},<_A)$ and $(\mathbb{N},<_B)$ are dense linear orders without end points, then there exists an isomorphism $h$ from $(\mathbb{N},<_A)$ onto $(\mathbb{N},<_B)$ such that its inverse $h^{-1}$ also exists.

\begin{proposition}\label{Lem_CatDense}
 Over $\Pra$, $\rca$ is equivalent to categoricity of countable dense linear orders without end points.
\end{proposition}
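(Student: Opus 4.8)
The plan is to prove both directions of the equivalence over $\Pra$. The statement asserts that $\rca$ (identified with $\qfac$) is equivalent to the following categoricity principle: whenever $(\Nb, <_A)$ and $(\Nb, <_B)$ are dense linear orders without endpoints, there is an isomorphism $h$ between them whose inverse $h^{-1}$ also exists.

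For the \textbf{forward direction} ($\rca \imp$ categoricity), I would formalise the classical back-and-forth argument and check that it goes through under $\qfac$. The usual construction builds a finite partial isomorphism stage by stage, alternately adding the least unmatched element from the domain of $<_A$ and the least unmatched element from $<_B$. At each stage, given a finite partial isomorphism and a new element $a$ to be matched, one must \emph{search} for a target element $b$ in $(\Nb, <_B)$ that respects the order relations with the finitely many already-matched points; density without endpoints guarantees such a $b$ exists, but finding it requires an unbounded search. This is exactly where $\qfac$ (equivalently, totality of minimisation, by \cref{prop:mini}) does the work: the statement ``for every finite configuration there exists a suitable next target'' is $\forall \exists$ with a $\Delta^0_0$ matrix, so $\qfac$ produces a choice function picking the witnesses uniformly, and primitive recursion (available in $\Pra$) then assembles these into the full isomorphism $h$ together with $h^{-1}$. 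One uses the calculus of finite sets from \S\ref{sec:fin} to manage the finite partial maps. Since $\qfac$ implies $\is 0 1$ by \cref{Prop_QfacNotIS01}, the induction needed to verify that $h$ is a total order-isomorphism is also available.

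For the \textbf{reverse direction} (categoricity $\imp \qfac$), which I expect to be the main obstacle, I would encode an arbitrary instance of a choice/minimisation problem into a pair of dense linear orders so that any isomorphism between them decodes a solution. By \cref{prop:mini} it suffices to derive totality of minimisation: given $g \colon \Nb^2 \to \Nb$ with $\forall n \, \exists m \, (g(n,m)=0)$, produce $h(n) = \mu m \,(g(n,m)=0)$. The idea is to build one order $(\Nb, <_A)$ as a ``standard'' copy of $\mathbb{Q}$ (e.g.\ the canonical primitive recursive dense order on $\Nb$) and a second order $(\Nb, <_B)$ in which information about the least witnesses is primitive-recursively encoded into the local structure around certain marked points, while keeping $<_B$ dense without endpoints. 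The plan is to arrange that recovering the isomorphism $h$ (and in particular reading off where specific elements of one order land in the other) forces one to have located the witnesses $\mu m\,(g(n,m)=0)$; extracting this data from $h$ and $h^{-1}$ by a bounded/primitive recursive procedure then yields the minimisation function.

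The \textbf{hard part} is the reverse direction: designing the coding so that $(\Nb, <_B)$ is provably (in $\Pra$) a dense linear order without endpoints \emph{regardless} of whether the witnesses have yet been found, while still guaranteeing that the mere \emph{existence} of an isomorphism forces the witnesses into existence. One must be careful that the construction of $<_B$ is genuinely primitive recursive (no unbounded search smuggled in), and that the decoding of $h$ into the minimisation function uses only bounded search plus $\Delta^0_0$-comprehension, both available in $\Pra$. A delicate point is that $\Pra$ does not prove $\is 0 1$, so every induction used to verify correctness of the coding and decoding must be checked to be $\Delta^0_0$ (or otherwise justified), and one cannot tacitly assume the target witnesses form a set with nice closure properties before the isomorphism is invoked.
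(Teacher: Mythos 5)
Your proposal is correct and follows essentially the same route as the paper: the forward direction applies $\qfac$ once to extract density/endpoint witness functions and then runs the back-and-forth by primitive recursion, and the reversal codes a $\Delta^0_0$ instance into a primitively recursively presented dense order $\mathcal{B}$ (dense only because witnesses exist, with insertion delayed until a witness appears) against a standard order $\mathcal{A}$ equipped with a Skolem function, so that $h$ and $h^{-1}$ yield a numerical bound inside each marked interval from which the solution is recovered by bounded search. The only cosmetic differences are that the paper applies $\qfac$ to pairs rather than to whole finite configurations, and derives $\qfac$ directly rather than via totality of minimisation (equivalent by \cref{prop:mini}).
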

\begin{proof}
We give a rather detailed proof, but in later arguments  similar details will be omitted.

\

 $(\Imp)$. Let $(A,<_A)$ and $(B,<_B)$ be two  dense linear orders without end points. Since it holds that
 \[
 	\forall \la a_0,a_1 \ra \, \exists \la c, d, e\ra \, (a_0 <_A a_1 \rightarrow c <_A a_0 <_A d <_A a_1 <_A e ), 
\]
then by $\qfac$ (identified with $\rca$), there exists $f \colon A \to B$ which given any pair of distinct elements of $A$, returns a triple constituted of one element smaller than the pair, one in-between the pair, and one greater than the pair, namely such that
\[
	\forall \la a_0,a_1 \ra \, (a_0 <_A a_1 \rightarrow \pi_1f(\la a_0,a_1\ra) <_A a_0 <_A \pi_2f(\la a_0,a_1\ra) <_A a_1 <_A \pi_3f(\la a_0,a_1\ra).  
\]
Following an analogous reasoning, we also get $g\colon \Nb \to \Nb$ which does the same for $B$.

We define an isomorphism $h \colon A \to B$ by the usual back-and-forth argument. Without loss of generality, we may assume that $0<_{A}1$ and $0<_{B}1$. So, beforehand we put $h(0) = 0$ and $h(1) = 1$.

Assume $h$ is a partial isomorphism between $\{a_0, \dots, a_n\}$ and  $\{b_0, \dots, b_n\}$, where $n\geq 1$, $a_0=b_0 = 0$, and $a_1 = b_1 = 1$. Assume that $a_{i_0} <_A \dots <_A a_{i_n}$, and let $a_{n+1}$ be the least element of $A \setminus \{a_0, \dots, a_n\}$. There are three cases to be considered:
 \begin{enumerate}
   \item $a_{i_n} <_A a_{n+1}$, then let $h(a_{n+1})=\pi_3g(\la 0, h(a_{i_n})\ra)$, 
   \item $a_{n+1} <_A a_{i_0}$, then let $h(a_{n+1})=\pi_1g(\la h(a_{i_0}), 1\ra)$, 
   \item $a_{i_j} <_A a_{n+1} <_A a_{i_k}$, for some $j,k \le n$. Then let $h(a_{n+1})=\pi_2g(\la h(a_{i_j}), h(a_{i_k})\ra)$. 
 \end{enumerate}
 When we have to define $b_{n+1}$, we do the same using $f$ in place of $g$.

It is immediate to prove that $h$ is injective, surjective (recall that $\iso$ is implied by $\qfac$), and respects $<_A$ and $<_B$. In addition, the construction also gives the existence of the inverse map $h^{-1}$.

\smallskip

$(\Leftarrow)$. Our argument relies on a coding strategy from  Theorem~2 of~\cite{BazhKal-21}.
Let $\psi(x,y)$ be a $\Delta^0_0$ formula (possibly with function parameters) such that $\forall x \exists y \psi(x,y)$. We need to build a function $f(x)$ such that $\forall x \psi(x,f(x))$. 

First, we fix a dense linear order $\mathcal{A} = (\mathbb{N},<_{A})$ that comes with the Skolem function $g_A(x,y)$: if $x<_A y$, then $x<_A g_A(x,y) <_A y$.
We can appeal to, e.g., the standard construction of the rationals adapted to $\Pra$ and then either illustrate that the Skolem function is primitive recursive or appeal to
 Propositions~\ref{pr:m1} and \ref{pr:m2} (and the well-known fact that the theory of dense linear orders admits primitive recursive elimination of quantifiers) to conclude that such a dense linear order exists.

\begin{footnotesize}\begin{remark}\label{rem:dense-ext} It is also not hard to argue in $\Pra$ directly, and explicitly  define $\mathcal{A} = \bigcup_{s\in \mathbb{N}} A_s$  by primitive recursion as follows.
Define $A_0 = \{ 0<_{A} 1\}$. Consider the primitive recursive function
\[
	q(0) = 2, \quad q(x+1) = 2q(x)+1.
\]
Suppose we have $A_s = \{  a^s_1 <_A a^s_2 <_A \dots <_A a^s_{q(s)}\}$.

We choose the least numbers $c_0 <_{\mathbb{N}} c_1 <_{\mathbb{N}} \dots <_{\mathbb{N}} c_{q(s)}$ from $\mathbb{N} \setminus A_s$, and we define
\[
	A_{s+1} = \{  c_0 <_A a^s_1 <_A  c_1 <_A a^s_2 <_A  c_2 <_A  \dots  <_A a^s_{q(s)} <_A c_{q(s)}\}.
\]
Say that $A_{s+1}\setminus\{ c_0,c_{q(s)}\}$ is the \emph{finite dense extension} of the order $A_s$ by numbers $c_1,c_2,\dots,c_{q(s)-1}$.
The desired order $\mathcal{A}$ is defined as follows: $x <_A y$ if and only if inside the finite order $A_{\max(x,y)}$, $x$ is less than $y$.
Similarly to the previous proofs, one can argue in  $\pra^2$ and show that $\mathcal{A}$ is a well-defined linear order with domain $\mathbb{N}$. In addition, there is a function $g_A(x,y)$ with the following property: if $x<_A y$, then $x<_A g_A(x,y) <_A y$. This, in particular,  shows that the order $\mathcal{A}$ is dense. In a similar way, one can show that $\mathcal{A}$ does not have end points.
\end{remark}
\end{footnotesize}

\smallskip

Second, we define another dense linear order $\mathcal{B} = (\mathbb{N}, <_{B})$. This order `encodes' the formula $\psi(x,y)$. Without loss of generality, we may assume that $\psi(0,0)$ is true. The order $\mathcal{B}$ is built by primitive recursion, as follows.

 We put $B_0 = \{ 0 <_{B} 4 <_B 2 <_B 6\}$. Assume we have defined $B_s$ (going from $<_B$-left to $<_B$-right) such that:
\begin{itemize}
	\item the order $A_s$ is copied on the numbers $4k$~--- more formally, we have $\{ 4a^s_1 <_B 4a^s_2 <_B \dots <_B 4 a^s_{q(s)}\}$;
	
	\item for each $k\leq s$, the interval $[4k+2;4k+6]_{B_s}$ is such that each number $x$ strictly between $4k+2$ and $4k+6$ is odd.
\end{itemize}
The order $B_{s+1}$ is then defined as follows:
\begin{enumerate}
	\item We add the number $4s+10$ as its greatest number.
	
	\item In a natural way, we extend the copy of $A_s$ to the copy of $A_{s+1}$.
	
	\item For each $k\leq s+1$, if $(\exists y \leq s+1) \psi(k,y)$, then the $B_{s+1}$-interval $[4k+2;4k+6]_{B_{s+1}}$ is constructed as the finite dense extension of $[4k+2;4k+6]_{B_{s}}$ (see Remark~\ref{rem:dense-ext}) by the least odd numbers not belonging to $\mathrm{dom}(B_{s+1})$ at the moment.
\end{enumerate}

Similarly to $\mathcal{A}$, we say that $\mathcal{B} \models (x <_B y)$ if and only if inside the finite order $B_{\max(x,y)}$, $x$ is less than $y$. It is not hard to show that $\mathcal{B}$ is a well-defined linear order on $\mathbb{N}$. In addition, it does not have end points.

One can easily prove that there is a function $g'_B(x,y)$ with the following property: if $x<_B y$ and $\{ x,y\} \neq \{ 4k+2,4k+6\}$, then $x <_B g'_B (x,y) <_B y$.

In order to show that $\mathcal{B}$ is dense, now we need to consider the remaining non-trivial case: suppose that $x = 4k+2$ and $y = 4k+6$. Then we know that there exists $z_0$ such that $\psi(k,z_0)$ holds. Then our construction ensures that inside the order $B_{\max(4k+6,z_0)}$, there exists an element $w$ with the property $x <_B w <_B y$.

\smallskip

Let $h$ be an isomorphism from $\mathcal{B}$ onto $\mathcal{A}$. Consider the function
\[
	\xi(k) = h^{-1}(g_A( h(4k+2), h(4k+6))).
\]
The construction of $\mathcal{B}$ guarantees the following: the number $\xi(k)$ is odd, and
\[
	\xi(k) \geq \text{the least $s$ such that the interval } [4k+2;4k+6]_{B_s} \text{ contains odd numbers}.
\]
Hence, we deduce $(\exists y \leq \xi(k)) \psi(k,y)$. We define the function $f(k) := (\mu y \leq \xi(k))[\psi(k,y)]$. It is clear that we have $\forall x \psi(x,f(x))$. \cref{Lem_CatDense} is proved.
\end{proof}






\

\begin{definition}\label{Def_Random}
An undirected graph $(\nat,E)$ is \emph{random} if for each pair of disjoint non-empty finite sets $X,Y \subseteq \nat$, there exists a vertex $z \in \nat$ such that $\forall x \in X \,(x \E z)$  and  $\forall y \in Y \,\neg(y \E z)$.
\end{definition}

\begin{proposition}\label{Lem_CatRandom}
  Over $\Pra$, $\rca$ is equivalent to categoricity of countable random graphs.
\end{proposition}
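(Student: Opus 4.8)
The plan is to mirror the structure of the proof of \cref{Lem_CatDense}, replacing the dense-linear-order machinery with the corresponding random-graph machinery. The forward direction $(\Imp)$ is the standard back-and-forth argument, which I expect to go through over $\rca = \qfac$ with only the usual care about bounded search. The reverse direction $(\Leftarrow)$ will again be a coding argument: I would encode an arbitrary instance $\forall x\,\exists y\,\psi(x,y)$ (for a $\Delta^0_0$-formula $\psi$, which suffices by \cref{Lemma_Delta00EquivQf}) into two random graphs whose only available isomorphism is forced to reveal witnesses for $\psi$.

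For $(\Imp)$, let $(\nat,E_A)$ and $(\nat,E_B)$ be random graphs. The randomness property from \cref{Def_Random} has the shape $\forall (X,Y)\,\exists z\,[\dots]$ quantifying over disjoint finite $X,Y$; using the calculus of finite sets from \S\ref{sec:fin} (pairs $(f,\ell)$ coded as in \cref{lemma:fin-tuple}), this is a $\forall\exists$ statement with a $\Delta^0_0$ matrix, so $\qfac$ produces Skolem functions $s_A, s_B$ returning, for each pair of disjoint finite sets, an appropriate extension vertex. I would then run the usual back-and-forth construction by primitive recursion to build a partial isomorphism $h$ extended one point at a time: at stage $n+1$, to place the least unmapped vertex $a_{n+1}$ of $A$ into $B$, I split its neighbours and non-neighbours among the already-matched vertices into $X$ and $Y$, apply $s_B(X,Y)$ to obtain the image, and alternate the roles of $A$ and $B$ to guarantee surjectivity. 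As in \cref{Lem_CatDense}, $\qfac$ gives $\is 0 1$ (\cref{Prop_QfacNotIS01}), which is what makes the verification that $h$ is a total, surjective isomorphism with existing inverse $h^{-1}$ go through.

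For $(\Leftarrow)$, the plan is to build one ``master'' random graph $\mathcal{A}$ by primitive recursion (analogous to the explicit stage-by-stage construction in Remark~\ref{rem:dense-ext}, but now using the extension axiom for graphs to guarantee randomness), together with a primitive recursive extension function witnessing \cref{Def_Random} for $\mathcal{A}$. I would then build a second graph $\mathcal{B}$ that copies $\mathcal{A}$ on a reserved set of vertices but, for each $k$, reserves a finite ``gadget'' whose extension axiom for a specific pair $(X_k,Y_k)$ of sets is realized in $\mathcal{B}$ \emph{only after} a witness $y$ with $\psi(k,y)$ has appeared (i.e.\ we add the required extension vertex at stage $s$ precisely when $(\exists y\le s)\,\psi(k,y)$). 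Because $\mathcal{B}$ must genuinely be random (every instance of \cref{Def_Random} must eventually be satisfied), and because $\forall x\,\exists y\,\psi(x,y)$ holds, the gadget for each $k$ \emph{is} eventually completed, so $\mathcal{B}$ is a bona fide random graph. Given any isomorphism $h\colon \mathcal{B}\to\mathcal{A}$, transporting the chosen extension vertex for $(X_k,Y_k)$ through $h$ and back (as with $\xi(k)$ in \cref{Lem_CatDense}) yields a primitive recursive bound $\xi(k)$ on the stage by which the $k$-gadget was completed, hence a bound on a witness: $(\exists y\le \xi(k))\,\psi(k,y)$. Setting $f(k) = (\mu y\le \xi(k))[\psi(k,y)]$ by bounded minimisation gives the desired choice function, establishing $\qfac$.

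The main obstacle I anticipate is designing the gadget in $\mathcal{B}$ so that (i) the single vertex whose existence encodes a witness for $\psi(k,\cdot)$ is the \emph{only} way the extension axiom for its designated pair $(X_k,Y_k)$ can be satisfied up to a known stage, so that the isomorphism is genuinely forced to locate it, while simultaneously (ii) $\mathcal{B}$ remains random overall, so that an isomorphism onto $\mathcal{A}$ is guaranteed to exist. Reconciling these two demands — local rigidity of the encoding against global homogeneity — is the delicate point; the reserved-vertex bookkeeping (keeping the ``copy of $\mathcal{A}$'' part and the gadget parts on disjoint, primitive-recursively separated vertex classes, exactly as the $4k$ versus odd-number split does in \cref{Lem_CatDense}) is the mechanism I would use to keep these two requirements from interfering, and verifying it stays within $\Delta^0_0$-induction and $\Delta^0_0$-comprehension is where the real work lies.
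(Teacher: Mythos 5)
Your forward direction is fine and matches what the paper intends: the extension property of \cref{Def_Random} is a $\forall\exists$ statement with $\Delta^0_0$ matrix once finite sets are coded as in \S\,\ref{sec:fin}, so $\qfac$ yields Skolem functions and the back-and-forth goes through. The gap is in the reverse direction, and it sits exactly where you locate it. Your plan requires, inside a single graph $\mathcal{B}$ that must satisfy \cref{Def_Random}, infinitely many designated pairs $(X_k,Y_k)$ whose extension type is realised \emph{only} by the deliberately added gadget vertex until a witness for $\psi(k,\cdot)$ appears. Unlike the order case, where the interval $(4k+2,4k+6)$ is a convex region that no vertex can enter by accident, in a graph \emph{every} vertex you ever add carries a complete adjacency pattern over $X_k\cup Y_k$; to avoid accidentally realising the $k$-th type you must arrange that each such vertex either misses some element of $X_k$ or hits some element of $Y_k$, and you must do this simultaneously for all $k$ while still meeting every extension requirement of \cref{Def_Random} (some of which explicitly demand vertices avoiding all of $Y_k$ or adjacent to all of $X_k$). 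You have not shown that these constraints are jointly satisfiable, and the paper's own warning that the witnesses ``will no longer be independent from each other'' is precisely the statement that this parallel, interval-style coding does not transfer from \cref{Lem_CatDense}.

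The paper avoids the problem by coding sequentially rather than in parallel. There, $\mathcal{B}$ begins as a growing clique until the least $s$ with $\psi(0,s)$ appears, at which point one vertex isolated from everything so far is added, so $f(0)$ is recovered from \emph{any} two non-adjacent vertices; one then runs the Fra{\"\i}ss{\'e} construction for a primitively recursively bounded number of steps up to a stage $t_0$, and codes $f(1)$ by adding only fully connected fresh vertices until $\psi(1,s_1)$ fires, and so on. Decoding uses no designated extension type at all, only a cardinality argument: given an isomorphism $g\colon\mathcal{A}\to\mathcal{B}$ and the stage $t_k$, pick $(t_k+1)$-many vertices of $\mathcal{A}$ at least one of which is non-adjacent to the rest; by $\Delta^0_0$-induction one of their $g$-images has index at least $f(k+1)$. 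The price is that decoding $f(k+1)$ genuinely needs $f(k)$ (to know $t_k$), but nothing has to be kept ``rigid'' inside a homogeneous structure. You should either switch to this sequential scheme or supply an explicit, verified construction of your gadget system; as written, the reverse direction is not yet a proof.
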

\begin{proof}[Extended sketch]

Assuming categoricity, we sketch how to prove $\qfac$.
The construction is similar to \cref{Lem_CatDense}, but we have to be more careful with `witnesses' since they will no longer be independent from each other.
  We observe that the standard construction of the random graph via Fra{\"\i}ss{\'e} limit of finite graphs is primitive recursive; we use $\Delta^0_0$-induction to verify
 that the resulting structure $\mathcal{A}$ indeed satisfies \cref{Def_Random}
   and, furthermore, has a primitive recursive Skolem function for existential formulae.

We also fix an instance $\psi(x,y)$ of $\qfac$ and define a `bad' random graph $\mathcal{B}$ by primitive recursion, as follows.
 Define $B_s$ to be a clique on $s$ nodes unless  $\psi(0,s)$ holds; by bounded minimisation, we can assume that $s$ is the least such (in other words, `$f(0) =s$', where $f$ is the minimal solution to the instance).
In this case define $B_{s_0}$ by adding a new point not connected to any other point defined so far. (Note that, to calculate $f(0)$ primitively recursively, it is now sufficient to find at least two nodes in $\mathcal{B}$ not connected by an edge.)

Then we temporarily switch to defining $\mathcal{B}$ according to
the standard Fra{\"\i}ss{\'e} construction, but beginning with $B_{s_0}$ (rather than with
the empty graph).  We continue according to the Fra{\"\i}ss{\'e} construction  until the $n$th finite configuration requirement, in the primitive recursive list of Fra{\"\i}ss{\'e} extension requirements, is met.  This way we define $B_{t_0}$ where $t_0$ is uniformly primitive recursive in $s_0$.

We then turn to coding $f(1)$, as follows. Resume adding fresh nodes to $\mathcal{B}$
and declare them connected to the already existing nodes. Do so unless $\psi(1,s_1)$ holds (where $s_1$ is the least such). If  $\psi(1,s_1)$ holds, then we declare that the node $s_1$
is not connected to all nodes $x< s_1$.
(Note that, to calculate $f(1)$ primitively recursively, it is now sufficient to find at least $(t_0+1)$-many nodes in $\mathcal{B}$ at least one of which is not connected to the rest of nodes by an edge.)
  We then switch again to the Fra{\"\i}ss{\'e} construction for primitively recursively many steps,
  and then code $f(n+1)$ primitively recursively using $f(n)$, $B_{t_n}$, and $\psi(n+1,x)$.

  Using the materials of \S\,\ref{sec:fin} we can argue that the definition of $\mathcal{B}$ is primitive recursive, so $\mathcal{B}$
  exists, and that it satisfies \cref{Def_Random}. For the latter, we appeal to the  Fra{\"\i}ss{\'e} construction  which is used simultaneously with the coding, albeit with a potentially unbounded `delay'.

  Now suppose $g$ is an isomorphism from $\mathcal{A}$ to $\mathcal{B}$. To calculate
  $f(0)$ so that $\psi(0, f(0))$ holds, primitively recursively pick a pair of points in $\mathcal{A}$ not connected by an edge and calculate their $g$-images in $\mathcal{B}$.
  Assume $f(0), \ldots, f(k)$ have already been calculated.  Primitively recursively, fix $(t_k+1)$-many nodes in $\mathcal{A}$ so that at least one of them is not connected to the rest by an edge. By $\Delta^0_0$-induction, at least one of the $g$-images of these nodes has index $d \geq f(k+1)$.
\end{proof}

\

We finish this section with a similar, also expected, result about countable atomless Boolean algebras. We view a Boolean algebra as an algebraic structure in the signature $(\vee, \wedge, \neg{\,}, 0, 1)$ satisfying the standard axioms of Boolean algebras. We say that a Boolean algebra is atomless if for every $x\neq 0$, there exist non-zero $z, y$ such that $z \vee y = x$ and $z \wedge y = 0 $; all these definitions can be formalised in $\Pra$.

\begin{proposition}[$\Pra$]
  Over $\Pra$, $\rca$ is equivalent to categoricity of countable atomless Boolean algebras.
\end{proposition}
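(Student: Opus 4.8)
The plan is to follow the template of \cref{Lem_CatDense} and \cref{Lem_CatRandom} and prove both implications, exploiting throughout that a finite Boolean algebra is completely determined by its finite set of atoms, so that a partial isomorphism between finite subalgebras can be recorded as a bijection between their atoms.

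For the forward direction I would first use that atomlessness is the statement $\forall x \neq 0\, \exists \la z, y\ra\,(z \vee y = x \land z \wedge y = 0 \land z \neq 0 \land y \neq 0)$, whose matrix is $\Delta^0_0$; applying $\qfac$ to each of the two given algebras $\mathcal{A}$ and $\mathcal{B}$ yields splitting functions $s_A, s_B$ that return, for any nonzero input, a proper nontrivial partition of it. I would then run the usual back-and-forth, maintaining at stage $n$ a partial isomorphism $h$ between the subalgebras generated by $\{a_0, \ldots, a_n\}$ and $\{b_0, \ldots, b_n\}$, encoded by the induced atom-bijection. To add the least missing $a_{n+1}$, compute the atoms $c_1, \ldots, c_r$ of the current subalgebra of $\mathcal{A}$: each $c_i$ either lies below $a_{n+1}$, is disjoint from it, or is split into the two nonzero pieces $c_i \wedge a_{n+1}$ and $c_i \wedge \neg a_{n+1}$; in the last case I would split the image atom $h(c_i)$ using $s_B$ and extend $h$ over the refined atoms, leaving $h$ unchanged on unsplit atoms. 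The symmetric step, using $s_A$, handles $b_{n+1}$. Each stage is the output of a primitive recursive functional once parameters are fixed, $\Delta^0_0$-induction on the length of the approximation shows $h$ stays a partial isomorphism, and totality, surjectivity, and the existence of $h^{-1}$ follow as in \cref{Lem_CatDense} using that $\qfac$ implies $\iso$ (\cref{Prop_QfacNotIS01}).

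For the reverse direction I would adapt the coding of \cref{Lem_CatDense}. First fix, by primitive recursion, a `good' countable atomless Boolean algebra $\mathcal{A}$ built as the Fra{\"\i}ss{\'e} limit of finite Boolean algebras, and verify by $\Delta^0_0$-induction (as in \cref{Lem_CatRandom}) that it is atomless and carries a primitive recursive splitting function $g_A$ sending each nonzero $x$ to a nonzero $z$ with $z \wedge x = z$ and $z \neq x$. Given an instance $\psi(x,y)$ of $\qfac$ with $\forall x \exists y\, \psi(x,y)$, I would build a `bad' atomless algebra $\mathcal{B}$ by primitive recursion, reserving pairwise disjoint elements $e_0, e_1, \ldots$ and keeping $e_k$ an atom until the first stage at which some $\psi(k,s)$ holds, at which point I begin to split $e_k$ densely while interleaving the generic Fra{\"\i}ss{\'e} construction to keep $\mathcal{B}$ globally atomless. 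Because $\forall x \exists y\, \psi(x,y)$, every $e_k$ is eventually split, so $\mathcal{B}$ is atomless. Given any isomorphism $h\colon \mathcal{B} \to \mathcal{A}$, the element $h^{-1}(g_A(h(e_k)))$ is a proper nonzero part below $e_k$, witnessing that $e_k$ was already split; arranging that splitting elements receive fresh large indices, its index $\xi(k)$ bounds the stage where the first witness to $\psi(k,\cdot)$ appeared, so $(\exists y \leq \xi(k))\,\psi(k,y)$ and $f(k) := (\mu y \leq \xi(k))[\psi(k,y)]$ solves the instance.

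I expect the reverse direction to be the main obstacle: one must arrange the delayed splitting of the $e_k$ so that the construction of $\mathcal{B}$ remains primitive recursive, so that $\mathcal{B}$ is genuinely atomless despite the potentially unbounded delay before each $e_k$ is split, and so that the index produced by $h^{-1}\circ g_A\circ h$ truly bounds the witness search --- precisely the interplay between coding and a generic construction `with delay' met in \cref{Lem_CatRandom}. The forward direction should be routine once the splitting functions are in hand; the only wrinkle, absent in the linear-order case, is that adding a single element can split several atoms simultaneously, so the bookkeeping runs over the atom-bijection rather than over individual points.
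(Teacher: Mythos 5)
Your proposal is correct, and while the forward direction matches what the paper leaves implicit (it only says the folklore computable back-and-forth formalises in $\rca$; your atom-bijection bookkeeping is the right way to spell it out), your reverse direction uses a genuinely different coding from the paper's. You reserve one element $e_k$ per input $k$, keep it an atom until a witness for $\psi(k,\cdot)$ appears, then split it densely; the solution is recovered pointwise, since $h^{-1}(g_A(h(e_k)))$ is a proper part of $e_k$ whose (deliberately large) index bounds the stage at which the first witness appeared. This is a direct transplant of the interval coding from \cref{Lem_CatDense}. The paper instead reserves a \emph{single} element $d$ and codes the instance into the \emph{cardinality} of the set of elements placed below $d$: a new element goes below $d$ at stage $s$ only when the function $\ell(s)$ (the length of the longest initial segment of inputs all having witnesses $\leq s$) increases, and the solution is recovered in blocks by using the Skolem function of $\mathcal{A}$ to produce $m$ disjoint nonzero elements below $g^{-1}(d)$, pushing them through the isomorphism, and reading off $f(0),\dots,f(m-1)$ from their indices via the calculus of finite sets of \S\,\ref{sec:fin}. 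Your version buys a cleaner per-input recovery and needs only a splitting function on $\mathcal{A}$, at the cost of maintaining infinitely many reserved elements whose delayed splitting must be interleaved with the generic construction (the same bookkeeping burden as in \cref{Lem_CatRandom}); the paper's version concentrates all the coding below one element and leans on the cardinality machinery instead. Both recoveries ultimately rest on the same fact that fresh elements receive indices at least as large as the stage at which they are created, so either route yields $(\exists y \leq \xi(k))\,\psi(k,y)$ and hence a solution to the $\qfac$ instance.
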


\begin{proof}[Sketch] The usual, the folklore `computable' proof can be formalised in $\rca$.

 Following the general pattern,
we observe that in $\Pra$ there is the `natural' atomless Boolean algebra $\mathcal{A}$ with a Skolem function.
We informally explain how to code an instance of $\qfac$ into an atomless Boolean algebra $\mathcal{B}$ so that any isomorphism from $\mathcal{A}$ onto $\mathcal{B}$ can be used to primitively recursively recover a solution to the instance.
As before, fix a $\Delta^0_0$ instance  $\psi(x,y)$ of $\qfac$.

We use properties of finite sets throughout (\S\,\ref{sec:fin}). Without loss of generality, we may assume that we have $\neg\psi(x,y)$ for every $y < x$: if needed, replace $\psi(x,y)$ with
\[
	\psi'(x,y) = \begin{cases}
		\text{false}, & \text{if } y<x,\\
		\psi(x,y-x), & \text{if } y\geq x.
	\end{cases}
\]
Then using primitive recursion, we can define the function $\ell(s)$ that outputs
the cardinality of the longest initial segment of $\mathbb{N}$
such that, for every element $x$ of this segment, $\psi(x, y)$ holds for some $y \leq s$.

In $\mathcal{B}$, reserve a special element $d \notin \{0,1\}$. Define $\mathcal{B} = \bigcup_s B_s$ by initial segments so that
 a new element $s$ is added below $d$ in $B_s$ only if $\ell$ has increased.
 (Otherwise,  adjoin a new element below $\neg d$.)
 Note that in this case $s$ bounds all witnesses that have been used in the definition of $\ell$. Informally, the numbers of elements below $s$ `code' the enumeration stages of a solution of the instance of $\qfac$. Since $\psi$ was an instance of $\qfac$, it follows by $\Delta^0_0$-induction that the resulting $\mathcal{B}$ satisfies the definition of a countable atomless Boolean algebra.

  We can also  show in $\Pra$ that there is a function which, on input (an index of) a finite set
  with at least $m$ elements below $d$, outputs the finite tuple of solutions
  $f(x)$ for all $x < m$ (together with their common bound).

  Now, if $g: \mathcal{A} \rightarrow \mathcal{B}$ is an isomorphism, then we can fix $d' \in \mathcal{A}$ such that $f(d') = d$. Since $\mathcal{A}$ possesses a Skolem function, given $m$ we can calculate a finite set $D$ containing only elements in $\mathcal{A}$ that lie below $d'$
  and such that $|D| = m$.
  Since $f$ is an isomorphism, it follows that  $f(D)$ is a finite subset below $d$ having the same cardinality as $D$. By the argument outlined above, this gives a primitive recursive procedure that defines a solution to the instance of $\qfac$.
\end{proof}

Note that each  categoricity result in Theorem~\ref{thm:easyhomo} evidently holds in $\rca$.
In fact, (1)--(3) of Theorem~\ref{thm:easyhomo} would be provable in $\Pra$ if we used structures augmented with a Skolem function
for existential formulae.
It is expected that results in $\Pra$ are more sensitive to the choice of coding than similar results in $\rca$. However,
$\rca$ also distinguishes between `structures' and `structures with Skolem functions': this reflects that, in computable algebra, not every computable structure
is decidable. 
Structures with Skolem functions are very useful (and indeed, seem unavoidable) when one needs to appeal to elementary model theory, as will be explained in  \S\,\ref{sebs:models}. More `honest' presentations of graphs will also play a significant role in the subsection below.

\subsection{Infinite combinatorics done in $\Pra$}\label{subsect:combinat}
We claim that many classical results in infinite combinatorics from the literature can be proved in $\Pra$. We give several examples below.
 In many cases we get these results almost for free if we follow proofs from the literature very closely, even though some extra care must be taken. Some of these proofs are non-trivial and quite lengthy.
We therefore shall not give many formal details since it would  drastically inflate the paper.  Thus, some of the claimed results below should perhaps be viewed as strong conjectures since we leave the details to the reader.

Often in combinatorics  theorems that fail to be computable in general become computably true when  restricted to a specific subclass of instances. We recall here two such results, namely Rival-Sands theorem for graphs and Hall's theorem, which in their generality are equivalent to $\aca$ as proved in \cite[Theorem 3.5]{FCSS} and \cite[Theorem 2.2]{hirstPhD} respectively. Nonetheless, `computable' restrictions of these results are also known. In the next subsections we (essentially) verify that those restrictions hold primitively recursively as well. We also have to be careful and make sure that only bounded quantifier induction is used (if any).


\subsubsection{Szpilrajn’s Theorem and graph reorientation} An \emph{oriented graph} is a directed graph such that at most one of the edges between two vertices exist. An oriented graph is \emph{pseudo-transitive} if for every $a,b,c
\in V$ such that $a \imp b$ and $b \imp c$ we have also $a \imp c \lor c \imp a$.  A \emph{reorientation} of an oriented graph $(V,\imp)$ is an oriented graph obtained by reversing some of the edges, or more formaly a relation $R$ on $V$ such that for each
$a,b\in V$, if $a\imp b$ then either $a \R b$ or $b \R a$ and if $a \R b$
then either $a\imp b$ or $b\imp a$.
A \emph{transitive reorientation} of $(V, \imp)$ is a reorientation of $(V,
\imp)$ which is also transitive.

\begin{proposition}
$\Pra$ proves the following:
\begin{enumerate}
    \item Szpilrajn's Theorem, i.e., each poset can be linearly extended.
    \item Every pseudo-transitive oriented graph has a transitive reorientation
\end{enumerate}
\end{proposition}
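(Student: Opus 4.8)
The plan is to treat the two items separately, in both cases adapting the standard effective construction and verifying correctness with only $\Delta^0_0$-induction ($\id 0 0$), which $\Pra$ proves. For Szpilrajn's theorem, suppose $(\Nb,<_P)$ is a partial order presented by the characteristic function of $<_P$. I would build the linear extension as an increasing sequence of finite linear orders $L_s$ on $\{0,\dots,s\}$ by primitive recursion, treating each $L_s$ as a coded finite object in the sense of \S\ref{sec:fin}. Start with $L_0$ the trivial order on $\{0\}$. To pass from $L_s$ to $L_{s+1}$, insert the new point $s+1$ into the already constructed linear order on $\{0,\dots,s\}$: let $D=\{a\le s : a<_P s+1\}$ and $U=\{b\le s : s+1<_P b\}$, and place $s+1$ in the first gap of $L_s$ lying above every element of $D$ and below every element of $U$. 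Transitivity of $<_P$ guarantees that all of $D$ precedes all of $U$ in $L_s$ (if $a\in D$ and $b\in U$ then $a<_P s+1<_P b$, so $a<_P b$ and hence $a<_{L_s}b$, since $L_s$ extends $<_P\restriction\{0,\dots,s\}$), so such a gap exists, and the search for it is bounded. One then sets $a<_L b$ iff $a<_{L_s}b$ for $s=\max(a,b)$.

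The verification is routine but must be carried out by bounded induction. Using $\id 0 0$ one checks simultaneously that each $L_s$ is a linear order on $\{0,\dots,s\}$ extending $<_P\restriction\{0,\dots,s\}$, and (crucially) that the construction is \emph{coherent}, i.e. that inserting $s+1$ never reverses the $L_s$-order of two old points. Coherence makes $<_L$ well defined, and then totality, antisymmetry, transitivity and the extension property all follow by inspecting the single finite order in which the relevant points first appear; for instance, for transitivity one reads $a<_L b<_L c$ off $L_{\max(a,b,c)}$ and uses transitivity of that finite order.

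For item (2) I would first isolate the combinatorial core. Writing $G$ for the underlying undirected graph of $(V,\to)$, a transitive reorientation is exactly a transitive orientation of $G$, i.e. a strict partial order on $V$ whose comparability graph is $G$. Pseudo-transitivity has the pleasant consequence that the \emph{only} obstructions to transitivity of $\to$ itself are directed triangles: whenever $a\to b\to c$, the hypothesis forces $a,c$ to be adjacent, so either $a\to c$ (no violation) or $c\to a$ (a directed $3$-cycle). I would then follow the known effective transitive-orientation procedure for comparability graphs, via Gallai's forcing relation on oriented edges (equivalently, via modular decomposition): the given orientation is used to fix, consistently, a direction on each forcing class, and pseudo-transitivity is precisely what guarantees that no class is oriented in both directions, so that the procedure succeeds and yields a transitive $R$ with the same edge set. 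As in item (1), all cardinality bookkeeping and bounded searches are handled through \S\ref{sec:fin} and Remark~\ref{rem:cardsearch}.

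The hard part is entirely in item (2), and it is the formalisation rather than the mathematics: the forcing relation on edges is $\Delta^0_0$, but its transitive closure (the forcing classes) is a priori $\Sigma^0_1$, and orienting one edge requires propagating a choice through its whole class. To remain inside $\Pra$ I would have to show that this propagation is bounded, i.e. exhibit a primitive recursive functional that, on input an edge, outputs a consistent orientation, and verify by $\id 0 0$ that the resulting global relation is transitive. This is exactly the point at which pseudo-transitivity must be leveraged to replace the unbounded search through a forcing class by a bounded one; making that replacement explicit while keeping every inductive step $\Delta^0_0$ is the main obstacle, and is the reason the full details are naturally left at the level indicated in the surrounding discussion.
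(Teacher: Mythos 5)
Your treatment of item (1) is correct and is essentially the paper's argument: the paper likewise builds the linear extension stage by stage, inserting the newly enumerated point into the finite linear order constructed so far using only the restriction of $<_P$ to the points seen and the partial output, and then checks linearity and the extension property by $\id 0 0$. Your version, with the explicit sets $D$ and $U$ and the coherence check, fills in details the paper leaves implicit; nothing is missing there.

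Item (2) has a genuine gap, and you have in fact put your finger on it yourself without closing it. The route through Gallai's forcing relation requires orienting an entire forcing class consistently, and the forcing classes are the transitive closure of a $\Delta^0_0$ relation on edges, hence a priori $\Sigma^0_1$; in an infinite graph a single class can be infinite, and there is no bound, primitive recursive or otherwise, on how far a forced chain of implications must be traced before the orientation of a given edge is determined. Saying that ``pseudo-transitivity must be leveraged to replace the unbounded search by a bounded one'' is precisely the statement to be proved, and your proposal offers no mechanism for it. The paper avoids forcing classes altogether: it invokes the on-line algorithm of Fiori-Carones and Marcone \cite{FCM}, whose defining feature is that when the $(n+1)$-st vertex arrives, the orientation of every edge between it and the first $n$ vertices is decided from the adjacency data on those $n+1$ vertices and the partial output alone, with no look-ahead. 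That vertex-incremental structure is what makes the functional primitive recursive, and it is a different decomposition of the problem from the edge-class propagation you describe. As written, your argument for (2) reduces the proposition to an unproven claim rather than establishing it; to repair it you would either need to prove a bounded-propagation lemma for forcing classes of pseudo-transitive orientations (which I do not believe holds in the form you would need) or switch to the on-line construction.
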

\begin{proof}[Proof idea]
$(1)$ The proof of the computable version of Szpilrajn's Theorem (see  \cite[Beginning of Sect.~10.2]{Hirschfeldt15})  can be transformed into a proof in $\Pra$. We outline the proof.

Given a poset $(P,<_P)$ and an enumeration of the vertices $(p_n)_{n\in \Nb}$, the linear extension is defined by stages. At a stage $s$ a linear extension $\prec_s$ of $<_P$ has been defined on  $\{p_0, \dots, p_{s-1}\}$. Then at the stage $s+1$, $\prec_s$ is extended with either $p_s \prec p_i$ or $p_i \prec p_s$, for each $i \leq s$. The relation between $p_s$ and $p_i$ is settled via checking only $<_P \upharpoonright \{p_0, \dots, p_{s}\}$ and $\prec_s$. Hence, it does not involve any unbounded search in the input, this means that one can actually write a primitive recursive functional, defined by primitive recursion, that takes $<_P$ as parameter and, at each stage $s$, inspecting (the code for) $s$, outputs (the code for) the linearisation of $\{p_0, \dots, p_s\}$.
In order to verify that the described construction gives a solution, one needs to check that the defined relation is a linear order and that the relation extends $<_P$. This can be done using only bounded induction.

\smallskip

$(2)$ Fiori-Carones and Marcone \cite{FCM} have recently designed an `on-line' algorithm to transitively reorient  pseudo-transitive oriented graphs. As discussed in the cited paper, `on-line' means that there is a functional which, given the pseudo-transitive oriented graph as input, outputs the transitive reorientation. Moreover, one can observe that, once the first $n$ vertices in the enumeration of the graph are transitively reoriented, then the relations between them and the $(n+1)$-st vertex are decided by the algorithm based only on the adjacency relations between those vertices and on the partial output (which transitively reorients the first $n$ vertices). In other words, it is possible to decide the first $n+1$ bits of the output looking only at the first $n+1$ bits of the input, provided that each vertex comes along with the entire information about its adjacency relation with the vertices previously enumerated, and thus no  search, in particular no unbounded search, is needed for the functional. This observation leads us to conclude that that functional is actually primitive recursive. To claim that the statement can be proved in $\Pra$, one also needs to check that the induction used in the proof is limited to $\id 0 0$.
\end{proof}

\subsubsection{Rival-Sands theorem  and graph colouring}   

Let $(V,E)$ be an undirected graph. Then $N(x)$ denotes the neighbours of $x$, for any $x \in V$; for convenience we assume that $v\in N(v)$, for all $v \in V$.
 The graph is \emph{locally finite} if $N(x)$ is finite for each $x \in V$.
An \emph{honest (locally finite) presentation}
of a locally finite graph $ (V,E)$ is a presentation of $(V,E)$ (as usual) together with
a    function $b \colon V \imp \Nb$ such that $b(x)$ gives the code of all neighbours of $x$, for each $x \in V$.
Intuitively, `honest' presentations correspond to `highly recursive graphs' in computable combinatorics.

Let $(V,E)$ be a graph. A total function $c \colon V \to n$ is said to be an $n$-colouring
iff  $c(v)\ne c(u)$ for each $\{v,u\} \in E$. A graph is $n$-colourable if there exists an $n$-colouring for it.

 In computable combinatorics,
Schmerl~\cite{schmerl_1980} proved that if $(V,E)$ is highly recursive and $n$-colourable, then $(V,E)$ is computably $(2n-1)$-colourable, but there exists  such a graph that is not computably $(2n-2)$-colourable (see also \cite[Theorem 4.21]{Gasarch}). In order to fit into the context of reverse mathematics (inspired by the known result that a graph if $n$-colourable if and only if every finite subgraph is $n$-colourable), we consider a weakening of the statement, and prove that it holds in $\Pra$ following essentially the Schmerl's argument. For more results about colourings of graphs, see \cite[Section 4]{Gasarch}.

\begin{proposition}
$\Pra$ proves the following:
\begin{enumerate}
 \item Rival-Sands theorem for honestly presented graphs, i.e., for every honest presentation of
  a locally finite infinite graph $ (V,E)$, there is an infinite $H \subseteq V$ such that for every $x \in V$, $x$ is   adjacent to at most one vertex in $H$.
 \item   If $(V,E)$ is honest and each finite subgraph is $n$-colourable, then $(V,E)$ is $(2n-1)$-colourable.
\end{enumerate}
\end{proposition}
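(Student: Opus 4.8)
For part (1) the plan is to build $H$ greedily by primitive recursion. Using the convention $v\in N(v)$, the desired conclusion — that every $x\in V$ is adjacent to at most one element of $H$ — is equivalent to the invariant $N(h)\cap N(h')=\emptyset$ for all distinct $h,h'\in H$, since a common neighbour of $h,h'$ is exactly a witness $x\in N(h)\cap N(h')$. So I would define a strictly increasing enumeration of $H$ by setting $h(0)$ to be a least vertex and, given $h(0),\dots,h(s)$, letting $h(s+1)=(\mu v\le M_s+1)[(\forall i\le s)\,(N(v)\cap N(h(i))=\emptyset)]$. The honest presentation supplies the code of each $N(x)$ as finite data, and local finiteness (built into honesty) makes each $N(x)$ finite; hence the \emph{blocked} set $\bigcup_{i\le s}\bigcup_{y\in N(h(i))}N(y)$ is a finite set whose bound $M_s$ is computed by a primitive recursive functional from $b$ and $\{h(0),\dots,h(s)\}$ via the calculus of finite sets of \S\,\ref{sec:fin}. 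This makes the search bounded: the vertex of index $M_s+1$ is unblocked, so $h$ is total. Because blocked sets only grow and each $h(s)$ is chosen least unblocked, $h$ is strictly increasing, whence $h(s)\ge s$, the set $H$ is unbounded, and $x\in H\biimp(\exists s\le x)(h(s)=x)$ is $\Delta^0_0$, so $H$ exists by $\Delta^0_0$-comprehension. Totality of $h$ and the maintenance of the invariant need only $\id 0 0$.

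For part (2) I would follow Schmerl's recursive $(2n-1)$-colouring, re-read as a primitive recursive online procedure. Process the vertices $v_0,v_1,\dots$ and define $c\colon V\to\{0,\dots,2n-2\}$ by primitive recursion on the stage. Two features make honesty decisive here. First, when colouring $v_s$ the \emph{whole} neighbourhood $N(v_s)$ is available as finite data, including neighbours appearing later in the enumeration, so the procedure has exactly the lookahead that `highly recursive' provides and performs no unbounded search. Second, the hypothesis that every finite subgraph is $n$-colourable is used only to extract, at each stage, an $n$-colouring of the finite graph $G\upharpoonright\{v_0,\dots,v_s\}$: its edges are read off from $b$, and a proper $n$-colouring is found by a \emph{bounded} search over the finitely many maps $\{v_0,\dots,v_s\}\to\{0,\dots,n-1\}$, properness being $\Delta^0_0$ and existence being the hypothesis. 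Feeding this finite data into Schmerl's decision rule fixes $c(v_s)$, and since every ingredient is a bounded computation from the parameters $G$, $b$ and $n$, the colouring is witnessed by a primitive recursive functional; properness and totality of $c$ are then verified by $\id 0 0$ on the stage.

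The main obstacle lies in part (2): I must ensure that Schmerl's argument that a $2n$-th colour is never needed is carried out using only $\id 0 0$ together with the stagewise finite colourings described above. In its usual presentation the counting argument is phrased in terms of a fixed $n$-colouring of the \emph{entire} graph, which below $\rca$ is obtained only through a compactness ($\wkl$) argument and is unavailable in $\Pra$. I would therefore reroute the verification so that it refers only to the finite $n$-colourings whose existence is exactly the hypothesis, and treat the parameter $n$ as standard, keeping any induction on $n$ at the meta-level and all internal induction bounded. For part (1) the analogous point is milder: one need only confirm that totality of the greedy enumeration and the computation of the blocking bound go through with $\id 0 0$ and the finite-set machinery of \S\,\ref{sec:fin}, which is routine.
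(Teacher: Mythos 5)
Your proposal is correct and follows essentially the same route as the paper: for (1) the paper likewise bounds the greedy search by the finite set of neighbours-of-neighbours of the vertices chosen so far (taking the next vertex above the code of that set), and for (2) it likewise runs Schmerl's online $(2n-1)$-colouring, invoking the finite $n$-colourability hypothesis only on finite pieces at each stage and verifying the invariants with $\id 0 0$. The obstacle you flag in (2) is handled in the paper exactly as you suggest, by an alternating-palette invariant on the boundary vertices that never appeals to a global $n$-colouring.
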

\begin{proof}
$(1)$ We  follow \cite[Proposition 3.4]{FCSS} closely. Given an instance of Rival-Sands theorem $(V,E)$ and a function $b \colon V \to \Nb$ witnessing that $(V,E)$ is honest, 
a solution is defined by primitive recursion, as follows. Once $x_0, \dots, x_{s-1}$ are defined, consider the set of neighbours of neighbours of those vertices, which can be primitively recursively computed. The set is clearly finite, and thus can be coded by a number $c$. Then let $x_s= c+1$. This shows that the solution can be computed by a primitive recursive functional, which takes the instance as a parameter. At each stage, the functional searches for a new vertex (in the enumeration of $V$), and the performed search is primitively recursively bounded.

\smallskip

$(2)$ Let $(v_n)_{n\in\mathbb{N}}$ be an enumeration of $V$, and $b \colon V\to \Nb$ be the function witnessing that $(V,E)$ is honest.
 A colouring $c \colon V \to 2n-1$ is defined by (primitive) recursion, so that   $c\upharpoonright \{v_0, \dots, v_s\}$ is defined at `step' $s$. At step $0$ let $c(v_0)=1$. Assume that at a step $s$ the following two conditions are met:
 \begin{enumerate}
        \item $c_s \colon X \to 2n-1$ is a colouring,  $X\subseteq V$ is finite, and $\{v_0, \dots, v_s\} \subseteq X$,
        \item  the vertices in the set $B_s= \{v \in X \mid  \exists u \in b(v)\, (u \notin X)\}$ are either coloured with $\{1, \dots,n-1 \}$ or $\{n+1,\dots, 2n-1\}$.
    \end{enumerate}
At step $s+1$ we colour $v_{s+1}$ and possibly some other vertices. If $v_{s+1} \in X$ (i.e., $v_{s+1}$ is already coloured),  we let $c_{s+1}=c_s$, so that conditions (1)--(2) still hold, with $v_s$ and $c_s$ replaced by $v_{s+1}$ and $c_{s+1}$. Thus, we proceed to the next stage.

Otherwise, let
\[H=\{v \in V\setminus X \mid \exists w \in b(v)\,\exists u \in b(w)\, (u \in X)\}  \cup \{v_{s+1}\}. \]
Assume $B_s$ is coloured with $\{1, \dots,n-1 \}$, the other case being analogous. Notice that $H$ is a finite set, since  the graph is locally finite and $H$ is a subset of the neighbours of the neighbours of  $X$, which is assumed to be finite by (1). Moreover, one can explicitly bound the size of $H$ thanks to $b$. Thus, let $d \colon H \to \{n, \dots, 2n-1\}$ be an $n$-colouring of $H$ such that $d(v_{s+1}) \ne n$. To guarantee that (2) is satisfied at step $s+2$, consider the set $S=\{v \in V \mid c(v)=n \land \exists u \in b(v)\,(u \notin X \cup H)\}$. Extend $c_s$ to $c_{s+1}\colon X \cup (H\setminus S)\to 2n-1$ using $d$ to colour the vertices in $H\setminus S$. It is easy to see that at step $s+2$ the conditions are still met.
\end{proof}

\subsubsection{Hall's theorem and  bipartite graphs}
A graph $(V,E)$ is \emph{bipartite} if there are two totally disconnected subsets $A,B\subseteq V$ such that $A\cup B=V$ and $A\cap B=\emptyset$. We represent $(V,E)$ directly as $(A,B,E)$. A bipartite graph $(A,B,E)$ satisfies \emph{Hall's condition} if for every finite $X \subseteq  A$, $|N(X)| \geq  |X|$. Hall's theorem guarantees that for any bipartite graph $(A,B,E)$ there exists an injective function $f \colon A \to B$ such that $\forall a\in A \,(a \, E\, f (a))$ if and only if Hall's condition is satisfied.

Hirst in \cite[Chapter 2]{hirstPhD} studied the strength of Hall's theorem, proving that in its full generality the theorem is not computably true. Nonetheless, it is possible to weaken the premise and formulate  versions of Hall's theorem which are computably true. Hirst himself restricted the instances to bipartite graphs $(A,B,E)$ with $A$ finite (see  \cite[Theorem 2.1]{hirstPhD}).  Gasarch in \cite[Theorem 5.19]{Gasarch} considered  bipartite graphs $(A,B,E)$ with $A$ possibly infinite, but strengthened Hall's condition itself and formulated the extended Hall's condition. A bipartite graph $(A,B,E)$ satisfies the \emph{extended Hall's condition} if there exists a function $h\colon \Nb \to \Nb$ such that $h(0) = 0$  and, for every finite $X \subseteq  A$,
\[
|X| \geq h(n) \Imp |N(X)| \setminus |X| \geq n.
\]
Notice that the former requirement on $H$ implies that if $(A,B,E)$ satisfies the extended Hall's condition, then it also satisfies Hall's condition.

We prove that both of the proposed weaker versions are primitively recursively true. We give a new simpler proof of  Hirst's weakening of Hall's theorem, which does not use $\iso$.

\begin{proposition}
 $\Pra$ proves the following:
\begin{enumerate}
  \item Hall's theorem for graphs $(A,B,E)$ with $A$ finite.
  \item If $(A, B, E)$ is a honest bipartite graph that satisfies extended Hall's condition, then there exists an  injective function $f \colon A \to B$ such that $\forall a\in A \,(a \, E\, f (a))$.
\end{enumerate}
\end{proposition}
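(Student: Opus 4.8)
The plan is to prove part~(1) by a construction that builds a system of distinct representatives (SDR) through an explicit recursion whose correctness is certified by $\id 0 0$ alone, and then to deduce part~(2) by running this construction on the finite windows supplied by honesty, together with a stabilisation argument powered by the surplus encoded in $h$. The point of recasting Hirst's argument (\cite[Theorem 2.1]{hirstPhD}) this way is that the naive induction on $|A|$ proves the \emph{existence} of a matching, a $\Sigma^0_1$ statement, and hence appeals to $\iso$; by instead producing the matching as a definable object and certifying it by induction on a $\Delta^0_0$ predicate, we remain inside $\Pra$.

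For part~(1), list $A = a_0,\dots,a_{m-1}$ (since $A$ is finite, $m = |A|$ is a genuine number, see \S\ref{sec:fin}). We build a partial matching $M$ by recursion on $k\le m$: given an SDR $M$ of $\{a_0,\dots,a_{k-1}\}$, we match $a_k$ by locating an \emph{augmenting path} from $a_k$, that is, an alternating path from $a_k$ to an $M$-unmatched vertex of $B$, and flipping it. Hall's condition enters exactly here: if no augmenting path existed, the set $X\subseteq A$ of vertices reachable from $a_k$ along alternating paths would satisfy $N(X)\subseteq M(X\setminus\{a_k\})$, whence $|N(X)|\le |X|-1$, contradicting $|N(X)|\ge |X|$. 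The alternating reachable set, the path, and the flip all live inside the finite region spanned by $\{a_0,\dots,a_k\}$ and their neighbours, and the relevant cardinalities are handled by \cref{lem:sets} and \cref{rem:cardsearch}. Crucially, the statement ``$M\upharpoonright\{a_0,\dots,a_k\}$ is an SDR of $\{a_0,\dots,a_k\}$'' is $\Delta^0_0$ in $k$ and in the constructed $M$, so $\id 0 0$ certifies that $M\upharpoonright A$ is the desired matching; this is precisely the step at which Hirst's $\iso$ is replaced by $\id 0 0$. For a non-honest instance one may alternatively read this combinatorics off as a first-order fact about the relevant finite coded subgraph and import it through $\Pi^0_2$-conservativity of $\wkl$ over $\pra$ together with \cref{lem:cons}.

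For part~(2), note first that extended Hall's condition implies Hall's condition (as observed just before the statement), so every finite initial segment $A_{\le s}=\{a_0,\dots,a_s\}$ has an SDR by part~(1). Because the presentation is honest, the neighbour code $b(a_i)$ exhibits each $N(a_i)$ as an explicitly bounded finite set, so the construction of part~(1) applied to the finite graph on $A_{\le s}$ and $\bigcup_{i\le s}N(a_i)$ is genuinely primitive recursive in $s$; let $M_s$ be the matching it outputs. The crux is to show that these finite matchings \emph{stabilise with a primitive recursive delay}: there is a primitive recursive $\sigma$ with $M_s(a_i)=M_{\sigma(i)}(a_i)$ for all $s\ge\sigma(i)$. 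Granting this, $f(a_i):=M_{\sigma(i)}(a_i)$ is a primitive recursive functional, and that $f$ is an injective matching follows from the corresponding property of each $M_s$ by $\id 0 0$.

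The main obstacle is exactly this stabilisation bound, and it is where the passage from Hall to \emph{extended} Hall is indispensable, plain Hall for infinite $A$ being unprovable even in $\rca$. Following Gasarch \cite[Theorem 5.19]{Gasarch}, the surplus recorded by $h$ yields bounded-delay matchability: any block of later vertices large enough to force a rerouting of $a_i$ would, by $|N(X)|\ge |X|+n$ whenever $|X|\ge h(n)$, already possess enough spare neighbours to be absorbed without disturbing the commitment at $a_i$, so $f(a_i)$ is decided by the window $\{a_0,\dots,a_{\sigma(i)}\}$ for a $\sigma$ assembled from $h$ and the local degrees $|N(a_j)|$. Turning this expansion estimate into an explicit primitive recursive $\sigma$, while keeping every auxiliary search bounded and every verification within $\id 0 0$, is the delicate part; the book-keeping is eased throughout by the calculus of finite sets of \S\ref{sec:fin}.
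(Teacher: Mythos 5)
Your part (1) has a genuine gap. In the statement only $A$ is finite; $B$ is an arbitrary (possibly infinite, non-honest) side, so $N(a_j)$ is merely an unbounded $\Delta^0_0$-definable subset of $B$. Your augmenting-path step must locate an \emph{unmatched} vertex in $N(X)$, and ``the finite region spanned by $\{a_0,\dots,a_k\}$ and their neighbours'' is not a coded finite set: there is no a priori bound on the least unmatched neighbour. That search is an instance of unrestricted minimisation, which over $\Pra$ is equivalent to $\qfac$, i.e.\ to $\rca$ (\cref{prop:mini}); so as written your recursion step is not available in $\Pra$. The same obstruction blocks your conservativity fallback: one cannot ``read off the relevant finite coded subgraph'' before one has a bound for it. The paper's proof supplies exactly this missing step: it applies $\bs 0 0$ to the $\Delta^0_0$ statement extracted from Hall's condition to obtain a single bound $c$ on all witnesses, replaces $B$ by $[0,c]$, and only then invokes the (fully coded) finite Hall's theorem. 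Once both sides are coded, your augmenting-path recursion certified by $\id 0 0$ would indeed work and is a legitimate alternative to citing finite Hall---but the reduction to a coded graph has to come first.

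Part (2) rests on a stabilisation claim that you do not prove and that is false for the construction you describe: rerunning the augmenting-path algorithm on $A_{\le s+1}$ can reroute $M_s(a_i)$ even when nothing forces it to, so the $M_s(a_i)$ need not converge, let alone with a primitive recursive modulus $\sigma$ (and turning the surplus heuristic into such a $\sigma$ is precisely the content one would have to supply). The paper takes a different, commitment-based route: using honesty and $\dcao$ it forms the finite set $S$ of vertices within alternating distance $2h(s+1)$ of $a_s$ and its (finite) neighbourhood $T$, applies finite Hall's theorem to $(S,T,E)$ to fix $f(a_s)$ irrevocably, deletes the matched pair, and verifies---this is where Gasarch's surplus argument actually enters---that the residual graph satisfies extended Hall's condition with the shifted witness $h''(n)=h'(n+1)$. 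That invariant maintenance replaces your stabilisation lemma and avoids ever revisiting an earlier commitment. Your identification of the two pressure points (replacing $\iso$ by $\id 0 0$, and using the surplus to control the infinite case) is correct, but both of the steps that would discharge them are missing.
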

\begin{proof}
$(1)$  Recall that \lq $A$ is finite\rq\ means that there exists a code for $A$, and notice that any subset of $A$ is itself coded by some code less or equal to (the code for) $A$. Here we identify the set with its code, for simplicity of notation.  Recall that Hall's condition guarantees that for each $X\subseteq A$  there are $|X|$ vertices in $B$ adjacent to vertices in $X$.  Thus,  the following is true:
 \[
 \forall X \le A\, \exists y \, (y=\la b_0, \dots, b_{|X|-1} \ra \land \forall j < |X| \, (b_j \in N(X)) ).
 \]
 Note that the formula in the parentheses is $\Delta^0_0$. Hence, by $\bs 0 0$, there exists a uniform bound $c$ such that
  \[\forall X \le A\, \exists y < c \, (y=\la b_0, \dots, b_{|X|-1} \ra \land \forall j < |X| \, (b_j \in N(X)) ).\]
 Note that $(A, [0,c], E)$ is finite and still satisfies Hall's condition. Thus, by finite Hall's theorem, there exists a solution for it, which is clearly a solution for $(A,B,E)$. \smallskip

 $(2)$ 
  Assume that $(A,B,E)$ satisfies the hypothesis of  the statement. Let  $h\colon \Nb \to \Nb$ be a function witnessing that $(A,B,E)$ satisfies extended Hall's condition, and let $a \in A$. The solution $f$ is defined by (primitive) recursion, so that if at `step' $s$ $$f_s \colon \{a_0, \dots, a_{s-1}\} \to \{b_0, \dots, b_{s-1}\}$$ is defined and $(A\setminus \{a_0, \dots, a_{s-1}\} , B\setminus \{b_0, \dots, b_{s-1}\},E )$ with $h'\colon \Nb \to \Nb$ satisfy the hypothesis of the statement, then at the next step the elements $a_s \in A \setminus \{a_0, \dots, a_{s-1}\} $ and $f(a_s) \in B\setminus \{b_0, \dots, b_{s-1}\}$ are picked, and a function $h''\colon \Nb \to \Nb$ is defined so that $(A\setminus \{a_0, \dots, a_{s-1}\} , B\setminus \{b_0, \dots, b_{s-1}\},E )$  with $h''$ satisfy the hypothesis of the statement.

In order to do so, pick  $a_s \in A\setminus \{a_0, \dots, a_{s-1}\}$ and by $\dcao$ define $S$, the subset of $A\setminus \{a_0, \dots, a_{s-1}\}$ containing vertices which are connected to $a_s$ by a path of length at most $2h(s+1)$, as follows:
\[
	\{x  \mid \exists i \le 2h(s+1) \, \exists x_0, x_1, \dots, x_i\, (x_0=x \land x_i=a_s \land \forall j < i \, (x_j \in b(x_{j+1}))) \},
\]
where $x_j \in b(x_{j+1})$ expresses the fact that $x_j$ belongs to the string coded by $b(x_{j+1})$. Let $T$ be the set of neighbours of $S$, which can still be defined by $\dcao$ thanks to the function $b$. The graph $(S,T,E)$ is bipartite and satisfies Hall's condition. Thus, by the finite Hall's theorem, there exists an injective function $g \colon S \to T$ such that $\forall a\in S \,(a \, E\, g(a))$. Let $f(a_s)=g(a_s)$ and $h''(0)=0$, $h''(n)=h'(n+1)$ for all $n\geq 1$. One now needs to check that  $(A\setminus \{a_0, \dots, a_{s-1}\} , B\setminus \{b_0, \dots, b_{s-1}\},E )$  with $h''$ satisfies the hypothesis of the statement. This is done essentially exactly as in the proof of \cite[Theorem 5.19]{Gasarch}.
\end{proof}

\subsubsection{Connected components of a graph}
Let $(V,E)$ be an undirected graph. Then $C\subseteq V$ is a \emph{connected component} of $V$ if $C$ is a maximal set such that  any pair of vertices in  $C$ is connected by a path. Gura, Hirst, and Mummert~\cite{GuraHirstMummert} studied the strength of the principle stating the existence of a connected component of a graph. In the same paper a modification of the statement is proposed, so to let it be provable in $\rca$. We prove that, over $\Pra$, the proposed modification is equivalent to  $2^{\mathbb{N}}$-$\rca =\dca \land \iso$.  

    \begin{proposition}
Over $\Pra$, the following are equivalent:
\begin{enumerate}
    \item $2^{\mathbb{N}}$-$\rca = \dca \land \iso$.
    \item  Let $(V,E)$ be a graph and $\{v_0, \dots, v_n\}\subseteq V$ be such that each $v\in V$ is connected to at least one of  $v_0, \dots, v_n$. Then the connected components of $V$ exist\footnote{More specifically, there exists a function $f\colon V\times V \to 2$ such that if $v,u \in V$ are connected, then $f(v,u)=1$, and outputs $0$ otherwise.}.
\end{enumerate}
    \end{proposition}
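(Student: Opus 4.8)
The plan is to prove the two implications separately: the forward direction $(1)\Rightarrow(2)$ uses both $\dca$ and $\iso$, while the reversal $(2)\Rightarrow(1)$ codes a $\Delta^0_1$ instance and a bounded monotone function into graphs of the required shape. For $(1)\Rightarrow(2)$, write $v\sim u$ for ``$v$ and $u$ are joined by a path'', a $\Sigma^0_1$ relation. The hypothesis that every vertex is connected to one of the seeds $v_0,\dots,v_n$ lets me rewrite connectedness through the seeds: $v\sim u$ iff $(\exists i\le n)(v\sim v_i\wedge u\sim v_i)$, which is still $\Sigma^0_1$. To produce the characteristic function via $\dca$ I must also express non-connectedness as $\Sigma^0_1$, and for this I first decide the finite connectivity pattern of the seeds. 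Let $\approx_s$ be the ($\Delta^0_0$, coded) set of pairs $(i,j)$ with $i,j\le n$ admitting a path from $v_i$ to $v_j$ inside $[0,s]$, and set $g(s)=|\approx_s|$. Then $g$ is primitive recursive, non-decreasing, and bounded by $(n+1)^2$, so by the lemma above characterizing $\iso$ as the attainment of suprema there is a $z$ with $g(s)\le g(z)$ for all $s$; monotonicity of $g$ and of the sets $\approx_s$ then makes $\approx:=\approx_z$ equal to the true connectivity relation on the seeds. With $\approx$ in hand as a finite object, $v\not\sim u$ iff $(\exists i,j\le n)(v\sim v_i\wedge u\sim v_j\wedge (i,j)\notin\approx)$, again $\Sigma^0_1$, and one checks this is the genuine negation of $v\sim u$. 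Hence $v\sim u$ is $\Delta^0_1$, and $\dca$ yields the desired $f$.

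For the reversal I handle $\dca$ and $\iso$ in turn. To recover $\dca$, take complementary $\varphi(n)=\exists y\,\theta(n,y)\in\Sigma^0_1$ and $\psi(n)=\forall y\,\eta(n,y)\in\Pi^0_1$ with $\forall n(\varphi(n)\leftrightarrow\psi(n))$. I build a graph on $\Nb$ with two seeds $a,b$ and, for each $n$, a ray $R_n$ with first vertex $\rho_{n,0}$; using the standard ``delayed linking'' device I attach $R_n$ to $a$ at the least stage exhibiting a witness for $\varphi(n)$ and to $b$ at the least stage exhibiting a witness for $\neg\psi(n)$. Since exactly one of these ever occurs, each ray lands in exactly one component, the edge relation is primitive recursive, every vertex is joined to $\{a,b\}$, and $a\not\sim b$. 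The component function from $(2)$ then gives $c(n)=f(\rho_{n,0},a)=[\varphi(n)]$, which is $\dca$.

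To recover $\iso$, I invoke the same lemma to reduce to showing that every non-decreasing $F\colon\Nb\to\Nb$ bounded by some $M$ attains its supremum (an arbitrary bounded function is handled by passing to $F(s)=\max_{t\le s}f(t)$ and a bounded search). I code $F$ into a graph with seeds $s_0,\dots,s_M$, again via delayed rays, so that $s_k$ and $s_{k+1}$ become connected precisely when $F$ first reaches the value $k+1$, while all other pairs of seeds remain apart. The seed components are then $\{s_0,\dots,s_G\}$ together with the singletons $s_{G+1},\dots,s_M$, where $G=\sup_s F(s)$. The component function lets me recover $G$ by bounded search ($G$ is the largest $k\le M$ with $s_0\sim s_k$), and once $G$ is known as a number a further bounded search locates a stage $z$ with $F(z)=G$, the attained supremum; by the lemma this yields $\iso$. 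Together with the previous paragraph this gives $(2)\Rightarrow \dca\wedge\iso$.

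The main obstacle I expect is the reversal to $\iso$: the coding must be arranged so that reconstructing the component structure is exactly as hard as locating the stabilization point of a bounded monotone counter, and one must verify, using the finite-set calculus of \S\ref{sec:fin} and $\Delta^0_0$-induction, that every edge relation produced by the delayed-linking device is genuinely primitive recursive and that the finitely-many-seeds hypothesis of $(2)$ holds throughout both codings. The forward direction is comparatively routine once the correct $\Sigma^0_1$ reformulations of $v\sim u$ and $v\not\sim u$ are isolated and the role of $\iso$ in stabilizing $g$ is pinned down.
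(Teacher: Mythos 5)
Your proof is correct, but it routes both uses of induction differently from the paper. In the forward direction the paper applies $\lnp 0 1$ (equivalent to $\is 0 1$ over the base theory) to an enumeration of all subsets of the seed set $\{v_0,\dots,v_n\}$ in order to extract a maximal totally disconnected set of seed representatives, and then invokes $\dca$ twice; you instead stabilize the seed-connectivity relation by applying the least-upper-bound characterization of $\iso$ to the bounded monotone counter $g(s)=|\approx_s|$, and then make a single application of $\dca$ to the now-$\Delta^0_1$ relation $v\sim u$. The information extracted is the same (the partition of the seeds into components), and your stabilization argument is arguably a little cleaner since it avoids enumerating exponentially many subsets, at the price of leaning on the finite-set calculus to conclude $\approx_z=\bigcup_s\approx_s$ from $|\approx_s|\le|\approx_z|$. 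Your $\dca$ reversal coincides with the paper's (two seeds $a,b$, with the $n$-th cluster attached according to which of the two complementary $\Sigma^0_1/\Pi^0_1$ events fires). For the induction reversal the paper derives $\lnp 0 1$ directly from a star-shaped graph on seeds $v_0,\dots,v_n,a$ in which $v_m$ is joined to $a$ iff $\exists s\,\neg\theta(m,s)$, whereas you derive the least-upper-bound principle from a chain of seeds $s_0,\dots,s_M$ linked consecutively as the counter climbs; both routes are legitimate, since the paper records both equivalences in its preliminaries. One phrase you should repair: at the end of the $\iso$ reversal you claim that a ``further bounded search locates a stage $z$ with $F(z)=G$''. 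That search is not bounded, and no bounded search can locate such a $z$ in general. Fortunately it is also unnecessary: from $s_0\sim s_G$ you already know $\exists z\,(F(z)\ge G)$, and from $s_0\not\sim s_{G+1}$ (or from $G=M$) that $\forall n\,(F(n)\le G)$, so the purely existential conclusion $\exists z\,\forall n\,(F(n)\le F(z))$ demanded by the lemma follows outright, without exhibiting $z$.
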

\begin{proof}
($1 \Imp 2$) By $\iso$ we claim that there exists a subset $S$ of $\{v_0, \dots, v_n\}$ which is maximal totally disconnected, that is a maximal set such that any vertex in $V$ is connected with exactly one vertex in $S$. To see this, consider an enumeration $S_0, \dots, S_{2^n-1}$ of all  subsets of $\{v_0, \dots, v_n\}$, such that if $S_i\subseteq S_j$, then $j \le i$. Consider the $\Pi^0_1$-formula $\varphi(k)$ stating that there is no path between elements of $S_k$, if not the trivial one from a vertex to itself. Since $\varphi(2^n-1)$ holds, because $S_{2^n-1}$ is a singleton, by $\lnp 0 1$ there exists the minimal $k$ such that $\varphi_k$ holds. Note that $(S_k,E)$ is maximal totally disconnected, by the choice of $k$. Define, by $\dca$,  a function $g \colon V\times S_k \to 2$ such that
\begin{align*}
  g(v,s_i)=1 & \Biimp \text{there exists a path from $v$ to $s_i$}\\
             & \Biimp \text{there is no path from $v$ to $s_j$, for any $j \ne i$.}
\end{align*}
Finally, let $f \colon V\times V \to 2$ be such that
\[f(v,u)=1 \Leftrightarrow \exists x\in S_k \, (g(v,x)=1=g(u,x) )\]
It is easy to see that $f$ is the desired function.\smallskip

$(2\Imp 1)$ We first prove $\dca$. Consider two $\Delta^0_0$-formulae $\theta, \eta$ such that  $\forall n \,(\exists s\,\theta(n,s) \biimp \forall s \neg\eta(n,s))$. Let $V=\{a,b\} \cup \{x_{n,s}\mid n, s \in \Nb\}$, and $E\subseteq V\times V$ satisfying the followings
\begin{enumerate}
    \item $x_{n,s} E x_{m,t}$ if and only if $n=m$,
    \item $x_{n,s} E a$ if and only if $\theta(n,s)$,
    \item $x_{n,s} E b$ if and only if $\eta(n,s)$.
\end{enumerate}
It is easy to see that, for each $n,t \in \Nb$, $x_{n,t}$ is connected with precisely one node from $\{a,b\}$. In fact, for each $n$ there exists an $s$ such that either $\theta(n,s)$ or $\eta(n,s)$. If the former is the case, then $a E x_{n,s} E x_{n,t}$ witnesses that  $x_{n,t}$ is connected with $a$; otherwise, $x_{n,t}$  is connected with $b$. Thus,  $(V,E)$ and the set $\{a,b\}$ satisfy the hypothesis of the statement. Let $f\colon V \times V\to 2$ be a solution. Define $g \colon \Nb \to 2$ be such that
\begin{align*}
 g(n)=1& \Biimp f(x_{n,n},a)=1 \Biimp \exists s\, \theta(n,s);\\
 g(n)=0& \Biimp   f(x_{n,n},b)=1 \Biimp \exists s\, \eta(n,s).
\end{align*}

We now prove $\lnp 0 1$. Let $\theta$ be a $\Delta^0_0$-formula and $n \in \Nb$ such that $\forall s\, \theta(n,s)$. We find the least  $m \in \Nb$ such that $\forall s\, \theta(m,s)$. Let $V=\{v_0, \dots, v_n,a\}\cup \{x_s \mid s \in \Nb\}$. Define $E \subseteq V \times V$  as follows
\begin{enumerate}
     \item $x_s E a$ for each $s$,
     \item $x_s E v_i$ if and only if $\neg \theta(i,s)$.
 \end{enumerate}
It is immediate to check that $(V,E)$ and   $\{v_0, \dots, v_n,a\}$ satisfy the hypothesis of the statement, so let $f \colon V \times V \to 2$ be a solution. Consider the sequence $\la f(v_0,a), \dots, f(v_n,a) \ra$, and search for the smallest $m\le n$ such that $f(v_m,a)=0$. Since by construction, $v_m$ is connected with $a$ if and only if $\exists s \, \neg\theta(m,s)$, such $m$ is the smallest such that $\forall s\, \theta(m,s)$.
\end{proof}

\smallskip

\subsection{Models and algebraically closed fields}\label{sebs:models}
The main purpose of this subsection is to verify that $\Pra$ proves that every countable field can be embedded into its algebraic closure, and similarly for ordered fields and their real closures.

\

Having in mind `decidable' algebraic structures in elementary computable model theory, we
shall need a more expressive way to code  an algebraic structure suitable for developing basic constructions (such as Henkin's) in $\Pra$. This is also consistent with the approach in Simpson~\cite{sosoa}.

We define a countable \emph{model} $M$ in a given finite signature, as follows.

\begin{itemize}
\item $M$ is represented as a structure (\S\,\ref{sub:str}).
\item  There is a $\{0,1\}$-valued function deciding the truth of  first-order facts about $M$ (perhaps, with parameters in $M$).

\end{itemize}
  We  identify $M$ with the function evaluating the truth of first-order statements in $M$, but we keep in mind that $M$ also has to have its domain an initial segment of $\mathbb{N}$ (to make the search for its $k$th element bounded).
  If a first-order formula $\sigma$ contains free variables, then (by definition) we set $M(\sigma) =1$ if
$M(\sigma') =1$, where $\sigma'$ is the universal closure (the generalisation) of $\sigma$.

   \begin{remark}
  We can additionally require that there is a function which, whenever $M(\sigma) =1$ for an existential $\sigma$, on input $\sigma$ returns an existential witness $x \in M$. The proofs that we give in this subsection would still work for this stronger notion.
  This assumption would not make any difference in $\rca$ or a stronger system, but in $\Pra$ it does. If we choose to additionally require that $M$ comes together with a Skolem function, we can drop the restriction on the domain to be (an initial segment of) $\mathbb{N}$
  since the search for the next element in the structure becomes bounded.
 \end{remark}

Every model is an algebraic structure.
It is not difficult to see that the notions of a `model' and an `algebraic structure' differ already in the minimal model of $\Pra$; we cite \cite{Kal17} for several results that imply this fact.

\

We fix calculus of first-order formulae (coded in $\Pra$).
We can assume that our formal proof system uses only modus ponens (see \cite[Section 2.4]{enderton}), so any initial segment of the proof is also a proof. 
A \emph{theory} is a set of sentences, represented in $\Pra$ through its characteristic function, closed under logical consequence. In particular, it includes all basic axioms
of our proof system.
Then $M$ is a model of $T$, $M \models T$, if $M(\sigma)=1$ whenever $  \sigma \in T$.


\begin{proposition}\label{pr:m1}
 $\Pra$ proves that a complete consistent theory  has a countable  model.
\end{proposition}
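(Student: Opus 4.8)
The plan is to carry out a Henkin-style construction, taking care that the completion and witnessing steps collapse into a single primitive recursive functional in the characteristic function of $T$, so that the resulting truth function already exists in $\Pra$ and the only inductions needed are instances of $\id 0 0$. First I would fix the modus-ponens-only calculus (as the subsection already stipulates, so that every initial segment of a proof is again a proof) and expand the signature to $L^{*} = L \cup \{ c_n : n \in \Nb\}$ by adjoining countably many fresh constants. I would then primitively recursively enumerate the $L^{*}$-sentences as $\sigma_0, \sigma_1, \dots$ and build an increasing sequence of finite sets $H_n$ of Henkin axioms of the form $\exists v\, \psi(v) \imp \psi(c)$, with $c$ always chosen fresh.

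The crucial point, which is what makes everything bounded, is that completeness of $T$ turns provability from a finite Henkin extension into a single membership query in $T$. Concretely, for an $L^{*}$-sentence $\tau$ mentioning Henkin constants $\bar c$, the deduction theorem reduces $T \cup H_n \vdash \tau$ to $T \vdash (\bigwedge H_n \imp \tau)$, and since $\bar c$ do not occur in $T$, generalisation on constants shows this is equivalent to $T \vdash \chi$, where $\chi$ is the $L$-sentence obtained by replacing $\bar c$ with fresh universally quantified variables. As $T$ is complete and deductively closed, $T \vdash \chi$ is equivalent to $\chi \in T$, so the bit ``$T \cup H_n \vdash \tau$'' is computed by evaluating the given characteristic function of $T$ at the primitively recursively computable sentence $\chi$. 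Hence I can define, by primitive recursion in $T$, the stage-by-stage decision whether to put $\sigma_n$ or $\neg \sigma_n$ into the theory, together with the addition of a witness whenever an existential is decided positively; this produces the characteristic function of a complete Henkin theory $T^{*} \supseteq T$, so $T^{*}$ exists in $\Pra$. Consistency of each $T_n$ (hence of $T^{*}$) follows from consistency of $T$ by the standard conservativity of adjoining Henkin axioms, whose proof is the ``drinker'' validity $\exists y\,(\exists v\, \psi(v) \imp \psi(y))$ applied finitely often---a bounded manipulation of proofs.

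Finally I would form the term model: its domain is the set of closed $L^{*}$-terms factored by the $T^{*}$-decidable relation $s \sim t \Biimp (s = t) \in T^{*}$, relabelled as an initial segment of $\Nb$ by keeping least representatives (a bounded search over earlier terms, hence primitive recursive, and covering the finite case as well); the operations and relations are read off $T^{*}$, with congruence guaranteed by the equality axioms, and the truth function of the model is declared to be $T^{*}$-membership after translating parameters into representative terms. It remains to check that this $\{0,1\}$-valued function is a genuine satisfaction relation for the structure and is $1$ on all of $T$. The clauses for the connectives follow at once from completeness and consistency of $T^{*}$, the clause for $\exists$ from the Henkin witnesses, and atomic compatibility holds by construction; crucially none of these requires an induction on formula complexity, so $\id 0 0$ suffices, while $T \subseteq T^{*}$ gives $M \models T$. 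The main obstacle is exactly the completion step: in the general (merely consistent) case one would need a path through a tree of consistent finite extensions, i.e.\ essentially $\wkl$, whereas here completeness of $T$ collapses every consistency question to a membership query in $T$, and this is precisely what keeps the whole construction primitive recursive and provable over $\Pra$.
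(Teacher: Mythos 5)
Your overall strategy coincides with the paper's: run Henkin's construction and observe that, because $T$ is complete and deductively closed, every provability/consistency question that arises collapses to a single membership query in the characteristic function of $T$ at a primitively recursively computable sentence, so the whole construction is a primitive recursive functional in $T$ and only $\id 0 0$ is needed. One point worth tightening: the inductive claim ``each $T_n$ is consistent'' is $\Pi^0_1$ as stated, so you should not carry it as your invariant; instead maintain the $\Delta^0_0$ invariant that the existential closure (over the Henkin constants) of the conjunction of all commitments made so far belongs to $T$. Each step preserves this using deductive closure of $T$, completeness, and a concrete proof of the ``drinker'' validity, and then $\id 0 0$ suffices --- this is exactly the spirit of your own key observation, just phrased so that the induction hypothesis is a single query to $T$.

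The genuine gap is in the final relabelling step, and it is precisely the one point the paper's proof singles out. A model in $\Pra$ must have domain $\Nb$ or an initial segment of $\Nb$, so you must be able to pass from a number $n$ in the domain to the term representing the $n$th equivalence class. Your ``bounded search over earlier terms'' only handles the forward direction (deciding whether a given term is a least representative, and computing its class index, is bounded); the inverse map $n \mapsto$ (least representative of the $n$th class) is an unbounded search as you have set things up, since nothing you say bounds how far into the term enumeration the $n$th new class first appears. This inverse is indispensable: it is needed to interpret the function symbols on the relabelled domain and to evaluate the truth function on parameters from $M$. The repair is the observation the paper makes explicitly: the sentences $(\exists x)\bigwedge_{i<m}(x\neq c_i)$ occur in the primitive recursive enumeration at stages $g(m)$ with $g$ primitive recursive in $m$, so (in the infinite case) by stage $g(m)$ the construction has already produced $m$ pairwise non-equivalent Henkin witnesses; this bounds the search for the $n$th representative and yields a repetition-free primitive recursive enumeration of the quotient classes. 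Without this device, or an equivalent one, the relabelling of the term model as an initial segment of $\Nb$ does not go through in $\Pra$.
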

\begin{proof}
This is essentially \cite[Proposition 2.7]{SurveyOnline2019}.
One also needs to recall that among the primitive recursive enumeration of sentences in the expanded language, there are also formulae
$(\exists x) \bigwedge_{i <m} (x \neq c_i)$ for each $m \in \Nb$.
Since the Henkin's proof is primitive recursive, and such formula is considered at a stage $g(m)$ of the proof, where $g$ is also primitive recursive in $m$, this allows to produce a primitive recursive
enumeration of representatives of the quotient classes, without repetition.
\end{proof}

\begin{proposition}\label{pr:m2}
 $\Pra$ proves that if $T$ has a model, then $T$ is consistent.
\end{proposition}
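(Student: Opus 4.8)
The plan is to prove this by a soundness argument: I will show that the truth-evaluation function of the given model $M$ assigns value $1$ to every sentence derivable from $T$, and then observe that no contradiction can receive the value $1$. Recall from our definition of a model that $M$ comes equipped with a $\{0,1\}$-valued function evaluating the truth of first-order sentences, and that, being a genuine satisfaction relation, this function obeys the Tarski recursion; in particular $M(\neg\sigma)=1 \biimp M(\sigma)=0$ and $M(\sigma \land \tau)=1 \biimp (M(\sigma)=1 \land M(\tau)=1)$, both of which are $\Delta^0_0$ conditions in the function-parameter $M$.

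First I would record the soundness lemma: for every $T$-derivation $d = \langle \varphi_0,\dots,\varphi_k\rangle$ (coded by a single number, with the individual lines extracted primitively recursively) we have $M(\varphi_i)=1$ for all $i\leq k$. This is established by $\Delta^0_0$-induction on the line number, applied to the predicate ``$(\forall j\leq i)\, M(\varphi_j)=1$'', which is $\Delta^0_0$ in the parameters $d$, $M$, and the characteristic function of $T$ — so that the fact that $\Pra$ proves $\id 0 0$ applies. The inductive step splits into the three usual cases: if $\varphi_i$ is a premise from $T$, then $M(\varphi_i)=1$ is exactly the hypothesis $M\models T$ (membership ``$\varphi_i\in T$'' being decided by the characteristic function of $T$); if $\varphi_i$ is a logical axiom, then $M(\varphi_i)=1$ because $M$ validates every logical axiom; and if $\varphi_i$ is obtained from earlier lines $\varphi_a$ and $\varphi_a\imp\varphi_i$ by modus ponens, then the bounded induction hypothesis gives $M(\varphi_a)=M(\varphi_a\imp\varphi_i)=1$, and the $\imp$-clause of the truth function yields $M(\varphi_i)=1$.

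To finish, since $T$ is closed under logical consequence, $T$ is inconsistent precisely when some contradiction $\sigma\land\neg\sigma$ is derivable from $T$ (equivalently, lies in $T$). Were this so, the soundness lemma would give $M(\sigma\land\neg\sigma)=1$, whence $M(\sigma)=1$ and $M(\neg\sigma)=1$, contradicting $M(\neg\sigma)=1\biimp M(\sigma)=0$. Hence $T$ is consistent. In fact, exploiting deductive closure one can shortcut the induction entirely: exactly one of $M(\sigma),M(\neg\sigma)$ equals $0$, so there is a sentence not satisfied by $M$, hence a sentence outside $T$, so $T$ is not the set of all sentences and is therefore consistent. I would nonetheless present the soundness lemma, since it is the robust form that survives if consistency is taken in its derivation-based guise and does not secretly rely on how $T$ was closed.

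The hard part is not the induction on derivations — that stays at the $\Delta^0_0$ level precisely because $M$ is handed to us as a single function-parameter, so ``line $j$ is true in $M$'' is quantifier-free modulo bounded search. The genuinely delicate point is the clause ``$M$ validates every logical axiom'': verifying the quantifier and equality axioms requires the term-evaluation and substitution lemmas for $M$, and one must check that these follow from the recursive satisfaction clauses using at most $\Delta^0_0$-induction on term and formula complexity (whereas the propositional axioms and modus ponens need only the $\neg,\land,\imp$ clauses, which are plainly $\Delta^0_0$ in $M$). Since these clauses are exactly what it means for the object produced in \cref{pr:m1} to be a model, and hold for any $M$ meeting our definition, this bookkeeping goes through — but it is where all the care must be spent.
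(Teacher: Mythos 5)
Your proof is correct and is essentially the paper's own argument: a soundness lemma established by $\Delta^0_0$-induction along the derivation, exploiting that modus ponens is the only inference rule so that the induction predicate stays bounded, followed by the observation that the truth function cannot assign $1$ to a contradiction. The only cosmetic difference is that the paper's definition of a theory already includes all basic axioms of the proof system, so your ``delicate point'' about verifying that $M$ validates the logical axioms is absorbed into the hypothesis $M \models T$ and needs no separate treatment.
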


\begin{proof}
The argument that can be found in \cite{sosoa} would not work since we cannot use $\Sigma^0_1$-in\-duc\-ti\-on and neither can we use  recursive comprehension. We need to be a bit more careful.

Suppose $T$ is not consistent and $M \models T$.
Fix $\sigma_0, \ldots, \sigma_k \in T$ such that $T$ proves $\neg \bigwedge_{i \leq k} \sigma_i$, and
let $p$ be a proof.
 Let $S$ be the collection of all formulae that are mentioned in $p$.
Let $\varphi(\sigma, k)$ say that if there is a subproof (of $p$) of $\sigma$ of length $k$ then $M \models \sigma$, and consider $$\psi(k) = (\forall \sigma \in S) \varphi(\sigma, k)$$
which is a bounded formula. (Here  $M$ is a parameter in the formula, and we do allow parameters in our induction scheme.)

We have that $\psi(0)$ holds because $M \models T$.
Since the only rule of inference is modus ponens,
we have that  $\forall k (\psi(k) \rightarrow \psi( k+1))$ since any instance $\sigma$
of $\varphi(\sigma, k+1)$ in $S$ is either an axiom in $T$ or is obtained
using modus ponens from an instance $\sigma' \in S$ having proof of length $k$.
By the principle of bounded induction, we arrive at $M(\neg \bigwedge_{i\leq k} \sigma_i) = 1$
which contradicts the assumption that  $M(\neg \sigma_i) = 0$, for $i = 0, \ldots, k$.
\end{proof}

\begin{definition} The \emph{algebraic closure} of a field $F$ is an algebraically closed field $X$
(more formally, $X \models ACF$)
together with an embedding $f: F \rightarrow X$ such that for every $x \in X$ there is a $\bar{z} \in F$ such that $x$ is algebraic over $\bar{z}$.
The real closure of an ordered field is defined similarly using $RCF$.

\end{definition}


\begin{remark}
We have to be a bit careful in our definition of the algebraic closure since the standard  textbook proof of transitivity of  `being algebraic over' seemingly relies on unbounded search.
Although there are other arguments that involve elementary matrix analysis,  we  chose to use the model-theoretic version, i.e., $x \in cl_{alg}{Y}$ if $x$ is first-order definable over $Y$, which is clearly transitive. Note that in both cases ($RCF$ and $ACF$) we have quantifier elimination  so this is (classically) equivalent to the algebraic definition, and we conjecture that this can be demonstrated in $\Pra$.
We feel that a more detailed analysis and comparison of the several potentially different ways of defining `algebraicity over', albeit perhaps interesting, is outside the scope of this article.
\end{remark}

\begin{theorem}[$\pra^2$] \mbox{}
\begin{enumerate}
\item Every field can be embedded into its algebraic closure.
\item Every ordered field can be embedded into its real closure.
\end{enumerate}

\end{theorem}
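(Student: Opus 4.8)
The plan is to derive both statements from the Henkin-style machinery already packaged in \cref{pr:m1} and \cref{pr:m2}, exploiting the fact that $ACF$ (resp.\ $RCF$) admits \emph{primitive recursive} quantifier elimination. I treat (1); part (2) is identical with $RCF$ in place of $ACF$ and the order symbol carried along in the diagram. Let $F$ be a countable field presented as in \S\ref{sub:str}, and let $\mathrm{Diag}(F)$ be its atomic diagram in the field language expanded by a fresh constant $c_a$ for each $a\in\mathrm{dom}(F)$. Set $T_0 = ACF \cup \mathrm{Diag}(F)$. First I would observe that $T_0$ is \emph{complete}: given primitive recursive QE, each sentence $\sigma$ of the expanded language is primitively recursively reduced to a quantifier-free sentence $\sigma^{QE}$, namely a Boolean combination of atomic facts about the named constants, and such facts are decided outright by evaluating the operations of $F$ on $\mathrm{Diag}(F)$. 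Hence $\sigma \mapsto \mathrm{eval}_F(\sigma^{QE})$ is a primitive recursive $\{0,1\}$-valued functional in $F$, and it determines a complete, negation-respecting valuation; the theory $T_0$ (closed under consequence) is recovered from it.

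Next I would verify, inside $\Pra$, that $T_0$ is consistent, mirroring the bounded-induction argument in the proof of \cref{pr:m2}. Towards a contradiction, fix a proof $p$ of $\bot$ from finitely many $\sigma_1,\dots,\sigma_k\in T_0$, let $S$ be the (bounded) collection of formulae occurring in $p$, and let $\psi(j)$ be the $\Delta^0_0$ formula asserting that every $\sigma\in S$ having a subproof of length $j$ satisfies $\mathrm{eval}_F(\sigma^{QE})=1$. Since modus ponens is the only rule and $\mathrm{eval}_F(\cdot^{QE})$ respects implication, $\Delta^0_0$-induction gives $\forall j\,\psi(j)$, whence $\bot$ would receive value $1$, a contradiction. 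With $T_0$ complete and consistent, \cref{pr:m1} yields a countable model $M\models T_0$; the map $f\colon a\mapsto (c_a)^M$ is an embedding because $T_0\supseteq\mathrm{Diag}(F)$, and $M\models ACF$.

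The main obstacle is the final step: cutting $M$ down to the \emph{algebraic} closure. The natural set $X=\{x\in M : x \text{ is algebraic over } F\}$ is defined by a $\Sigma^0_1$ condition (there exists a nonzero polynomial over $F$ vanishing at $x$), so $\Delta^0_0$-comprehension does not furnish it, and a generic Henkin term model of $T_0$ genuinely contains transcendental witnesses. To circumvent this I would not extract $X$ from an arbitrary model but instead refine the Henkin construction underlying \cref{pr:m1} so that it builds the \emph{prime} model of $T_0$: since $ACF$ eliminates quantifiers, every type over $F$ is principal and realised by an algebraic element, so one may restrict the Henkin witnesses to roots of polynomials over $F$. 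Because QE is primitive recursive, this witness-selection is itself primitive recursive, and the resulting term model consists entirely of elements algebraic over $F$; thus $X=M$ and the comprehension difficulty evaporates. Throughout, I would adopt the model-theoretic reading of ``algebraic over'' (definable closure) flagged in the preceding remark, which makes transitivity immediate and lets all the bookkeeping---that $M$ is a field, is algebraically closed, and that $f$ and the algebraicity clause hold---be checked with $\Delta^0_0$-induction and the finite-set calculus of \S\ref{sec:fin}. The real-closed case runs verbatim with $RCF$, using its own primitive recursive quantifier elimination and carrying the order relation in $\mathrm{Diag}(F)$.
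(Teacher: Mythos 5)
Your overall skeleton coincides with the paper's: both arguments run the Henkin machinery of \cref{pr:m1} and \cref{pr:m2} on $ACF\cup D_0(F)$ (resp.\ $RCF$), lean on the fact that quantifier elimination for these theories is a primitive recursive syntactic transformation, and obtain the embedding from the named constants. Where you genuinely diverge is the last step. The paper builds the generic Henkin model $M$ (which may contain transcendentals), then sets $U=cl_{alg}(F)\subseteq M$, observing that the algebraic elements can be \emph{primitively recursively listed} inside $M$ because QE makes membership checkable against $M$'s decidable diagram, and finally uses padding to stretch this enumeration onto all of $\mathbb{N}$. You instead refine the Henkin construction itself so that every witness is chosen algebraic, producing the prime model directly and avoiding the carving-out-and-re-indexing step. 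Your route is cleaner in that it never mentions the $\Sigma^0_1$ set $X$ at all; the paper's route is cheaper in that it needs no modification of the witness-selection and only an a posteriori enumeration.

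Two caveats on your version. First, the assertion that ``since $ACF$ eliminates quantifiers, every type over $F$ is principal'' is false as stated: the type of a transcendental element over $F$ is precisely the non-principal one. What your argument actually needs, and what is true, is that the isolated types are dense --- every formula $\varphi(x)$ consistent with $T_0$ is implied by a complete formula realised by an algebraic element --- i.e.\ that $T_0$ is atomic. Second, ``this witness-selection is itself primitive recursive'' is doing real work: at each Henkin step you must find, by a \emph{bounded} search, a nonzero polynomial $q$ over $F$ such that $\varphi(x)\wedge q(x)=0$ remains consistent with the commitments made so far. By QE this reduces to the case where the surviving constraints are finitely many inequations $p_i(x)\neq 0$, where a suitable $q=x-a$ with $a\in F$ (or an irreducible polynomial over the prime field when $F$ is finite) can indeed be located within a bound computed from $\sum_i\deg p_i$; but this argument should be spelled out, since it is exactly the point at which an unbounded search could otherwise creep in. With those two repairs your proof goes through, at roughly the same level of detail as the paper's own extended sketch.
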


\begin{proof}[Extended sketch]
We follow the proof of Theorem 11.9.4 in \cite{sosoa} closely (which itself is based on folklore in computable model theory).
Recall that quantifier elimination in both ACF and RCF is a primitive recursive manipulation with formulae. Our proof relies on Propositions~\ref{pr:m1} and \ref{pr:m2}  instead of the analogous results in $\rca$.

Form $ACF \cup D_0(F)$, where $D_0(F)$ is the quantifier-free diagram of the field $F$. If $AF$ stands for the field axioms, then  $AF \cup D_0(F)$ has a model (being $F$),
and thus is consistent by Propositions~\ref{pr:m2}. Because of the quantifier elimination, this also implies that  $ACF \cup D_0(F)$ is consistent, and thus has a countable model by Proposition~\ref{pr:m1}.
Henkin's construction guarantees that the embedding of $F$ into its natural image in the resulting $M$ is primitive recursive.
It remains to set $U = cl_{alg}(F)$; the latter can be primitively recursively listed (because of the quantifier elimination). We can use primitive recursiveness of the image of $F$ and padding (a delay of computation which consists in repeating segments of a sequence, more detailed examples are given in the next two sections) to make sure that the domain of $U$ is equal to $\mathbb{N}$.

(2) is proved similarly.
\end{proof}

In computable mathematics,  computable model-theoretic results are the standard tools for establishing various existential closure results.
For example, Harrington~\cite{Harrington-74} uses computable prime models to derive that every computable differential field is
contained in its differential closure. Ershov in a series of works~\cite{Ershov-Dokl,Ershov-Skolem,Ershov-Decidability-Book} develops an effective model-theoretic machinery and a general notion of an effective closure. He applies it to show that every computable locally nilpotent torsion-free group can be embedded into its divisible closure. We conjecture that many results of this sort hold primitively recursively.
But to apply these results in $\Pra$ one needs to develop a sufficient amount of `soft' model theory in $\Pra$; this may prove to be a challenging task.
For instance, it seems that the aforementioned result of Harrington about prime models holds primitively recursively but requires too much induction (since it is a priority construction).


\begin{question}\label{q:2}
Study model theory over $\Pra$.
\end{question}

\noindent The reverse mathematics of model theory has been studied in, e.g., \cite{hirschfeldt2009atomic,Belanger-14,Belanger-15,Induction_Bounding}.


\section{Baire category theorem and Ramsey theorem}\label{sect:Baire_and_Ramsey}

Recall that we fixed a primitive recursive coding of finite strings in $\mathbb{N}^{<\mathbb{N}}$. Elements of $\mathbb{N}^{<\mathbb{N}}$ can be identified with total functions. We could follow the basic ideas from \cite{sosoa} and formalise Polish metric spaces in $\Pra$,
but for now we restrict ourselves to the space $\mathbb{N}^{\mathbb{N}}$.

An open set $V$ in $\mathbb{N}^{\mathbb{N}}$ is coded by the sequence of basic open sets that together make up $V$; each basic open set is identified with the respective finite string.
An open set $V$ is said to be \emph{dense} if for every finite string $\sigma$ there is a string $\tau$ extending $\sigma$ and a basic open subset $B$ of $V$ such that $\tau$ lies in $B$.
These definitions can be formalised in $\mathcal{L}_{\Pra}$.  In particular, an open set is identified with a function that lists basic open sets (coded as finite strings) that together make up the open set.
A sequence of open sets is coded using a primitive recursive function in two arguments.

This choice of coding can be criticised. One could argue that a more `honest' coding should involve characteristic functions rather than enumeration. However, as further explained in Remark~\ref{codingremark}, Theorem~\ref{Bairethm} would still remain true under this seemingly more expressive coding.

\subsection{Baire category theorem}
The following can be made sense of in $\mathcal{L}_{\Pra}$.

 \begin{definition}[$\mathsf{Baire Category Theorem}$]
An instance of $\mathsf{Baire Category Theorem}$
is a sequence  $(V_n)_{n \in \mathbb{N}}$
of dense open sets in $\mathbb{N}^{\mathbb{N}}$. Then a function $h\in \mathbb{N}^{\mathbb{N}}$ is a solution
if $h\in \cap_n V_n $.
\end{definition}

\begin{theorem}\label{Bairethm}
 $\Pra +\mathsf{Baire Category Theorem}$
 lies strictly between $\Pra$ and $\rca$.

\end{theorem}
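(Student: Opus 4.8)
The plan is to establish three things: that $\rca$ proves $\mathsf{Baire Category Theorem}$ (so the principle sits at or below $\rca$), that $\Pra$ does not prove it (strictness on the left), and that it does not prove $\rca$ over $\Pra$ (strictness on the right). The first is the routine direction. Working in $\rca = \Pra + \qfac$, and given a sequence $(V_n)$ coded by a primitive recursive $F(n,j)$, density says $\forall \langle \sigma,n\rangle\, \exists\langle \tau,j\rangle\, \psi$, where $\psi$ is the $\Delta^0_0$ statement that $\tau \supsetneq \sigma$ and $\tau$ lies in the basic open set $F(n,j)$ of $V_n$. By $\qfac$ (equivalently, totality of minimisation, Proposition~\ref{prop:mini}) I extract a choice function $E(\sigma,n)$ returning such a $\tau$ with $|\tau|>|\sigma|$, build the nested sequence $\sigma_0 = \langle\rangle$, $\sigma_{n+1} = E(\sigma_n,n)$ by primitive recursion, and set $h(k) = \sigma_{k+1}(k)$. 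Since $\qfac$ implies $\is 0 1$ (Proposition~\ref{Prop_QfacNotIS01}), I can verify by $\Sigma^0_1$-induction that $h \in \bigcap_n V_n$.

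For the strict lower bound I refute $\mathsf{Baire Category Theorem}$ in the minimal model $(\omega, PRec(\omega))$. The naive idea is the family $V_e = \{h : h \neq p_e\}$, whose intersection is the set of functions that are not primitive recursive, hence empty in $PRec(\omega)$; the difficulty is that coding this family requires evaluating $p_e$ uniformly in $e$, i.e.\ a universal function, which is not primitive recursive. The fix is to replace exact evaluation by step-counted simulation: there is a primitive recursive $U(e,k,s)$ that runs the $e$-th scheme on input $k$ for $s$ steps and reports the value if the computation has halted. I enumerate into $V_e$, at stage $s$, every string $\tau$ for which some $k < |\tau|$ has $U(e,k,s)$ halted with $\tau(k)$ different from that value. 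This $F(e,s)$ is primitive recursive in $(e,s)$; each $V_e$ is open, and it is dense because for any $\sigma$ the one-point extension disagreeing with $p_e$ at the fresh coordinate is eventually enumerated once $p_e$ has converged there. One checks $V_e = \{h : h \neq p_e\}$ exactly, so the intersection omits every primitive recursive function and $\mathsf{Baire Category Theorem}$ fails; hence $\Pra \nvdash \mathsf{Baire Category Theorem}$.

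For the strict upper bound I produce an $\omega$-model $\mathcal{X}$ of $\Pra + \mathsf{Baire Category Theorem}$ that fails $\qfac$. The guiding observation is that a solution to an instance $F$ can always be taken Turing-reducible to $F$: the greedy search for the least extending basic open set terminates by density, so the generic built from it is $F$-recursive, and $\mathsf{Baire Category Theorem}$ never forces the Turing jump. I build $\mathcal{X} = \bigcup_s \mathcal{X}_s$ in stages, starting from $PRec(\omega)$, at each stage adjoining a solution to a pending instance and closing under primitive recursion, while maintaining the invariant that a fixed computable but non-primitive-recursive function (say Ackermann's $A$) is not primitive recursive in any finite join of members of $\mathcal{X}_s$. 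Since a failure of closure under minimisation is exactly a failure of $\qfac$ (Proposition~\ref{prop:mini}), keeping $A \notin \mathcal{X}$ yields $\mathcal{X} \not\models \qfac$, whence $\mathsf{Baire Category Theorem} \nvdash \rca$ over $\Pra$; this refines the model of Proposition~\ref{pr:rca}, which by contrast need not satisfy $\mathsf{Baire Category Theorem}$.

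The main obstacle is this last step: I must show that every instance admits a solution that is \emph{safe}, i.e.\ that can be adjoined without making $A$ primitive recursive relative to the current data. This is a forcing/genericity argument over $\mathbb{N}^{<\mathbb{N}}$: the solutions of a fixed instance form a comeager set, whereas for each primitive recursive functional $\Phi$ and each finite join $\vec g$ from $\mathcal{X}_s$ the condition that $\Phi(h \oplus \vec g) = A$ can be defeated by a finite extension of the current stem, since continuity of $\Phi$ together with $A$ being a fixed target lets me force a disagreement while remaining free to meet the remaining dense sets. Diagonalising against all such $\Phi$ and $\vec g$ produces a safe solution, and the usual bookkeeping over all instances, functionals, and parameters drives the construction. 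Verifying that the resulting $\mathcal{X}$ is genuinely closed under primitive recursion and genuinely meets every instance it codes, so that $\mathcal{X} \models \Pra + \mathsf{Baire Category Theorem}$, is where the care is concentrated.
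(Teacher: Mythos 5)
Your overall architecture matches the paper's: the positive direction in $\rca$ is routine, the lower bound is a diagonalization of a uniformly primitive recursive family of dense open sets against the primitive recursive functions via step-counted simulation, and the upper bound is an iterated extension lemma building a PR-closed $\omega$-model of $\mathsf{Baire Category Theorem}$ that omits a fixed computable function. Your upper-bound forcing step is sound and is essentially the paper's Lemma~\ref{lem0}: given a stem $\rho$, the function $\Phi^{(\rho^\smallfrown 0^\omega)\oplus \vec g}$ is total and primitive recursive in $\vec g$, so by the invariant it already disagrees with your target, and the use principle yields a finite extension $\rho^\smallfrown 0^j$ that freezes the disagreement while leaving you free to meet the next dense set. (The paper diagonalizes against \emph{domination} of a fixed computable $f$ rather than against \emph{equality} with $A$; both invariants survive PR-closure for the same reason, so this difference is cosmetic.)

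The one genuine gap is in the lower bound, and it is exactly the point the paper's Lemma~\ref{lem:easybaire} is engineered around. Under the paper's coding, $V_e$ is presented by a \emph{total} primitive recursive function listing genuine basic open sets, so $F(e,0)$ must commit to some $B_{\tau}$ before a single value of $p_e$ has been simulated to convergence; at that moment no $B_\tau$ is known to avoid $p_e$, and whichever $\tau$ you choose, for some $e$ you may have $p_e\in B_\tau\subseteq V_e$, after which the diagonalization against $p_e$ via $V_e$ is irreparably lost (e.g.\ a constant choice $\tau=\la 0\ra$ puts $0^\omega$ into every $V_e$, hence into $\bigcap_e V_e$). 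Your phrase ``enumerate into $V_e$, at stage $s$, every string $\tau$ for which some $k<|\tau|$ has $U(e,k,s)$ halted with $\tau(k)$ different from that value'' describes a c.e.-style enumeration that outputs nothing at early stages, which is not a legal instance under this coding. The paper repairs this by hedging: each requirement gets a \emph{pair} $V_{2n},V_{2n+1}$, with filler enumerations $B_{00},B_{01},\dots$ and $B_{10},B_{11},\dots$ respectively while waiting for $f_n(0)$, so that whichever value $f_n(0)$ takes, one of the two sets has so far committed only to basic open sets missing $f_n$ and can carry the diagonalization, while the other is abandoned to become all of $\omega^\omega$. Alternatively you could adopt a coding permitting a null/skip token (cf.\ Remark~\ref{codingremark}), but as written your single-$V_e$ scheme does not produce a primitive recursive instance.
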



\begin{proof}
It is easy to see that $\mathsf{Baire Category Theorem}$ can be demonstrated in $\rca$; see \cite{sosoa}.
We demonstrate that there is an instance of $\mathsf{Baire Category Theorem}$ in the standard minimal model of $\Pra$ which does not have a primitive recursive solution, and thus $\Pra$ does not prove $\mathsf{Baire Category Theorem}$:

\begin{lemma}\label{lem:easybaire}
There is a uniformly primitive recursive sequence of dense open sets $(V_n)$ in $\omega^{\omega}$ such that there is no primitive recursive point (path, function) in their intersection.
\end{lemma}

\begin{proof} Let $(f_e)_{e\in\omega}$ be a uniformly computable enumeration of all primitive recursive functions.

The idea is to describe a primitive recursive procedure of  simultaneous enumeration of open sets $(V_n)_{n\in\omega}$ such that  either $V_{2n}  = \omega^\omega \setminus \{ f_n\}$ and $V_{2n+1} =  \omega^\omega$, or $V_{2n}  = \omega^\omega$ and $V_{2n+1} =   \omega^\omega \setminus \{ f_n\}$.

For a $\sigma \in \omega^{<\omega}$,  let $B_\sigma = \{\rho \in \omega^{<\omega}: \sigma\preceq \rho\}$.
Regardless of the outcome, we will eventually put the basic open set $B_0$ into $V_{2n}$ and
 the basic open set $B_1$ into $V_{2n+1}$. This will not be done immediately though.
 We initiate a primitive recursive enumeration of $$B_{00}, B_{01},B_{02}, \ldots $$ into $V_{2n}$,
 and similarly we initiate a primitive recursive enumeration of  $$B_{10}, B_{11},B_{12}, \ldots $$ into $V_{2n+1}$.

We wait for $f_n(0)$ to converge. Without loss of generality, we can assume $f_n(0) \neq 0$;
the case when $f_n(0) \neq 1$ is symmetric.
Since $f_n(0) = m_0 \neq 0$ we can use $V_{2n}$ to diagonalise, since no part of $B_{m_0}$ has yet been listed in $V_{2n}$. (In this case we proceed to list all basic balls into $V_{2n+1}$ thus making it equal to the whole space.)

We wait for $f_n(1)$ to halt. Meanwhile,  we keep enumerating the sequence $B_{00}, B_{01},B_{02}, \ldots $ into $V_{2n}$.
If $f_n(1) = m_1$, then we initiate the enumeration of $B_{m_0} \setminus B_{m_0m_1}$ into $V_{2n}$.

We iterate the procedure. Eventually, we will start listing elements in $B_{m_0 m_1} \setminus B_{m_0m_1 m_2}$, and the same for $m_3$, etc. We keep enumerating $B_{m_0} \setminus B_{m_0m_1}$ into $V_{2n}$ while we wait for $f_n$ to halt on one more argument.
This way we produce a primitive recursive procedure that lists an open set equal to $\omega^{\omega} \setminus \{f_n\}$.

It remains to argue that these open sets can be build uniformly primitively recursively.
We can fix a primitive recursive function $U$ such that $f_{e,s}(x) = U(e,x,s)$.
(Note that the computable function $s_e(x) = \mu_t  \, f_{e, t} (x)$$\downarrow$ is not primitive recursive.) We use $U$ to run the construction of $(V_n)_{n\in\omega}$ simultaneously,
so that in each $V_n$ we either use  $B_0$ or $B_1$ until $s_n$ halts on one more input.

It is clear that each $V_n$ is dense, but for any $n$, $f_n \notin \bigcap_n V_n$.
\end{proof}

To show that $\rca$ is strictly above $\mathsf{Baire Category Theorem}$, we build a (standard) model of $\Pra+ \mathsf{Baire Category Theorem}$ such that the model does not contain all computable functions.
 We work  with the standard natural numbers $\omega$.
All functions in this proof are total. We identify functions with paths through $\omega ^{ \omega}.$


Fix a computable function $f\in \omega^\omega$
that is not dominated by any primitive recursive function.
We iteratively apply Lemma \ref{lem0} below to the minimal model of $\Pra$ to build a model
$\mathcal{M}\subseteq \omega^\omega$
of $\mathsf{Baire Category Theorem}$
so that $f$ is not dominated by any function in $\mathcal{M}$.
\begin{lemma}\label{lem0}
Let
$\mathcal{K}\subseteq \omega^\omega$ be a countable PR-closed class.
Suppose $f$ is not dominated by any function in $\mathcal{K}$.
Then, for every sequence  $(V_n)_{n \in \omega}$ of  dense open sets in $\omega^\omega$,
there is a function $h\in \cap_n V_n$ such that $f$ is not dominated by any
function in
PR$(\mathcal{K}\cup\{h\})$.

\end{lemma}
\begin{proof}

For each $g\in \mathcal{K}$ and each primitive recursive
functional $\Psi$, we need to meet the following requirements:
$$
\mathcal{R}_{\Psi,g}: \ \Psi^{g\oplus h} \text{ does not dominate }f;
$$
and also for every $n \in \omega$,
$$
\mathcal{D}_n: \ h\in V_n.
$$

Suppose we have already defined a finite initial segment $\rho\prec h$.
To meet $\mathcal{D}_n$, use that $V_n$ is dense, combined with the claim below that allows to meet  $\mathcal{R}_{\Psi,g}$.
\begin{claim}
For any $\rho\in \omega^{<\omega}$,
there is a $\sigma\succ\rho$ such that for some $m$,
$\Psi^{g\oplus\sigma}(m)<f(m)$.
\end{claim}
\begin{proof}
Since $\Psi$
is a primitive recursive functional,  the function
$w= \Psi^{g\oplus (\rho^\smallfrown 0^\omega)}$ is total and is indeed primitive recursive in $g$. In other words,
$$ w \in \mathcal{K}.$$
So, by the assumption about $\mathcal{K}$ there must be   $m,j$ such that
$\Psi^{g\oplus (\rho^\smallfrown 0^j)}(m)<f(m)$.\end{proof}
Using the claim and by the use principle, we can always extend a given string $\rho$ to a string $\rho' = \rho^\smallfrown 0^j$ such that any extension of this string meets $\mathcal{R}_{\Psi, g}$.
We then further extend the string $\rho'$ to meet the open set $V_{n+1}$, and so on.
\end{proof}
This finishes the proof of Theorem~\ref{Bairethm}.
\end{proof}

\begin{remark}\label{codingremark}
We represented an open set via a function that lists its basic open subsets.
We could instead use the characteristic function, i.e., a function $g$ so that $g(\sigma) =1$ iff $B_\sigma \in V$, and $g(\sigma) =0$ otherwise. It is however not hard to see that Lemma~\ref{lem:easybaire} would still hold true under this new coding.
More specifically, suppose we have to quickly decide whether $B_\sigma$ is in $V$
but we are not yet sure whether we can say ``yes'' (because $f_n$ has not yet halted on enough inputs, so it is still possible that $f_n \in B_\sigma$). In this case
we can always quickly declare that $B_\sigma$  is not in $V_n$. We can later list $B_\sigma \setminus \xi$ into $V_n$, where $\xi$ is either $f_n$ or $\sigma 0^\omega$ in case if $f_n \notin B_\sigma$.

This adds quite a bit of extra noise to the construction of Lemma~\ref{lem:easybaire}.
One needs to argue that it can be arranged so that for each basic open $B_\sigma \not\ni f_n$, the intersection $B_\sigma \cap V_n$
is missing at most finitely many points of $B_\sigma$. (This property implies that $V_n$ is dense.)
Indeed, if $B_\sigma \not \ni f_n$, then eventually this will be recognised, and from this stage on we can stop extracting points from $B_\sigma$ for the sake of producing a rapid definition of $V_n$.

Alternatively, we could assume that the enumeration of $V_n$ has no repetitions---this would also have no effect on the results, with just a bit of extra care.
In other words, the results of this subsection are essentially independent of the specific (natural)  choice of coding.
 \end{remark}


\subsection{The stronger result} In fact, using similar techniques exploiting the speed of growth, we can establish, perhaps, a more unexpected fact:

\begin{theorem} $\mathsf{Baire Category Theorem}$ neither implies nor is implied by  $\dca$ (or $2^{\mathbb{N}}$-$\rca$) over $\Pra$.

\end{theorem}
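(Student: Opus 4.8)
The plan is to establish the two non-implications separately, each by producing a suitable $\omega$-model of $\Pra$; since all these models are $\omega$-models, induction comes for free and we only have to control the second-order part.

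\emph{$2^{\Nb}$-$\rca$ does not prove $\mathsf{Baire Category Theorem}$.} Here I would reuse the model $\mathcal{M}$ from Proposition~\ref{pr:rca}: it is an $\omega$-model of $\Pra+\dca+\is 0 1$ (hence of $2^{\Nb}$-$\rca$) in which every function is dominated by a primitive recursive function. Fix a uniformly primitive recursive enumeration $(f_e)_{e\in\omega}$ of the primitive recursive functions, with a staged universal function $U$ as in Lemma~\ref{lem:easybaire}, and take the instance
\[
  V_{\la e,n\ra} = \{ h \in \omega^{\omega} : (\exists m\geq n)\,(h(m) > f_e(m)) \}.
\]
Each $V_{\la e,n\ra}$ is dense, since any $\sigma$ extends to a $\tau$ carrying the value $f_e(m)+1$ at some position $m\geq\max(n,|\sigma|)$; and the double sequence is uniformly primitive recursive, because at stage $s$ we may list $B_\sigma$ once $U$ has produced $f_e(m)$ for some $m\in[n,|\sigma|)$ with $\sigma(m)>f_e(m)$, totality of $f_e$ guaranteeing that the listed set is exactly $V_{\la e,n\ra}$. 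Thus $(V_{\la e,n\ra})$ is a genuine instance living in $\mathcal{M}$. Any solution $h\in\bigcap_{e,n}V_{\la e,n\ra}$ exceeds every $f_e$ infinitely often, hence is dominated by no primitive recursive function; as every function of $\mathcal{M}$ is so dominated, $\mathcal{M}$ has no solution and $\mathcal{M}\models\neg\mathsf{Baire Category Theorem}$.

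\emph{$\mathsf{Baire Category Theorem}$ does not prove $\dca$.} Since $\dca$ only asserts the existence of $\{0,1\}$-valued (hence bounded) characteristic functions, speed of growth alone cannot refute it, so I would instead use a cone-avoidance variant of the forcing of Lemma~\ref{lem0}. First fix a computable $\{0,1\}$-valued function $\chi_C$ that is not primitive recursive: taking $c_e(x)=\min(f_e(x),1)$ gives a uniformly computable enumeration containing all $\{0,1\}$-valued primitive recursive functions, and the diagonal $\chi_C(e)=1-c_e(e)$ is computable, $\{0,1\}$-valued, and differs from each $c_e$, hence is not primitive recursive. The goal is an $\omega$-model $\mathcal{N}\models\Pra+\mathsf{Baire Category Theorem}$ with $\chi_C\notin\mathcal{N}$; as $\chi_C$ is computable, and therefore $\Delta^0_1$ with no parameters, such an $\mathcal{N}$ automatically fails $\dca$, a fortiori $2^{\Nb}$-$\rca$. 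I would build $\mathcal{N}=\bigcup_s\mathcal{K}_s$ as an increasing union of countable $\mathrm{PR}$-closed classes with $\chi_C\notin\mathcal{K}_s$, governed by the following lemma together with a bookkeeping that eventually serves every $\mathsf{Baire Category Theorem}$-instance appearing in any $\mathcal{K}_s$. The lemma: if $\mathcal{K}$ is countable and $\mathrm{PR}$-closed with $\chi_C\notin\mathcal{K}$, then for every sequence $(V_n)$ of dense open sets there is $h\in\bigcap_nV_n$ with $\chi_C\notin\mathrm{PR}(\mathcal{K}\cup\{h\})$. I would prove it by finite-extension forcing, interleaving the density requirements $\mathcal{D}_n\colon h\in V_n$ (met exactly as in Theorem~\ref{Bairethm}) with avoidance requirements $\mathcal{R}_{\Psi,g}\colon \Psi^{g\oplus h}\neq\chi_C$, one for each primitive recursive functional $\Psi$ and each $g\in\mathcal{K}$. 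To meet $\mathcal{R}_{\Psi,g}$ from a condition $\rho$, I search for $\sigma\succ\rho$ and $x$ with use contained in $\sigma$ and $\Psi^{g\oplus\sigma}(x)\neq\chi_C(x)$; were there no such $\sigma,x$, then $\Psi^{g\oplus(\rho\smf 0^\omega)}=\chi_C$, and since $\rho\smf 0^\omega$ is primitive recursive and $\mathcal{K}$ is $\mathrm{PR}$-closed this yields $\chi_C\in\mathcal{K}$, contradicting the invariant. As every member of $\mathrm{PR}(\mathcal{K}\cup\{h\})$ can be written $\Psi^{g\oplus h}$ with $g\in\mathcal{K}$ (using closure under $\oplus$), meeting all $\mathcal{R}_{\Psi,g}$ gives $\chi_C\notin\mathrm{PR}(\mathcal{K}\cup\{h\})$.

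The main obstacle is this second construction: one must simultaneously satisfy the unboundedly many density requirements that keep $\mathcal{N}\models\mathsf{Baire Category Theorem}$ and the avoidance requirements that keep $\chi_C$ out, and then check that the bookkeeping over the growing classes $\mathcal{K}_s$ serves every instance that ever appears in $\mathcal{N}$ while preserving $\chi_C\notin\mathcal{K}_s$ at each stage. By contrast, the first separation is comparatively routine once the fast-growth instance and its uniform primitive recursiveness have been set up.
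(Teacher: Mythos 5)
Your overall strategy coincides with the paper's: a growth/domination argument to show that $\dca$ (indeed $2^{\mathbb{N}}$-$\rca$) does not prove $\mathsf{Baire Category Theorem}$, and a forcing construction of a PR-closed $\omega$-model omitting a fixed computable, non-primitive-recursive characteristic function for the converse. Your second half is essentially the paper's argument in a cleaner, more standard dress: where you meet $\mathcal{R}_{\Psi,g}$ by searching for a finite extension forcing a disagreement with $\chi_C$, and justify that the search succeeds because $\Psi^{g\oplus(\rho\smf 0^{\omega})}$ is primitive recursive in $g\in\mathcal{K}$, the paper packages the very same observation as Lemma~\ref{lem:halflemma} together with the ``local delay'' property and runs all requirements against one common delay function $\ell$. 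The two are interchangeable here; your requirement-by-requirement finite-extension forcing, with previously added solutions absorbed into the single oracle $g$ at later stages, and with the reduction of an arbitrary element of $\mathrm{PR}(\mathcal{K}\cup\{h\})$ to the form $\Psi^{g\oplus h}$ via closure under $\oplus$, is correct and is arguably the more transparent bookkeeping.

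The one genuine gap is in the first half, and it is precisely the point on which the paper spends most of Lemma~\ref{lemma:abitmore}: an open set is coded by a \emph{total} function listing basic open balls whose union is the set, so your enumeration of $V_{\la e,n\ra}$ must output, at every stage including stage $0$, a ball certified to lie inside $V_{\la e,n\ra}$. But no $B_\sigma$ can be so certified before $U$ has produced a value $f_e(m)$ with $m\geq n$ and $\sigma(m)>f_e(m)$, and the convergence stage of $U$ is not primitively recursively bounded; hence ``we may list $B_\sigma$ once $U$ has produced $f_e(m)$'' does not by itself give a primitive recursive instance, since the implicit ``wait until something is certified'' search is unbounded. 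The paper repairs this by double-indexing: each $V_n$ is split into sets $V_{n,k}$, the ball $B_k$ is committed to $V_{n,k}$ at the outset, and $V_{n,k}$ is inflated to the whole space if that commitment turns out to be unsafe, so that the intersection is unchanged. The same patch works verbatim for your instance (commit, for each $k$, the $k$-th string of length greater than $n$, and discard that copy by inflating it if the committed ball fails to avoid $f_e$); alternatively, the characteristic-function coding discussed in Remark~\ref{codingremark} legitimises the ``declare it out now, add it later'' strategy directly. With that repair the rest of your first half goes through: the instance lies in the model of Proposition~\ref{pr:rca}, each $V_{\la e,n\ra}$ is dense, and any solution escapes domination by every primitive recursive function, whereas every function of that model is primitively recursively bounded.
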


Indeed, the theorem holds even when restricted to standard models (in particular, with full induction). It is also clear that combined with~Corollary~\ref{cor:weaker} the theorem implies
the less elementary half of Theorem~\ref{Bairethm}. However, the proof below relies on
Lemma~\ref{lemma:abitmore} which is established using a generalisation of the much more transparent argument in Lemma~\ref{lem:easybaire} used in the proof of Theorem~\ref{Bairethm}. Thus,  we decided to keep both proofs.
\begin{proof}

Recall that in the proof of Proposition~\ref{pr:rca}, we argued that
 the minimal model of $\dca$ consists of primitively recursively bounded computable functions.
Thus, to construct a standard model of $\dca$ in which $\mathsf{Baire Category Theorem}$ fails, we need to push the proof of Lemma~\ref{lem:easybaire} and diagonalise against
all solutions that are primitively recursively bounded rather than just against all primitive recursive solutions. (Since we are working in standard models, the $2^{\mathbb{N}}$-$\rca$ part of the theorem will also follow.)

\begin{lemma}\label{lemma:abitmore}
There is a primitive recursive instance of $\mathsf{Baire Category Theorem}$ that has no primitively recursively bounded  solutions.
\end{lemma}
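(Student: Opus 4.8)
The plan is to generalise the delaying trick from the proof of Lemma~\ref{lem:easybaire}. There one carved a single slow point $f_n$ out of the space; here I must instead carve out, for each $e$, the whole compact ``box'' of functions pointwise below the $e$-th primitive recursive function, and then take the corresponding open sets as the instance. I would fix a primitive recursive enumeration $(f_e)_{e\in\omega}$ of all unary primitive recursive functions, together with the primitive recursive interpreter-with-fuel $U(e,x,s)$ and halting predicate $\mathrm{Halt}(e,x,s)$ satisfying $f_e(x) = U(e,x,s)$ at the least $s$ with $\mathrm{Halt}(e,x,s)$, such an $s$ always existing since each $f_e$ is total. The only obstruction to a naive construction is exactly the one already noted in the proof of Lemma~\ref{lem:easybaire}: the convergence-time function $s_e(x)=\mu_t\,f_{e,t}(x){\downarrow}$ is not primitive recursive, so one cannot primitively recursively compute $f_e(x)$ uniformly in $e$.

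I would then define, for each $e$, the open set
\[
	V_e = \{ h\in\omega^\omega \,\colon (\exists x)\,(h(x) > f_e(x))\},
\]
that is, the complement of the compact box $\{ h : (\forall x)\, h(x)\le f_e(x)\}$. Each $V_e$ is dense: given any string $\sigma$, the extension $\sigma^\smallfrown(f_e(|\sigma|)+1)$ already escapes $f_e$, so $B_{\sigma^\smallfrown(f_e(|\sigma|)+1)} \subseteq V_e \cap B_\sigma$. Moreover, any primitively recursively bounded function $h$ is dominated everywhere by some primitive recursive function, hence by some $f_e$ (enlarging the bound by the finitely many initial values of $h$, if one works with eventual domination), so $h\notin V_e$ and therefore $h\notin\bigcap_e V_e$. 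Thus $(V_e)$ has no primitively recursively bounded solution, while genuine (fast-growing) solutions still exist, consistent with $\mathsf{Baire Category Theorem}$ holding in larger models.

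The remaining task is to produce $(V_e)$ by a single primitive recursive function of two arguments. I would call a string $\sigma$ \emph{confirmed escaping at stage $s$} if $(\exists x < |\sigma|)\,[\mathrm{Halt}(e,x,s) \land \sigma(x) > U(e,x,s)]$; this is a primitive recursive predicate of $(e,\sigma,s)$. Running over all pairs $(\sigma,s)$ in the standard primitive recursive order, I would enumerate $B_\sigma$ into $V_e$ whenever $\sigma$ is confirmed escaping at stage $s$, and list nothing otherwise (using the standard convention that the enumeration of an open set may stall). This listing is primitive recursive uniformly in $e$, and it recovers $V_e$ exactly: it is \emph{sound}, since a confirmation uses an actual halting value $U(e,x,s)=f_e(x)$, so every listed $B_\sigma$ is genuinely contained in $V_e$; and it is \emph{complete}, since if $\sigma(x) > f_e(x)$ for some $x<|\sigma|$ then at the true halting stage $s_0$ of $(e,x)$ the string $\sigma$ is confirmed. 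Hence each enumerated $V_e$ is honestly dense open.

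The main point, and the only place real care is needed, is the soundness/completeness bookkeeping above: I must never list a string that does not escape (which forces waiting for genuine halting rather than computing $f_e$ outright), yet must eventually list every escaping string. This is precisely the generalisation of the ``wait for $f_n$ to halt on one more argument'' mechanism of Lemma~\ref{lem:easybaire}, now applied to the coordinatewise bound $f_e$ instead of to a single diagonal point; the remaining verifications (density, the domination argument, and the coding of the sequence as a primitive recursive function of two arguments) I expect to be routine.
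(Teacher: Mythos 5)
Your overall strategy is the same as the paper's: for each $e$ you remove the compact ``box'' of functions pointwise dominated by $f_e$, observe that the resulting open sets are dense, and note that any primitively recursively bounded solution would have to miss one of them. Your soundness/completeness bookkeeping via the halting predicate is also essentially the paper's ``wait for $f_n$ to halt on one more argument'' mechanism, and that part is fine.

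There is, however, one concrete gap: you dismiss exactly the point on which the paper's proof spends its effort. Under the coding fixed in Section~\ref{sect:Baire_and_Ramsey}, an open set is a \emph{total} function listing basic open sets (finite strings), and a sequence of open sets is a single primitive recursive function of two arguments; there is no ``standard convention that the enumeration may stall.'' At early stages, before $f_e$ has halted on any input, \emph{no} string is confirmed escaping --- indeed no basic open set whatsoever is provably contained in $V_e$, since for all you know $f_e$ could majorise any given string --- so your procedure has nothing it is allowed to output, and the resulting listing is not a total primitive recursive function. The paper resolves this by replacing each $V_e$ with a doubly-indexed family $V_{e,k}$: seed $V_{e,k}$ with $B_k$ immediately (so the enumeration is total from the start), then wait for $f_e(0)$; if $f_e(0)\geq k$ the set $V_{e,k}$ is harmlessly inflated to all of $\omega^\omega$, while for each $k>f_e(0)$ the seed $B_k$ was already outside the box and one proceeds with your delayed enumeration to get $V_{e,k}=\omega^\omega\setminus f_e^{\omega}$; the intersection over all $(e,k)$ still avoids every box. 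Your argument needs this (or an equivalent seeding device, as in the paired sets $V_{2n},V_{2n+1}$ of Lemma~\ref{lem:easybaire}, or the characteristic-function workaround of Remark~\ref{codingremark}) to actually produce a primitive recursive instance; with that patch the rest of what you wrote goes through.
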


\begin{proof} For a fixed primitive recursive bound $h$, let $h^{\omega}$ denote the $h$-branching homeomorphic copy of the Cantor space $2^{\omega}$. Note that  $\omega^{\omega} \setminus h^{\omega}$ is open and  dense in $\omega ^{\omega}$.
So the idea is to
build a primitive recursive instance of $\mathsf{Baire Category Theorem}$ in which $V_n = \omega^{\omega} \setminus f_n^{\omega}$, where $f_n$ is the $n$th primitive recursive function.  If we succeed, then evidently no $h$ bounded by any $f_n$ can possibly be a solution to this instance, and thus the minimal model of $\dca$ would fail to contain any solution to this instance.

The usual issue is that, of course, there is no uniformly primitive recursive enumeration of $f_n$. Thus, we have to deal with a primitive recursive simultaneous approximation to $f_n$, $n \in \omega$, using a primitive recursive function in two arguments.

The idea is to delay the enumeration of a basic open set $B_\sigma$ into $V_n$, as follows.
If we are not yet sure whether we should put $B_\sigma$ in or not, declare it out.
We then wait for $f_n$ to converge on sufficiently many inputs to decide whether we should actually have listed $B_\sigma$ in $V_n$ if we had quick access to $f_n$. If this is indeed the case, we can always later initiate a primitive recursive enumeration of a sequence of basic clopen subsets of $B_\sigma$ that together make up $B_\sigma$.

However, this might lead to the issue of totality of enumeration in the sense that each $V_n$
has to be enumerated by a primitive recursive procedure, so we have to put at least one basic clopen set into each $V_n$.

We resolve this as follows.   Instead of one dense open  $V_n$ for each $n$, define a sequence $V_{n,k}$, $k \in \omega$. (This is similar to how we had two open sets in the proof of Lemma~\ref{lem:easybaire}.) More specifically, we always put $B_{k }$ into $V_{n,k}$ initially. We then wait for $f_n(0)$ to halt. If $f_n(0) \geq k$ then we can proceed to enumerating the entirety of $\omega^\omega$ into $V_{n,k}$. Otherwise, for each $k> f_n(0)$, we can proceed
with the strategy of local delay (as described above) and build $V_{n,k} = \omega^{\omega} \setminus f_n^{\omega}$. We omit further details which we believe are sufficiently elementary. (Remark~\ref{codingremark} applies to this argument as well.)
\end{proof}

We conclude that $\dca$ does not imply $\mathsf{Baire Category Theorem}$.

\

To establish that $\mathsf{Baire Category Theorem}$ does not imply $\dca$ over $\Pra$, we shall construct a standard model of $\Pra + \mathsf{Baire Category Theorem}$ that
does not include some $\{0,1\}$-valued computable function; the latter can be picked to be an arbitrary total computable characteristic function that is not primitive recursive.

Recall that constructing a standard model of  $\Pra$ is the same as defining a collection of
total functions closed under primitive recursive operators, i.e., PR-closed.
Note also that for any instance $(V_n)_{n \in \omega}$ of $\mathsf{Baire Category Theorem}$ and any monotonically increasing
total function $\ell$, $(V_n)_{n \in \omega}$ has a solution $h$ with the property:
$$
	\,\,\text{For infinitely many} \,\, i,  \,\,  h\upharpoonright_{[\ell(i), \ell(i+1)-1]} \mbox{ is a constant function}.
$$

The property holds true since $V_{n}$ is dense in $\omega^{\omega}$, and therefore
we can delay the correction of $h$ for as long as we desire
and still hit $V_{n}$.  In the property, $\ell$ simply stands for the delay that we choose.
We thus call the property described above the \emph{local delay property}, or $\mathsf{LD}$-property for short.
(It is essentially a property that allows one to use `padding'.)


\

We shall construct our model by iteratively applying the lemma below.


\begin{lemma}\label{lem:halflemma} Fix a  non-primitive recursive function $g\in\omega^\omega$,
a primitive recursive operator $P$, $\sigma\in \omega^{<\omega}$, and $m\in\omega$. 
There exist $n,i\in\omega$ such that
$g(i)\ne P^{\sigma^\smallfrown m^n}(i)\downarrow$
(where $m^n$ denotes the  string of the form $m\cdots m$ having length $n$).
\end{lemma}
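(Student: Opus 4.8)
The plan is to argue by contradiction, exploiting the fact that the candidate ``limit'' oracle $\sigma^\smallfrown m^\omega$ is itself primitive recursive. Suppose the conclusion fails, that is, for every $n,i\in\omega$, whenever $P^{\sigma^\smallfrown m^n}(i)$ converges we have $P^{\sigma^\smallfrown m^n}(i)=g(i)$. I want to derive that $g$ is primitive recursive, contradicting the hypothesis.

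First I would introduce the total function $h\in\omega^\omega$ given by $h(j)=\sigma(j)$ for $j<|\sigma|$ and $h(j)=m$ for $j\geq|\sigma|$; informally, $h=\sigma^\smallfrown m^\omega$. Since $\sigma$ is a fixed finite string and $m$ a fixed number, $h$ is primitive recursive. Because the class of all primitive recursive functions is PR-closed (indeed it is the smallest such class), applying the primitive recursive operator $P$ to the primitive recursive input $h$ yields a total function $P^{h}$ that is again primitive recursive.

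Next I would invoke the use principle: viewing $P$ as a Turing functional in the standard model, each convergent computation $P^{h}(i)$ queries its oracle at only finitely many arguments. Hence for every $i$ there is an $n$ (any $n$ at least the use of $P^{h}(i)$ minus $|\sigma|$) for which the finite oracle $\sigma^\smallfrown m^n$ already agrees with $h$ on every queried position, so $P^{\sigma^\smallfrown m^n}(i)\downarrow = P^{h}(i)$. Combining this with the for-contradiction assumption gives $P^{h}(i)=g(i)$ for every $i$, whence $g=P^{h}$ is primitive recursive, a contradiction.

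I expect there to be no genuinely hard step here; the only point requiring care is the correct reading of the convergence notation $P^{\sigma^\smallfrown m^n}(i)\downarrow$ through the use principle. Concretely, one must note that $P^{\sigma^\smallfrown m^n}(i)$ converges exactly when the use of $P^{h}(i)$ does not exceed $|\sigma|+n$, and that in that case the value is stable precisely because $\sigma^\smallfrown m^n$ is an initial segment of $h$. The remaining ingredients, namely totality and primitive recursiveness of $P^{h}$, follow immediately from PR-closure of the primitive recursive functions, and everything else is routine bookkeeping.
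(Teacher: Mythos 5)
Your proposal is correct and is essentially the paper's own argument, merely phrased contrapositively: both rest on the observation that $P^{\sigma^\smallfrown m^\omega}$ is a total primitive recursive function, hence differs from $g$ at some $i$, and then on the use principle to replace the infinite oracle $\sigma^\smallfrown m^\omega$ by a finite initial segment $\sigma^\smallfrown m^n$.
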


\begin{proof} This is simply because
$P^{\sigma^\smallfrown m^\omega}$ is a (total) primitive recursive function.
Thus, take $i\in\omega$ so that $g(i)\ne P^{\sigma^\smallfrown m^\omega}(i)\downarrow$
and take $n$ so that the use of $P^{\sigma^\smallfrown m^\omega}(i)$
is $\sigma\cdot m^n$.
\end{proof}

In particular, strings $\sigma_1, \ldots, \sigma_k$ could be taken as initial segments of
solutions to $k$ instances of \textsf{Baire\-Ca\-te\-goryTheorem} that we have built so far.
We can assume they have equal lengths, say length $s$.
We then can use the lemma (with $\sigma = \sigma_1\oplus \dots\oplus \sigma_k$) to define $\ell$ on one more argument and successfully diagonalise against the operator $P$. We then use $\mathsf{LD}$-property
and the use principle to conclude that any extension of the strings that we now have will be diagonalising against the operator $P$.

We thus can iterate the lemma to build a $PR$-closed family $K$ of total functions that
includes at least one solution for each instance of $\mathsf{Baire Category Theorem}$
while simultaneously meeting the requirements:
$$g \neq P^{f_1, \ldots, f_k} $$
 for all $f_1, \ldots, f_k \in K$ and each primitive recursive scheme $P$ with $k$ parameters.
 We do so by simultaneously defining the $f_i$ and also building the common `local delay' function  $\ell$; see the $\mathsf{LD}$-property. We omit the further elementary (but somewhat tedious) details.
\end{proof}





  \begin{question}\label{q:3}
 Over $\Pra$, does $\mathsf{Baire Category Theorem}$ imply $\iso$ or $\ido$?
  \end{question}
In \cite{simpson2014baire} it is proved that, over $\rca^*$, $\mathsf{Baire Category Theorem}$ implies $\iso$, or in other words that,  over $\rca^*$, $\mathsf{Baire Category Theorem}$ is equivalent to $\rca$.

\subsection{Ramsey Theorem}

Recall that $\mathsf{RT}_k^n$ abbreviates that any $k$-colouring of
$\mathbb{N}^n$ admits a homogeneous set.   For more background on Ramsey Theorem in reverse mathematics, see \cite{Hirschfeldt15}.
It should be clear that  $\rt n k$ can be formalised in $\Pra$. However, there are two natural ways to code a solution to an instance of  $\mathsf{RT}_k^n$.
One possibility is to represent a solution via an injective function that lists the solution; we will return to this approach in the next section.
In this subsection we focus on the coding that views a solution of  $\mathsf{RT}_k^n$ as a set that is identified with its characteristic function.

\begin{theorem} \label{The:RTDelta}
Over $\pra^2$,  $\rt n k$ is incomparable with
$\dca$, for any $n, k \geq 2$. (This holds already for standard models.)
\end{theorem}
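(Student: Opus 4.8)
The plan is to prove the two non-implications separately, working throughout with standard $\omega$-models so that induction is never at issue.

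First I would show that $\dca$ does not imply $\rt n k$. The key observation, already implicit in the proof of Proposition~\ref{pr:rca}, is that the minimal $\omega$-model of $\Pra+\dca$ consists precisely of the primitively recursively bounded computable functions: $\dca$ only produces characteristic functions (bounded by $1$, hence primitively recursively bounded), applying $\dca$ relative to computable parameters again yields only computable sets, and a primitive recursive operator applied to functions with a primitive recursive bound on their values again has primitively recursively bounded use. In particular the $\{0,1\}$-valued members of this model are \emph{exactly} the characteristic functions of computable sets. Now fix, for the given $n,k\ge 2$, a primitive recursive colouring $c\colon[\mathbb{N}]^n\to k$ with no infinite computable homogeneous set; such $c$ exist by the classical theorem of Jockusch (a primitive recursive $2$-colouring suffices, viewed as a $k$-colouring when $k>2$), and the standard constructions are primitive recursive. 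Since a solution of $\rt n k$ is coded by the characteristic function of an infinite homogeneous set, any solution lying in the model would be an infinite \emph{computable} homogeneous set for $c$, contradicting the choice of $c$. Hence $\rt n k$ fails in this model of $\dca$.

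Conversely, to see that $\rt n k$ does not imply $\dca$, I would build a standard model $(\omega,\mathcal{X})$ of $\Pra+\rt n k$ that omits a fixed computable, $\{0,1\}$-valued function $d$ which is not primitive recursive; since $d$ is the characteristic function of a computable (hence $\Delta^0_1$) set, omitting it makes $\dca$ fail. Write $d\le_{\mathrm{PR}}\bar f$ for ``$d=P^{\bar f}$ for some primitive recursive operator $P$''. As $\mathcal{X}$ will be $\mathrm{PR}$-closed, $d\notin\mathcal{X}$ is equivalent to $d\not\le_{\mathrm{PR}}\bar f$ for every finite tuple $\bar f$ from $\mathcal{X}$. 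I would construct $\mathcal{X}=\mathrm{PR}(\{h_0,h_1,\dots\})$ by adding generators one at a time, processing every $\rt n k$-instance that appears in $\mathcal{X}$, and maintaining the invariant that $d\not\le_{\mathrm{PR}}(h_0,\dots,h_m)$ for all $m$ (the base case being that $d$ is not primitive recursive). The heart of the matter is the following preservation lemma: if $\bar f$ is a finite tuple with $d\not\le_{\mathrm{PR}}\bar f$ and $c\in\mathrm{PR}(\bar f)$ is any colouring, then $c$ has an infinite homogeneous set $H$ with $d\not\le_{\mathrm{PR}}\bar f\oplus\chi_H$. Crucially---and this is what makes the argument survive even for $n\ge 3$, where $\rt n k$ has the strength of arithmetical comprehension---we avoid only $\mathrm{PR}$-reducibility, never Turing reducibility, so the (possibly high) Turing complexity of homogeneous sets is irrelevant.

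To prove the lemma I would fix \emph{any} infinite homogeneous set $H_0$ for $c$ (it exists classically) and build $H\subseteq H_0$; every infinite subset of a homogeneous set is homogeneous, so we have complete freedom in thinning. We meet, for each primitive recursive operator $P$, the requirement $\exists x\,(P^{\bar f\oplus\chi_H}(x)\ne d(x))$, building $H$ in substages with a growing threshold $t$, never placing a new element of $H$ below the current threshold. At the substage for $P$, let $F=H\cap[0,t)$ be the finite part decided so far and consider $g=P^{\bar f\oplus(\chi_F\smf 0^\omega)}$. Since $\chi_F\smf 0^\omega$ is a finite modification of the zero function we have $g\le_{\mathrm{PR}}\bar f$, so by hypothesis there is $x_0$ with $g(x_0)\ne d(x_0)$. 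Letting $B$ be the primitive recursive use bound of $P$, reset the threshold to $t'=B(x_0)+1$ and draw all further elements of $H$ from $H_0\cap[t',\infty)$; then $\chi_H\uhr B(x_0)$ never changes again, so $P^{\bar f\oplus\chi_H}(x_0)=g(x_0)\ne d(x_0)$ permanently. Adding at least one new element of $H_0$ beyond the threshold at each substage keeps $H$ infinite. Feeding this lemma into the stage construction preserves the invariant at every step, so $d\notin\mathcal{X}$ while every instance of $\rt n k$ in $\mathcal{X}$ is solved; thus $(\omega,\mathcal{X})\models\Pra+\rt n k$ but $(\omega,\mathcal{X})\not\models\dca$. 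Together with the first half this yields incomparability, and since everything takes place in standard models the parenthetical remark follows. The main obstacle I anticipate is bookkeeping rather than conceptual: interleaving the substage construction of the lemma with the outer enumeration of instances so that every colouring appearing in $\mathcal{X}$ is eventually handled, $H$ is genuinely infinite, and the threshold mechanism is compatible with the $\mathsf{LD}$-style padding used to keep previously-added generators undisturbed---exactly the elementary but tedious verification deferred in the Baire-category arguments (Lemmas~\ref{lem0} and~\ref{lem:halflemma}).
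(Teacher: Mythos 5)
Your proof is correct, and for the harder direction ($\rt n k \not\Rightarrow \dca$) it is essentially the paper's argument: the paper also fixes a computable, non--primitive-recursive $\{0,1\}$-valued function, observes that any infinite subset of a homogeneous set is again homogeneous (so one may pad with arbitrarily long blocks of zeros), and diagonalises against primitive recursive operators via Lemma~\ref{lem:halflemma}, noting exactly the point you address --- that the delay interval may be longer than in the Baire-category case because there is no guarantee the solution returns to value $1$ at a prescribed place. Your ``preservation lemma'' is precisely the adjustment the paper leaves to the reader, and your threshold/use-bound bookkeeping is the right way to carry it out. For the easier direction ($\dca \not\Rightarrow \rt n k$) you take a mildly different route: you work inside the minimal $\omega$-model of $\Pra+\dca$ (all of whose $\{0,1\}$-valued members are computable characteristic functions) and invoke a computable colouring with no computable homogeneous set, whereas the paper simply takes the functional version of an $\omega$-model of $\rca+\neg\rt n k$ (the computable sets) and observes it satisfies $\dca$. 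Both are valid; yours is slightly more informative in that it shows $\rt n k$ already fails in the \emph{minimal} model of $\Pra+\dca$, at the cost of needing the structural claim about that model from Proposition~\ref{pr:rca}, while the paper's is a one-line reduction to the classical Specker--Jockusch fact. One small caveat: the ``exactly'' in your claim that the $\{0,1\}$-valued members of the minimal model of $\Pra+\dca$ are exactly the computable characteristic functions is not needed --- only the inclusion into the computable sets matters --- and your argument does not depend on it.
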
\label{thm:rt22}
\begin{proof}
Fix $n, k \geq 2$.
We first argue that $\mathsf{RT}_k^n$ does not imply $\dca$.
 If  $X$ is an instance of $\rt n k$ and $Y$ is a solution to $X$, then
any infinite subset $\hat{Y}$ of $Y$ is also a solution to $X$. In particular, we can keep arbitrarily long segments of the characteristic function equal to zero. This is very similar to the local delay property that was used in the proof of the previous theorem, the only difference being that the delaying interval could be longer than $[\ell(i), \ell(i+1)-1]$ because there is no guarantee that $Y(\ell(i+1)) =1.$ However, Lemma~\ref{lem:halflemma} still applies, and the argument that follows the lemma can be slightly adjusted to work for a longer delay.  We omit further details.
\smallskip

To see that $\dca$ does not imply $\rt n k$  for $n,k \geq 2$, recall that
 a computable instance of $\rt n k$, $n,k \geq 2$,  does not have to possess a computable solution \cite{Specker,JockuschRamsey}, and thus there exists a model $(M, \mc{X}) \vDash \rca \land \neg\rt n k$. Let $(M,\mc{Y})$ be the functional version of $(M,\mc{X})$,  that is let $\mc{Y}$ be composed by the characteristic functions of sets in $X$. Then  $(M, \mc{Y}) \vDash \rca$, and so in particular  $(M, \mc{Y}) \vDash \dca$, but fails to satisfy  $\rt n k$. 
\end{proof}


The previous theorem allows to immediately derive the following corollaries, which show that even above $\rca$ the behaviour of the principles in classical and primitive recursive reverse mathematics may be different. Recall that,  over $\rca$, $\rt n k$, for $n\ge 3$ and $k \ge 2$, is equivalent to  K{\"{o}}nig's lemma (see \cite[Theorem III.7.6]{sosoa}). Moreover, $\rt 1 {< \infty}$, namely the infinite pigeonhole principle for arbitrary numbers of colours, is strictly stronger than $\rca$ (see \cite[Theorem 6.4]{hirstPhD}).

\begin{corollary}
 Over $\Pra$, $\rt n k$, for $k,n\geq 2$, does not imply $\kl$, i.e., for each finitely branching tree  $T\subseteq \Nb^{<\Nb}$ there exists a path $P$ (that is, $P \colon \Nb \to \Nb$ such that $\forall n \, P(n)\in T$).
\end{corollary}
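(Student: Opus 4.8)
The plan is to reduce the corollary to \cref{The:RTDelta} by proving that König's lemma already implies $\dca$ over $\Pra$. Granting this, the corollary is immediate: were $\rt n k$ to imply $\kl$ over $\Pra$, it would imply $\dca$, contradicting the incomparability asserted in \cref{The:RTDelta}. Thus the whole task is the lemma $\Pra + \kl \vdash \dca$, and for this it suffices to verify the Post-theorem reformulation of $\dca$ proved above. So I would fix $g,h \colon \Nb \to \Nb$ with $\forall n\,(\exists y\, g(y)=n \biimp \forall y\, h(y)\ne n)$ and produce the characteristic function of $\ran(g)$.

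I would consider the binary tree $T$ consisting of all $\sigma \in 2^{<\Nb}$ such that for every $n<|\sigma|$ both of the following hold: if $\sigma(n)=1$ then $(\forall y<|\sigma|)\,h(y)\ne n$, and if $\sigma(n)=0$ then $(\forall y<|\sigma|)\,g(y)\ne n$. This is a $\Delta^0_0$ condition, so $T$ exists by $\Delta^0_0$-comprehension (\cref{prop:d00ca}) and is plainly closed under initial segments. To see that $T$ is infinite, for each $\ell$ let $\sigma_\ell$ be the length-$\ell$ string with $\sigma_\ell(n)=1 \biimp (\exists y<\ell)\,g(y)=n$; this string is uniformly primitive recursive in $g$ and $\ell$. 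Membership $\sigma_\ell \in T$ follows from the complementarity hypothesis: a number below $\ell$ cannot lie simultaneously in $\ran(g)$ and $\ran(h)$, so the two clauses never conflict. Hence $T$ has nodes of every length.

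Next I would apply $\kl$ to obtain a path $P$; since $T\subseteq 2^{<\Nb}$, $P$ is $\{0,1\}$-valued, and every initial segment $P\uhr \ell$ lies in $T$. I then claim $P(n)=1 \biimp \exists y\, g(y)=n$ for all $n$, which is exactly the required characteristic function. Indeed, if $g(y_0)=n$ then for $\ell>\max(n,y_0)$ the membership of $P\uhr\ell$ in $T$ forbids $P(n)=0$, so $P(n)=1$; symmetrically, if $h(y_0)=n$ then $P(n)=0$, and by the hypothesis exactly one of these alternatives occurs for each $n$. This establishes $\dca$ and hence the lemma.

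The main obstacle is keeping the entire argument strictly inside $\Pra$: every comprehension used is $\Delta^0_0$ and every induction is bounded, so $\iso$ is never invoked (which matters, since $\dca$ does not even imply $\iso$ over $\Pra$). The delicate point is the infinitude of $T$, where the witnessing nodes $\sigma_\ell$ must be produced uniformly as a primitive recursive functional of $g$ and $\ell$, and their membership in $T$ checked using only the complementarity hypothesis rather than any unbounded information about $\ran(g)$ or $\ran(h)$. Everything else is bounded bookkeeping, and the final contradiction with \cref{The:RTDelta} requires no induction at all.
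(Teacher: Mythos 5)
Your proof is correct, and its overall skeleton matches the paper's: both arguments reduce the corollary to \cref{The:RTDelta} by showing that $\kl$ implies $\dca$ over $\Pra$. The difference lies in how that implication is obtained. The paper derives it by passing through the stronger system $\rca$, citing \cref{Prop:noLeavesKL} (the equivalence of $\rca$ with the existence of paths through infinite pruned subtrees of $\Nb^{<\Nb}$) and then \cref{prop:p2}. You instead prove $\kl \Rightarrow \dca$ directly via a binary separation tree whose unique path is the characteristic function of $\ran(g)$; this is in substance the same construction the paper uses to show $\wkl \Rightarrow \dca$ in \cref{lem:wkl}, so you could shorten your write-up by simply observing that $\kl$ trivially implies $\wkl$ (binary trees are finitely branching) and citing that proposition. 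Your route buys two things: it needs only the weaker conclusion $\dca$ rather than full $\qfac$, and it stays entirely within binary trees, thereby sidestepping the question of whether $\kl$ as formulated for finitely branching trees yields paths through the possibly infinitely branching pruned trees that appear in the reversal of \cref{Prop:noLeavesKL}. What the paper's route buys, in exchange, is the stronger intermediate fact that $\kl$ implies all of $\rca$ over $\Pra$. Your verification details (prefix-closure of $T$, the primitive recursive witnesses $\sigma_\ell$ to unboundedness, and the use of only $\Delta^0_0$-comprehension and bounded induction) are all sound.
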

\begin{proof}
$\kl$ implies $\rca$ by \cref{Prop:noLeavesKL} below, and hence it implies $\dca$, which is not implied by $\rt n k$ by \cref{The:RTDelta}.
\end{proof}

As firstly noted in \cite{CholakJockuschSlaman} (stable) $\rt 2 2$ implies $\rt 1 {<\infty}$. In fact, if $c\colon \Nb \to k$, for some $k\in \Nb$, then one can $\Delta_0$ in $c$ define a colouring $d\colon [\Nb]^2 \to 2$ such that $d(x,y)=0 \Leftrightarrow c(x)\ne c(y)$ and  $d(x,y)=1 \Leftrightarrow c(x)= c(y)$, and note that any homogeneous set for $d$ is homogeneous for $c$. It is clear the the implication still holds in $\Pra$. Since $\rca$ implies $\dca$, while $\rt 2 2$ does not, we can derive the following corollary.

\begin{corollary}\label{Cor:RT-RCA}
Over $\Pra$, neither $\rt 1 {< \infty}$ nor $\rt n k$, for $k,n\geq 2$, imply $\rca$, and are thus incomparable with it.
\end{corollary}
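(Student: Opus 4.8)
The plan is to derive the whole corollary from the already-established incomparability of $\rt n k$ with $\dca$ (\cref{The:RTDelta}) together with the elementary fact that $\rca$ proves $\dca$. First I would recall that, under our standing identification of $\rca$ with $\Pra + \qfac$, \cref{prop:p2} gives $\rca \vdash \dca$; consequently any principle that fails to prove $\dca$ a fortiori fails to prove $\rca$. Thus the task splits into two halves: showing each Ramsey principle does not prove $\dca$ (hence not $\rca$), and showing $\rca$ does not prove each Ramsey principle.

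For the families $\rt n k$ with $n,k\ge 2$ the first half is immediate. \cref{The:RTDelta} supplies a standard model of $\Pra + \rt n k$ in which $\dca$ fails; such a model is automatically a model of $\Pra + \rt n k + \neg\rca$, so $\rt n k \not\Rightarrow \rca$. The case $\rt 1 {<\infty}$ is not covered directly by \cref{The:RTDelta}, so I would route through the pigeonhole implication recorded just above the corollary: over $\Pra$, $\rt 2 2$ implies $\rt 1 {<\infty}$ via the $\Delta^0_0$-definable colouring $d$ with $d(x,y)=1 \Leftrightarrow c(x)=c(y)$, whose homogeneous sets are homogeneous for $c$. Taking the standard model $M \models \Pra + \rt 2 2 + \neg\dca$ produced by \cref{The:RTDelta} in the case $n=k=2$, this implication (valid in $M$ since $M\models\Pra$) yields $M \models \rt 1 {<\infty}$, whence $M \models \Pra + \rt 1 {<\infty} + \neg\dca$ and therefore $\rt 1 {<\infty} \not\Rightarrow \rca$.

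To upgrade ``does not imply'' to ``incomparable'' I would supply the reverse non-implications $\rca \not\Rightarrow \rt n k$ and $\rca \not\Rightarrow \rt 1 {<\infty}$. For $\rt n k$ these are already at hand: the proof of the second half of \cref{The:RTDelta} invokes the recursive counterexamples of \cite{Specker,JockuschRamsey} to produce a model of $\rca$ (hence of $\dca$) in which $\rt n k$ fails, and this model witnesses $\rca \not\Rightarrow \rt n k$ directly. For $\rt 1 {<\infty}$ I would invoke \cite[Theorem 6.4]{hirstPhD}, which gives a model of $\rca$ failing $\rt 1 {<\infty}$; since $\Pra + \qfac$ and the set-based $\rca$ share their consequences, this transfers to our function-based setting.

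The only genuinely delicate point --- and the one I would flag explicitly --- is the asymmetry hidden in the phrase ``strictly stronger than $\rca$'' as applied to $\rt 1 {<\infty}$. Over the classical base $\rca$ the principle $\rt 1 {<\infty}$ \emph{trivially} implies $\rca$, so the only transferable content of ``strictly stronger'' is the reverse failure $\rca \not\Rightarrow \rt 1 {<\infty}$; the genuinely new phenomenon is that, once the base is weakened to $\Pra$, $\rt 1 {<\infty}$ loses even the strength to prove $\dca$, which is precisely what makes it \emph{incomparable} with $\rca$ rather than lying below it. I would make this contrast explicit so the corollary is not misread as conflicting with the classical picture.
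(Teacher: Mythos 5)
Your proposal is correct and follows essentially the same route as the paper: it derives $\rt n k \not\Rightarrow \rca$ from \cref{The:RTDelta} together with $\rca\vdash\dca$ (\cref{prop:p2}), handles $\rt 1 {<\infty}$ via the $\Pra$-provable implication $\rt 2 2 \Rightarrow \rt 1 {<\infty}$, and gets incomparability from the classical recursive counterexamples and Hirst's theorem. The closing remark contrasting this with the classical picture over $\rca$ is accurate but not needed for the argument.
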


On the other hand, it is easy to see that, for each $k \in \Nb$, $\Pra \vdash \rt 1 k$.

The following questions remain open.

\begin{question} \label{q:ramseyBaire}
Over $\Pra$, do $\rt 1 {< \infty}$ or $\rt n k$, for $k,n\geq 2$, imply $\mathsf{Baire Category Theorem}$?
\end{question}

Note that the reverse cannot hold since it does not hold over $\rca$.

\begin{question} \label{q:ramseyInd}
Over $\Pra$, do $\rt 1 {< \infty}$ or $\rt n k$, for $k,n\geq 2$, imply $\is 0 1$?
\end{question}

Note that, over $\rca^*$, $\rt n k$, for $k,n\geq 2$, do not imply $\is 0 1$ (see \cite[Corollary 3.7]{yokoyama2013}). On the other hand, over $\rca^*$, $\rt n k$, for $k,n\geq 2$, do imply $\is 0 1$, whenever the homogeneous set is required to have universe size (as opposed to be only unbounded) (see \cite{fkk:weak-cousins}).

\section{Transforming a computable instance to  a primitive recursive instance}\label{sect:transforming}

It seems to be a general phenomenon in computable algebra that many computable algebraic structures
are isomorphic, and indeed often computably isomorphic, to primitive recursive structures. Many results of this sort can be found in \cite{Grigorieff-90,Cenzer-Remmel-91,Kal17}.
It requires some effort to find an example of a computable structure without a primitive recursive or fully primitive recursive (`punctual') presentation; we cite \cite{Cenzer-Remmel-92,Cenzer-Remmel-98,Kal17}.
In this section we discuss a similar phenomenon that occurs in $\Pra$; we have already encountered it a few times (implicitly or explicitly) in the preceding sections.
Specifically, we have seen that typically some delaying  `padding' argument shows that for many
problems,
their primitive recursive instance can be as
powerful (with respect to coding) as their computable instance
in the sense that
\begin{align}\nonumber
&\text{for every computable $\mathsf{P}$-instance $X$, there is a primitive recursive $\mathsf{P}$-instance
$\hat X$ }\\ \nonumber
&\text{such that every solution of $\hat X$ computes a solution of $X$.}
\end{align}

Indeed, it is not uncommon that every solution of $\hat X$
can be turned into a solution of $X$ uniformly primitively recursively,
i.e., using a primitive recursive operator.
For some problems, every solution of $\hat X$ is a solution of $X$; and the aforementioned transformation operator is simply the identity operator.
 For such problems,
many results that are known over the base system $\rca$ can be transformed into $\Pra$ proofs with minimal effort.
We give more examples of such problems below.


\subsection{More notation}\label{sec:morenot} Before we state the next result, we clarify our notation and our approach to continuity in $\Pra$.
We represent rationals as pairs of integers.
We represent a real via a function $f: \mathbb{N} \rightarrow \mathbb{Q}$ such that $|f(i) - f(i+1)| < 2^{-i-1}$.
We represent a function $g: \mathbb{Q} \rightarrow \mathbb{R}$ via a function $g( r, m)$ such that for every $r \in \mathbb{Q}$ and $m\in\mathbb{N}$,
$g( r, m)$ is a rational.

 A function $h: \mathbb{R} \rightarrow \mathbb{R}$ is represented by a pair of functions $f$ and $
\delta$, where
$f: \mathbb{Q} \rightarrow \mathbb{R}$ is its `restriction to $\mathbb{Q}$' and
$\delta: \mathbb{Q} \times \mathbb{N} \rightarrow \mathbb{N}$ is the `attempted modulus of continuity':
$$  h( (r-2^{-\delta(r,n)},r+2^{-\delta(r,n)}))\subseteq
(  f(r)-2^{-n},   f(r)+2^{-n}).$$
\begin{remark}
Having such a presentation does not imply
  that $h$ can be continuously extended to $\mathbb{R}$, as some points could be `missing' in a model.
Given a presentation
$(h,\delta)$,
we can express ``$(h,\delta)$ is continuous"
as a  (second order) arithmetic sentence.
But
it only ensures that the function is continuous at every real $r$ (whose presentation is) in the model.
We will study this effect later in much detail.
\end{remark}

\begin{definition}\label{def:anprob}
We specify the following problems:
\begin{itemize}
\item
An instance of $\mathsf{IntermediateValueTheorem}$
is (a presentation of)
a function $X$ on $[0,1]$.  
A solution to $X$ is (a presentation of) a real $r$ so that
either $X$ is not continuous at $r$
or $  X(r) = 0$.

\item An instance of $\mathsf{Completeness\ of\ \mathbb{R}}$
is (a presentation of) a sequence $(\rho_n\in 2^n:n\in\mathbb N)$
of strings so that $\lim_{n\rightarrow\infty }\rho_n$ is Cauchy.
A solution to $(\rho_n\in 2^n:n\in\mathbb N)$
is  $\lim_{n\rightarrow\infty }\rho_n$.

\item An instance of $\mathsf{HeineBorelTheorem}$
is (the presentation) of a
sequence $(I_s:s\in\mathbb{N})$ of open intervals
 with rational end points (some of the intervals could be empty)
such that $\bigcup_{s<t}I_s \nsupseteq [0,1]$ for all $t\in\mathbb{N}$.
A solution is (the presentation) of a real
$x$ such that $x\notin \bigcup_s I_s$.

\end{itemize}

\end{definition}

\begin{remark} In the definition of $\mathsf{HeineBorelTheorem}$, the assumption that the end points are rational is a mere convenience. Indeed,
for any primitive recursive sequence
$(I_s:s\in\omega)$ of intervals (whose end points are not necessarily rational),
there is an instance $(\hat I_s:s\in\omega)$
primitive recursive in $(I_s:s\in\omega)$
with rational end points of $\hat I_s$ such that
$\bigcup_s \hat I_s= \bigcup_s I_s$.
\end{remark}


Finally, recall that  $\mathsf{COH}$ stands for the Cohesive Principle:  For any family $\{R_x: x \in \mathbb{N}\}$ of subsets of $\mathbb{N}$
there is an infinite $H$ such that for each $x$, either $\forall^{\infty} z \in H \, (z \in R_x)$ or $\forall^{\infty} z \in H \, (z \notin R_x)$. In $\Pra$ we represent $\mathsf{COH}$ as follows: for any function $r \colon \Nb \times \Nb \to 2$, there exists a function $h \colon \Nb \to \Nb$ which gives value $1$ infinitely many times and such that, for each $x$, either $\forall^{\infty} z \,(h(z)=1 \imp r(x,z)=1)$ or $\forall^{\infty} z\, (h(z)=1 \imp r(x,z)=0).$

\subsection{The main transformation result}

The theorem below  essentially says that, for the listed problems, each computable instance can be (usually, uniformly) turned into a primitive recursive instance so that the solutions
are the same up to a Turing degree.
We also note that the result below is a recursion-theoretic result, not a result in $\Pra$, at least as stated.
\begin{theorem}\label{prop10}
For the following problems $\mathsf{P}$,
 for every computable $\mathsf{P}$-instance $X$, 
 there is a primitive recursive $\mathsf{P}$-instance $\hat X$,
such that every solution of $\hat X$ computes a solution of $X$:

\begin{enumerate}

\item $\mathsf{WKL}$,

\item $\mathsf{Completeness\ of\ \mathbb{R}}$,

\item  $\mathsf{RT}^n_k$, for $n\ge 1$ and $k \ge 2$,

\item $\mathsf{SRT}_2^2$,

\item $\mathsf{COH}$,

\item  $\mathsf{IntermediateValueTheorem}$,

\item $\mathsf{HeineBorelTheorem}$.

\end{enumerate}
 \end{theorem}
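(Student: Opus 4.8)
The plan is to prove every item by a single \emph{local delay} (padding) technique. Since every computable instance $X$ is $\varphi_e$ for some index $e$, we have a primitive recursive universal approximation $U(e,x,s)$ with $X(x)=\lim_s U(e,x,s)$, where the convergence modulus $x\mapsto \mu s\,[U(e,x,s)\downarrow]$ is computable but typically not primitive recursive. The whole difficulty is concentrated in this one gap: we must feed the approximation into a construction whose output $\hat X$ is primitive recursive (so that each of its bits is decided within a clock bounded by the current stage) while the solutions of $\hat X$ still carry the non-primitive-recursive content needed to recover a solution of $X$. In each case the recovery will be witnessed either by the identity or by a fixed primitive recursive decoding map.

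For the \emph{re-enumeration} problems $\wkl$, $\mathsf{Completeness\ of\ \mathbb{R}}$ and $\mathsf{HeineBorelTheorem}$ the decoding is the identity. For $\wkl$, writing $[T]=\{Y:\forall n\,\theta(Y\upharpoonright n)\}$ with $\theta$ decidable, I would set $\hat T=\{\sigma\in 2^{<\omega}:(\forall n\le|\sigma|)\,\theta_{|\sigma|}(\sigma\upharpoonright n)=1\}$, where $\theta_s$ is the monotone (in $s$) primitive recursive approximation that outputs $1$ until $\neg\theta$ is confirmed. Monotonicity gives downward closure, so $\hat T$ is a primitive recursive tree, and one checks $[\hat T]=[T]$; thus every path of $\hat T$ already \emph{is} a path of $T$. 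For $\mathsf{Completeness\ of\ \mathbb{R}}$ and $\mathsf{HeineBorelTheorem}$ I would stutter the computable enumeration: replace $\rho_n$ by $\rho_{m(n)}$ (respectively list the intervals $\{I_j:j\le q(n)\}$), where $m(n),q(n)$ count how many terms have converged within $n$ steps. These are primitive recursive, nondecreasing and tend to infinity, so the stuttered sequence has the same limit (respectively the same union, and every finite subunion is a finite subunion of the original, so the ``no finite subcover'' promise is preserved); a solution real of $\hat X$ is literally a solution of $X$.

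For the \emph{combinatorial} problems $\rt n k$, $\mathsf{SRT}_2^2$ and $\coh$ I would reparametrise the underlying copy of $\omega$ by a single primitive recursive ``confirmed horizon'' $p$: let $p(x)$ be the largest $m\le x$ such that every relevant value of the instance on arguments below $m$ converges within $x$ steps. Then $p$ is primitive recursive, nondecreasing, has finite fibres and tends to infinity, and composing the instance with $p$ only ever calls $X$ on arguments it has already certified. Thus for $\rt n k$ and $\mathsf{SRT}_2^2$ I would put $\hat c(x_1,\dots,x_n)=c(p(x_1),\dots,p(x_n))$ (with a default value when the $p(x_i)$ fail to be strictly increasing), and for $\coh$, $\hat r(x,z)=r(x,p(z))$ when $x\le z$ (default otherwise); both are primitive recursive. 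The decoding is $H\mapsto p[H]$: if $H$ is an infinite homogeneous (respectively cohesive) set for $\hat X$, then choosing representatives in distinct $p$-blocks shows that $J=p[H]$ is an infinite homogeneous (respectively cohesive) set for the original instance, and $J$ is primitive recursive in $H$. For $\mathsf{SRT}_2^2$ one additionally checks that the reparametrisation preserves stability, since for fixed $x$ one has $\hat c(x,y)=c(p(x),p(y))\to\lim_z c(p(x),z)$ as $y\to\infty$; $\rt 1 k$ is the degenerate case $n=1$.

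The case I expect to be the main obstacle is $\mathsf{IntermediateValueTheorem}$ (and, to a lesser extent, the continuity bookkeeping shared with $\mathsf{HeineBorelTheorem}$). Here one cannot simply re-present the same continuous function primitively recursively: the modulus $\delta$ is computable, but a computable function need not be dominated by any primitive recursive function, so the naive ``pad the presentation'' move fails. The resolution I would pursue exploits the disjunctive solution form, namely that a solution is allowed to be a point of discontinuity \emph{or} a zero. One delays the presentation as before, producing a primitive recursive $\hat X$ whose solution set is (essentially) a $\Pi^0_1$ class, and arranges that wherever the delay has not yet ``caught up'' the only effect is to spawn solutions of $\hat X$ from which a genuine zero or discontinuity of $X$ is computed; concretely this reuses the $\wkl$-style re-presentation of the second paragraph, since the solution sets of these analytic problems are $\Pi^0_1$ classes whose members are exactly the branches of a primitive recursively re-presented tree. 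Verifying that the delayed presentation remains a legitimate instance and that its solutions transfer back is the technical heart of the argument.
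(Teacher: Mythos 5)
For items (1)--(5) and (7) your construction is essentially the paper's: the same delayed tree for $\wkl$, the same stuttering of the enumeration for $\mathsf{Completeness\ of\ \mathbb{R}}$ and $\mathsf{HeineBorelTheorem}$, and the same ``confirmed horizon'' $p$ with $\hat c(\vec x)=c(p(x_1),\dots,p(x_n))$ and decoding $H\mapsto p[H]$ for $\rt n k$, $\mathsf{SRT}^2_2$ and $\coh$ (the paper phrases $\coh$ via membership-vector strings, but the content is identical, and your observation that the decoding is Turing- rather than primitive-recursively computable matches the paper's own remark). These parts are correct modulo trivial bookkeeping (padding $\rho_{m(n)}$ back up to length $n$, and defaulting $\hat r(x,z)$ whenever $x\geq p(z)$ rather than only when $x>z$).

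The gap is item (6). You correctly diagnose that one cannot re-present the same function primitively recursively, but the route you then propose --- ``the solution set is a $\Pi^0_1$ class, so reuse the $\wkl$-style re-presentation of its tree'' --- does not produce an instance of $\mathsf{IntermediateValueTheorem}$: re-presenting the tree gives a $\wkl$-instance, and there is no step in your argument that converts a primitive recursive tree back into a primitive recursive presentation $(\hat X,\hat\delta)$ of a function on $[0,1]$ whose zeros or discontinuities are the branches. That conversion is exactly the content of the construction, and the paper does it by a bisection-with-flattening argument: assume WLOG $X(0)<0<X(1)$ and $X(r)\neq 0$ for rational $r$; while waiting to determine the sign of $X$ at the current midpoint, declare the stage-$t$ approximation $\hat X(r)[t]=0$ for all rationals enumerated so far; once the sign is found at stage $t$, commit the half-interval on the known-sign side to the constant $\pm 1/2^{t}$ (so the approximations still converge with the required modulus, and $\hat\delta$ is easy to define on the flattened region) and recurse into the other half. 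The unique zero of $\hat X$ is then the limit of the nested bisection intervals, i.e.\ a point where $X$ changes sign, hence a zero or a point of discontinuity of $X$. Without some such explicit construction of $(\hat X,\hat\delta)$, your treatment of (6) remains a statement of the problem rather than a proof.
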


 \begin{remark}\label{Wei:remark}
 The  reductions between solutions and instances tend to be uniform. For instance, typically we have that  $\h X  = \Phi^{X}$ for some Turing functional $\Phi$, where furthermore the running time of the computations in $\h X$
 are also bounded by a primitive recursive timestamp  function that does not depend on $X$. Also, there is a Turing operator $\Psi$ such that for a solution $\xi$ of $\h X$, we have that $\Psi^{\xi}$ is a solution of $X$.

 In other words, this is a sub-recursive version of the Weihrauch reduction---see, e.g.,~\cite{BraGhe}.
Usually these reductions that we get enjoy various uniformities that allow to relativise the results.
For example, we could throw in a total function $f$ and (subrecursively) relativise everything to $f$ (including the timestamp function),
in the sense of primitive recursive operators. That is, we could allow $f$ to be `primitive recursive', and the results would still typically hold
relative to $f$.
 \end{remark}

\begin{proof}

\noindent (1)
Note that for any computable tree $T\subseteq 2^{<\omega}$,
there is a primitive recursive tree $\hat T\subseteq 2^{<\omega}$
such that $[\hat T] = [ T]$.
To see that, define $\hat T$ so that $\rho\notin \hat T$
iff it is found at time $|\rho|$ that for some $\sigma\preceq\rho$,
$\sigma\notin T[|\rho|]$.\medskip

\noindent (2)
Using a straightforward padding  (a delay of computation by repetition) we can argue that for any computable sequence of strings
$(\rho_n\in 2^n:n\in\omega)$,
there is a primitive recursive sequence of strings
$(\sigma_n\in 2^n:n\in\omega)$ such that
if $\lim_{n\rightarrow\infty }\rho_n$ exists,
then $\lim_{n\rightarrow\infty }\rho_n = \lim_{n\rightarrow\infty} \sigma_n$.
\medskip

\noindent (3)
To explain the idea behind this proof, consider $\mathsf{RT}_2^2$, and view an instance as a computable graph.
The idea is to replace every vertex $x$ with many identical copies of $x$ that form (say) an anti-clique $A_x$. We grow $A_x$ until the graph is calculated on one more vertex $x+1$. Then we begin growing $A_{x+1}$ and wait for the graph on $\{0,1, \ldots, x+2\}$ to be calculated, etc.

Now fix $n\ge 1$ and $k\ge 2$.
Given a computable $\mathsf{RT}^n_k$-instance $c:[\omega]^n \to k$,
we construct a primitive recursive instance $\hat c:[\omega]^n\to k$ together with
a primitive recursive nondecreasing function $p:\omega\rightarrow\omega$ so that
\begin{equation}\label{eq10}
\hat c(x_1,\dots, x_n) = c(p(x_1),\dots, p(x_n)) \text{ whenever } p (x_i)\ne p(x_j), \text{ for any } i,j\le n.  
\end{equation}
Suppose by stage $t$, we have constructed
$\hat c:[t]^n\rightarrow k$ and a nondecreasing function $p:t\rightarrow t$
so that
\begin{align*}
\hat c(x_1,\dots, x_n) = c(p(x_1),\dots, p(x_n)) \text{ for all } x_1, \dots, x_n \in t \text{ with } p (x_i)\ne p(x_j).
\end{align*}
At stage $t$, let $k = p(t-1)$. If
$c(x_1, \dots, x_{n-1},k+1)[t]\downarrow$ for all $x_1, \dots, x_{n-1}\leq k$,
then let $p(t) = k+1$ and  $\hat c(y_1, \dots, y_{n-1},t) = c(p(y_1), \dots, p(y_{n-1}),k+1)$ for all $y_1, \dots, y_{n-1} <t$;
otherwise, let $p(t) = k$ and $\hat c( y_1, \dots, y_{n-1},t ) = c(p(y_1), \dots, p(y_{n-1}),k)$ for all $y_1, \dots, y_{n-1}<t$ with $p(y_j)<k$,
and let $\hat c(y_1, \dots, y_{n-1},t) = 0$ for all $y_1, \dots, y_{n-1} \in p^{-1}(k)$.
Obviously, $\hat c$ satisfies (\ref{eq10}).

Let $Y$ be a solution to $\hat c$. It is clear that we can compute a solution to the original problem from $Y$
using the function $p$. (For instance, consider $p(Y)$.)
\medskip

\noindent (4)
Argue as in (3), noticing that if $c$ is stable, then $\hat c$ is stable as well.
\medskip

\noindent (5)
We think of a $\msf{COH}$ instance $(R_j:j\in\omega)$ as a sequence of strings
$(\rho_j\in 2^j: j\in\omega)$
where
$\rho_j$ is the membership vector $(R_0(j),\cdots, R_{j-1}(j))$.
A set $G\subseteq\omega$ is
 a solution iff $\lim_{i\in G}\rho_i$ exists.
Given a computable sequence $(\rho_j\in 2^j:j\in\omega)$,
we will apply delay of computation to code it by a primitive recursive sequence
$(\sigma_j:j\in\omega)$.
Suppose  before stage $t$, $(\rho_k:k<j)$ has been computed, but  $\rho_j$ has not been computed.
If at stage $t$, $\rho_j$ is still not computed, then let $\sigma_t\in [\sigma_{t-1}]^\preceq\cap 2^t$
be arbitrary.
Otherwise, we choose arbitrary $\sigma_t\in [\rho_j]^\preceq\cap 2^t$
(in which case we say $\sigma_t$ is added due to $\rho_j$;
if at stage $t$, $\rho_j$ is not computed, then $\sigma_t$ is added due to the same
string that was behind the choice of $\sigma_{t-1}$, which must be $\rho_{j-1}$).
Let $\h G\subseteq\omega$ be a solution to $(\sigma_j:j\in\omega)$
(say $\h G= \{j_i:i\in\omega\}$),
and suppose $\sigma_{j_i}$ is added due to $\rho_{k_i}$.
Let $G\subseteq\omega$ be the set $\{k_i:i\in\omega\}$.
Clearly, $G\leq_T \h G$
and   $\lim_i \rho_{k_i} =\lim_i\sigma_{j_i} $.
\medskip

\noindent (6) Fix a computable instance $X$ of   $\mathsf{IntermediateValueTheorem}$.
We produce a primitive recursive instance $\hat X$ such that
every solution of $\hat X$ is a solution of $X$. We do not necessarily require that $\hat X$ and $X$ have the same
set of solutions.

Without loss of generality, assume $X(0)<0$, $X(1)>0$,
and $X(r)\ne 0$ for all $r\in\mathbb{Q}$.
When (at stage $t$) we cannot decide whether $X(1/2)>0$ or $X(1/2)<0$,
we set $\hat X(1/2)[t] = 0$\footnote{Recall that $\hat X$ is seen as a function from
$\mathbb{Q}$ to $\mathbb{Q}^\mathbb{N}$, so $\hat X(r)[t]$
is the $t^{th}$ rational in the rational sequence converging to $\hat X(r)$.}.
Meanwhile,  we set $\hat X(r)[t]= 0$
for all $r\in\mathbb{Q}$ so that $\langle r\rangle\leq t$.
Here $\langle r\rangle$ is the presentation of $r$, i.e., the associated pair of integers.

Once we found, say $X(1/2)<0$, at stage $t$,
we set $\hat X(1/2)[\hat t] = -1/2^t$ for all $\hat t\geq t$.
We define $\hat X(r)[\hat t] = -1/2^t$ for all $r\in\mathbb{Q}\cap [0,1/2]$
(i.e., we don't care about the value of $\hat X(r)$ for $r\in [0,1/2]$,
since we know that $X$ has a solution in $(1/2,1)$;
this is to guarantee the continuity of $\hat X$).

Next, we look at the value $X(3/4)$. If (at stage $t$)
we cannot decide whether $X(3/4)>0$ or $X(3/4)<0$,
we set $\hat X(r)[t] = 0$ for all $r\in \mathbb{Q}\cap (1/2,1]$
with $\langle r\rangle \leq t$.
If at stage $t$ we see  that, e.g., $X(3/4)>0$,
we let $\hat X(r)[\hat t] = 1/2^t$ for all $\hat t \geq t$
and all $r\in \mathbb{Q}\cap [3/4,1]$.

The rest of the construction goes similarly.
It is not hard to define the continuity modulus
$\delta$; we omit further details.
\medskip

\noindent (7) Fix a computable instance $(I_s:s\in\mathbb{N})$ of $\mathsf{HeineBorelTheorem}$.
We produce a primitive recursive
instance $(\hat I_s:s\in\mathbb{N})$
such that every solution of $(\hat I_s:s\in\mathbb{N})$
primitively recursively computes a solution of $(I_s:s\in\mathbb{N})$.

Recall  that we are allowed to have the empty interval in the
sequence $(I_s:s\in\mathbb{N})$, we can use it to delay our computation.
If at stage $t$, we have not finished computing
$I_s$, then list the empty interval into $(\hat I_s:s\in\mathbb{N})$.
Clearly, $\bigcup_s\hat I_s= \bigcup_s I_s$.
\end{proof}

\begin{remark}
In (7) above,  the conclusion
would still hold even if the empty interval was not allowed.
We can use the interval $[0,1/2]$ to `code' the interval $[0,1]$.
Then any interval $I\subseteq [0,1]$ is `coded' by the interval $I/2$.
Thus, we can always spam the interval $(1/2,2)$ by enumerating it into $(\hat I_s:s\in\omega)$---we
put interval $(1/2,2)$ into it when we wait for our computation to halt.
So given a solution $r $ of $(\hat I_s:s\in\omega)$,
we have $2r$ as a solution of $(I_s:s\in\omega)$.
\end{remark}

\begin{remark}
Note that for $\mathsf{COH}$, the computation of a solution of $(R_n:n\in\omega)$
using $(\sigma_j:j\in\omega)$ is not primitive recursive.
Indeed, there is a computable $\mathsf{COH}$ instance $X$ such that
there is no primitive recursive instance $\hat X$ such that every
solution of $\hat X$ primitively recursively computes a solution of $X$.
Actually, there is a single computable set $R\subseteq \omega$ such that
for every primitive recursive instance $X$ of $\mathsf{COH}$,
there is a solution $Y$ of $X$ such that
no $Z\in PR(Y)\cap 2^\omega$ is cohesive for $R$. That is, either $Z$ is finite or
 $Z\cap R,Z\cap \overline{R}$  are both infinite.
\end{remark}

The above property allows to derive information about minimal models failing some principles. Obviously, if $\rca \nvdash \mathsf{P}$, then the \lq functional-translation\rq\ of the model witnessing the unprovability of $\mathsf{P}$ over $\rca$,  witnesses the unprovability of $\mathsf{P}$ over $\Pra$. However, if such $\mathsf{P}$ has the above property and $(\omega, \Delta^0_1\text{-Def}(\omega)) \nvDash \mathsf{P}$, then we can argue that $(\omega, PRec(\omega)) \nvDash \mathsf{P}$. In fact, let $X$ be a computable $P$-instance  with no computable solutions. Let $\hat X$ be a primitive recursive $P$-instance, so that $\hat X \in PRec(\omega)$. Then there is no solution $\hat{Y}  \in PRec(\omega)$, otherwise a solution $Y$ to $X$ would belong to $\Delta^0_1\text{-Def}(\omega)$, since $Y \le_T \hat{Y}$, contradicting the assumption.

\medskip

We can  of course use Theorem~\ref{prop10}
as a base of our intuition or to argue in standard models.
For example, we conjecture that the following holds:

\begin{proposition} Over $\Pra$, $\mathsf{IntermediateValueTheorem}$ is equivalent to
$\dca$.
\end{proposition}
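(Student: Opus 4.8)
The plan is to prove both implications over $\Pra$, using a trisection argument for $\dca \imp \mathsf{IntermediateValueTheorem}$ and a coding argument for the converse. Throughout I would use that $\Pra$ proves $\Delta^0_0$-comprehension and $\bqfac$, so that the sign of a rational value of a presented function, computed to a fixed precision, is available as $\Delta^0_0$ data, and that $\dca$ proves $\id 0 1$.

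For $\dca \imp \mathsf{IntermediateValueTheorem}$: fix an instance $X = (f,\delta)$ with $X(0) < 0 < X(1)$, as in the proof of \cref{prop10}(6). First I would define, by a trisection recursion, a sequence of nested rational intervals $[a_n, b_n]$ with $b_n - a_n \le (2/3)^n$ so that at each stage the approximate sign data force a genuine sign change to persist on $[a_{n+1}, b_{n+1}]$. Concretely, given $[a_n, b_n]$ with interior points $p = (2a_n + b_n)/3$ and $q = (a_n + 2 b_n)/3$, I search for the least precision $s$ at which the $s$-th rational approximations of $X(p)$ and $X(q)$ let me commit to $[a_n, q]$ or to $[p, b_n]$ while preserving "$X$ negative at the left end, positive at the right end" in the approximate sense. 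The graph of the resulting map $n \mapsto (a_n, b_n)$ is $\Delta^0_1$ (the stage search is a $\Sigma^0_1$ matrix with a $\Pi^0_1$ uniqueness clause), so $\dca$ produces it as an actual function; the limit $r = \lim a_n$ is then primitive recursive in this sequence, and continuity of $X$ gives $X(r) = 0$. The \textbf{main obstacle here} is to prove, using only $\id 0 0$ together with the $\id 0 1$ furnished by $\dca$, that the per-stage search is \emph{total}: one must argue from continuity and the persisting sign change that for each $n$ there is a precision $s$ at which a valid refinement is detectable, which is the one place where the weakness of induction in $\Pra$ could bite.

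For $\mathsf{IntermediateValueTheorem} \imp \dca$: I would code a $\dca$-instance into the location of a zero. Let $A$ be given by $\varphi(n) = \exists s\, \theta(n,s)$ and $\psi(n) = \forall s\, \eta(n,s)$ with $\forall n\,(\varphi(n) \biimp \psi(n))$; this yields $\forall n\, \exists s\,(\theta(n,s) \lor \neg \eta(n,s))$, so each bit of $A$ is eventually decided by a witness. I build a piecewise-linear IVT-instance $X$, primitive recursive in the data, whose unique zero is $r = \sum_{n} \chi_A(n)\, 4^{-n-1}$, by a slalom-with-padding construction: maintaining a committed prefix $c_0\cdots c_{k-1}$ of decided bits, I keep the active window $[P_k, P_k + \tfrac13 4^{-k}]$ with $P_k = \sum_{i<k} c_i 4^{-i-1}$, declare $X$ clearly negative to its left and clearly positive to its right, and inside the window let $X$ hover near $0$ with amplitude halved at each delayed stage; a new bit $c_k$ is committed, shrinking the window, exactly when a witness $\theta(k,s)$ or $\neg\eta(k,s)$ first appears. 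Base $4$ with digits in $\{0,1\}$ keeps the possible zero away from the decision thresholds, so that each $\chi_A(n)$ is recovered from a sufficiently precise rational approximation of $r$ by a primitive recursive operator. Applying $\mathsf{IntermediateValueTheorem}$ yields a solution $r$, and I would show by $\Delta^0_0$-induction along the construction that the $n$-th recovered digit equals $1$ iff $\varphi(n)$, so that $\dcao$ applied to the recovered-digit predicate produces $\chi_A$.

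The delicate points in the coding direction, which I expect to be the real work, are (i) verifying in $\Pra$ that the presentation $(f,\delta)$ is genuinely primitive recursive in the instance data and that $\delta$ is an \emph{honest} modulus of continuity, so that the "not continuous at $r$" escape clause of \cref{def:anprob} is never triggered and the solution is a true zero, both reducible to $\Delta^0_0$ facts about the finitely many linear pieces relevant at a given precision; and (ii) defining $f(q)$ correctly for every rational $q$ via padding, using that $q \ne r$ exits the shrinking window at a finite stage while $q = r$ already sits at value $0$. I expect step (i) of the coding direction, rather than the bisection argument, to be the main obstacle, since it is exactly where the function-based, induction-poor setting of $\Pra$ forces one to replace the usual appeals to $\iso$ by explicit $\Delta^0_0$ bookkeeping.
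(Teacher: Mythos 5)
Your reverse direction ($\mathsf{IntermediateValueTheorem}\Rightarrow\dca$) is essentially the argument the paper has in mind: it sketches exactly this route, namely recycling the computable-analysis fact that a real can be realised as the unique zero of a computable continuous function and then using the padding technique of Theorem~\ref{prop10}(6) to make the instance primitive recursive directly from the $\dca$-data. Your base-$4$ slalom and the two verification points you isolate (honesty of $\delta$, so the ``not continuous at $r$'' escape clause is never invoked, and digit recovery by a bounded-use operator) are a reasonable fleshing-out of that sketch; the paper itself leaves this implementation as a tedious exercise.

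The forward direction is where there is a genuine gap, and it is not only the induction issue you flag. First, your per-stage trisection search need not terminate at all, even in the standard model: committing to $[a_n,q]$ requires certifying $X(q)>0$ and committing to $[p,b_n]$ requires certifying $X(p)<0$, and if $X(p)\geq 0\geq X(q)$ (in particular if $X(p)=X(q)=0$) neither fact is ever certified by finite-precision approximations, so the recursion stalls at stage $n$. No strict-inequality commitment rule avoids this; the standard remedy --- used in the $\rca$ proof and implicit in the paper's phrase ``the usual dichotomy argument'' --- is the classical case split: either $X(q)=0$ for some rational $q$ (and that $q$ is already a solution), or $X(q)\neq 0$ for every rational $q$, in which case $X(q)<0$ is genuinely $\Delta^0_1$ and every sign query terminates. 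Second, even granting termination, your order of operations is the wrong one for $\Pra$: you run a recursion with embedded unbounded searches and only afterwards try to apply $\dca$ to its graph, but proving that this graph is (provably) total and $\Delta^0_1$ is precisely a $\Sigma^0_1$-induction statement, and $\dca$ does not yield $\is 0 1$ over $\Pra$ (Proposition~\ref{Prop_DcaNotIS01}); so the step ``$\dca$ produces it as an actual function'' is circular. The fix is to apply $\dca$ \emph{once}, to the sign predicate $\{q\in\mathbb{Q}\cap[0,1] : X(q)<0\}$, obtaining its characteristic function as an object of the model; the bisection (or trisection) then becomes a primitive recursion \emph{relative to that function}, with each step a bounded query, and $\Pra$ provides such recursions outright with no appeal to $\is 0 1$ or to minimisation. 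This is also what the paper means by ``a real can be viewed as a $\{0,1\}$-valued function'': the only application of $\dca$ should be to a single $\Delta^0_1$ predicate fixed in advance, after which everything else is primitive recursive in the resulting oracle.
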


\begin{proof}[Sketch]
Noting that a real can be viewed as a $\{0,1\}$-valued function, we can repeat the
usual dichotomy argument to see that $\dca$ implies $\mathsf{IntermediateValueTheorem}$.
For the other direction, we recycle the well-known fact from computable analysis that
any computable real can be realised as a solution to some computable instance
of $\mathsf{IntermediateValueTheorem}$.
We will then have to  mimic the proof of Theorem~\ref{prop10}(5) to actually produce a primitive recursive instance
of $\mathsf{IntermediateValueTheorem}$ directly from an instance of $\dca$.
Then we could argue in $\Pra$ that this works.
\end{proof}

Even though we strongly conjecture that the idea outlined above can be indeed implemented, the actual formal implementation would likely be a bit tedious
(cf.\ the proof of Proposition~\ref{Lem_CatDense}).
It would be very nice to have a general fact that would imply this sort of results, rather than checking the details
for each specific result that involves a problem with `enough' primitive recursive instances.


\subsection{Can we always use padding?}

It seems that for all combinatorial problems that we are aware of, primitive recursive instances are as
computationally powerful as computable instances.

\begin{question}\label{q:padding}
Is there a natural problem $\mathsf{P}$ so that for some computable instance $X$,
there is no primitive recursive instance $\hat X$ such that every solution of $\hat X$
computes a solution of $X$?


\end{question}

The question above is of course loosely stated, since `natural' is a subjective quality.
Preferably, we would like to find a reverse mathematical problem  $\mathsf{P}$ that
has already been studied in the past rather than manufacture an ad hoc problem.

We therefore leave the question open, but we give an example
of a \emph{somewhat natural} problem to illustrate what can potentially go wrong with primitive recursive instances: there could be simply not enough
such instances. The example below is in the spirit of the `categoricity' examples that we have seen in Section~\ref{sec:examples}.
 It is based on an old result in effective algebra that can be traced back to Mal'cev~\cite{Mal-62} and the well-known
description of subgroups of the rationals $(\mathbb{Q},+)$ by their Baer types~\cite{Baer-37}. The elementary result says that two rank 1 computable TFAGs (torsion-free abelian groups) having the same types
have to be isomorphic. The corresponding old effective algebraic result says that groups having the same type are indeed computably isomorphic, and that every computable rank 1 group has to have a c.e.\ (but not necessarily computable) type.
 We clarify these terms below.

\

\begin{definition}
The following definitions can be formalised in $\Pra.$ We represent groups as \emph{structures} (\S\,\ref{sub:str}).
\begin{itemize}

\item We say that an additive torsion-free abelian group (TFAG) $G$ has rank 1 if
$\forall g, h \in G \setminus \{0\}$ we can find non-zero $m,n \in \mathbb{Z}$ such that
$mg = nh$\footnote{Using primitive recursion, define $mg$ via $1g = g$ and $(k+1)g = kg +g$. For a negative integer $m$, $mg = - (-m)g$. In particular, we can express that a group is torsion-free by stating that the order of any non-zero element is infinite.}.

\item Say that  rank 1 TFAGs  $G$ and $H$
have the same (Baer) type, written ${\bf t}(G) = {\bf t}(H)$, if there exist non-zero $g$ in $G$ and a non-zero $h$ in $H$ such that,
for any $m \in \mathbb{N}$,
$$ \,\, H \models m | h \iff G \models m | g,$$
where $A \models k | x$ means that there is a $y \in A$ such that $ky = x$.

\item A  rank 1 TFAG $G$ is
\emph{Baer categorical}
if whenever  $H$ is a rank 1 TFAG such that ${\bf t}(G) = {\bf t}(H)$,
we have that there is an isomorphism from $H$ onto  $G$.

\item An instance of $\mathsf{BaerType}$ is a rank 1 Baer categorical $G$. A solution is a set
$H \subseteq \mathbb{N}$ such that for some non-zero $g \in G$,
$H  = \{m: m | g\}.$  (Output the empty set otherwise.)

\end{itemize}
\end{definition}

In other words, $\mathsf{BaerType}$ takes a  `categorical' group and outputs its isomorphism invariant.
The example below exploits that there are simply not enough primitive recursive Baer categorical groups.
On the other hand, Baer's classification of rank 1 TFAGs holds computably, and thus there are enough instances to code
an arbitrary c.e.~set.

\begin{proposition}\label{prop:whatever}
For any c.e.~Turing degree $\mathbf{b}$, there is a computable instance of $\mathsf{BaerType}$
any solution of which has degree $\mathbf{b}$.
In contrast, the only primitive recursive instance of $\mathsf{BaerType}$ is the trivial group, and the only solution is the empty set.
\end{proposition}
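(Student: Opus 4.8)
The claim has two halves, so the plan is to attack them separately, handling the computable-coding half first and then the scarcity of primitive recursive instances.

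\textbf{The computable half.} Fix a c.e.\ degree $\mathbf{b}$ and a c.e.\ set $B$ of degree $\mathbf{b}$. The plan is to build a computable rank 1 TFAG $G$ whose unique Baer type encodes $B$. Concretely, I would let $G$ be the subgroup of $(\mathbb{Q},+)$ generated by $1$ together with all fractions $1/p_i^{k}$ for which a certain divisibility ``switches on'' in accordance with the enumeration of $B$; more precisely, arrange that for a fixed generator $g$ (say the image of $1$) the characteristic $\{m : m \mid g\}$ is exactly the c.e.\ set coding $B$. Since a subgroup of $\mathbb{Q}$ is computable and rank 1, and since its type is realised by $g$, this gives a computable instance. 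The two remaining points to check are (i) that $G$ is \emph{Baer categorical}, which follows from Baer's classification: any rank 1 TFAG $H$ with ${\bf t}(H)={\bf t}(G)$ is isomorphic to $G$ by the classical argument (match a generator $h$ of $H$ with $g$ and extend by divisibility, which is the computable isomorphism from the effective algebraic result cited); and (ii) that any solution $H=\{m: m\mid g'\}$ for a non-zero $g'\in G$ has degree exactly $\mathbf{b}$. For (ii) one uses that in rank 1 groups the types of two generators differ only by a ``finite shift'' (multiplying through by the finitely many primes separating $g$ and $g'$), so $H$ is Turing equivalent to the canonical type, hence to $B$. This is the standard Baer-type bookkeeping and should go through.

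\textbf{The primitive recursive half.} Here the plan is to show that a \emph{primitive recursive} rank 1 Baer categorical group must be trivial. The key observation is that for a primitive recursive group $G$, the type $\{m : m\mid g\}$ of any element $g$ is not merely c.e.\ but in fact \emph{decidable} uniformly: given $g$ and $m$, the set of candidate witnesses $y$ with $my=g$ can be searched, but to make this a bounded (primitive recursive) search one uses that in a genuinely primitive recursive rank 1 presentation the divisibility relation is decidable --- this is exactly the Mal'cev-style phenomenon the paper keeps invoking (e.g.\ in Proposition~\ref{prop:Basis-for-Vector-Spaces} and the discussion after Mal'cev~\cite{Mal-62}). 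A primitive recursive nontrivial rank 1 TFAG thus has a \emph{computable} (indeed primitive recursive) type, which means its Baer type is a computable set. But then one can construct, by the computable-half machinery run \emph{against} $G$, a rank 1 TFAG $H$ with ${\bf t}(H)={\bf t}(G)$ that is not isomorphic to $G$ --- for instance by realising the same type non-effectively on a different domain whose divisibility is arranged to defeat any prospective isomorphism --- contradicting Baer categoricity. Hence no nontrivial $G$ can be primitive recursive and categorical, leaving only the trivial group, whose only type-set is empty.

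\textbf{Main obstacle.} The delicate step is the primitive recursive half: I must argue rigorously that primitive recursiveness of $G$ forces its type to be \emph{decidable} (not just c.e.), and then that decidability of the type is incompatible with Baer categoricity. The first part hinges on turning the existential divisibility search $\exists y\,(my=g)$ into a bounded search, which is precisely where the ``not enough primitive recursive instances'' intuition lives and where the Mal'cev phenomenon must be deployed carefully; the second part requires producing a genuine non-isomorphic companion group of the same type, which is a back-and-forth/diagonalisation construction one must set up so that no isomorphism $G\to H$ can exist. I expect the divisibility-decidability argument to be the crux, and I would model it on the vector-space basis argument of \S\ref{sub:str} where bounded search over a primitively recursively controlled structure is the recurring tool.
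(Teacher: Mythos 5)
Your first half is essentially the paper's argument: code a c.e.\ set $S$ into a computable $G\leq\mathbb{Q}$ via $p_i\mid 1 \Leftrightarrow i\in S$, observe that every non-zero element is a rational multiple of $1$ so the coding is degree-invariant, and get Baer categoricity from Baer's classification together with computable categoricity of rank~$1$ groups. That part is fine.

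The second half has a genuine gap, in fact two. First, your key claim that primitive recursiveness of $G$ forces the type $\{m : m\mid g\}$ to be decidable is false. Divisibility $\exists y\,(my=g)$ is an unbounded existential search over the domain, and primitive recursive group operations give no bound on the witness $y$; the Mal'cev phenomenon you invoke is about the \emph{existence} of primitive recursive presentations, not about decidability of divisibility in them. Indeed, by exactly the padding arguments of Section~\ref{sect:transforming} (and the classical results of Mal'cev and Grigorieff cited there), every computable subgroup of $\mathbb{Q}$ has a primitive recursive isomorphic copy, and since the type is a degree-invariant, primitive recursive rank~$1$ TFAGs realise \emph{every} c.e.\ degree as a type. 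So the decidability step, which you yourself identify as the crux, cannot be repaired. Second, even granting it, your plan to produce a rank~$1$ TFAG $H$ with ${\bf t}(H)={\bf t}(G)$ that is \emph{not isomorphic} to $G$ contradicts Baer's classification: same type implies isomorphic, full stop. What actually fails is the existence of an isomorphism \emph{inside the model}. In the minimal standard model of $\Pra$ the second-order objects are exactly the primitive recursive functions, so ``there is an isomorphism from $H$ onto $G$'' in the definition of Baer categorical means a \emph{primitive recursive} isomorphism. The paper's proof exploits precisely this: for every nontrivial primitive recursive TFAG $G$ there is a (fully) primitive recursive $H$ that is computably isomorphic to $G$ (hence of the same type) but not primitively recursively isomorphic to it (citing \cite{Kal17}), so no nontrivial $G$ is Baer categorical in that model. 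Your argument never engages with this model-relative reading of ``isomorphism,'' which is the actual mechanism behind the scarcity of instances.
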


\begin{proof}
Use the aforementioned classification of Baer combined with the fact that
every rank 1 group is \emph{computably categorical} meaning that any two isomorphic computable copies are computably isomorphic.
Code any c.e.~set $S$ into a computable $G \leq \mathbb{Q}$ as follows:

\begin{center}
$p_i | 1 \in G$ iff  $ i \in S$.
\end{center}
Since any other non-zero element is a rational multiple of $1$, any such element also codes $S$ up to a finite difference.
Thus, this coding is degree-invariant. (This argument is folklore; see \cite{Melnikov-Survey-Groups} for further details.)

However, in the minimal standard model of $\Pra$, the only  \emph{Baer categorical} rank 1 TFAG is the trivial group.
This is because for any nontrivial  primitive recursive TFAG, there exists a (fully) primitive recursive group computably isomorphic to it but not primitively recursively isomorphic to it.
(We cite \cite{Kal17} for a detailed proof.)
It follows that the only possible primitive recursive instance is $\{0\}$, and the only possible solution is the empty set.
\end{proof}

 \subsection{A note about $\aca$ and $2^{\mathbb{N}}$-$\aca$}\label{subs:aca} We conjecture that, much in the spirit of the categoricity results discussed in \S\,\ref{subsect:combinat},
the proof of Proposition~\ref{prop:whatever} outlined above can be carried out in models that are not necessarily standard or minimal.
We conjecture that
$\Pra \vdash \mathsf{BaerType} $ while  $\rca \vdash \aca \leftrightarrow \mathsf{BaerType}$;  we leave the verification of this claim to the reader.

We shall not really look at problems equivalent to $\aca$ in the present paper, but we conjecture that a large portion of results 
known to be equivalent to $\aca$ over $\rca$ will be equivalent to $\aca$ or $2^{\mathbb{N}}$-$\aca$ over $\Pra$ as well; we clarify what we mean by $\aca$ and $2^{\mathbb{N}}$-$\aca$ below.

The function-based version of $\aca$ is similar to the function-based version $\qfac$ of $\rca$, but it asserts the existence of $\Sigma^0_n$ definable functions rather than just $\Delta^0_1$-definable functions. Similarly to $\rca$, it also has a bounded version that is strictly weaker.
More specifically, the bounded version, that we denote $2^{\mathbb{N}}$-$\aca$, postulates the existence of arithmetically definable $\{0,1\}$-valued functions.
 Notably,  over $\Pra$, $2^{\mathbb{N}}$-$\aca$ does not imply $\rca$ (this is similar to~Proposition~\ref{pr:rca}).

Notice that, $2^{\mathbb{N}}$-$\aca$, and hence $\aca$, implies arithmetical induction, since any arithmetical formula becomes equivalent to a quantifier-free formula (with extra parameter the defining $\{0,1\}$-valued function), over which one can apply $\qfi$.

\section{$\wkl$ over $\Pra$} \label{Sec:WKL}

Recall that instances of $\wkl$ are binary trees, and solutions are paths through the trees. We can represent a binary tree via a set of finite $\{0,1\}$-strings (identified with its characteristic function) closed under taking the prefix. Note that a solution is necessarily a $\{0,1\}$-valued function; in particular, it is primitively recursively bounded.
We therefore obtain the following (seemingly well-known) fact.

\begin{proposition}\label{lem:wkl} Over $\Pra$, $\wkl$ is strictly stronger than $\dca$, is incomparable with $\rca$, and is strictly weaker than $\aca$.
\end{proposition}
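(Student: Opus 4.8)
The plan is to treat the four comparisons separately: prove the two implications $\wkl \vdash \dca$ and $\aca \vdash \wkl$, and exhibit three separating $\omega$-models. Throughout I would use that the $\omega$-models of $\Pra$ are exactly the PR-closed classes of total functions, and the running observation made just before the statement that every $\wkl$-solution is $\{0,1\}$-valued, hence bounded by a primitive recursive function.

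For $\wkl \vdash \dca$, suppose $\forall n\,(\exists y\,\theta(n,y) \biimp \forall y\,\eta(n,y))$ with $\theta,\eta\in\Delta^0_0$. First I would use $\dcao$ (available by \cref{prop:d00ca}) to form the characteristic function of the tree $T\subseteq 2^{<\Nb}$ of those $\rho$ such that, for every $n<|\rho|$, if $\rho(n)=1$ then $(\forall y\le|\rho|)\,\eta(n,y)$, and if $\rho(n)=0$ then $(\forall y\le|\rho|)\,\neg\theta(n,y)$; this condition is $\Delta^0_0$ and plainly prefix-closed. The delicate point, and the place where one must avoid appealing to $\iso$, is infinitude: for each $s$ the $\Delta^0_0$-definable string $\rho_s$ given by $\rho_s(n)=0$ iff $(\forall y\le s)\,\neg\theta(n,y)$ lies in $T\cap 2^s$, since the hypothesis $\exists y\,\theta\biimp\forall y\,\eta$ rules out both defining clauses failing at a single $n$. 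Applying $\wkl$ yields a path $P$, and instantiating the two clauses at all levels shows $P(n)=1\biimp\exists y\,\theta(n,y)$, so $P$ is the desired characteristic function; only $\id 0 0$ is used.

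The three ``downward'' separations I would obtain from $\omega$-models. For $\dca\nvdash\wkl$ and $\rca\nvdash\wkl$, the class $\mathrm{REC}$ of computable functions and the minimal model of $\dca$ identified in the proof of \cref{pr:rca} (the primitively-recursively-bounded computable functions) are PR-closed and satisfy $\qfac$ and $\dca$ respectively, yet each contains a computable infinite binary tree with no computable path, and since each consists only of computable functions, neither contains a solution. The one genuinely new construction is $\wkl\nvdash\rca$. Here I would build, by a standard dovetailing, a PR-closed $\omega$-model $\mathcal{M}$ satisfying $\wkl$ but consisting entirely of primitively-recursively-bounded functions: starting from the primitive recursive functions, at each stage adjoin a leftmost path of one more infinite binary tree already present and re-close under primitive recursion. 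Because each such path is $\{0,1\}$-valued, and — by the use-bound argument of \cref{pr:rca} and Lemma~3.5 of \cite{punc2} — a primitive recursive functional applied to primitively-recursively-bounded functions again yields a primitively-recursively-bounded function, the class never leaves the primitively-recursively-bounded functions. Then $\mathcal{M}\vDash\Pra+\wkl$, but the parameter-free $\qfac$-instance asserting that for every $n$ there is an $m$ with $\{e\}(n)$ halting in exactly $m$ steps, where $\{e\}$ is total with runtime not bounded by any primitive recursive function (e.g.\ Ackermann's function), has as its unique choice function that runtime function, which is therefore absent from $\mathcal{M}$; hence $\mathcal{M}\nvDash\qfac=\rca$. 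I expect this verification — that PR-closure never forces in an unbounded function — to be the main obstacle. The same model also gives $\wkl\nvdash\aca$, since $\aca$ proves $\qfac$ (the least-witness function of a $\qfac$-instance has $\Delta^0_0$ graph, so $\aca$ supplies it) while $\mathcal{M}\nvDash\qfac$.

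Finally, for $\aca\vdash\wkl$ I would realise the leftmost path: using $\aca$ obtain the characteristic function $g$ of the ($\Pi^0_2$, hence arithmetic) set of $\sigma$ with infinitely many extensions in $T$, and define $P$ by primitive recursion from $g$ and $T$ via $P(n)=0$ iff $g((P\uhr n)\smf 0)=1$. Once $g$ is in hand the invariant ``$P\uhr n\in T$ and $g(P\uhr n)=1$'' is $\Delta^0_0$, so $\id 0 0$ confirms that $P$ is a total path whenever $T$ is infinite. Assembling the pieces yields the strict chain $\dca<\wkl<\aca$ together with the incomparability of $\wkl$ and $\rca$, strictness being supplied by the implications of the first and last paragraphs combined with the separating models above.
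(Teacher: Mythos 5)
Your proposal is correct and follows essentially the same route as the paper: the same $\Delta^0_0$-definable separating tree for $\wkl\vdash\dca$ (you additionally spell out its unboundedness, which the paper leaves implicit), the same minimal-model and primitively-recursively-bounded-model separations via \cref{pr:rca}, and the standard leftmost-path argument for $\aca\vdash\wkl$. No gaps.
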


\begin{proof}
To see why $\Pra+\wkl \vdash \dca$, fix an instance of $\dca$ whose solution is $f$.    Use the idea in (1) of Theorem~\ref{prop10} to define a primitive recursive tree $T$
such that the only path through $T$ is $f$. (Note that we do not need induction to argue that the only path through $T$ is $f$.)
Of course, if we were to give full details, then we would define the tree using bounded versions of formulae from the instance of $\dca$ and primitive recursion to produce the tree.

\

\begin{footnotesize}
For instance, we can argue as follows. Consider an instance of $\dca$, i.e., $\Delta^0_0$-formulae $\varphi(n,x)$ and $\psi(n,x)$ such that $\forall n[\exists x \varphi(n,x) \leftrightarrow \forall x \neg\psi(n,x)]$. Then a string $\sigma$ belongs to our tree $T$ if and only if
\[
	(\forall i < |\sigma|) [ (\sigma(i) = 0 \rightarrow (\forall x\leq|\sigma|) \neg\varphi(i,x)) \ \wedge\ (\sigma(i) = 1 \rightarrow (\forall x\leq|\sigma|) \neg\psi(i,x))].
\]
Then the only path through our tree is $f$. Indeed, if $g$ is an arbitrary path through $T$, then:
\begin{itemize}
	\item[] $g(i) = 0$ $\Rightarrow$ $\forall x \neg \varphi(i,x)$ $\Leftrightarrow$ $f(i) = 0$;
	
	\item[] $g(i) = 1$ $\Rightarrow$ $\forall x \neg \psi(i,x)$ $\Leftrightarrow$ $f(i) = 1$.
\end{itemize}
This is similar to the proof of $\Sigma^0_1$-separation from $\wkl$ given in \cite[Lemma IV.4.4]{sosoa}.
\end{footnotesize}

\

Since there are infinite primitive recursive binary trees  with no computable paths ((1) of Theorem~\ref{prop10} combined with folklore),
the standard minimal model of $\Pra+\rca$ illustrates that $\Pra+\rca \not \vdash \wkl$, and in particular $\Pra+\dca \not \vdash \wkl$.
To see why $\Pra+ \wkl \not \vdash \rca$, follow the proof of Proposition~\ref{pr:rca} to construct a standard model of $\Pra+ \wkl$ that contains only primitively recursively bounded functions. The proof that $\aca$ implies  $\wkl$ is essentially the same as the standard proof in the set-based system, up to notation.\end{proof}

\begin{remark}
We do not need the full power of $\aca$ to deduce $\wkl$; the existence of arithmetical $\{0,1\}$-valued functions would suffice.
\end{remark}

It is immediate to see that it is possible to compute a path in each infinite pruned tree, i.e., a tree without leaves. Thus,  $\rca$ proves both $\mathsf{WKL}$ and $\mathsf{KL}$ for pruned trees. In this setting we observe the following.

\begin{proposition}
$\Pra$ proves that each binary pruned tree has an infinite path.
\end{proposition}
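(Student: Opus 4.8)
The plan is to construct the \emph{leftmost path} explicitly by primitive recursion and then verify, using only $\Delta^0_0$-induction, that it stays on the tree. Let $T \colon 2^{<\Nb} \to 2$ be (the characteristic function of) a binary pruned tree, i.e.\ $T(\la\ra) = 1$, $T$ is closed under prefixes, and every node has an immediate successor in $T$:
\[
	\forall \sigma\, \big( T(\sigma) = 1 \imp (T(\sigma\smf 0) = 1 \lor T(\sigma\smf 1) = 1) \big).
\]
Since string concatenation and bit-extraction are primitive recursive (see \S\,\ref{sec:fin}), I would define a primitive recursive functional $p$, with $T$ as a function-parameter, giving the initial segments of the path: set $p(0) = \la\ra$ and
\[
	p(n+1) = \begin{cases} p(n)\smf 0 & \text{if } T(p(n)\smf 0) = 1,\\ p(n)\smf 1 & \text{otherwise.}\end{cases}
\]
Definition by cases on the primitive recursive predicate ``$T(p(n)\smf 0) = 1$'' is itself primitive recursive, so $p$ is a genuine primitive recursive functional whose existence is postulated by $\Pra$.

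Next I would verify the two facts that make $p$ a path, each by $\Delta^0_0$-induction (which $\Pra$ proves; see \S\,\ref{sub:delta00}). First, $|p(n)| = n$ and $p(n) \prec p(n+1)$ for all $n$, which is immediate from the recursion. Second, and crucially, the $\Delta^0_0$ statement $\theta(n) \colon T(p(n)) = 1$ holds for all $n$; note $\theta$ is $\Delta^0_0$ with parameter $T$ precisely because $p$ is a \emph{fixed} primitive recursive functional. The base case $\theta(0)$ is $T(\la\ra) = 1$. For the step, assuming $T(p(n)) = 1$, prunedness yields $T(p(n)\smf 0) = 1 \lor T(p(n)\smf 1) = 1$, and the definition of $p(n+1)$ selects a child lying in $T$ in either case, whence $\theta(n+1)$.

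Finally I would extract the path as a function $P \colon \Nb \to 2$ by $P(n) = (p(n+1))(n)$, the $n$-th bit of $p(n+1)$; this is again primitive recursive. From $p(n) \prec p(n+1)$ and $|p(n)| = n$ one obtains $P \uhr n = p(n)$ by a further $\Delta^0_0$-induction, so $T(P\uhr n) = 1$ for every $n$, i.e.\ $P$ is an infinite path through $T$. The point to emphasise --- and the reason this lands in $\Pra$ rather than requiring $\qfac$, unlike $\kl$ or $\wkl$ --- is that \emph{no search} is involved: prunedness guarantees that the greedy choice of the leftmost child never fails, so the path is produced by a bare primitive recursion and every verification remains at the $\Delta^0_0$ level. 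I expect no real obstacle beyond the routine bookkeeping with the primitive recursive calculus of strings from \S\,\ref{sec:fin}.
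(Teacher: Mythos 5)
Your proof is correct and follows essentially the same route as the paper's own (sketched) argument: a greedy primitive recursion that always picks the least $i\in\{0,1\}$ with $T(p(n)\smf i)=1$, which prunedness guarantees exists without any unbounded search. Your additional care in separating the sequence of initial segments $p(n)$ from the extracted path $P$ and in verifying $T(p(n))=1$ by $\Delta^0_0$-induction is exactly the bookkeeping the paper leaves implicit.
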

\begin{proof}[Proof sketch]
Let $T \subseteq 2^{<\Nb}$ be without leaves. One can define a path $P$ inductively as follows
\begin{align*}
P(0)&=r  \\
P(n)&= P(n-1)\smf i
\end{align*}
where $r$ is the root and $i \in \{0,1\}$ is minimal such that $P(n-1)\smf i \in T$.
\end{proof}
In contrast, we have the following, also highly expected, fact.

\begin{proposition}\label{Prop:noLeavesKL}
Over $\Pra$, $\rca$ is equivalent to the following: each infinite pruned tree $T\subseteq \Nb^{<\Nb}$  has an infinite path.
\end{proposition}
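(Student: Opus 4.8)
The plan is to prove the two implications separately, with the function-based choice $\qfac$ (with which $\rca$ is identified) isolated as exactly the strength needed to select children along a branch.

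\textbf{From $\rca$ to the principle.} Let $T\subseteq\Nb^{<\Nb}$ be an infinite pruned tree, coded by its characteristic function. Being pruned means $\forall\sigma\,(\sigma\in T\to\exists i\,(\sigma\smf i\in T))$, so the $\Delta^0_0$-formula $\eta(\sigma,i)$ defined as $(\sigma\notin T\vee\sigma\smf i\in T)$ satisfies $\forall\sigma\,\exists i\,\eta(\sigma,i)$. By $\qfac$ I obtain a child-selector $F$ with $\forall\sigma\,\eta(\sigma,F(\sigma))$. Since $T$ is infinite it is nonempty, hence $\langle\rangle\in T$ by prefix-closure, and I define a path by primitive recursion, $P\uhr 0=\langle\rangle$ and $P\uhr(n+1)=(P\uhr n)\smf F(P\uhr n)$. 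Since $\Pra$ proves $\id 0 0$, a $\Delta^0_0$-induction shows $P\uhr n\in T$ for every $n$ (the predicate is quantifier-free once $P\uhr n$ is produced by the recursion), whence $P$ is a path through $T$.

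\textbf{From the principle to $\qfac$.} Fix an instance $\forall n\,\exists m\,\theta(n,m)$ of $\qfac$ with $\theta\in\Delta^0_0$. Using $\dcao$ form the tree $T=\{\sigma:(\forall i<|\sigma|)\,\theta(i,\sigma(i))\}$, which is $\Delta^0_0$-definable, prefix-closed, and pruned: any $\sigma\in T$ of length $n$ extends to $\sigma\smf m\in T$ for a witness $m$ of $\theta(n,m)$. Crucially, every path $P$ through $T$ satisfies $\theta(n,P(n))$ for all $n$, so $P$ is precisely the choice function required by $\qfac$ and the reduction is the identity operator. Thus it suffices to verify that $T$ is infinite and then invoke the principle.

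\textbf{The obstacle.} The difficulty is exactly the infiniteness of $T$. Prunedness together with nonemptiness yields only that the length-set $L=\{k:\exists\sigma\,(\sigma\in T\wedge|\sigma|=k)\}$ is a downward-closed, successor-closed subset of $\Nb$ containing $0$; concluding $L=\Nb$ is a $\Sigma^0_1$-induction, which $\Pra$ does not prove. Indeed $T$ is infinite iff $L=\Nb$ iff $\is 0 1$ holds for this instance, so the reverse direction cannot avoid extracting induction from the principle. I would therefore first show that the principle implies $\is 0 1$: assuming $\is 0 1$ fails, $L$ is a proper $\Sigma^0_1$-cut, and its least-witness function must be unbounded on $L$ (were it bounded, $L$ would be $\Delta^0_0$-definable and hence all of $\Nb$ by $\id 0 0$, a contradiction). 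Exploiting this unbounded \emph{measuring} function, together with the fact that in $\Nb^{<\Nb}$ a tree may be unbounded and yet path-free, I would build an unbounded pruned tree with no path, contradicting the principle. Once $\is 0 1$ is available, $L=\Nb$, so $T$ is genuinely infinite, the principle supplies a path, and $\qfac$ follows. Making this last construction precise — turning a putative $\Sigma^0_1$-cut into an unbounded-but-path-free pruned tree — is where the real work lies, and it is the part most sensitive to the weakness of induction in $\Pra$.
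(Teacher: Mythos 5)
Your forward direction is essentially the paper's argument: apply $\qfac$ to the $\Delta^0_0$ predicate ``$\sigma\smf i\in T$'' to obtain a child selector, iterate it by primitive recursion from the root, and verify $P\uhr n\in T$ by $\id 0 0$ (your guard clause for $\sigma\notin T$ is a small improvement in hygiene). For the converse you also build the same tree $T=\{\sigma:(\forall i<|\sigma|)\,\theta(i,\sigma(i))\}$ as the paper, which at this point simply asserts that ``it is immediate to check that $T$ is infinite and pruned.'' You are right to be suspicious: with ``infinite'' meaning unbounded and without $\is 0 1$, the statement ``$T$ contains a string of length $k$'' is $\Sigma^0_1$ in $k$, and establishing it for all $k$ amounts to $\Sigma^0_1$-induction (or to collecting the witnesses $m_0,\dots,m_{k-1}$ under a common bound), neither of which $\Pra$ supplies; if the witnesses are unique, unboundedness of the codes in $T$ really does reduce to this.

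The repair, however, is much lighter than the detour you sketch. Replace $\theta(n,m)$ by $\theta'(n,m)$ asserting that $m=\la m_0,s\ra$ for some $s$ and some $m_0$ with $\theta(n,m_0)$, and let $T'$ be the corresponding tree. Then $T'$ is pruned exactly as before (a node of length $k$ has a child because $\exists m_0\,\theta(k,m_0)$), and it is unbounded outright: already its length-one nodes $\la\la m_0,s\ra\ra$ have arbitrarily large codes as $s$ varies, which needs only $\dcao$ and no induction at all. The principle applied to $T'$ hands you a path $P$, and $n\mapsto\pi_1(P(n))$ is the required choice function; whether the lengths occurring in the unpadded $T$ form a proper cut never has to be decided. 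Note, moreover, that the unbounded, pruned, path-free tree you propose to manufacture from a failing instance of $\is 0 1$ --- the step you flag as ``where the real work lies,'' and which your sketch via the unbounded least-witness function does not actually deliver, since placing a large witness at level $n$ still presupposes collecting witnesses for all earlier levels --- is precisely this padded $T'$ built from the $\Sigma^0_1$ formula witnessing the failure. So once your detour is completed it collapses into the direct argument, and the fact that the principle implies $\is 0 1$ becomes a corollary of the proposition rather than a lemma needed to prove it.
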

\begin{proof}
Let $T\subseteq \Nb^{<\Nb}$ be an  infinite pruned tree, so that it holds that $\forall \sigma\, \exists m\, (\sigma\smf m \in T)$. Let $f \colon \Nb \to \Nb$ be a choice function for \[
\theta(\sigma,m) = (\sigma\smf m \in T),
\]
and define a path $P$ through $T$ such that $P(n)=f^n(r)$, for $r$ the root of $T$ and $n \in \Nb$.

Let $\theta$ be a quantifier-free formula such that $\forall n \, \exists m\, \theta(n,m)$. By $\Delta^0_0$-comprehension define a tree $T\subseteq \Nb^{\Nb}$ as follows:
\[\sigma \in T \Leftrightarrow \forall n < |\sigma|\, (\sigma(n)=m \leftrightarrow \theta(n,m))\]
It is immediate to check that $T$ is infinite and pruned. The path provides the desired choice function for the formula $\theta$.
\end{proof}

Recall that we defined  $\mathsf{HeineBorelTheorem}$ in Definition~\ref{def:anprob}.
\begin{proposition}
\label{no4prop1}
Over $\pra^2$,
 $\mathsf{HeineBorelTheorem}$
is equivalent to  $\wkl$.

\end{proposition}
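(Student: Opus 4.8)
The plan is to prove the equivalence $\mathsf{HeineBorelTheorem} \Leftrightarrow \wkl$ over $\Pra$ by establishing both implications directly, transferring the standard $\rca$-proof (e.g.\ Theorem~IV.1.2 of~\cite{sosoa}) while checking that only $\Delta^0_0$-induction and primitive recursion are used. Throughout I identify a binary tree with its characteristic function and use the materials of \S\,\ref{sec:fin} to manage finite objects.

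For the direction $\wkl \Imp \mathsf{HeineBorelTheorem}$, fix an instance $(I_s:s\in\Nb)$ of the Heine--Borel problem, so that $\bigcup_{s<t} I_s \nsupseteq [0,1]$ for every $t$. I would build a primitive recursive binary tree $T\subseteq 2^{<\Nb}$ whose nodes of length $n$ correspond to the dyadic subintervals of $[0,1]$ at resolution $2^{-n}$ that have not yet been covered by $\bigcup_{s<n} I_s$. More precisely, a string $\sigma\in 2^n$ codes (via the usual bisection) a closed dyadic interval $J_\sigma$ of length $2^{-n}$, and I declare $\sigma\in T$ exactly when $J_\sigma$ is not contained in the open set $\bigcup_{s<|\sigma|} I_s$. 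Deciding containment of a closed dyadic interval in a finite union of rational open intervals is a $\Delta^0_0$-fact, so $T$ is $\Delta^0_0$-definable and hence exists by Proposition~\ref{prop:d00ca}; it is closed under initial segments by construction. The hypothesis that no finite subunion covers $[0,1]$ guarantees, by a short $\Delta^0_0$-induction on $n$, that $T$ has a node of every length, so $T$ is infinite. Applying $\wkl$ yields a path $P\colon\Nb\to 2$, and the real $x = \lim_n J_{P\upharpoonright n}$ (which is Cauchy with a primitive recursive modulus since the interval lengths halve) satisfies $x\notin I_s$ for every $s$: indeed $x\in I_s$ would place some $J_{P\upharpoonright n}$ inside $\bigcup_{t<n} I_t$ for large $n$, contradicting $P\upharpoonright n\in T$.

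For the reverse direction $\mathsf{HeineBorelTheorem} \Imp \wkl$, I would argue contrapositively by encoding an arbitrary binary tree as an open cover. Given $T\subseteq 2^{<\Nb}$ infinite, associate to each node $\sigma$ that is \emph{not} in $T$ the dyadic open interval $U_\sigma$ of the reals whose binary expansion begins with $\sigma$; enumerate these $U_\sigma$ (over $\sigma\notin T$) primitively recursively as a sequence $(I_s:s\in\Nb)$. Since $T$ is infinite, for every $t$ the finite union $\bigcup_{s<t} I_s$ omits the interval coded by some length-$n$ node still in $T$, so $\bigcup_{s<t} I_s \nsupseteq [0,1]$; verifying this uniform-omission fact is again a bounded-quantifier induction. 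This $(I_s)$ is then a legitimate instance, and any solution $x\notin\bigcup_s I_s$ has a binary expansion all of whose initial segments lie in $T$, i.e.\ a path through $T$. The passage from the real $x$ to the $\{0,1\}$-valued path is primitive recursive (read off the dyadic digits of $x$ from its Cauchy presentation), so no unbounded search is introduced.

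The main obstacle I anticipate is the careful bookkeeping at the boundary between the combinatorial (tree) and analytic (interval) pictures, specifically handling \emph{dyadic rationals}, which have two binary expansions and sit on the shared endpoint of adjacent intervals. In the $\wkl\Imp\mathsf{HeineBorelTheorem}$ direction one must ensure that the closed interval $J_\sigma$ is tested for containment in an \emph{open} union, so that the limit point $x$ genuinely escapes every $I_s$ rather than merely sitting on a boundary; in the reverse direction one must ensure the coding intervals $U_\sigma$ tile $[0,1]$ without gaps at dyadic points so that escaping the cover forces membership in $T$ at every level. The standard fix is to work with half-open or slightly shrunk/enlarged dyadic intervals and to make the endpoint conventions explicit, exactly as in the $\rca$ treatment; the only additional point over $\rca$ is to confirm that each such decision is witnessed by a primitive recursive functional and that the supporting inductions never exceed $\id 0 0$, which they do not since all the relevant predicates (interval containment, node presence in a $\Delta^0_0$-tree, digit extraction) are bounded.
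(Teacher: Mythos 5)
Your first direction ($\wkl \Rightarrow \mathsf{HeineBorelTheorem}$) is exactly the paper's argument: the tree of closed dyadic intervals not yet covered by $\bigcup_{s<n}I_s$, membership decided by a $\Delta^0_0$ predicate, infinitude by a bounded induction, and a path converging to an uncovered real. No issues there.

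The converse direction has a genuine gap, and it is precisely the dyadic-rational issue you flag at the end --- but the repairs you propose do not work, and the difficulty is not mere bookkeeping. Concretely, let $T$ be the downward closure of $\{01^n : n\in\Nb\}$, so that $[T]$ consists of the single path $0111\cdots$ whose associated real is $1/2$. With your coding, the open intervals $U_\sigma$ for $\sigma\notin T$ never contain the dyadic rational $1/4$ (the only cones whose \emph{open} interior contains $1/4$ are those of $\emptyset$ and $0$, both in $T$), so $1/4$ is a legitimate solution of the resulting $\mathsf{HeineBorelTheorem}$ instance; yet both binary expansions of $1/4$, namely $01000\cdots$ and $00111\cdots$, leave $T$ at level $2$ or $3$. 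So a solution need not code a path, and no primitive recursive (indeed, no) post-processing of the real $1/4$ recovers one. Half-open intervals are not permitted by the instance format (open intervals with rational endpoints); shrinking the $U_\sigma$ only enlarges the set of spurious uncovered points; and enlarging each $U_\sigma$ to contain the closed interval $J_\sigma$ breaks instance-hood in the same example: an enlarged $U_1\supseteq[1/2,1]$ reaches some $1/2-\varepsilon$, and together with the enlarged level-$n$ intervals for $2^{-n}<\varepsilon$ (all of whose nodes except $01^{n-1}$ lie outside $T$) it yields a \emph{finite} subfamily covering $[0,1]$, violating the requirement $\bigcup_{s<t}I_s\nsupseteq[0,1]$. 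The paper avoids all of this by not using binary expansions at all: it fixes the primitive recursive homeomorphism $h\colon 2^{\Nb}\to\mathcal{C}$ onto the Cantor middle-thirds set, enumerates into the cover both the complementary intervals of $[0,1]\setminus\mathcal{C}$ and the $\mathcal{C}$-intervals of the strings outside $T$; since distinct strings of the same length now correspond to intervals with pairwise disjoint closures, every uncovered real lies in $\mathcal{C}$ and has a unique preimage $h^{-1}(r)\in[T]$, computed by a primitive recursive functional. Your argument becomes correct once you replace the dyadic coding by this Cantor-set coding (or the equivalent middle-thirds arithmetic of \cite{sosoa}).
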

\begin{proof}
Working in $\Pra$, we give primitive recursive definitions. It then takes only quantifier-free induction (combined with appealing to the primitive recursive schemata)
to argue that these processes define the desired objects.
Also, recall that all our intervals have rational end points, so, in particular, inclusion of two given intervals becomes a quantifier-free formula.

\smallskip

($\Rightarrow$).
Fix the natural primitive recursive homeomorphism $h:2^\mathbb{N}\rightarrow \mathcal{C}$, where
$\mathcal{C}$ denotes the Cantor set.
The homomorphism $h$ and its inverse are realised by primitive recursive functionals.

 \begin{remark}\label{homeo-remark}  We shall avoid giving the formal definition of a homeomorphism
in $\Pra$ and treat the operator $h$  merely as a notation that can be extracted from the primitive recursive definition of the Cantor set $\mcal{C}$.
It should be clear to the reader at this stage how this sort of operators can be formally defined; we omit this.
\end{remark}


Let $T\subseteq 2^\mathbb{N}$ be an infinite tree.
We will primitively recursively compute a $\mathsf{HeineBorelTheorem}$
instance $(I_s:s\in\omega)$
such that any solution of $(I_s: s\in\omega)$
primitively recursively computes a $\wkl$-solution for $T$.
Firstly, let all intervals in $[0,1]\setminus\mathcal{C}$
be included in $(I_s:s\in\omega)$.
Secondly, for each string $\rho\notin T$,
put the interval corresponding to $\rho$ into $(I_s:s\in\omega)$.
Obviously, this  $(I_s:s\in\omega)$ is primitive recursive in $T$.
One can arrange the construction, by slowly enumerating `small enough' intervals in $[0,1]\setminus\mathcal{C}$, in such a way that $\bigcup_{s<t} I_s\nsupseteq [0,1]$ for all $t$. Hence, $(I_s:s\in\omega)$ is an instance. Now, let (a presentation of) a real $r$ be so that
$r\notin \bigcup_s I_s$.

It is easy to see that if $X\not\in [T]$, then $\bigcup_s I_s$ contains the real $h(X)$.
Therefore, $h^{-1}(r)\in [T]$.

\smallskip

($\Leftarrow$).
Let $(I_s:s\in\omega)$ be an $\mathsf{HeineBorelTheorem}$ instance.
Recall that each string $\rho\in 2^\mathbb{<N}$ represents
an interval of form $[k_\rho/2^{|\rho|},(k_\rho+1)/2^{|\rho|}]$.
To define $T$,
whenever we see $\bigcup_{s<n} I_s\supseteq [k_\rho/2^{|\rho|}, (k_\rho+1)/2^{|\rho|}]$,
we put $\rho$ of length $n$ in $\overline{T}$, i.e., the complement of $T$. Otherwise,  declare  $\rho$ in $T$.
Let $Y\in [T]$.
Clearly
$(k_{Y\upharpoonright n}/2^n:n\in\omega)$ is a sequence of rationals representing a real $r\in [0,1]$
such that $r\notin \bigcup_s I_s$. \end{proof}

We conjecture that many basic theorems, such as $\mathsf{G\ddot{o}delCompletenessTheorem}$,
that are equivalent to $\wkl$ over $\rca$ remain equivalent to $\wkl$ over $\Pra$.  The following elementary but useful fact
 helps to study problems whose solutions lie in $2^{\mathbb{N}}$.

\begin{lemma}\label{lemma:whatever}
Suppose $P$ and $Q$ are problems such that $P$-instances lie in $2^{\mathbb{N}}$
and $Q$-solutions  lie in $2^{\mathbb{N}}$. Suppose also that $P$ implies $\dca$ over $\Pra$.
If $P$ implies $Q$ over $\rca$, then $P$ implies $Q$ over $\Pra$.

\end{lemma}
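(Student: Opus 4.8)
The plan is to argue semantically and then invoke the completeness theorem. Fix an arbitrary model $(M,\mc{X})\vDash\Pra+P$; since $P$ implies $\dca$ over $\Pra$, we have $(M,\mc{X})\vDash\dca$. Let $A\in\mc{X}$ be a $Q$-instance. The idea is to pass to a larger second-order part $\mc{X}'\supseteq\mc{X}$ that models the full system $\rca+P$, apply the hypothesis $\rca+P\vdash Q$ inside $(M,\mc{X}')$ to obtain a $Q$-solution $S$ for $A$, and then observe that, because $Q$-solutions lie in $2^{\mathbb{N}}$, the function $S$ is $\{0,1\}$-valued and therefore already belongs to $\mc{X}$. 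Since $A$ is an arbitrary $Q$-instance of $\mc{X}$, this yields $(M,\mc{X})\vDash Q$, and as $(M,\mc{X})$ was an arbitrary model, completeness gives $\Pra+P\vdash Q$.

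First I would define $\mc{X}'$ to be the general recursive closure of $\mc{X}$ inside $M$: the collection of all functions of the form $\Phi_e^{\vec g}$, where $e\in\omega$, $\vec g$ is a finite tuple from $\mc{X}$, and $M$ regards $\Phi_e^{\vec g}$ as total. Closure under composition follows from the use principle and $s$-$m$-$n$, and the primitive recursive functionals preserve totality, so $(M,\mc{X}')\vDash\Pra$. To see that $(M,\mc{X}')\vDash\qfac$ I would use \cref{prop:mini} and verify totality of minimisation: given $g\in\mc{X}'$ with $\forall\vec n\,\exists m\,(g(\vec n,m)=0)$, the statement $\exists m\,(g(\vec n,m)=0)$ holds in $M$ for each $\vec n$, and since $\Pra$ proves $\id 0 0$ and hence the bounded least number principle, the least such $m$ exists; thus $h(\vec n)=\mu m\,(g(\vec n,m)=0)$ is a total function, general recursive in $g$, whence $h\in\mc{X}'$. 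Consequently $(M,\mc{X}')\vDash\rca$, and in particular $\iso$ holds by \cref{Prop_QfacNotIS01}.

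It then remains to check $(M,\mc{X}')\vDash P$ together with the fact that $\mc{X}'$ introduces no new $\{0,1\}$-valued functions. For the latter, suppose $B\in\mc{X}'\cap 2^M$, say $B=\Phi_e^{\vec g}$ with $\vec g\in\mc{X}$ and $B$ total and $\{0,1\}$-valued. Then both $B(n)=1$ and $B(n)=0$ are $\Sigma^0_1$ in $\vec g$, so $B$ is $\Delta^0_1$ with parameters in $\mc{X}$; the totality of $\Phi_e^{\vec g}$ is an arithmetic fact about $M$ and $\vec g$, hence absolute between $(M,\mc{X})$ and $(M,\mc{X}')$, so $(M,\mc{X})$ also sees $B$ as $\Delta^0_1$, and $\dca$ places $B\in\mc{X}$. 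This proves $\mc{X}'\cap 2^M\subseteq\mc{X}$, while $\mc{X}\subseteq\mc{X}'$ is trivial. Now any $P$-instance in $\mc{X}'$ lies in $2^{\mathbb{N}}$ by assumption, hence in $\mc{X}'\cap 2^M\subseteq\mc{X}$, and $(M,\mc{X})\vDash P$ supplies a $P$-solution, which lies in $\mc{X}\subseteq\mc{X}'$. Therefore $(M,\mc{X}')\vDash\rca+P$, so $(M,\mc{X}')\vDash Q$, and the resulting $Q$-solution $S$ for $A$ is $\{0,1\}$-valued, so $S\in\mc{X}'\cap 2^M\subseteq\mc{X}$, completing the argument.

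The main obstacle I anticipate is the verification that $\mc{X}'$ genuinely models $\qfac$ (equivalently, that $\iso$ survives) rather than merely $\dca$: one must confirm that forming minimisation functions never requires induction beyond $\id 0 0$, and that totality of the relevant oracle computations is an arithmetic—hence second-order absolute—condition, so that passing from $\mc{X}$ to $\mc{X}'$ neither destroys induction nor smuggles in $\{0,1\}$-valued functions outside $\mc{X}$ (compare the delicate situation in \cref{Prop_DcaNotIS01}). The twin hypotheses that $P$-instances and $Q$-solutions both lie in $2^{\mathbb{N}}$, together with $P\Rightarrow\dca$, are exactly what keep all the relevant Boolean-valued objects trapped inside $\mc{X}$.
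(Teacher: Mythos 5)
Your proposal is correct and follows essentially the same route as the paper: the paper expands the second-order part to the collection of all $\Delta^0_1$-definable functions of $M$ (your general recursive closure $\mc{X}'$ is the same class), observes that the $\{0,1\}$-valued functions do not change because $P$ implies $\dca$ and $\Delta^0_1$-definability is transitive, applies $\rca + P \vdash Q$ in the expansion, and pulls the $Q$-solution back down. Your additional verifications — that $\qfac$ holds in the expansion via totality of minimisation (\cref{prop:mini}) and that no new elements of $2^{\mathbb{N}}$ are introduced — are exactly the points the paper leaves as ``it should be clear,'' so they are welcome but not a different argument.
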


\begin{proof}
Suppose $M \models \Pra +P$. Define  an expansion $\hat{M}$ of $M$ by taking the collection of all $\Delta^0_1$-definable functions
in $M$. It should be  clear that $\hat{M} \models \rca$. Note also that $\hat{M} \cap 2^{\mathbb{N}} =M \cap 2^{\mathbb{N}} $ because  $P$ implies $\dca$ over $\Pra$ (note that $\Delta^0_1$-definability is transitive). It follows that $M$ already contains all $P$-instances that are present in $\hat{M}$.
Since $\hat{M}$ is an expansion of $M$ and $M \models P$, it evidently contains all the solutions of $P$ too.
So it follows that $\hat{M} \models \rca+ P$, and thus $\hat{M} \models Q$. Recall that all solutions of $Q$ are in $2^{\mathbb{N}}$, and $\hat{M} \cap 2^{\mathbb{N}} =M \cap 2^{\mathbb{N}} $. We conclude that $M \models Q$.\end{proof}

We obtain:

\begin{theorem}\label{Cor:WKLPra}
Suppose all $Q$-solutions  lie in $2^{\mathbb{N}}$. If $\wkl$
implies $Q$ over $\rca$, then $\wkl$ implies $Q$ over $\Pra$.
\end{theorem}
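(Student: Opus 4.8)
The plan is to derive this immediately from Lemma~\ref{lemma:whatever} by instantiating $P := \wkl$. To do so, I would verify that $\wkl$ satisfies the three hypotheses that the lemma places on the principle $P$. First, I would observe that every instance of $\wkl$ lies in $2^{\mathbb{N}}$: under the coding fixed at the start of \cref{Sec:WKL}, a $\wkl$-instance is a binary tree $T \subseteq 2^{<\Nb}$, which is identified with its characteristic function, a $\{0,1\}$-valued function. Via the fixed primitive recursive coding of $2^{<\Nb}$ into $\Nb$, this is precisely an element of $2^{\mathbb{N}}$, as required for the ``$P$-instances lie in $2^{\mathbb{N}}$'' clause.

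Second, I would invoke Proposition~\ref{lem:wkl}, which already establishes that over $\Pra$ the principle $\wkl$ is strictly stronger than $\dca$; in particular $\wkl$ implies $\dca$ over $\Pra$. This supplies the crucial ``$P$ implies $\dca$ over $\Pra$'' hypothesis that makes the $\Delta^0_1$-expansion argument in Lemma~\ref{lemma:whatever} go through (it is exactly what guarantees $\hat M \cap 2^{\mathbb{N}} = M \cap 2^{\mathbb{N}}$). The remaining two ingredients are given to us directly by the statement of the theorem: the hypothesis that all $Q$-solutions lie in $2^{\mathbb{N}}$, and the assumption that $\wkl$ implies $Q$ over $\rca$.

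With all hypotheses of Lemma~\ref{lemma:whatever} checked for $P = \wkl$, the conclusion ``$\wkl$ implies $Q$ over $\Pra$'' follows at once. I do not expect a genuine obstacle here, since the content of the argument is entirely packaged inside Lemma~\ref{lemma:whatever}; the only point requiring a line of care is confirming that the adopted representation of binary trees really does place $\wkl$-instances in $2^{\mathbb{N}}$ (so that the lemma's first hypothesis is literally met, not merely morally true), and citing Proposition~\ref{lem:wkl} for $\wkl \vdash \dca$ over $\Pra$. The proof is therefore one short paragraph: \emph{Apply Lemma~\ref{lemma:whatever} with $P = \wkl$, using Proposition~\ref{lem:wkl} to verify that $\wkl$ implies $\dca$ over $\Pra$ and noting that $\wkl$-instances lie in $2^{\mathbb{N}}$.}
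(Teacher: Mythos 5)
Your proposal is correct and matches the paper's own proof exactly: the paper likewise cites Proposition~\ref{lem:wkl} for $\Pra + \wkl \vdash \dca$, notes that $\wkl$-instances can be represented as elements of $2^{\mathbb{N}}$, and then applies Lemma~\ref{lemma:whatever}. No differences worth noting.
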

\begin{proof} By \cref{lem:wkl}, we have that $\Pra +\wkl \vdash \dca$.
Under a suitable coding of subsets of $2^{\mathbb{N}}$, instances of $\wkl$ can be viewed as primitively recursively bounded functions. We can represent instances of $\wkl$ as $2$-bounded functions, i.e., elements of $2^{\mathbb{N}}$. It remains to apply~\cref{lemma:whatever}.
\end{proof}

We now derive several corollaries of the result stated above.

\

In this contest, where $\iso$ may fail, one needs a bit of care to formalise, inside the theory, the notion of Turing reduction. We borrow the definition of \lq being recursive in\rq, as in \cite{chong-yang:jump-cut}, so that $\forall X\, \exists Y\, (Y \leq_T X)$ means that there exists a monotonic $\Sigma^0_1$-functional $\Phi$ such that $y \in Y$ ($y \notin Y$) if and only if there are two coded sets $P\subseteq X$ and $N \subseteq \Nb\setminus X$ such that $\la x,1,P,N \ra \in \Phi$ ($\la x,0,P,N \ra \in \Phi$). Notice that, in our contest, both  $P\subseteq X$ and $N \subseteq \Nb\setminus X$ are $\Delta^0_0$-properties.  For more details we refer to the cited paper.

A set $H \subseteq \Nb$ is homogeneous for a $\sigma \in 2^{<\Nb}$ if there exists a colour $i<2$ such that $\forall n \in H\, (n < |\sigma| \imp \sigma(n)=i)$. A set  $H \subseteq \Nb$ is homogeneous for an infinite tree $T \subseteq 2^{<\Nb}$ if the tree  $\{\sigma \in T \mid
H \text{ is homogeneous for } \sigma\}$ is infinite.

Let $T$ be a theory. A formula $\varphi(x_0,\dots, x_n)$ of $T$ is an atom of $T$ if for each formula $\psi(x_0,\dots, x_n)$ it holds that $T \vdash \varphi \imp \psi$ or $T \vdash \varphi \imp \neg\psi$, but not both.
The theory $T$ is atomic if, for every formula $\psi(x_0,\dots, x_n)$ consistent with $T$,
there is an atom $\varphi(x_0,\dots, x_n)$ of $T$ such that $T \vdash \varphi \imp \psi$.
The types of $T$ are subenumerable if there exists a set $S$ such that, for every type $\Gamma$ of $T$, there is an $i$ such that $\{\varphi \mid \la i,\varphi\ra \in S \}$ and $\Gamma$ imply the same formulae in $T$.
A model $\mathcal{M}$ of $T$ is atomic if every $n$-tuple from $\mathcal{M}$ satisfies an atom of $T$.

\begin{corollary}
Over $\Pra$, the following principles are implied by $\wkl$:
\begin{enumerate}
  \item $\mathsf{WWKL}$, Weak Weak K{\H{o}}nig's Lemma, i.e., every tree $T \subseteq 2^{<\Nb}$ such that
\[\frac{ |\{\sigma \in  2^n \mid \sigma \in T \}|}{2^n}\]
is uniformly bounded away from zero for all $n$ has an infinite path.
\item $\mathsf{DNR}$, Diagonally Non-Recursive function, i.e., for each $A\subseteq \Nb$ there exists a function $f\colon \Nb \to \Nb$ such that $f(e)\ne \varphi^A_e(e)$, for any $e \in \Nb$.
\item $\forall X\, \exists Y\, (Y \nleq_T X)$.
\item  $\mathsf{AST}$, i.e., Atomic model theorem with Subenumerable Types: Let T be a complete atomic theory whose types are subenumerable. Then T has an atomic model.
\item $\mathsf{RWKL}$, Ramsey-type Weak K{\H{o}}nig’s Lemma, i.e., for every infinite subtree of $2^{<\Nb}$,
there is an infinite homogeneous set.
\end{enumerate}
\end{corollary}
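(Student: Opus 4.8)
The plan is to derive all five implications uniformly from \cref{Cor:WKLPra}. That transfer result says that whenever the solutions of a problem $Q$ lie in $2^{\mathbb{N}}$ and $\wkl$ implies $Q$ over $\rca$, then $\wkl$ implies $Q$ over $\Pra$. Thus for each principle the work reduces to two bookkeeping checks: that the natural coding of a solution is genuinely $\{0,1\}$-valued, and that the corresponding implication over $\rca$ is already on record in classical reverse mathematics. First I would dispose of $\mathsf{WWKL}$ (1) and $\mathsf{RWKL}$ (5), which are immediate. A solution of $\mathsf{WWKL}$ is a path through a binary tree and a solution of $\mathsf{RWKL}$ is an infinite homogeneous set; both are $\{0,1\}$-valued functions, hence lie in $2^{\mathbb{N}}$. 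Over $\rca$ both are weakenings of $\wkl$ (the former restricting to positive-measure trees, the latter asking only for a homogeneous set rather than a full path), so they are trivially implied by it; see \cite{sosoa,Hirschfeldt15}. \cref{Cor:WKLPra} then applies verbatim.

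Next I would treat principle (3) and $\mathsf{AST}$ (4), which need only a little more attention to coding. For $\forall X\,\exists Y\,(Y\nleq_T X)$, a solution is a set $Y\subseteq\Nb$, i.e.\ an element of $2^{\mathbb{N}}$; using the formalisation of $\le_T$ borrowed from \cite{chong-yang:jump-cut}, the classical argument (that $\wkl$ yields a path through an $X$-computable binary tree having no $X$-computable path) gives the implication over $\rca$, and \cref{Cor:WKLPra} transfers it. For $\mathsf{AST}$, an atomic model is presented, as in \S\,\ref{sebs:models}, by its truth-evaluation function, which is $\{0,1\}$-valued, so the solutions again lie in $2^{\mathbb{N}}$; the implication $\wkl\to\mathsf{AST}$ over $\rca$ is a known result in the reverse mathematics of model theory (see \cite{hirschfeldt2009atomic}), after which the theorem closes the case.

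The one genuinely delicate case is $\mathsf{DNR}$ (2), and this is where I expect the main obstacle. As stated, a solution is a function $f\colon\Nb\to\Nb$, which is \emph{not} an element of $2^{\mathbb{N}}$, so \cref{Cor:WKLPra} does not apply to $\mathsf{DNR}$ directly. The remedy is to route through the $\{0,1\}$-valued variant $\mathsf{DNR}_2$, asserting that for each $A$ there is $f\colon\Nb\to 2$ with $f(e)\ne\varphi^A_e(e)$ whenever the latter halts. The solutions of $\mathsf{DNR}_2$ do lie in $2^{\mathbb{N}}$, and over $\rca$ one has $\wkl\equiv\mathsf{DNR}_2$ (the degrees of $\{0,1\}$-valued diagonally non-recursive functions are exactly the PA degrees), so in particular $\wkl\to\mathsf{DNR}_2$. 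Applying \cref{Cor:WKLPra} to $\mathsf{DNR}_2$ yields $\wkl\vdash\mathsf{DNR}_2$ over $\Pra$.

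Finally I would note that a $\{0,1\}$-valued diagonally non-recursive function is literally a diagonally non-recursive function, so $\mathsf{DNR}_2\to\mathsf{DNR}$ holds over $\Pra$ via the identity transformation, requiring no induction and no new objects. Composing the two implications gives $\wkl\vdash\mathsf{DNR}$ over $\Pra$, which completes the corollary. The only steps calling for genuine care are thus the passage to the bounded version $\mathsf{DNR}_2$ and the verification that the model-coding underlying $\mathsf{AST}$ really produces a $2^{\mathbb{N}}$-solution; everything else is a direct application of the transfer theorem to classical facts over $\rca$.
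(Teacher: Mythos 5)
Your proposal is correct and follows essentially the same route as the paper: apply \cref{Cor:WKLPra} after checking that solutions lie in $2^{\mathbb{N}}$, cite the known implications over $\rca$ (including \cite{hirschfeldt2009atomic} for $\mathsf{AST}$ and Flood's result for $\mathsf{RWKL}$), and handle $\mathsf{DNR}$ by routing through the $\{0,1\}$-valued variant and observing that a $\{0,1\}$-valued diagonally non-recursive function is trivially a $\mathsf{DNR}$ solution. The only cosmetic difference is in the coding used to place $\mathsf{AST}$-solutions in $2^{\mathbb{N}}$ (you use the truth-evaluation function of \S\,\ref{sebs:models}, the paper uses graphs of the signature functions), which changes nothing of substance.
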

\begin{proof}
In light of \cref{Cor:WKLPra} we only need to check that the items above are consequences of $\wkl$ and that their solutions belong to $2^{\Nb}$.

Items $(1)$ and $(3)$ are clear consequences of $\wkl$.


Over $\rca$, $\wkl$ implies the existence of $\{0,1\}$-valued diagonally non-computable functions. Moreover,  the existence of $\{0,1\}$-valued diagonally non-computable functions trivially implies $\mathsf{DNR}$. Thus, $(2)$ holds.

Over $\rca$, $\mathsf{AST}$ is implied by $\wkl$ by \cite[Theorem 6.3]{hirschfeldt2009atomic}. In order to apply \cref{Cor:WKLPra}, we represent a model as a $\{0,1\}$-valued function, namely the signature functions are represented through their graphs.

Over $\rca$, $(5)$ is implied by $\wkl$ by \cite[Theorem 3]{flood2012reverse}.
\end{proof}

\subsection{Uniform continuity} 
Recall that, over $\rca$, $\mathsf{UniformContinuityOn[0,1]}$ is equivalent to $\wkl$ (see \cite[Exercise IV.2.9]{sosoa}).

\begin{definition}
An instance of
$
\mathsf{UniformContinuityOn[0,1]}
$ (in a model $\mathcal{M}$)
is a presentation $(X,\delta)$ of
a function on $[0,1]$
(see \S\,\ref{sec:morenot}).
A solution of $X$
is  a \emph{modulus of uniform continuity} of $X$, which is
a function $h\in \omega^\omega$
 such that $|r-\hat r|<2^{-h(n)}$ implies
 $|X(r)-X(\hat r)|<2^{-n}$
 for all $r,\hat r\in   [0,1]$.
\end{definition}

We now determine the proof-theoretic strength of 
$\mathsf{UniformContinuityOn[0,1]}$ over $\Pra$.

\begin{theorem}\label{theorem:UniformContinuityOn[0,1]}
Over $\Pra$,
 $\mathsf{UniformContinuityOn[0,1]}$ is equivalent to  $\wkl+\rca$.

\end{theorem}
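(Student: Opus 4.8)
The plan is to prove the two implications separately, reducing the easy direction to the classical theory over $\rca$ and concentrating the real work on the single new implication $\mathsf{UniformContinuityOn[0,1]} \Rightarrow \qfac$. For $(\Leftarrow)$, since we identify $\rca$ with $\Pra + \qfac$, the hypothesis $\rca + \wkl$ is exactly the setting of the classical equivalence: over $\rca$, $\wkl$ proves that every continuous function on $[0,1]$ admits a modulus of uniform continuity (Theorem~IV.2.3, Exercise~IV.2.9 in \cite{sosoa}). That argument is carried out entirely within $\rca$, so nothing new is needed here.

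For $(\Rightarrow)$ I would first show $\Pra + \mathsf{UniformContinuityOn[0,1]} \vdash \qfac$; once this is done we are working over $\rca$, where the classical reversal (again Theorem~IV.2.3 in \cite{sosoa}) gives $\wkl$, so that $\mathsf{UniformContinuityOn[0,1]}$ proves $\rca + \wkl$ as required. To obtain $\qfac$, fix a $\Delta^0_0$ instance $\psi$ with $\forall n\, \exists m\, \psi(n,m)$. Since $\Pra \vdash \id 0 0$, the least-number principle $\lnp 0 0$ is available, so each $n$ has a least witness stage $s_n = \mu s\,(\exists m \le s)\,\psi(n,m)$, although the map $n \mapsto s_n$ is not primitive recursive. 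I would encode this search into a single continuous function $X$ on $[0,1]$, built by the padding/delay method of \cref{sect:transforming}. Reserve the intervals $I_n = [1-2^{-n},\, 1-2^{-(n+1)}]$ with midpoint $c_n$, and on $I_n$ place a tent of height $2^{-n}$ supported on a subinterval of width $2^{-s_n-n}$ centred at $c_n$ (with value $0$ on the rest of $I_n$), and set $X(1)=0$. While no witness for $n$ has yet appeared the spike is kept narrowing; as a witness always eventually appears, every spike freezes, so $X$ is a genuine continuous function (the shrinking heights $2^{-n}$ securing continuity at the accumulation point $1$), and a primitive recursive presentation $(f,\delta)$ of $X$ can be produced exactly as in the proof of \cref{prop10}.

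The point is that any modulus $h$ of uniform continuity for $X$ decodes the search. Fix $n$ and its least witness stage $s_n$. The rationals $c_n$ and $c_n - 2^{-s_n-n-1}$ lie within $2^{-s_n-n-1}$ of each other, while $|X(c_n)-X(c_n-2^{-s_n-n-1})| = 2^{-n} \ge 2^{-(n+1)}$; hence validity of $h$ at level $n+1$ forces $2^{-h(n+1)} \le 2^{-s_n-n-1}$, that is $h(n+1) \ge s_n$. Therefore $f(n) := (\mu m \le h(n+1))\,\psi(n,m)$ is a well-defined choice function obtained by bounded minimisation, and $\forall n\, \psi(n,f(n))$ holds. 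This is a per-$n$ elementary argument requiring no unbounded induction, and it yields $\qfac$.

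The main obstacle will be the formal verification, over $\Pra$ (hence with only $\Delta^0_0$-induction), that the spiky $X$ admits a legitimate primitive recursive presentation as a continuous function: one must manage the delay for spikes that freeze at unbounded stages $s_n$, define the attempted modulus $\delta$ so that the pair $(f,\delta)$ is a valid instance, and secure continuity at the point $1$. All of this is in the spirit of the padding arguments used repeatedly above (cf. \cref{prop10} and the proof of \cref{Lem_CatDense}), but writing it out carefully is where the effort lies; the decoding step, by contrast, is elementary and uses only bounded minimisation.
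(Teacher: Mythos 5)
Your proposal follows essentially the same route as the paper: both directions reduce to the classical equivalence over $\rca$, and the whole weight falls on showing that $\mathsf{UniformContinuityOn[0,1]}$ implies $\qfac$ over $\Pra$ by building, via delayed/padded primitive recursion, a continuous function whose steep segments record the witness stages of a $\Delta^0_0$ instance, so that any modulus of uniform continuity bounds the search. The paper arranges the steep segments as a monotone piecewise-linear staircase accumulating at the single irrational $\sqrt{2}/2$ and phrases the core as a domination lemma, whereas you use disjoint tents on the intervals $[1-2^{-n},1-2^{-(n+1)}]$ and decode each $f(n)$ by bounded minimisation below $h(n+1)$; these differences are cosmetic, and your per-$n$ decoding is if anything slightly cleaner.

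One concrete warning about the step you defer to "formal verification": a tent of height $2^{-n}$ whose support width $2^{-s_n-n}$ depends on the not-yet-known stage $s_n$ cannot be presented literally, because a rational $r$ near $c_n$ would have its value revised by up to $2^{-n-1}$ at arbitrarily late stages, violating the requirement $|f(r,i)-f(r,i+1)|<2^{-i-1}$ on presentations of reals. The repair is exactly the paper's mechanism: commit each rational's value once and for all when its code is reached, keep the function locally constant while waiting, choose $\delta(r,\cdot)$ small enough to reserve an uncommitted subregion of $I_n$, and install the steep pair $z_n,x_n$ inside that reserved region only at stage $s_n$. With that adjustment your argument goes through as in the paper's proof.
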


\begin{proof}
Note that over $\rca$,
 $\mathsf{UniformContinuityOn[0,1]}$ is equivalent to  $\wkl$.
 So it suffices to show that over $\Pra$,
  $\mathsf{UniformContinuityOn[0,1]}$  implies $\rca$.
The proof is based on a recursion-theoretic lemma.
We first explain how to prove the lemma and then we explain how to turn it into an argument in $\Pra$.
\begin{lemma}
Fix a computable function $g$.
There is a primitive recursive continuous
function $(h,\delta)$ (in the sense of \S\ref{sec:morenot}) so that any  uniform continuity modulus of $(h,\delta)$ primitively recursively computes
 a function dominating $g$.
\end{lemma}
\begin{proof} We define a continuous function  represented via primitive recursive  $(h, \delta)$. In order for $(h, \delta)$ to be a presentation of a continuous function we must
make sure that $\delta$ gives arbitrarily small covers of $[0,1]$. But we do not have to produce these covers `quickly'. In other words, we can delay
the definition of the next refined cover until we are ready, as long as every rational point that we consider at any stage is within its $\delta$-neighbourhood 
that could be quite small.

 If $m$ is a modulus of uniform continuity for $g$, then 
 our goal is to make sure that  $m(i)> g(i)$, for every $i$. 
 Fix some irrational but primitive recursive point $\xi$, say $\xi =
\sqrt{2}/2.$ (Fixing $\xi$ ahead of time is not really necessary, but it will make things a bit more transparent at least in the standard model.)
 We build it so that the infinitely many breaking points of $h$ converge at the accumulation point $(\xi,  h(\xi))$, where $h(\xi) = \sup_{x \in [0,1]} h(x) =2$.
 Outside of $\xi$ the function $h$ will be piecewise linear.
 As the argument of $h$ approaches $\xi$ the value of $h$ will be increasing, but the speed with which it will be increasing locally will
 be determined by the construction, thus making $h$ very steep around $\xi$. The reader is perhaps already convinced that this can be done primitively recursively by delaying, but we give more details nonetheless.


 To make $m(i)> g(i)$, we ensure that there is a pair of rational  points $x_i $ and $z_i$ so that $z_i < x_i< \xi$ and 
 $|x_i -z_i |  < 2^{-g(i)}$
 but $| h(z_i) -  h(x_i)| > 2^{-i}$. 
 For that, we wait for $g(i)$ to converge. While we wait, we define the function 
 on more and more rational points, as follows.
 If $r < \xi$ is a new rational point so that $h(r)$ has to be defined, then use bounded search to 
 find the closest rational $q <\xi$ so that $h(q)$ has already been defined at a previous stage. 
Set $h(r) = h(q)$, and also declare $\delta(r,n)$ to be so small that the point $\xi$ is not covered by the $\delta$-neighbourhood (nbhd) of $r$.
Notice that if a rational point $d$ is in the interval between $r$ and $q$, then at the stage at which we consider $d$ the value of $h(d)$
will be set equal to $h(r)$, and we can define $\delta$ for $d$ so that the $\delta$-nbhd of $d$ is inside the interval between $r$ and $q$.
We proceed in this manner primitively recursively until $g(i)$ is calculated.
Once this is done, we primitively recursively pick the right-most rational to the left of $\xi$ for which $h$ has already been defined and set  $z_i$ equal to this rational.
Note that $h(x_{i-1}) = h(z_i)$. We then pick a rational $x_i$ between $z_i$ and $\xi$ so that is $2^{-g(i)}$-close to $z_i$, is not covered by the $\delta$-nbhd around $z_i$ (for the precision moduli defined so far for $z_i$), and set 
$$h(x_i) = h(z_{i}) + 2^{-i+1} = h(x_{i-1}) + 2^{-i+1},$$
and we also define $
\delta$ so that $
\xi$ is covered by the $\delta$-nbhd of $x_i$ at the stage. 

\

\begin{footnotesize}
	A more formal construction could be arranged as follows. At a stage $s\in\omega$ we follow the (current) computation of $g(i_s)$ for a number $i_s\in\omega$. At stage $0$, we put $i_0 := 0$, $h(0) := 0$, and $\delta(0,n) := n$ for all $n\in\omega$. We also define auxiliary parameters $l_0:= 1$ and $t_0:=0$.
	
	\emph{Stage $s+1$.}	Suppose that the value $g(i_s)[s+1]$ is undefined (i.e., after $s+1$ steps of computation, the value $g(i_s)$ has not been computed yet). Without loss of generality, here we assume that $g(i_0)[1]$ is undefined. Every $j$ such that $0< j < 2^{l_{s}}$ \emph{and} $j\cdot 2^{-l_s} < \sqrt{2}/2$ satisfies one of the following three cases:
	\begin{enumerate}
		\item The value $h(j\cdot 2^{-l_s})$ has been already defined at one of the previous stages.
	
		\item The value $h(j\cdot 2^{-l_s})$ is still undefined, and there exists (the least) $k_1$ such that $k_1 > j$, $k_1\cdot 2^{-l_s} <\sqrt{2}/2$, and $h(k_1\cdot 2^{-l_s})$ was defined at previous stages. Then we find the greatest $k_0$ such that $k_0 < j$ and $h(k_0\cdot 2^{-l_s})$ was defined at previous stages. We set
		\[
			h(j\cdot 2^{-l_s}) := h(k_0\cdot 2^{-l_s}) + (j-k_0)\cdot \frac{h(k_1\cdot 2^{-l_s}) - h(k_0\cdot 2^{-l_s})}{k_1-k_0},\quad \delta(j\cdot 2^{-l_s}, n) := n + t_s.
		\]
		
		\item The value $h(j\cdot 2^{-l_s})$ is still undefined, and there is no $k_1$ such that $k_1 > j$, $k_1\cdot 2^{-l_s} <\sqrt{2}/2$, and $h(k_1\cdot 2^{-l_s})$ was defined at previous stages. Again, we find the greatest $k_0$ such that $k_0 < j$ and $h(k_0\cdot 2^{-l_s})$ was defined at previous stages. We define
		\[
			h(j\cdot 2^{-l_s}) := h(k_0\cdot 2^{-l_s}), \quad \delta(j\cdot 2^{-l_s}, n) := n + t_s.
		\]
	\end{enumerate}
	We set $i_{s+1} := i_s$, $l_{s+1} := l_s + 1$, and $t_{s+1} := t_s$.
	
	Now assume that $g(i_s)[s+1]$ is defined. Then we set $i_{s+1} := i_s +1$, $l_{s+1} := 1+\max(l_s, g(i_s))$, and $t_{s+1} := \max(t_s, g(i_s)+2)$. Let $m := \max(l_s,g(i_s))$. Every $j$ such that $0< j < 2^{m}$ and $j\cdot 2^{-m} < \sqrt{2}/2$ satisfies one of the following three cases:
	\begin{enumerate}
		\item The value $h(j\cdot 2^{-m})$ has been already defined at a previous stage.
		
		\item The value $h(j\cdot 2^{-m})$ is still undefined, and there exists the least $k_1$ such that $k_1 > j$, $k_1\cdot 2^{-m} <\sqrt{2}/2$, and $h(k_1\cdot 2^{-m})$ was defined at a previous stage. Then find the greatest $k_0$ such that $k_0 < j$ and $h(k_0\cdot 2^{-m})$ was defined at a previous stage. Set
		\[
			h(j\cdot 2^{-m}) := h(k_0\cdot 2^{-m}) + (j-k_0)\cdot \frac{h(k_1\cdot 2^{-m}) - h(k_0\cdot 2^{-m})}{k_1-k_0},\quad \delta(j\cdot 2^{-m}, n) := n + t_s.
		\]
		
		\item In the remaining case find the greatest $k_0$ such that $k_0 < j$ and $h(k_0\cdot 2^{-m})$ was defined at a previous stage. Declare
		\begin{equation}\label{equ:aux-final}
			h(j\cdot 2^{-m}) := h(k_0\cdot 2^{-m}) + 2^{-i+1}, \quad \delta(j\cdot 2^{-m}, n) := n + t_{s+1}. \tag{$*$}
		\end{equation}
	\end{enumerate}
	
	This concludes the description of the formal construction. Notice that here $z_i$ is chosen as $k_0\cdot 2^{-m}$ in~(\ref{equ:aux-final}), and one can take $x_i$ equal to $(k_0+1)\cdot 2^{-m}$.
\end{footnotesize}

\

To make the function continuous, we also implement the same procedure for rationals $r> \xi$, and simultaneously define
a sequences $(w_i)_{i \in \omega}$ and $(y_i)_{i \in \omega}$ that converge to $\xi$ from the right. This is done similarly to how we defined $z_i$ and $x_i$ mutatis mutandis; we omit this. 
Note that the function $h$ is indeed continuous at the point $\xi$, with $\lim_{x \rightarrow \xi}h(x)$ well-defined (and is equal to 2). The function is therefore continuous at $\xi$. It is also continuous at any other point, by the construction. It is also primitive recursive (by the construction).
\end{proof}
In any $\omega$-model, the theorem now follows by subrecursive relativisation of the above argument.
To get an argument in $\Pra$, we use the restricted Church--Turing thesis to produce a primitive recursive schema implementing the lemma above. For that, 
we fix an instance of $\rca$ (more formally, of $\qfac$) and use primitive recursion to produce an instance of  $\mathsf{UniformContinuityOn[0,1]}$ along the lines of the proof of the lemma above. Some care must be taken. For instance, it is perhaps most convenient to use Proposition~\ref{prop:mini} and refer to the minimisation operator applied to some function that exists in the model. We shall use this function in our primitive recursive schema.
Also, to avoid appealing to $\wkl$, we have to be very careful and explicit in the way we define $\delta$ and the associated covers of $[0,1]$. For that, for parameter $s$ in the scheme that corresponds to a `stage', we always subdivide $[0,1]$ into more and more refined rational intervals using, e.g., nested  partitioning of the form  $$0< 2^{-s} <  2 \cdot 2^{-s} < \ldots < (k+1) 2^{-s} < \ldots < 1-2^{-s}< 1,$$
which do correspond to covers of the whole $[0,1]$ without any reference to $\wkl$. It does not take any induction to conclude that the formal schema gives a presentation of a continuous function.
 We then argue  using only bounded induction and bounded comprehension that, using any solution of  this instance of $\mathsf{UniformContinuityOn[0,1]}$ produced by the schema, we can calculate the fixed instance of $\rca$. We invite the reader to reconstruct the tedious but not difficult formal details.
\end{proof}

\section{Further open questions}\label{sec:que}

Recall that we stated Questions \ref{q:1}, \ref{q:2}, \ref{q:3}, \ref{q:ramseyBaire}, \ref{q:ramseyInd}, \ref{q:padding} in the previous sections.
We also leave open whether all dashed lines in Fig.~\ref{fig:fig} correspond to strict implications. We state a few more questions below.



\begin{question}
 Study the behaviour of $\coh$ over $\Pra$. 
\end{question}
\noindent We note that $\coh$  behaves differently over $\rca$ and over $\rca^*$; only over the former is implied by $\rt 2 2$ (see \cite[Lemma 7.11]{CholakJockuschSlaman} and \cite{fkk:weak-cousins}).

\begin{question}
	Develop the theory of Weihrauch reductions in the primitive recursive setting.\end{question}

 In particular, some version of Weihrauch reduction may help to `separate' the categoricity principles discussed in the present paper for the dense linear order, the random graph, and the atomless Boolean algebra. We note that an `online' version of Weihrauch reduction has recently been suggested in  \cite{punc2}.

\begin{question}\label{q:alg}
Develop the reverse mathematics of countable algebra over $\Pra$.
\end{question}
\noindent For instance, how much of \cite{SolomonThesis} can be carried over $\Pra$?  We have not really looked at natural problems equivalent to $\aca$ over $\Pra$; see Subsection~\ref{subs:aca} for a brief discussion. We believe that systematically investigating into Question~\ref{q:alg} will help to fill this gap.

\

Of course, this list of potential questions is far from being complete.


\end{document}